\theoremstyle{plain}
\newtheorem{thm}{Theorem}[section]
\newtheorem{cor}[thm]{Corollary}
\newtheorem{lem}[thm]{Lemma}
\newtheorem{prop}[thm]{Proposition}
\theoremstyle{definition}
\newtheorem{rem}[thm]{Remark}
\newtheorem{exe}[thm]{Example}
\theoremstyle{definition}
\newtheorem{defn}[thm]{Definition}
\newenvironment{citethm}[1]{%
	\thm}{\endthm\addtocounter{thm}{-1}}
\newenvironment{citecor}[1]{%
	\cor}{\endthm\addtocounter{thm}{-1}}
\newenvironment{citeprop}[1]{%
	\prop}{\endthm\addtocounter{thm}{-1}}
\def\makeautorefname#1#2{\expandafter\def\csname#1autorefname\endcsname{#2}}
\newcommand{\cA}{\mathcal{A}}
\newcommand{\cC}{\mathcal{C}}
\newcommand{\cD}{\mathcal{D}}
\newcommand{\cG}{\mathcal{G}}
\newcommand{\cT}{\mathcal{T}}
\newcommand{\bC}{\mathbb{C}}
\newcommand{\bN}{\mathbb{N}}
\newcommand{\bP}{\mathbb{P}}
\newcommand{\bR}{\mathbb{R}}
\newcommand{\bZ}{\mathbb{Z}}
\newcommand{\bc}{{\boldsymbol{c}}}
\newcommand{\be}{{\boldsymbol{e}}}
\newcommand{\bg}{{\boldsymbol{g}}}
\newcommand{\bk}{{\boldsymbol{k}}}
\newcommand{\bx}{{\boldsymbol{x}}}
\newcommand{\bz}{{\boldsymbol{z}}}
\newcommand{\vsimeq}{\rotatebox{-90}{\(\simeq\)}}
\newcommand{\brak}[1]{\langle #1\rangle}
\newcommand{\pp}[1]{(\!( #1 )\!)}
\newcommand{\id}{\text{Id}}
\DeclareMathOperator{\Hom}{Hom}
\DeclareMathOperator{\End}{End}
\DeclareMathOperator{\Ext}{Ext}
\DeclareMathOperator{\HOM}{HOM}
\DeclareMathOperator{\RHOM}{RHOM}
\DeclareMathOperator{\Lderiv}{L}
\newcommand{\Lotimes}{\otimes^{\Lderiv}}
\DeclareMathOperator{\Image}{im}
\DeclareMathOperator{\cok}{cok}
\DeclareMathOperator{\gdim}{gdim}
\DeclareMathOperator{\mcolim}{MColim}
\DeclareMathOperator{\mlim}{MLim}
\DeclareMathOperator{\colim}{colim}
\DeclareMathOperator{\cone}{Cone}
\newcommand{\bKO}{\boldsymbol{K}_0}
\newcommand{\bGO}{\boldsymbol{G}_0}
\DeclareMathOperator{\amod}{\mathrm{-}mod}
\DeclareMathOperator{\dgmod}{\mathrm{-}dgmod}
\DeclareMathOperator{\pmodfg}{\mathrm{-}pmod_{fg}}
\DeclareMathOperator{\pmodlfg}{\mathrm{-}pmod_{cblfg}}
\DeclareMathOperator{\modlf}{\mathrm{-}mod_{cblf}}
\DeclareMathOperator{\gmod}{\mathrm{-}gmod}
\DeclareMathOperator{\vect}{\mathrm{-}vect}
\DeclareMathOperator{\br}{\mathbf p} 
\title{Asymptotic Grothendieck groups and c.b.l.f. positive dg-algebras}
\author{Gr\'egoire Naisse}
\address{Max-Planck Institute for Mathematics\\
 Vivatsgasse 7 \\ 
53111 Bonn\\ 
Germany}
\email{gregoire.naisse@gmail.com}
\begin{document}


\begin{abstract} 
We introduce the notion of an asymptotic Grothendieck group for a subcategory of an abelian or a triangulated categories that is both AB4 and AB4*. 
Such an asymptotic Grothendieck group consists in quotienting the usual Grothendieck group by relations obtained from controlled infinite iterated extensions. 
We study when the asymptotic Grothendieck group of the heart of a triangulated category with a t-structure is isomorphic to the asymptotic Grothendieck group of the triangulated category itself. 
We also explain a connection with the notion of topological Grothendieck group from Achar--Stroppel.
Finally, we compute the asymptotic Grothendieck group of certain multigraded positive (dg-)algebras having a cone bounded, locally finite dimension. 
\end{abstract}


\maketitle



\section{Introduction}\label{sec:intro}

Categorification is a trendy research area in mathematics nowadays. 
In short, the philosophy behind \emph{categorification} is to lift classical objects to the level of categories, turning set elements
into objects, functions into functors, equalities into isomorphisms, and revealing a whole new layer of structure achieving these isomorphisms. 
It is particularly relevant when one categorifies representations, turning a vector space into a category and the action (of a group or an algebra) into endofunctors. 
One usually refer to this procedure as \emph{higher representation theory} (see for example \cite{chuangrouquier,  FKS, lauda, soergel}). Categorification also has applications in low-dimensional topology (see for example \cite{khovanov, webster}) and deep connections with algebraic geometry (see \cite{catgeom} for a nice survey). 
While most results live in an integral setting,  it appears to be not enough to handle all situations, such as in the categorification of the Jones-Wenzl projectors~\cite{frenkelstroppelsussan}, or in the categorification of Verma modules~\cite{naissevaz1,naissevaz2,naissevaz3}, which is related by~\cite{naissevaztriply} to Khovanov--Rozansky triply--graded link homology~\cite{KRhomology}.

\smallskip

In the first case, it is necessary to categorify fractions of the form
\begin{equation}\label{eq:nfraction}
\frac{1}{1+q^2+ \cdots + q^{2n-2}},
\end{equation}
where $q$ is a formal variable. 
The polynomial $1+q^2 + \cdots + q^{2n-2}$ is the graded dimension of $R := \Bbbk[x]/(x^n)$, $\deg(x) = 2$, the cohomology ring of $\bC\bP^{n-1}$, as a graded $\Bbbk$-module. 
Thus, in the Grothendieck group $K_0(\Bbbk\vect)$ of $\bZ$-graded $\Bbbk$-module, we have
\[
[R] = 1+q^2 + \cdots + q^{2n-2}[\Bbbk].
\]
Moreover,  $\Bbbk$ admits a free resolution over $R$ with (graded) ranks matching \cref{eq:nfraction} expanded as a Laurent series in $\Bbbk\llbracket q \rrbracket [q^{-1}]$. 
Therefore, the authors of \cite{frenkelstroppelsussan} suggested to categorify the fraction \cref{eq:nfraction} by the Ext-algebra $\Ext^*_R(\Bbbk, \Bbbk)$. 
However, we usually cannot say that in  $K_0(\Bbbk\vect)$ we have
\[
[\Bbbk] = \frac{1}{1+q^2+ \cdots + q^{2n-2}}[R],
\]
since it would require to make sense of a convergent series, as the projective resolution is infinite.  
In~\cite{acharstroppel}, Achar and Stroppel developed a framework to handle such situations. 
They start with a nice graded, finite length abelian category with enough projectives, and explain how to construct a derived category where (minimal) projective resolutions of the objects exist.  Then, they develop a notion of \emph{topological Grothendieck group}, by modding out relations coming from these (potentially infinite) projective resolutions. 
We can sum up their framework as objects are finite, but can admit infinite resolutions. 
This is the notion used in~\cite{frenkelstroppelsussan}.

\smallskip

In the case of the categorification of Verma modules~\cite{naissevaz1,naissevaz2,naissevaz3}, one needs to categorify the rational fraction
\begin{equation}\label{eq:lambdafraction}
\frac{1-\lambda^2}{1-q^2},
\end{equation}
which appears as a structure constant. The solution used in~\cite{naissevaz1,naissevaz2} was to interpret $\lambda$ as a second grading, the minus sign on the numerator as a parity degree (thus working in a  ``super'' setting), and the denominator as the formal Laurent series $(1+q^2+q^4+ \cdots)$. 
From that, the author developed with Vaz in~\cite[Appendix]{naissevaz1} notions of Krull--Schmidt (resp. Jordan--H\"older) categories for infinite direct sums (resp. infinite composition series), controlled in a certain way. 
These categories also come with some `topological' versions of the Grothendieck groups. 
Here, the object are not finite anymore, but only locally finite with respects to the grading. However, the framework in~\cite[Appendix]{naissevaz1} does not handle infinite projective resolutions, or more generally derived categories. 

\smallskip

In this paper, we propose a framework bringing both the topological Grothendieck groups of~\cite{acharstroppel} and of~\cite{naissevaz1} under the same roof, and generalizing it even further. We call this generalization an \emph{asymptotic Grothendieck group}. Such a construction is necessary to handle properly the (de)categorification of (parabolic) Verma modules for all quantum Kac--Moody algebras, as done in~\cite{naissevaz3}. It also yields a more natural categorification of $\mathfrak{sl}_2$-Verma modules from~\cite{naissevaz1,naissevaz2} in the world of homological algebra rather than in the super-setting, with the minus sign on the numerator of \cref{eq:lambdafraction} induced by a difference in homological degree. 
Consequently, asymptotic Grothendieck groups are also used in other constructions, such as  \cite{naissevaztriply}, \cite{twoblob} and \cite{tensorvermas}, which all connect categorification of Verma modules to low-dimensional topology. The results in~\cite{frenkelstroppelsussan} can also be rewritten in our setting.

\subsection{Main example and goal of the paper}\label{sec:mainexintro}

As a main example for the paper, we consider the following algebra, whose definition is motivated by~\cite{KRhomology} and~\cite{naissevaz1}. Consider the $\Bbbk$-algebra $R := \Bbbk[x, \omega]/(\omega^2)$.  We endow it with a $\bZ \times \bZ^2$ grading, declaring that $\deg(x) := (0,2,0)$ and $\deg(\omega) := (1,0,2)$. We interpret the first $\bZ$-grading as homological, while we call each of the components of the $\bZ^2$-grading the $q$- and the $\lambda$-grading respectively. We equip $R$ with a trivial differential, turning it into a $\bZ^2$-graded dg-algebra $(R,0)$, with the differential grading given by the homological one. 
Note that the graded Euler characteristic of $R$ is
\[
\chi(R) = (1 - \lambda^2)(1+q^2+q^4+ \cdots) = \frac{1-\lambda^2}{1-q^2}.
\]
Furthermore, $R$ admits a unique (up to isomorphism and grading shift) projective module, given by $R$ itself, and a unique (up to isomorphism and grading shift) simple module $L := \Bbbk$. They both lift to $\bZ^2$-graded dg-modules when equipped with a trivial differential. Moreover, we obtain a filtration
\begin{equation}\label{eq:filtintroex}
\cdots
\hookrightarrow x^2 R 
\hookrightarrow (x^2 R + x \omega R)
\hookrightarrow xR
\hookrightarrow (xR + \omega R)
\hookrightarrow R,
\end{equation}
with quotients
\begin{align*}
\frac{R}{(xR + \omega R)} &\cong L, 
&
\frac{(xR + \omega R)}{x R} &\cong \lambda^2 L [1],
&
\frac{xR}{(x^2 R + x \omega R)} &\cong q^2 L,
&
\dots
\end{align*}
and a resolution
\begin{equation}\label{eq:resintroex}
\cdots
\rightarrow
  q^2 \lambda^4 R [2] \oplus \lambda^6 R [3]  \xrightarrow{ \begin{pmatrix} \omega & x \\ 0 & \omega \end{pmatrix} }
q^2 \lambda^2 R [1] \oplus \lambda^4 R [2]  \xrightarrow{ \begin{pmatrix} \omega & -x \\ 0 & \omega \end{pmatrix} } q^2 R \oplus \lambda^2 R [1] \xrightarrow{\begin{pmatrix} x  \\ \omega \end{pmatrix}} R \rightarrow L,
\end{equation}
where $q$ and $\lambda$ are grading shifts in the $\bZ^2$-grading and $[1]$ in the homological grading. 
Our goal is to construct a suitable derived category of $(R,0)$ such that its asymptotic Grothendieck group is a free $\bZ\pp{q,\lambda}$-module, and generated either by the class of $[R]$ or $[L]$, with formulas
\begin{align}\label{eq:changeofbasisintro}
[R] &= \frac{1-\lambda^2}{1-q^2} [L], & [L] &= \frac{1-q^2}{1-\lambda^2}[R],
\end{align}
 obtained from the categorical relations in \cref{eq:filtintroex} and in \cref{eq:resintroex}. 
 Throughout the paper, we will study different categories of modules over $R$ or its dg-version $(R,0)$, and their respective (asymptotic) Grothendieck group.

\subsection{Sketch of the construction}

Let us give an insight of what we mean by asymptotic Grothendieck group. 
Let $\cA$ be an abelian category. Given a sequence or arrows
\[
X = F_{m+1} \xrightarrow{f_m} \cdots \xrightarrow{f_2} F_2 \xrightarrow{f_1} F_1 \xrightarrow{f_0}  F_{0} = Y,
\]
one can deduce that in the (abelian) Grothendieck group $G_0(\cA)$ there is an equality
\[
[Y] - [X] = \sum_{i = 0}^m  [\cok{f_i}] - [\ker{f_i}].
\]
The idea of the asymptotic Grothendieck group is to take this phenomenon to the limit. One can extend such a composition of arrows to the left (resp. to the right)
\begin{align}
\label{eq:XislimYiscolim}
\begin{split}
X &= \lim\bigl( \cdots \xrightarrow{f_2} F_2 \xrightarrow{f_1} F_1 \xrightarrow{f_0}  F_{0}  =Y \bigr), \\
\text{(resp. }\quad  Y &= \colim \bigl( X = F_0 \xrightarrow{f_0} F_1 \xrightarrow{f_1} F_2 \xrightarrow{f_2} \cdots \bigr) \text{ )}, 
\end{split}
\end{align}
giving a filtered limit (resp. filtered colimit).
Naively, we could be tempted to ask that in this case we have
\begin{align} \label{eq:wrongdef}
\text{`` }
[X] &= [Y] +  \sum_{r \geq 0} \bigl( [\ker f_r] - [\cok f_r] \bigr).
\text{ ''}
\end{align}
 However, filtered limits (resp. filtered colimits) are in general not exact. 
 This is problematic since for example given a short exact sequence of filtered limits
 \begin{equation} \label{eq:seslimits}
 0 \rightarrow A_\bullet \rightarrow B_\bullet \rightarrow C_\bullet \rightarrow 0,
 \end{equation}
 with $A := \lim( \cdots \rightarrow A_1 \rightarrow A_0)$ and similarly for $B$ and $C$, then \cref{eq:wrongdef} would imply that
 \[
[B] = [A] + [C],
 \]
 while $B$ could be very far from being an extension of $C$ by $A$. 
Hopefully, when $\cA$ is a full subcategory of an abelian category having countable products (resp. coproducts), it is possible to compute by \emph{how much} a filtered limit $X$ (resp. colimit $Y$) is not exact, by considering its derived limit $\tilde X$ (resp. derived colimit $\tilde Y$). In particular, for a short exact sequence of limits as \cref{eq:seslimits}, there is an exact sequence
\begin{equation}\label{eq:derivexactseq}
0 \rightarrow A \rightarrow B \rightarrow C \rightarrow \tilde A \rightarrow \tilde B \rightarrow \tilde C \rightarrow 0. 
\end{equation}
Thus, we suggest to define the asymptotic Grothendieck group $\bGO(\cA)$ by taking $G_0(\cA)$ modded out by the relations
\begin{align*}
[X] - [\tilde X] &= [Y] + \sum_{r \geq 0} \bigl( [\ker f_r] - [\cok f_r] \bigr), \\
\text{(resp. }\quad  [Y] - [\tilde Y] &= [X] + \sum_{r \geq 0} \bigl( [\cok f_r] - [\ker f_r] \bigr) \text{ )}, 
\end{align*}
whenever we are in the case of \cref{eq:XislimYiscolim}, see \cref{def:topG0} for a precise definition of $\bGO(\cA)$. 
In particular, this definition implies that given \cref{eq:seslimits}, then 
\[
[B] - [\tilde B] = [A] -  [\tilde A] + [C] -[\tilde C],
\]
which agrees with \cref{eq:derivexactseq}. 
We use a similar definition in the case of a triangulated category
, using \emph{Milnor limits} and \emph{Milnor colimits} instead of filtered ones, and replacing $[\cok f_r] - [\ker f_r]$ by the class of the \emph{mapping cone} $[\cone (f_r)]$, see \cref{def:toptriangulatedK0}.

\subsection{Outline of the paper}

\cref{sec:classK0} is a reminder of the basics about Grothendieck groups. In \cref{sec:triangcat} we recall the construction of the derived category of a dg-algebra, which we will use as main example for later, and we fix the conventions (in particular all our differentials are of degree $-1$). 
In \cref{sec:finitemultigradings} we explain what we mean by a multigraded category and how it influences the Grothendieck group. 
In \cref{sec:formalLaurent} we recall the construction of the ring of formal Laurent series $\Bbbk\pp{x_1, \dots, x_n}$. 

Our contributions start in \cref{sec:cbladditive} where we extend the notion of locally Krull--Schmidt category of \cite[Appendix]{naissevaz1} to the non-graded case (\cref{def:locKS}). One of the main results is:
\begin{citecor}{cor:lockrullschmidt}
Let $\cC$ be a locally Krull--Schmidt category with a full collection of pairwise non-isomorphic indecomposable objects $\{X_i\}_{i \in I}$. 
If $\cC$ admits locally finite direct sums 
of indecomposable objects, then its split Grothendieck group is a free $\bZ$-module
\[
K_0^\oplus(\cC) \cong \prod_{i \in I} \bZ \cdot [X_i],
\]
with basis $\{[X_i]\}_{i \in I}$. 
\end{citecor}
We also recall the notions of cone bounded, locally finite (c.b.l.f.) direct sums and of c.b.l. additive category from \cite[Appendix]{naissevaz1} (appearing under the name `locally finite' in the reference). We also recall the notion of c.b.l.f. dimensional $\Bbbk$-vector space, which basically means the graded dimension of the vector space is an element of $\bZ\pp{x_1,\dots,x_n}$. 

In \cref{sec:topG0} we introduce the notion of asymptotic Grothendieck group for abelian categories. We refine the notion of `local Jordan--H\"older category' from \cite[Appendix]{naissevaz1}. The main difference being that we require in this paper filtered limits and filtered colimits to be exact (thus derived limits and derived colimits to be zero) in a locally Jordan--H\"older category (\cref{def:locJH}). Then, based on the results in \cite[Appendix]{naissevaz1}, we show the following:
\begin{citethm}{thm:locallyJHG0}
Let $\cC$ be a locally Jordan--H\"older category with a full collection of pairwise non-isomorphic simple objects $\{S_i\}_{i \in I}$. If $\cC$ admits locally finite direct sums of $\{S_i\}_{i \in I}$, then its asymptotic Grothendieck group is a free $\bZ$-module
\[
\bGO(\cC) \cong \prod_{i \in I} \bZ \cdot[S_i],
\]
with basis $\{[S_i]\}_{i \in I}$.
\end{citethm}
In particular, the asymptotic Grothendieck group of a locally Jordan--H\"older category is equivalent to the topological one of \cite[Appendix]{naissevaz1}. 
This allows us to compute the asymptotic Grothendieck group for certain categories of graded modules. 
Let $R$ be a c.b.l.f. dimensional, $\bZ^n$-graded $\Bbbk$-algebra. Consider the category $R\amod$ of $\bZ^n$-graded $R$-modules with degree zero maps, and let $R\modlf$ be the full subcategory of $R\amod$ consisting of c.b.l.f. dimensional modules. 
\begin{citeprop}{prop:RposJH}
If $R$ is positive, that is $R = \bigoplus_{\bg \succeq 0} R_\bg$, and $R_0$ is semi-simple, then $R\modlf$ is strongly c.b.l. Jordan--H\"older.
\end{citeprop}

\begin{citecor}{cor:GORmodlf}
Let $R \cong \bigoplus_{i \in I} Re_i$ be as in \cref{prop:RposJH} and take a full collection $\{e_j\}_{j \in J \subset I}$ of non-equivalent idempotents. Then we have
\[
\boldsymbol G_0(R\modlf) \cong \bigoplus_{j} \bZ\pp{x_1, \dots, x_n} \cdot [S_j],
\] with $S_j := Re_j/R_{\succ 0}e_j$.
\end{citecor}

Still in \cref{sec:topG0}, we also introduce the notion of a cobaric structure on a $\bZ^n$-graded abelian category $\cC$ (\cref{def:cobaricstruct}), inspired by Achar definition~\cite{baric}. A cobaric structure is loosely speaking a way to split $\cC$ in two parts: a positive one $\cC_{\succ 0}$, stable by shifting the degree up,  and a negative one $\cC_{\preceq 0}$, stable by shifting the degree down, such that any object $X \in \cC$ is an extension of a negative object $X_{\preceq 0}$ by a positive one $X_{\succ 0}$. 
Given such a category $\cC$ with a cobaric structure $\beta$, there is a natural notion of \emph{topological Grothendieck group $\bGO(\cC,\beta)$} similar to the one introduced by Achar--Stroppel in \cite{acharstroppel}. It is given by saying that $[X] = [Y] \in \bGO(\cC,\beta)$ whenever $[(x^{\bg}X)_{\preceq 0}] = [(x^{\bg}Y)_{\preceq 0}] \in G_0(\cC)$  for all $\bg \in \bZ^n$, where $x^{\bg}$ is the degree shift by $\bg$. 
As hinted by its name, the topological Grothendieck group comes as in~\cite{acharstroppel} with a natural topology. 
 If $(\cC,\beta)$ is nice enough, then we show the topological Grothendieck group coincides with the asymptotic one:
\begin{citeprop}{prop:topGOisasympGO}
Let $\cC$ be a Mittag--Leffler strictly $\bZ^n$-graded abelian category with  a  full, non-degenerate, stable, cone bounded cobaric structure $\beta$.
There is a surjection
\[
 \bGO(\cC,\beta) \twoheadrightarrow \bGO(\cC),
\]
induced by the identity on $G_0(\cC)$. 
If  $\beta$ is locally finite, then it is an isomorphism
\[
 \bGO(\cC,\beta) \cong \bGO(\cC). 
\]
\end{citeprop}

In \cref{sec:derivedtopK0}, we introduce the notion of asymptotic Grothendieck group for a triangulated category $\cC \subset \cT$ where $\cT$ admits products and coproducts, and these preserve distinguished triangles. We investigate the case when $\cT$ is equipped with a t-structure $\tau$. We show that, when $\tau$ is nice enough, one can compute the asymptotic Grothendieck group of $\cC$ by computing the asymptotic Grothendieck group of its heart $\cC^\heartsuit$: 
\begin{citethm}{thm:eqasympKOheart}
If $\tau$ is bounded from below, non-degenerate, full and stable, then
\[
\bKO(\cC) \cong \bGO(\cC^\heartsuit).
\]
\end{citethm}
We also extend the notion of baric structure from~\cite{baric} to $\bZ^n$-graded triangulated categories. Again, it comes with a notion of topological Grothendieck group, somehow generalizing the one from~\cite{acharstroppel}. 
As before, when the baric structure is nice enough, the topological Grothendieck group coincides with the asymptotic one: 
\begin{citeprop}{prop:baricisoKO}
Suppose $\beta$ is a  full, non-degenerate, stable, cone bounded baric structure on $\cC$.
There is a surjection
\[
 \bKO^\Delta(\cC,\beta) \twoheadrightarrow \bKO^\Delta(\cC),
\]
induced by the identity on $K_0^\Delta(\cC)$. 
If  $\beta$ is locally finite, then it is an isomorphism
\[
 \bKO^\Delta(\cC,\beta) \cong \bKO^\Delta(\cC). 
\]
\end{citeprop}

Finally, we investigate in \cref{sec:cblfdgalg} the case of the derived category $\cD(A,d_A)$ of a $\bZ^n$-graded positive dg-algebras $(A,d_A)$. 
We introduce the notion of \emph{c.b.l.f. derived category $\cD^{cblf}(A,d_A)$} given by the full subcategory of $\cD(A,d_A)$ consisting of objects having c.b.l.f. dimensional, bounded from below homology. 
 Since $\cD(A,d_A)$ comes with a canonical t-structure, as a consequence of  \cref{thm:eqasympKOheart}, we obtain the following:
 \begin{citecor}{cor:dgheart}
Let $(A,d_A)$ be a $\bZ^n$-graded dg-algebra. If $A$ is positive for the homological grading and $H^0(A,d_A)$ is locally finite dimensional, then
\[
\bKO^\Delta(\cD^{cblf}(A,d_A)) \cong \bGO(H^0(A,d_A)\modlf).
\]
\end{citecor}
Adding some more hypothesis on $(A,d_A)$ we introduce the notion of a \emph{c.b.l.f. positive dg-algebra} (see \cref{def:cblfposdgalg}). We show any c.b.l.f. dimensional $(A,d_A)$-module admits a cover by a c.b.l.f. iterated extension (see \cref{def:cblfitext}) of projective $A$-modules. This allows us to show there is a natural baric structure on $\cD^{cblf}(A,d_A)$, such that the corresponding topological Grothendieck group coincides with the asymptotic one. In addition, we obtain:
\begin{citethm}{thm:triangtopK0genbyPi}
Let $\{P_i\}_{i \in I}$ be a full collection of  distinct, indecomposable relatively projective $(A,d_A)$-modules.
The asymptotic Grothendieck group is a free $\bZ\pp{x_1, \dots, x_\ell}$-module
\begin{align*}
\bKO^\Delta(\cD^{cblf}(A,d_A))  & \cong  \bigoplus_{i \in I} \bZ\pp{x_1, \dots, x_\ell} \cdot [P_i],
\end{align*}
with basis given by $\{[P_i]\}_{i \in I}$. Moreover, $\bKO^\Delta(\cD^{cblf}(A,d_A))$ is also freely generated by the classes of distinct simple modules $\{[S_i]\}_{i \in I}$.
\end{citethm}

Finally, we show the following result, which is particularly useful for higher representation theory purposes:

\begin{citeprop}{prop:derivedtensorinduces}
Let $(A,d_A)$ and $(A',d_{A'})$ be two c.b.l.f. positive dg-algebras. Let $B$ be a c.b.l.f. dimensional $(A',d_{A'})$-$(A,d_A)$-bimodule. The derived tensor product functor
\[
F : \cD^{cblf}(A,d_A) \rightarrow \cD^{cblf}(A',d_{A'}), \quad F(X) := B \Lotimes_{(A,d_A)} X,
\]
induces a continuous map
\[
[F] : \bKO^\Delta(\cD^{cblf}(A,d_A))  \rightarrow \bKO^\Delta(\cD^{cblf}(A',d_{A'})).
\]
\end{citeprop}

\subsection*{Acknowledgments}
The author is grateful to the Max Planck Institute for Mathematics in Bonn for its hospitality and financial support. 
The author was a Research Fellow of the Fonds de la Recherche Scientifique - FNRS, under Grant no.~1.A310.16 while working on this project. 
%



\section{Usual Grothendieck groups}\label{sec:classK0}

We start by recalling classical facts about Grothendieck groups. 
There are three usual notions of Grothendieck groups:  one for additive categories, one for abelian categories (or Quillen exact categories), and one for triangulated categories. 
However, the recipe is always the same: we start with a pointed, small category~$\cC$ admitting some structure that comes with a collection of distinguished triples of objects $\{ \brak{X,Y,Z} \}$ for some $X,Y,Z \in \cC$, encoding the fact that $Y$ is built from $X$ and $Z$. 
Then, one constructs the free abelian group $F(\cC) := \bigoplus_{X \in [\cC]} \bZ \cdot [X] $ over the set $[\cC]$ of equivalences of objects of $\cC$ up to isomorphism. 
The \emph{Grothendieck group} is
\begin{align*}
K_0(\cC) &:= F(\cC)/T(\cC), 
\intertext{where}
 T(\cC) &:= \{[Y]-[X]-[Z] \ |\  \brak{X,Y,Z} \text{ is a distinguished triple}\}.
\end{align*}
The Grothendieck group inherits other properties from the category. Indeed, suppose we have two categories $\cC$ and $\cC'$, both having distinguished triples, and a functor $F : \cC \rightarrow \cC'$ that preserves these triples. Then it induces a map
\[
K_0(\cC) \xrightarrow{[F]} K_0(\cC').
\]
From this, we can start playing around. 
If the category $\cC$ has a monoidal structure and the bifunctor
\[
- \otimes - : \cC \times \cC \rightarrow \cC
\]
preserves distinguished triples, then the Grothendieck group comes with a ring structure
\[
[X] . [Y] := [X \otimes Y].
\]
Another example of interest for us is when $\cC$ is $\bZ$-graded. In this paper, it means that we endow $\cC$ with an auto-equivalence 
\[
\brak{1} : \cC \rightarrow \cC,
\]
called \emph{grading shift}, 
preserving distinguished triples.  We write $\brak{-1}  := \brak{1}^{-1}$, and in general $\brak{k} := \brak{1}^k$ for all $k \in \bZ$. In that case, the Grothendieck group is a $\bZ[q,q^{-1}]$-module with the action of $q^k$ given by
\[
q^{k} \cdot [X] := [X\brak{k}].
\]

\subsection{Additive categories}\label{sec:K0split}

The first examples of a categories that comes with distinguished triples are the additive categories, where there is a triple $\brak{X,Y,Z}$ whenever
\[
Y \cong X \oplus Z.
\]
For this particular case, one calls $K^\oplus_0(\cC)$ the \emph{split Grothendieck group} of $\cC$.

\subsubsection{Krull--Schmidt categories}
.
A \emph{Krull--Schmidt category} is an additive category in which every object decomposes as a finite direct sum of objects having local endomorphism rings.
The main property of such a category is that any object $X$ decomposes as a finite direct sum of indecomposable objects (which coincide with the objects having local endomorphism rings). Moreover, this decomposition is essentially unique. From this, one easily deduces the following classical result:

\begin{prop}
Let $\cC$ be a Krull--Schmidt category with a full collection of pairwise non-isomorphic indecomposable objects $\{X_i\}_{i \in I}$. The split Grothendieck group of $\cC$ is a free $\bZ$-module
\[
K^\oplus_0(\cC) \cong \bigoplus_{i \in I} \bZ \cdot [X_i],
\]
with basis $\{ [X_i] \}_{i \in I}$.
\end{prop}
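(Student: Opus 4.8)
The plan is to exhibit an explicit inverse to the natural surjection. First I would define a group homomorphism $\varphi \colon K_0^\oplus(\cC) \to \bigoplus_{i\in I}\bZ[X_i]$ as follows. Given an object $Y\in\cC$, the Krull–Schmidt property provides a decomposition $Y\cong\bigoplus_{j=1}^n Y_j$ into indecomposables, and essential uniqueness of this decomposition means the multiset of isomorphism classes $\{[Y_j]\}$ is well defined; hence the assignment $Y\mapsto \sum_i m_i(Y)[X_i]$, where $m_i(Y)$ is the multiplicity of $X_i$ among the $Y_j$, is well defined on isomorphism classes. I would extend this $\bZ$-linearly to the free abelian group $F(\cC)$, so the only thing to check is that it kills the relations $[Y]-[X]-[Z]$ coming from $Y\cong X\oplus Z$; but multiplicities are clearly additive under direct sums (again by essential uniqueness of the decomposition of $X\oplus Z$ obtained by concatenating those of $X$ and $Z$), so $\varphi$ descends to $K_0^\oplus(\cC)$.

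Next I would define the candidate inverse $\psi\colon \bigoplus_{i\in I}\bZ[X_i]\to K_0^\oplus(\cC)$ to be the unique homomorphism sending the basis element $[X_i]$ to the class of $X_i$ in $K_0^\oplus(\cC)$; this exists and is unique because the target is a group and the source is free on the symbols $[X_i]$.

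Finally I would check the two composites are identities. The composite $\varphi\circ\psi$ sends $[X_i]$ to $\varphi([X_i])$, which is $[X_i]$ since $X_i$ is indecomposable and so has multiplicity $1$ at $i$ and $0$ elsewhere; as these generate, $\varphi\circ\psi=\id$. For $\psi\circ\varphi$, take any $Y\in\cC$ with Krull–Schmidt decomposition $Y\cong\bigoplus_i X_i^{\oplus m_i(Y)}$ (finitely many nonzero $m_i(Y)$); then in $K_0^\oplus(\cC)$ we have $[Y]=\sum_i m_i(Y)[X_i]$ using the direct-sum relation repeatedly, and $\psi(\varphi([Y]))=\psi\bigl(\sum_i m_i(Y)[X_i]\bigr)=\sum_i m_i(Y)[X_i]=[Y]$; since classes of objects generate $K_0^\oplus(\cC)$, this gives $\psi\circ\varphi=\id$. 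Thus $\varphi$ is an isomorphism with inverse $\psi$.

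The only genuinely non-formal input is the well-definedness of the multiplicity function $m_i$, i.e. that the decomposition into indecomposables is unique up to permutation and isomorphism — this is precisely the content of the Krull–Schmidt theorem recalled just before the statement, so there is no real obstacle here; the argument is a routine "define a map on generators, check it respects relations, exhibit a two-sided inverse" verification. I would simply be careful to phrase everything in terms of multisets of isomorphism classes so that the countability/arbitrariness of the index set $I$ causes no trouble, each individual object still being a \emph{finite} direct sum.
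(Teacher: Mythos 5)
Your argument is correct, and it is precisely the deduction the paper gestures at without writing out: the paper simply remarks that the result ``is easily deduced'' from the essential uniqueness of finite Krull--Schmidt decompositions, which is exactly the non-formal input you isolate (well-definedness and additivity of the multiplicity function $m_i$). Your explicit construction of $\varphi$ and $\psi$ and verification that they are mutually inverse is the standard way to fill in that gap, so there is nothing to reconcile between your route and the paper's.
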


\begin{defn}
A category $\cC$ is \emph{strictly $\bZ$-graded} if it is $\bZ$-graded and 
\[
X\brak{k} \ncong X
\]
for all non-zero object $X \in \cC$, and all $k \in \bZ\setminus\{0\}$. 
We say that a category is \emph{$\bZ$-graded Krull--Schmidt} if it is both strictly $\bZ$-graded and Krull--Schmidt.
\end{defn}

We say that two objects $X$ and $Y$ are \emph{distinct} if $X \ncong Y\brak{k}$ for all $k \in \bZ$. The following is classical:

\begin{prop}\label{prop:krullschmidt}
Let $\cC$ be a strictly $\bZ$-graded Krull--Schmidt category with a full collection of pairwise distinct indecomposable objects $\{X_i\}_{i \in I}$. 
The split Grothendieck group of $\cC$ is a free $\bZ[q,q^{-1}]$-module 
\[
K^\oplus_0(\cC) \cong \bigoplus_{i \in I} \bZ[q,q^{-1}] \cdot [X_i],
\]
with basis $\{[X_i]\}_{i \in I}$. 
\end{prop}

\begin{exe}\label{exe:splitbbbk}
Let $\Bbbk$ be a field. 
Consider the category $\Bbbk\gmod$ of finite dimensional $\bZ$-graded $\Bbbk$-vector spaces, with degree preserving maps. It is a graded Krull--Schmidt category with a unique indecomposable object, up to grading shift and isomorphism, given by $\Bbbk$. Thus, the split Grothendieck group is 
\[
K^\oplus_0(\Bbbk\gmod) \cong \bZ[q,q^{-1}].
\]
If we consider $\Bbbk\gmod$ with the monoidal structure given by tensor product over $\Bbbk$, we get a categorification of $\bZ[q,q^{-1}]$ as a ring.
\end{exe}

\begin{rem}\label{rem:eleinbergswindle}
It is important to restrict to finite dimensional modules, and in general we need to avoid categories with all (countably) infinite (co)products. Indeed, otherwise the Eilenberg swindle applies:
\begin{align*}
\coprod_{i \in \bN} X &\cong X \oplus \coprod_{i \in \bN} X & &\Rightarrow & [\coprod_{i \in \bN} X] = [X] + [\coprod_{i \in \bN} X ],
\end{align*}
implying that $[X] = 0$ for all $X$, and thus $K_0(\cC) \cong 0$.
\end{rem}

\subsection{Abelian categories}\label{sec:G0abelian}

Another class of categories that come with distinguished triples is given by the abelian categories. In this situation, a distinguished triple $\brak{X,Y,Z}$ is a short exact sequence
\[
0 \rightarrow X \hookrightarrow Y \twoheadrightarrow Z \rightarrow 0.
\]
This generalizes to Quillen exact categories~\cite{quillen}. Since abelian categories are also additive, we write $G_0(\cC)$
for the Grothendieck group obtained from short exact sequences, so that we do not mistake it with the split Grothendieck group $K^\oplus_0(\cC)$ (which is in general different).

\subsubsection{Jordan--H\"older}

If $\cC$ is a finite length abelian category, then by the classical Jordan--H\"older theorem every object admits an essentially unique finite composition series. Moreover, if 
\[
0 = X_{m+1} \subset X_m \subset\cdots \subset X_1 \subset X_0 = X,
\]
is a finite filtration of $X$ then
\[
[X] = \sum_{r=0}^{m} [X_{r}/X_{r+1}],
\]
in $G_0(\cC)$. 
Therefore, we get the following well-known result:

\begin{prop}
Let $\cC$ be a finite length category with a full collection of pairwise non-isomorphic simple objects $\{S_i\}_{i \in I}$. 
The Grothendieck group of $\cC$ is a free $\bZ$-module
\[
G_0(\cC) \cong \bigoplus_{i \in I}  \bZ \cdot [S_i],
\]
with basis $\{[S_i]\}_{i \in I}$.
\end{prop}

The result also holds in the graded case.

\begin{prop}
Let $\cC$ be a strictly $\bZ$-graded, finite length category with a full collection of pairwise  distinct simple objects $\{S_i\}_{i \in I}$. The Grothendieck group of $\cC$ is a free $\bZ[q,q^{-1}]$-module 
\[
G_0(\cC) \cong \bigoplus_{i \in I}  \bZ[q,q^{-1}] \cdot [S_i],
\]
with basis $\{[S_i]\}_{i \in I}$.
\end{prop}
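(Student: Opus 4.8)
The plan is to reduce the graded statement to the ungraded one (the immediately preceding proposition) together with the general mechanism, recalled in \cref{sec:classK0}, by which a strict $\bZ$-grading turns the Grothendieck group into a free $\bZ[q,q^{-1}]$-module. First I would fix a set $\{S_i\}_{i\in I}$ of representatives of the simple objects up to isomorphism \emph{and} grading shift; since $\cC$ is strictly $\bZ$-graded, the full collection of pairwise non-isomorphic simple objects is then $\{S_i\brak{k} : i \in I,\ k \in \bZ\}$, and these are genuinely pairwise non-isomorphic precisely because of strictness ($X\brak{k}\ncong X$ for $k\neq 0$ and $X\neq 0$). Applying the ungraded proposition to $\cC$ (viewed as a plain finite length abelian category) gives $G_0(\cC) \cong \bigoplus_{i\in I, k\in\bZ} \bZ[S_i\brak{k}]$ as an abelian group.

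Next I would upgrade this isomorphism to one of $\bZ[q,q^{-1}]$-modules. Recall that $q^k\cdot[X] = [X\brak{k}]$, so the $\bZ[q,q^{-1}]$-action simply permutes the basis elements: $q^k\cdot[S_i\brak{\ell}] = [S_i\brak{k+\ell}]$. Hence $\bigoplus_{k\in\bZ}\bZ[S_i\brak{k}]$ is a free $\bZ[q,q^{-1}]$-module of rank one on the generator $[S_i]$, the isomorphism sending $q^k \mapsto [S_i\brak{k}]$; this is well-defined and injective exactly because the $[S_i\brak{k}]$ are $\bZ$-linearly independent, which is what strictness secured in the previous paragraph. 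Taking the direct sum over $i \in I$ yields $G_0(\cC) \cong \bigoplus_{i\in I}\bZ[q,q^{-1}][S_i]$ as claimed.

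The only genuine content beyond bookkeeping is checking that $\brak{1}$, being an exact auto-equivalence, does descend to a well-defined automorphism of $G_0(\cC)$ (it sends short exact sequences to short exact sequences, so it preserves the subgroup $T(\cC)$ of relations), which makes the $\bZ[q,q^{-1}]$-action legitimate; this is already implicit in the discussion of $\bZ$-graded categories in \cref{sec:classK0}. I expect the main — indeed only — subtlety to be the role of strictness: without the hypothesis $X\brak{k}\ncong X$ for $k\neq 0$ one could have $[S_i\brak{k}] = [S_j]$ for distinct pairs, the proposed $\bZ[q,q^{-1}]$-basis would fail to be a basis, and the conclusion would be false (the module could have torsion or lower rank). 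So the proof is essentially: invoke the ungraded proposition for the underlying abelian structure, then observe that strictness makes the grading shift act freely on the resulting $\bZ$-basis, organizing it into $\bZ[q,q^{-1}]$-orbits indexed by $I$.
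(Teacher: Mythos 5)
Your argument is correct, and since the paper states this proposition without proof (as a classical graded analogue of the preceding ungraded statement), there is no written proof to compare against; your route is the natural one. You correctly identify the two load-bearing points: strictness guarantees that the objects $S_i\brak{k}$ for $i\in I$, $k\in\bZ$ form a complete, pairwise non-isomorphic list of simples, so the ungraded proposition gives the $\bZ$-basis $\{[S_i\brak{k}]\}$; and the $\bZ[q,q^{-1}]$-action $q^k\cdot[X]=[X\brak{k}]$ (well-defined because $\brak{1}$ is an exact auto-equivalence) permutes this basis freely, regrouping it into rank-one free $\bZ[q,q^{-1}]$-submodules indexed by $I$. Your closing observation — that dropping strictness would collapse distinct basis elements and break freeness — is exactly why the paper singles out the strict condition.
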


\begin{exe}
In $\Bbbk\gmod$, there is a unique simple object up to grading shift and isomorphism, given by $\Bbbk$. Therefore
\[
G_0(\Bbbk\gmod) \cong \bZ[q,q^{-1}].
\]
In this particular case, it can be viewed also as a consequence of \cref{exe:splitbbbk} since every short exact sequence in $\Bbbk\gmod$ splits, and both Grothendieck groups coincide. 
\end{exe}

\subsection{Triangulated categories}\label{sec:triangulatedcat}

The third class of categories we are interested in is given by triangulated categories~\cite{verdier}. 
A triangulated category is the datum of an additive category $\cC$, an automorphism $[1] : \cC \rightarrow \cC$ called the \emph{translation functor}, and a class of \emph{distinguished triangles}:
\[
X \rightarrow Y \rightarrow Z \rightarrow X[1],
\]
respecting some axioms (see below). In this situation, one calls $Y$ an \emph{extension of $X$ by $Z$}.
In particular $\cC$ comes equipped with a $\bZ$-action such that $[1] \circ [-1] = \id = [-1] \circ [1]$.

\subsubsection{Axiomatic}

Here is the (redundant) list of axioms a triangulated category must respect:
\begin{enumerate}
\item every diagram $X \rightarrow Y \rightarrow Z \rightarrow X[1]$ isomorphic to a distinguished triangle is itself a distinguished triangle;
\item the diagram $X \xrightarrow{\id_X} X \rightarrow 0 \rightarrow X[1]$ is a distinguished triangle; 
\item for each map $X \xrightarrow{f} Y$ there is a \emph{mapping cone} $\cone(f)$ fitting in a distinguished triangle
\[
X \xrightarrow{f} Y \rightarrow \cone(f) \rightarrow X[1],
\]
\item a diagram $X \xrightarrow{f} Y \xrightarrow{g} Z \xrightarrow{h} X[1]$ is a distinguished triangle if and only if 
\[
Y \xrightarrow{g} Z \xrightarrow{h} X[1] \xrightarrow{-f[1]} Y[1]
\]
is also a distinguished triangle;
\item given a commutative diagram
\[
\begin{tikzcd}
X \ar{r}{f} \ar{d}{\alpha} & Y \ar{d}{\beta} \\
X' \ar{r}{f'} & Y'
\end{tikzcd}
\]
and two distinguished triangles $X \xrightarrow{f} Y \xrightarrow{g} Z \xrightarrow{h} X[1]$ and $X' \xrightarrow{f'} Y' \xrightarrow{g'} Z' \xrightarrow{h'} X'[1]$ then there is a (non-unique) map $Z \xrightarrow{\gamma} Z'$ such that the following diagram commutes:
\[
\begin{tikzcd}
X \ar{r}{f} \ar{d}{\alpha} & Y \ar{r}{g} \ar{d}{\beta} & Z \ar{r}{h} \ar[dashrightarrow]{d}{\exists\gamma} & X[1] \ar{d}{\alpha[1]} \\
X' \ar{r}{f'} & Y'  \ar{r}{g'} & Z'  \ar{r}{h'} & X'[1]
\end{tikzcd}
\]
\item 
given three distinguished triangles
\[
\begin{tikzcd}[row sep = tiny, column sep = scriptsize]
X \ar{r}{f} & Y \ar{r} & Y/X \ar{r} & X[1], \\
Y \ar{r}{g} & Z \ar{r} & Z/Y \ar{r} & Y[1], \\
X \ar{r}{g\circ f} & Z \ar{r} & Z/X \ar{r} & X[1],
\end{tikzcd}
\]
then there is a distinguished triangle
\[
Y/X \rightarrow Z/X \rightarrow Z/Y \rightarrow (Y/X)[1],
\]
such that the following diagram commutes:
\[
\begin{tikzcd}[column sep = small, row sep = small]
X \ar{rr} \ar{rd} && Z \ar{rr}  \ar{rd} && Z/Y \ar{rr} \ar{rd} && (Y/X)[1]. \\
&Y \ar{ru} \ar{rd} &&Z/X \ar{ru} \ar{rd} &&Y[1] \ar{ru}& \\
&& Y/X \ar{rr} \ar{ru} && X[1] \ar{ru} &&
\end{tikzcd}
\]
\end{enumerate}

By definition, triangulated categories come with distinguished triples given by the distinguished triangles. This defines the \emph{triangulated Grothendieck group} $K_0^\Delta(\cC)$.
One of the nice  properties of the triangulated Grothendieck group is that the second and fourth axioms of triangulated categories imply that
\[
X \rightarrow 0 \rightarrow X[1] \rightarrow X[1]
\] 
is a distinguished triangle. Therefore, in  $K_0^\Delta(\cC)$ we have
\[
0 = [X] + [X[1]],
\]
so that the translation functor descends as multiplication by $-1$ in $K_0^\Delta(\cC)$. 

\section{Derived categories} \label{sec:triangcat}

A common source of triangulated structures is given by looking at homotopy categories or derived categories. We will consider triangulated categories obtained from dg-algebras. For this, we will mainly follow \cite{keller} (see also~\cite{bernsteinlunts}) and borrow some ideas and notations from~\cite{qithesis}.  
A good survey for dg-categories is~\cite{keller2}.

\subsection{Dg-algebras}\label{sec:dgalgebras}

Let $\Bbbk$ be a commutative unital ring. 

\begin{defn}
A \emph{dg-($\Bbbk$-)algebra} $(A,d_A)$ is a unital, $\bZ$-graded $\Bbbk$-algebra $A = \bigoplus_{i \in \bZ} A^i$ with a \emph{differential} $d_A : A^i \rightarrow A^{i-1}$ of degree $-1$ such that $d_A^2 = 0$ and
\[
d_A(ab) = d_A(a)b + (-1)^{\deg(a)}ad_A(b),
\]
for all homogeneous elements $a,b \in A$.
 A morphism of dg-algebras is a map of algebras that preserves the grading and commutes with the differentials.
\end{defn}

\begin{rem}
In opposite to~\cite{keller}, we consider differentials of degree $-1$ instead of $+1$. 
 This will impact many definitions for the remaining of the paper. 
\end{rem}

\begin{defn}
A (left) \emph{dg-module} $(M, d_M)$ over a dg-algebra $(A,d_A)$ is the datum of a $\bZ$-graded (left)  $A$-module $M = \bigoplus_{i \in \bZ} M^i$ and a differential $d_M$ such that $d_M^2= 0$ and
\[
d_M(a\cdot m) = d_A(a) \cdot m + (-1)^{\deg(a)} a \cdot d_M(m),
\]
for all homogeneous elements $a \in A$ and $m \in M$. A morphism of dg-modules is a map of modules that preserves the grading and commutes with the differentials.
\end{defn}
There are similar obvious definitions for right dg-modules and dg-bimodules.

Given a dg-algebra $(A,d_A)$, one can construct its \emph{homology} as
\[
H(A,d_A) := \frac{\ker d_A}{\Image d_A},
\]
which is a $\bZ$-graded algebra. Similarly, one can construct the homology $H(M,d_M)$ of an $(A,d_A)$-module, and it is a $\bZ$-graded $H(A,d_A)$-module.  
As usual in homological algebra, a morphism of dg-modules induces a morphism of graded modules in homology, and a short exact sequence of dg-modules gives  rise to a long exact sequence in homology. Moreover, one says that a map $f : (X,d_X)  \rightarrow (Y,d_Y)$ is a \emph{quasi-isomorphism} if it induces an isomorphism in homology $f_* : H(X, d_X) \xrightarrow{\simeq} H(Y, d_Y)$. 
A dg-algebra is \emph{formal} if it is quasi-isomorphic to its homology, viewed as a dg-algebra with a trivial differential.

\smallskip

The category $(A,d_A)\amod$ of (left) dg-modules over a dg-algebra $(A,d_A)$ is a $\bZ$-graded abelian category with $[1]$ acting by:
\begin{itemize}
\item shifting the degree of all elements up by $1$;
\item switching the sign of the differential $d_{M[1]} := -d_M$;
\item introducing a sign in the action $r \cdot m[1]  := (-1)^{\deg r} (r \cdot m)[1]$.
\end{itemize}

\begin{rem}
If we consider right dg-modules instead, then the translation functor does not change the sign in the action:
\[
m[1] \cdot r := (m \cdot r)[1].
\]
 In the case of dg-bimodules, it only twists the left-action.
\end{rem}

Let $f : (X,d_X) \rightarrow (Y,d_Y)$ be a morphism of dg-modules. Then, one constructs the \emph{mapping cone} of $f$ as 
\begin{align} \label{eq:cone}
\cone(f) &:= (X[1] \oplus Y, d_C), & 
d_C &:= \begin{pmatrix} -d_X & 0 \\ f & d_Y \end{pmatrix}.
\end{align}
It is an $(A,d_A)$-module. It fits in a short exact sequence
\[
0 \rightarrow Y \hookrightarrow \cone(f) \twoheadrightarrow X[1] \rightarrow 0.
\]

\subsection{Derived category}\label{sec:derivedcat}

Given two dg-modules $X,Y \in  (A,d_A)\amod$, the \emph{graded enriched hom-space} is
\[
\HOM_{A\amod}(X,Y) := \bigoplus_{i\in\bZ} \Hom_{A\amod}(X,Y[i]).
\]
It is a graded space with $f : X \rightarrow Y[i]$ having degree $-i$. 
It can upgraded to a dg-hom space $(\HOM_{A\amod}(X,Y), d)$ with differential given by 
\[
d(f) = d_Y \circ f - (-1)^{\deg(f)} f \circ d_X.
\]
Then, we consider the dg-category of $(A,d_A)$-module, written $(A,d_A)\dgmod$, where the objects are the same as in $(A,d_A)\amod$ and the hom-spaces are the dg-hom spaces 
\begin{equation*}
\Hom_{(A,d_A)\dgmod}(X,Y) := (\HOM_{A\amod}(X,Y), d). 
\end{equation*}
Let $Z^0((A,d_A)\dgmod)$ and $H^0((A,d_A)\dgmod)$ be the categories with the same objects as $(A,d_A)\dgmod$ but with hom-spaces being
\begin{align*}
\Hom_{Z^0((A,d_A)\dgmod)}(X,Y) &:= \ker( \Hom_{A}(X,Y[0]) \xrightarrow{d}  \Hom_{A}(X,Y[1])), \\
\Hom_{H^0((A,d_A)\dgmod)}(X,Y) &:= \frac{\ker( \Hom_{A}(X,Y[0]) \xrightarrow{d}  \Hom_{A}(X,Y[1]))}
	{\Image( \Hom_{A}(X,Y[-1]) \xrightarrow{d}  \Hom_{A}(X,Y[0]))}. 
\end{align*}
Note that $Z^0((A,d_A)\dgmod)$ is equivalent to $(A,d_A)\amod$. 

\smallskip

One calls $H^0((A,d_A)\dgmod)$ the \emph{homotopy category} of $(A,d_A)$.
It is a triangulated category with translation functor given by $[1]$ defined as in $(A,d_A)\amod$. The distinguished triangles are isomorphic to triangles
$
X \rightarrow Y \rightarrow Z \rightarrow X[1]
$
obtained from short exact sequences $X \rightarrow Y \rightarrow Z$ in $Z^0((A,d_A)\dgmod)$ that splits in $A\amod$.  Mapping cones are constructed as in~\eqref{eq:cone}.

\smallskip

The \emph{derived category} $\cD(A,d_A)$ of $(A,d_A)\dgmod$ is given by localizing the homotopy category $H^0((A,d_A)\dgmod)$ along quasi-isomorphisms. It inherits a triangulated structure from the homotopy category, but now any short exact sequences in $Z^0((A,d_A)\dgmod)$ gives rise to a distinguished triangle in $\cD(A,d_A)$. 

\smallskip

It is in general a hard problem to compute the hom-spaces in the derived category, except for a specific class of objects.

\begin{defn}
An $(A,d_A)$-module is a \emph{relatively projective module} if it is a direct summand of a free module in $(A,d_A)\amod$. 
\end{defn}
\begin{defn}
An $(A,d_A)$-module $Y$ \emph{satisfies property (P)} if  there is an exhaustive filtration of $(A,d_A)$-modules
\[
0 = F_0 \subset F_1 \subset F_2 \subset  \cdots \subset F_r \subset F_{r+1} \subset \cdots \subset Y,
\]
such that each $F_{r+1}/F_r$ is isomorphic in  $(A,d_A)\amod$ to a relatively projective module. 
\end{defn}
\begin{defn}
An $(A,d_A)$-direct summand of a property (P) module is called \emph{cofibrant}.
\end{defn}

For $P$ cofibrant and any $Y \in (A,d_A)\amod$, one has
\[
\Hom_{\cD(A,d_A)}(P,Y) \cong \Hom_{H^0((A,d_A)\dgmod)}(P,Y). 
\]
Moreover, $\Hom_{H^0((A,d_A)\dgmod)}(P,Y)$ can be computed as $\Hom_{(A,d_A)\amod}(P,Y)/\sim$ 
where $f \sim g$ if there exists a morphism $h : P \rightarrow Y[1]$ such that 
\[
f-g = d(h) = d_Y \circ h + h \circ d_P.
\]
Luckily, there is enough cofibrant modules thanks to following result:
\begin{prop}[{\cite[\S3.1]{keller}}]
For any $(A,d_A)$-module $X$ there is an $(A,d_A)$-module satisfying property (P), denoted $\br(X)$, with a surjective quasi-isomorphism
\[
\br(X) \twoheadrightarrow X.
\]
One calls $\br(X)$ the \emph{bar resolution} of $X$, and the assignment $X \rightarrow \br(X)$ is functorial.
\end{prop}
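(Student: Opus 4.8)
The plan is to realise $\br(X)$ as the (unnormalised two-sided) bar resolution. Set
\[
\br(X) := \bigoplus_{n\geq 0} A \otimes_\Bbbk A[1]^{\otimes n}\otimes_\Bbbk X,
\]
with $A$ acting on the leftmost factor and all inner tensor powers taken over $\Bbbk$, and equip it with the total differential $D = D_{\mathrm{int}} + D_{\mathrm{bar}}$. Here $D_{\mathrm{int}}$ is the sum of the internal differentials of the factors $A[1]$ and $X$, inserted with Koszul signs and recalling that $[1]$ reverses the sign of the differential; $D_{\mathrm{bar}}$ is the usual alternating sum of the maps multiplying two consecutive factors, the last term being the action of $A$ on $X$. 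The fact that $d$ is a derivation gives $D_{\mathrm{int}}D_{\mathrm{bar}} + D_{\mathrm{bar}}D_{\mathrm{int}} = 0$, whence $D^2 = D_{\mathrm{int}}^2 + D_{\mathrm{bar}}^2 = 0$, so $\br(X)$ is an $(A,d)$-module. Let $\epsilon : \br(X) \twoheadrightarrow X$ be the augmentation, acting by $a_0\otimes x\mapsto a_0\cdot x$ on the summand $n=0$ and by $0$ on the summands $n\geq 1$; associativity of the action makes $\epsilon\circ D_{\mathrm{bar}} = 0$ on the $n=1$ part, so $\epsilon$ is a morphism of $(A,d)$-modules, and it is surjective since $1\otimes x \mapsto x$.

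First I would prove that $\epsilon$ is a quasi-isomorphism. For this one considers the augmented complex $\cdots \to A\otimes A[1]\otimes X \to A\otimes X \xrightarrow{\epsilon} X \to 0$ together with the $\Bbbk$-linear ``extra degeneracy'' $s$ prepending a unit, $s(a_0\otimes\cdots\otimes a_n\otimes x) = 1\otimes a_0\otimes\cdots\otimes a_n\otimes x$ and $s(x) = 1\otimes x$. The standard simplicial identities give $D_{\mathrm{bar}}s + s D_{\mathrm{bar}} = \id$, while the sign reversal built into $A[1]$ makes $D_{\mathrm{int}}$ anticommute with $s$, so $D_{\mathrm{int}}s + s D_{\mathrm{int}} = 0$. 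Hence $Ds + sD = \id$, the mapping cone of $\epsilon$ is null-homotopic over $\Bbbk$, and $\epsilon$ is a quasi-isomorphism. (Alternatively, filter by the number of bar factors as below and invoke the Künneth formula over $\Bbbk$: the $E_1$-page is the classical bar resolution of $H(X)$ over $H(A)$, which is acyclic.)

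Next, property (P). Filter $\br(X)$ by the number of bar factors: $F_0 := 0$ and $F_{p} := \bigoplus_{n < p} A\otimes A[1]^{\otimes n}\otimes X$. Since $D_{\mathrm{bar}}$ lowers $n$ and $D_{\mathrm{int}}$ preserves it, each $F_p$ is a sub-dg-module, the filtration is exhaustive, and $F_{p+1}/F_p \cong A\otimes_\Bbbk V_p$, where $V_p := A[1]^{\otimes p}\otimes_\Bbbk X$ is a complex of $\Bbbk$-vector spaces with differential induced by $D_{\mathrm{int}}$ (the $D_{\mathrm{bar}}$-terms vanishing in the quotient). Over a field, every complex of vector spaces carries a three-step filtration, by boundaries inside cycles inside the whole space, whose three subquotients have zero differential; applying the exact functor $A\otimes_\Bbbk -$ turns such a filtration into one of $A\otimes_\Bbbk V_p$ with subquotients of the form $\bigoplus_i A[m_i]$, that is, free $(A,d)$-modules, hence in particular relatively projective. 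Splicing these refinements into $\{F_p\}$ produces an exhaustive filtration of $\br(X)$ with relatively projective subquotients, so $\br(X)$ satisfies property (P).

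Finally, functoriality is immediate: a morphism $f : X \to Y$ induces $\br(f) := \bigoplus_n \id_{A\otimes A[1]^{\otimes n}}\otimes f$, which commutes with the differentials and intertwines the augmentations, and $\br$ plainly preserves identities and composition; the auxiliary filtration used to verify property (P) depends on chosen splittings, but $\br(X)$ and $\epsilon$ do not. The step most in need of care is the quasi-isomorphism claim — pinning down all Koszul signs so that the contracting homotopy $s$ genuinely satisfies $Ds + sD = \id$ — while the property (P) refinement and the functoriality are routine. (Keller's treatment uses the normalised bar complex; the statement is insensitive to this choice.)
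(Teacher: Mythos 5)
The paper itself does not prove this proposition; it simply cites Keller's \S3.1, so there is no in-paper argument to compare against. Your construction is precisely the bar resolution that Keller uses, and your three verifications are correct: the contracting homotopy $s$ (prepending a unit) does satisfy $D_{\mathrm{bar}}s+sD_{\mathrm{bar}}=\id$ on the augmented complex and anticommutes with $D_{\mathrm{int}}$ because of the sign reversal built into $A[1]$, so the augmented complex is $\Bbbk$-contractible and $\epsilon$ is a quasi-isomorphism; the filtration by bar length is by sub-dg-modules since $D_{\mathrm{bar}}$ strictly lowers the bar degree; and functoriality is as you say. The one hypothesis you invoke that the paper does not state at this point is that $\Bbbk$ is a field: you use it to split each $V_p$ into three zero-differential subquotients (boundaries, homology, co-cycles), which is what upgrades $F_{p+1}/F_p\cong A\otimes_\Bbbk V_p$ into a finite filtration with free subquotients. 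This is consistent with the paper's actual use of the result (the field hypothesis appears explicitly from \S6 onward), but it is worth flagging that Keller works over a general commutative base ring, where the refinement of the bar filtration requires a different argument or a correspondingly more flexible notion of the cells appearing in property (P). Apart from that remark, the proof is sound and is essentially the one the citation points to.
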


This allows the construction of a \emph{derived hom functor} for each $X \in (A,d_A)\amod$ as
\begin{align*}
\RHOM(-,X) : \cD(A,d_A) &\rightarrow \cD(\Bbbk, 0), & \RHOM(Y,X) := (\HOM(\br(Y), X), d).
\end{align*}
There is a similar definition for $\RHOM(X,-)$ with 
\[
\RHOM(X,Y) = (\HOM(\br(X), Y), d). 
\]
In that case, if $X$ carries the structure of an $(A,d_A)$-$(B,d)$-bimodule, then it can be upgraded into a functor
\[
\RHOM(X,-) : \cD(A,d_A) \rightarrow \cD(B, d).
\]
The derived hom-functor admits a left  adjoint functor called \emph{derived tensor product functor}
\begin{align*}
X \Lotimes_{(B,d)} - :\cD(B,d) &\rightarrow \cD(A,d_A), & X \Lotimes_{(B,d)} Y := X \otimes_{(B,d)} \br{(Y)},
\end{align*}
where $\br{(Y)}$ is the bar resolution of $Y$ in $(B,d)\amod$. 
 
\subsection{Back to Grothendieck groups}\label{sec:compactderivedcat}

In order to avoid the Eilenberg swindle (cf. \cref{rem:eleinbergswindle}), one usually restricts to a subcategory of the derived category. 
In this setting, as explained in ~\cite{qithesis} for example, there is a special class of objects that play the role of finitely generated objects. 

\begin{defn}
An object $X$ in a category with (infinite) coproducts is called \emph{compact} if, for any family of objects $\{N_i\}_{i \in I}$, the natural map
\[
\coprod_{i \in I} \Hom(M,N_i) \xrightarrow{\simeq} \Hom(M, \coprod_{i\in I} N_i)
\]
is an isomorphism.
\end{defn}

The \emph{compact derived category} is the full subcategory $\cD^c(A,d_A) \subset \cD(A,d_A)$ given by objects quasi-isomorphic to compact objects. It is a triangulated, idempotent complete subcategory of $\cD(A,d_A)$. Luckily, in the setting of dg-categories, $\cD(A,d_A)$ is compactly generated (see~\cite{neeman92, neeman96,ravenel}) by the free module $(A,d_A)$ and all its shifts $A[r]$, $r \in \bZ$. Equivalently, it is compactly generated by finitely generated, relatively projective modules. Therefore, we can obtain all compact objects as direct summands of finite iterated extensions of the compact generators. For this reason, as in~\cite{qithesis}, one defines the following:

\begin{defn}\label{def:finitecellmodule}
A \emph{finite cell module} is an $(A,d_A)$-module satisfying property (P) and being finitely generated as $A$-module.
\end{defn}

Thus, finite cell modules are modules that admit finite filtrations in $(A,d_A)\amod$ where each composition factor is a finitely generated, relatively projective module. 
In particular, any compact object is isomorphic to a direct summand of a finite cell module. 
%
%
We say that a collection $\{K_i\}_{i \in I}$ is a \emph{minimal compact generating set} of $\cC$ if
\begin{itemize}
\item the collection $\{K_i[n]\}_{i\in I, n \in \bZ}$ compactly generates $\cC$;
\item $K_i$ cannot be obtain as a direct summand of an iterated extension of objects in $\{K_j[n]\}_{j\in I\setminus\{i\}, n \in \bZ}$.
 \end{itemize}
 Note that the second condition implies in particular that $K_i$ is distinct from $K_j$ with respect to $[1]$ for all $i \neq j$. 
 Then we obviously have the following result:

\begin{prop}
Let $\cC^c$ be the compact subcategory of a compactly generated triangulated category $\cC$ with minimal compact generating set $\{K_i\}_{i \in I}$. 
 The triangulated Grothendieck group of $\cC^c$ is a free $\bZ$-module 
\[
K_0^\Delta(\cC^c) \cong \bigoplus_{i \in I} \bZ \cdot [K_i],
\]
with basis $\{[K_i]\}_{i \in I}$.
\end{prop}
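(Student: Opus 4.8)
The plan is to prove the two halves of the claimed isomorphism separately: surjectivity of the natural map $\bigoplus_{i\in I}\bZ[K_i]\to K_0^\Delta(\cC^c)$, $[K_i]\mapsto[K_i]$, and its injectivity.

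\textbf{Surjectivity.} First I would record the universal sign relation in a triangulated category: applying the second and fourth axioms to $\id_{K_i}$ produces the distinguished triangle $K_i\to 0\to K_i[1]\xrightarrow{\;\sim\;}K_i[1]$, whence $[K_i[n]]=(-1)^n[K_i]$ in $K_0^\Delta(\cC^c)$ for every $n\in\bZ$. Next, by the description of compact objects recalled just before the statement (every object of $\cC^c$ is a direct summand of a finite iterated extension of the shifts $K_i[n]$), an induction on the number of steps of such an extension — using additivity of $[-]$ along the defining distinguished triangles together with the sign relation — shows that the class of \emph{any} finite iterated extension of the $K_i[n]$ already lies in $\bigoplus_i\bZ[K_i]$. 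One then passes to direct summands: if $E\cong X\oplus X'$ is such an extension, then $[X]=[E]-[X']$, and I would argue that $[X']$, hence $[X]$, again lies in $\bigoplus_i\bZ[K_i]$ (see the obstacle below).

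\textbf{Injectivity.} Here the minimality hypothesis does the work. The goal is to produce, for each fixed $j\in I$, a group homomorphism $\mu_j\colon K_0^\Delta(\cC^c)\to\bZ$ with $\mu_j([K_i])=\delta_{ij}$; linear independence of $\{[K_i]\}_{i\in I}$ is then immediate. I see two ways to build $\mu_j$. The first is to check that the finite iterated extensions of the $K_i[n]$ form a Krull--Schmidt category whose indecomposables are exactly the $K_i[n]$ (this is where the suitable locality of the relevant endomorphism rings is used, and it holds in the c.b.l.f.\ positive settings of \cref{sec:cblfdgalg}); then ``signed multiplicity of $K_j$ among the indecomposable summands'' is additive on distinguished triangles and yields $\mu_j$, with condition~(ii) of a minimal generating set being precisely what rules out cancellations that would make $\mu_j$ ill-defined. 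A more robust route is to invoke the recognition theorem for compactly generated triangulated categories (cf.\ \cite{keller}): $\cC\simeq\cD(\cA)$ for the dg-category $\cA$ with $\cA(K_i,K_j):=\RHOM_\cC(K_i,K_j)$, under which $\cC^c$ corresponds to the perfect complexes over $\cA$ and $K_0^\Delta(\cC^c)\cong K_0(\operatorname{perf}\cA)$; freeness on the classes of the representable modules $\cA(-,K_i)$ can then be read off from the structure of $\cA$.

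\textbf{Main obstacle.} The delicate step is the passage from finite iterated extensions to their direct summands in the surjectivity argument — equivalently, the behaviour of $K_0^\Delta$ under idempotent completion. This is not formal: in general $K_0^\Delta$ may strictly increase upon closing under summands, so the clean conclusion requires the extra input that the subcategory of finite iterated extensions of the $K_i[n]$ is \emph{already} idempotent complete, i.e.\ that every indecomposable compact object is a shift of some $K_i$. I would therefore isolate this as the key lemma, prove the proposition modulo it, and verify the lemma in the cases of actual interest (where it follows from Krull--Schmidt for relatively projective modules, as in \cref{sec:cblfdgalg}); the same argument also drives the injectivity step, so in practice both halves rest on this single point.
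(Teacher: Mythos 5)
The paper offers no proof of this proposition (it is introduced only with ``Then we obviously have the following result''), so there is no internal argument to compare yours against. Your instinct to isolate the idempotent-completion step as the crux is sound, and is in fact more careful than the paper: as stated, the proposition seems to need an extra hypothesis. Take $\cC = \cD(A)$ with $A = \Bbbk \times \Bbbk$ and the singleton generating set $\{A\}$. The second clause of the minimality definition is vacuous for a singleton, so $\{A\}$ is a minimal compact generating set in the paper's sense; yet $K_0^\Delta(\cC^c) \cong \bZ^2$ while $\bZ[A] \cong \bZ$, since $A = P_1 \oplus P_2$ splits into two compacts whose classes are independent. So the hypotheses must be strengthened (e.g.\ to indecomposable $K_i$ in a setting where every compact is a finite iterated extension of the $K_i[n]$), exactly as you suspected.

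Two corrections to your own argument, which is otherwise on the right track. First, the equivalence you assert when isolating the key lemma---``the subcategory of finite iterated extensions of the $K_i[n]$ is idempotent complete, i.e., every indecomposable compact object is a shift of some $K_i$''---is not an equivalence; the second condition is strictly stronger. For a hereditary algebra, projective resolutions have length at most one, so every perfect complex is a finite iterated extension of shifted indecomposable projectives and the iterated extensions are already idempotent complete, yet any indecomposable non-projective module yields an indecomposable compact which is not a shift of any projective. Only the weaker statement (equality of the iterated-extension subcategory with $\cC^c$) is what surjectivity needs. Second, for injectivity the claim that signed multiplicity of indecomposable \emph{summands} is additive on distinguished triangles is not automatic: a triangle $X \to Y \to Z \to X[1]$ does not split $Y$ as $X \oplus Z$, so even with a Krull--Schmidt decomposition the summand multiplicities need not add over an extension. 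In the paper's actual applications (\cref{thm:triangtopK0genbyPi}) this is handled by computing $\bKO^\Delta$ through the heart of a $t$-structure, where the relevant multiplicities are \emph{filtration} multiplicities in an abelian Jordan--H\"older category and additivity along triangles is the long exact sequence in homology; that route is where the rigour lives, and is the one you should pursue rather than summand-counting on the triangulated side.
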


Note that it is in general a non-trivial problem to determine whether a set of compact generators is a minimal compact generating one or not.

\begin{exe}
Here $\Bbbk$ is a field. 
The triangulated Grothendieck group of $(\Bbbk,0)$ respects
\[
K_0^\Delta(\cD^c(\Bbbk,0)) \cong \bZ,
\]
with minimal compact generating collection given by $\{ (\Bbbk,0) \}$.
\end{exe}

\section{Multigradings}\label{sec:finitemultigradings}

We say that a category $\cC$ is \emph{$\bZ^n$-(multi)graded} if it is endowed with a collection of auto-equivalences $\brak{1}_i $ for $1 \leq i \leq n$, strictly commuting with each others. If $\cC$ is triangulated, then we say it is a $\bZ^n$- graded triangulated categories if $\brak{1}_i$ strictly commutes with the translation functor $[1]$ for each $i$ (making it a $\bZ^{1+n}$-graded category), and $\brak{1}_i$ preserves distinguished triangles. 
The Grothendieck group of a $\bZ^n$-graded category is a $\bZ[x_1^{\pm 1}, \dots, x_n^{\pm 1}]$-module. 
In order to simplify notations, we write a shift in the $\bZ^n$-multigrading as $x^\bg$ for $\bg  =  (g_1, \dots, g_n) \in \bZ^n$, or as monomial $x_1^{g_1}\cdots x_n^{g_n} \in \bZ[x_1^{\pm 1}, \dots, x_n^{\pm 1}]$, so that we have
\[
x^\bg  X := x_1^{g_1}\cdots x_n^{g_n} X := X\brak{g_1}\cdots\brak{g_n},
\]
which is well defined since the shifts strictly commute with each others. We keep the notation $[1]$ for the shift in the homological grading.
We say that $\cC$ is \emph{strictly} $\bZ^n$-graded if it is $\bZ^n$-graded and if 
\[
x^\bg X \ncong X
\]
for all $x \in \bZ^n\setminus\{0\}$, and non-zero object $X \in \cC$.

\begin{rem}
Note that a (non-zero) category admitting arbitrary coproducts cannot be strictly $\bZ^n$-graded, since one would have $\coprod_{i \in \bZ} x_1^i X \cong x_1 (\coprod_{i \in \bZ} x_1^i X)$ for any object $X$.
\end{rem}

A $\bZ^n$-graded dg-structure is a structure that is both $\bZ^n$-graded and dg, and with the differential preserving the $\bZ^n$-grading. In particular, a $\bZ^n$-graded dg-algebra $(A,d_A)$ decomposes as a $\bZ \times \bZ^n $-graded $\Bbbk$-vector space as a direct sum
\[
A \cong \bigoplus_{(h,\bg) \in \bZ \times \bZ^n } A_\bg^h,
\]
and the multiplication and differential respect
\begin{align*}
A_\bg^h \cdot A_{\bg'}^{h'} &\subset A_{\bg+\bg'}^{h+h'}, & d_A(A_\bg^h) &\subset A_\bg^{h-1}.
\end{align*}

Everything said in~\cref{sec:triangulatedcat} generalizes to multigraded dg-structures. In this case, we let $(A,d_A)\amod$ to be the $\bZ \times \bZ^n$-graded abelian category of $\bZ^n$-graded dg-modules with maps that preserve all gradings. Note that in this context, the enriched graded hom-spaces are given by
\[
\HOM_{A\amod}(X,Y) := \bigoplus_{(h,\bg) \in \bZ \times \bZ^n} \Hom_{A\amod}(X,x^\bg Y[h]),
\]
where a map $f : X \rightarrow x^\bg Y[h]$ has degree $(-h,-\bg)$.

\begin{exe}
Here $\Bbbk$ is a field.
We consider $(\Bbbk,0)$ as a $\bZ^n$-graded dg-algebra concentrated in degree $0$, and its derived category is $\bZ^n$-graded triangulated. 
Its triangulated Grothendieck group respects
\[
K_0^\Delta(\cD^c(\Bbbk, 0)) \cong \bZ[x_1^{\pm 1}, \dots, x_n^{\pm 1}].
\]
\end{exe}

\begin{exe}
Recall the $\bZ \times \bZ^2$-graded algebra $R$ from \cref{sec:mainexintro} and its simple module $L$, as well as its dg-version $(R,0)$. 
Consider the additive category of $\bZ^2$-graded, finitely generated projective $R$-modules $R\pmodfg$, or alternatively the $\bZ \times \bZ^2$-graded compact derived category $\cD^c(R,0)$). We have
\begin{align*}
K_0(R\pmodfg) &\cong \bZ[h^{\pm 1},q^{\pm 1},\lambda^{\pm 1}] \cdot [R],
&
K^\Delta_0(\cD^c(R,0)) &\cong \bZ[q^{\pm 1},\lambda^{\pm 1}] \cdot [(R,0)].
\end{align*}
However, note that $L \notin \pmodfg$ and $(L, 0) \notin \cD^c(R,0)$. 

We can also consider the abelian category of $\bZ^2$-graded, finite-dimensional $R$-modules $R\amod_{fd}$, or alternatively the full subcategory $\cD^{fd}(R,0)$ of the derived  category $\cD(R,0)$ given by $\bZ \times \bZ^2$-graded dg-modules with finite-dimensional homology. Then we obtain
\begin{align*}
G_0(R\amod_{fd}) &\cong \bZ[h^{\pm 1},q^{\pm 1},\lambda^{\pm 1}] \cdot [L],
&
K^\Delta_0(\cD^{fd}(R,0)) &\cong \bZ[q^{\pm 1},\lambda^{\pm 1}] \cdot [(L,0)].
\end{align*}
However, note that since $R$ is infinite-dimensional, we have $R \notin R\amod_{fd}$ and $(R,0) \notin \cD^{fd}(R,0)$. In particular, \cref{eq:changeofbasisintro} does not make sense in either of these contexts.
\end{exe}



\section{Ring of formal Laurent series}\label{sec:formalLaurent}

We follow the description given in \cite{laurent}. We choose an arbitrary additive total order $\prec$ on $\bZ^n$. That is, $a \prec b$ implies $a + c \prec b + c$, for every $a,b,c \in \bZ^n$.

\begin{defn}
A \emph{cone} is a subset $C \subset \bR^n$ such that
\[
C = \{ \alpha_1 v_1 + \dots + \alpha_n v_n | \alpha_i \in \bR_{\geq 0}\},
\]
for some generating elements $v_1, \dots, v_n \in \bZ^n$. 
Moreover, $C$ is \emph{compatible with the order $\prec$} if $0 \prec v_i$ for all $i \in \{1, \dots, n\}$.
\end{defn}

\begin{rem}
An important fact to note here is that given a cone $C$ compatible with~$\prec$, then we can write $C \cap\bZ^n = \{0 = \bc_0, \bc_1, \bc_2, \dots \}$ with $\bc_i \prec \bc_{i+1}$ for all $i \in \bN$. In particular given an element $\bz \in C \cap\bZ^n$, then there is only finitely many elements $\bc \in  C \cap\bZ^n$ sich that $\bc \prec \bz$.
\end{rem}

Let $\Bbbk$ be a unital, commutative ring. Recall that for $\bk = (k_1, \dots, k_n) \in \bZ^n$ we write 
$
x^{\bk} := x_1^{k_1} \dots x_n^{k_n}.
$ 
Let $C \subset \bR^n$ be a cone compatible with $\prec$ and define
\[
\Bbbk_C\llbracket x_1, \dots, x_n \rrbracket := \left\{ \sum_{\bk  \in \bZ^n} a_{\bk} x^{\bk} | a_{\bk} \in \Bbbk, a_{\bk} = 0 \text{ if } \bk \notin C \right\}.
\]

\begin{prop}[{\cite[Theorem 10]{laurent}}]
The set $\Bbbk_C\llbracket x_1, \dots, x_n \rrbracket$ together with the point-wise addition and usual multiplication forms a unital, commutative ring. 
Moreover, for $f(\bx) \in  \Bbbk_C\llbracket x_1, \dots, x_n \rrbracket$ with $f(0) = a_0 \in \Bbbk^\times$ invertible, then there exists $f^{-1}(\bx) \in \Bbbk_C\llbracket x_1, \dots, x_n \rrbracket$ such that $f(\bx) f^{-1}(\bx) = 1$.
\end{prop}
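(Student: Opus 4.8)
The plan is to reduce everything to the local finiteness property recorded in the Remark preceding the statement: that $C \cap \bZ^n$ is well-ordered by $\prec$ as $\{0 = \bc_0 \prec \bc_1 \prec \bc_2 \prec \cdots\}$, with $0$ its minimum and only finitely many elements below any given $\bz$.

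First I would check the ring axioms. Given $f = \sum_\bk a_\bk x^\bk$ and $g = \sum_\bk b_\bk x^\bk$ in $\Bbbk_C\llbracket x_1,\dots,x_n\rrbracket$, define $c_\bk := \sum_{\bi+\bj=\bk} a_\bi b_\bj$. A nonzero term forces $\bi,\bj \in C \cap \bZ^n$, hence $\bj \succeq 0$ and $\bi = \bk - \bj \preceq \bk$; by the Remark only finitely many such $\bi$ exist, so $c_\bk$ is a finite sum and well-defined, and since $C$ is closed under addition $c_\bk = 0$ for $\bk \notin C$. Thus $fg \in \Bbbk_C\llbracket x_1,\dots,x_n\rrbracket$. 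Associativity, commutativity, distributivity and the fact that $x^0$ is a two-sided unit are then verified exactly as for Laurent polynomials, the point being that in each case the coefficient of a fixed $x^\bk$ is a finite sum over tuples in $C \cap \bZ^n$ adding up to $\bk$ (by the same argument), so the formal manipulations are legitimate.

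For invertibility I would argue by recursion along the well-order on $C \cap \bZ^n$. Using that $\Bbbk$ is commutative, replace $f$ by $a_0^{-1}f$ to assume $a_0 = 1$, so $f = 1 + \sum_{m \geq 1} a_{\bc_m} x^{\bc_m}$. Construct $g = \sum_{m\geq 0} b_{\bc_m} x^{\bc_m}$ by setting $b_{\bc_0} := 1$ and, for $m \geq 1$,
\[
b_{\bc_m} := -\sum_{\substack{\bi + \bj = \bc_m\\ \bi,\bj \in C \cap \bZ^n,\ \bi \succ 0}} a_\bi b_\bj .
\]
For each term here $\bj = \bc_m - \bi \preceq \bc_m - \bc_1 \prec \bc_m$, so $\bj \in \{\bc_0,\dots,\bc_{m-1}\}$, and by the Remark the sum is finite; hence $b_{\bc_m}$ is well-defined in terms of the previously constructed coefficients, and this choice makes the coefficient of $x^{\bc_m}$ in $fg$ vanish (while the $x^0$-coefficient is $a_0 b_{\bc_0} = 1$). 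By construction the support of $g$ lies in $C \cap \bZ^n$, so $g \in \Bbbk_C\llbracket x_1,\dots,x_n\rrbracket$ with $fg = 1$, and commutativity gives $gf = 1$; undoing the normalization, $f^{-1} = a_0^{-1}g$.

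I do not expect a genuine obstacle here: the entire content is the finiteness bookkeeping, and the one thing to be careful about is that $C$ is closed under addition and that $0$ is the $\prec$-minimum of $C \cap \bZ^n$ — precisely the properties packaged into the Remark — which is what guarantees that every coefficient (in products and in the recursive formula for $g$) is a finite sum and that the recursion runs along a well-order.
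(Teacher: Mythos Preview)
Your argument is correct. Note, however, that the paper does not supply its own proof of this proposition: it is quoted as \cite[Theorem 10]{laurent} and left without proof, so there is nothing to compare against beyond the cited reference. Your approach --- using the well-ordering of $C\cap\bZ^n$ from the preceding Remark to guarantee that convolution coefficients are finite sums, and then constructing the inverse by recursion along that well-order --- is exactly the standard one, and is essentially what one finds in the cited source.
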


\begin{defn}[{\cite[Definition~14]{laurent}}]
We put 
\[
\Bbbk_\prec \llbracket x_1, \dots, x_n\rrbracket := \bigcup_{C} \Bbbk_C\llbracket x_1, \dots, x_n \rrbracket,
\]
where the union is over all cones compatible with $\prec$. The \emph{ring of formal Laurent series} is 
\[
\Bbbk_\prec \pp{x_1, \dots, x_n} := \bigcup_{\be \in \bZ^n} x^{\be} \Bbbk_\prec \llbracket x_1, \dots, x_n\rrbracket.
\]
\end{defn}

\begin{thm}[{\cite[Theorem 15]{laurent}}]
The set $\Bbbk_\prec \pp{x_1, \dots, x_n}$ together with the usual addition and multiplication forms a ring. 
Whenever $\Bbbk$ is a field, then $\Bbbk_\prec \pp{x_1, \dots, x_n}$ is a field.
\end{thm}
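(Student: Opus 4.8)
The statement to prove is Theorem 15 of \cite{laurent}: that $\Bbbk_\prec\pp{x_1,\dots,x_n}$ is a ring, and a field when $\Bbbk$ is a field. Since this is quoted directly from a reference, the natural approach is to assemble it from the two ingredients already recorded in the excerpt: the fact that each $\Bbbk_C\llbracket x_1,\dots,x_n\rrbracket$ is a commutative ring (Theorem 10 of \cite{laurent}), and the invertibility statement for series with invertible constant term. The plan is to first verify that $\Bbbk_\prec\llbracket x_1,\dots,x_n\rrbracket=\bigcup_C\Bbbk_C\llbracket x_1,\dots,x_n\rrbracket$ is closed under addition and multiplication, which amounts to showing that the union is \emph{directed}: given two cones $C,C'$ compatible with $\prec$, one must produce a single cone $D$ compatible with $\prec$ containing both. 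Then $\Bbbk_\prec\llbracket\cdots\rrbracket$ inherits the ring structure as a filtered colimit of the $\Bbbk_C\llbracket\cdots\rrbracket$. Finally one passes to $\Bbbk_\prec\pp{\cdots}=\bigcup_{\be}x^\be\Bbbk_\prec\llbracket\cdots\rrbracket$, noting that multiplication by a monomial $x^\be$ is compatible with addition and multiplication, so this larger union is again a ring (it is the localization of $\Bbbk_\prec\llbracket\cdots\rrbracket$ at the monoid of monomials $x^\be$, $\be\in\bZ^n$).

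\textbf{Key steps, in order.} First I would show the directedness of the family of order-compatible cones: if $C$ is generated by $v_1,\dots,v_r$ and $C'$ by $w_1,\dots,w_s$ with all $v_i,w_j\succ 0$, then the cone $D$ generated by $v_1,\dots,v_r,w_1,\dots,w_s$ is again compatible with $\prec$ and contains $C\cup C'$; one must also check $D$ is still a ``cone'' in the sense of the definition (finitely generated by integer vectors), which is immediate. Second, given $f\in\Bbbk_C\llbracket\cdots\rrbracket$ and $g\in\Bbbk_{C'}\llbracket\cdots\rrbracket$, both lie in $\Bbbk_D\llbracket\cdots\rrbracket$, so $f+g$ and $fg$ are defined there and land in $\Bbbk_\prec\llbracket\cdots\rrbracket$; associativity, distributivity, commutativity all hold because they hold inside each $\Bbbk_D\llbracket\cdots\rrbracket$. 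Third, for the Laurent version: $x^\be\Bbbk_\prec\llbracket\cdots\rrbracket\subseteq x^{\be'}\Bbbk_\prec\llbracket\cdots\rrbracket$ whenever $\be'\preceq\be$ (more precisely whenever $\be-\be'$ lies in some compatible cone — here one uses that every element of $\bZ^n$ with $\be-\be'\succ 0$ lies in \emph{some} order-compatible cone, e.g. the ray it generates), so again the union is directed and a $\be$ can be factored out of any finite collection of terms; hence $\Bbbk_\prec\pp{\cdots}$ is closed under $+$ and $\cdot$ and is a commutative ring.

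\textbf{The field claim.} Assume $\Bbbk$ is a field and let $0\neq f\in\Bbbk_\prec\pp{\cdots}$. Write $f=x^\be g$ with $g\in\Bbbk_C\llbracket\cdots\rrbracket$ for some compatible cone $C$. The point is to factor out the $\prec$-smallest monomial actually occurring in $g$: using the remark in the excerpt that $C\cap\bZ^n=\{0=\bc_0\prec\bc_1\prec\cdots\}$ is well-ordered by $\prec$, there is a least index $k$ with $a_{\bc_k}\neq 0$, and then $g=x^{\bc_k}h$ where $h\in\Bbbk_{C}\llbracket\cdots\rrbracket$ has $h(0)=a_{\bc_k}\in\Bbbk^\times$ invertible (one should check $x^{-\bc_k}g$ still has exponents in $C$, which holds because $C$ is closed under addition and $C\cap\bZ^n$ is well-ordered so every occurring exponent $\bc_j$ with $j\ge k$ satisfies $\bc_j-\bc_k\in C$ — this last point, that $\bc_j-\bc_k\in C$, is the one genuinely needing the cone structure and is the subtlest bookkeeping in the argument). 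By the invertibility proposition quoted above, $h^{-1}\in\Bbbk_C\llbracket\cdots\rrbracket$ exists, whence $f^{-1}=x^{-\be-\bc_k}h^{-1}\in\Bbbk_\prec\pp{\cdots}$.

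\textbf{Main obstacle.} The only non-formal point is the directedness of the poset of order-compatible cones, together with the closely related fact that $\bZ^n\setminus\{0\}$ (restricted to the positive part under $\prec$) is covered by such cones, so that an arbitrary $x^\be$ can be absorbed. Everything else is bookkeeping that reduces to the already-quoted Theorems 10 and the invertibility statement of \cite{laurent}. I expect the cleanest write-up simply cites \cite[Theorem 15]{laurent} directly, since the excerpt presents this as a recalled result rather than an original contribution, and spells out only the directedness observation in a sentence or two.
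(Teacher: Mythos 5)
Your plan for the ring structure is sound: the family of cones compatible with $\prec$ is directed (the cone generated by the union of the generators of $C$ and $C'$ is again compatible and contains $C\cup C'$), so $\Bbbk_\prec\llbracket x_1,\dots,x_n\rrbracket$ is a directed union of rings and hence a ring, and absorbing a monomial shift into a larger compatible cone handles $\Bbbk_\prec\pp{x_1,\dots,x_n}$.

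The field step, however, contains a genuine error exactly where you flagged the bookkeeping as subtle. You assert that after factoring out the $\prec$-least exponent $\bc_k$ of $g\in\Bbbk_C\llbracket\cdots\rrbracket$, the quotient $h=x^{-\bc_k}g$ still has support in $C$, ``because $C$ is closed under addition and $C\cap\bZ^n$ is well-ordered so every occurring exponent $\bc_j$ with $j\ge k$ satisfies $\bc_j-\bc_k\in C$.'' Neither of those two properties implies this, and the conclusion is in fact false. Take $n=2$ with the lexicographic order, and $C$ the cone generated by $(1,0)$ and $(1,1)$, so that $C=\{(a,b):a\ge b\ge 0\}$; both generators are $\succ 0$, so $C$ is compatible. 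Then $\bc_k=(1,0)$ and $\bc_j=(1,1)$ both lie in $C\cap\bZ^2$ with $\bc_k\prec\bc_j$, yet $\bc_j-\bc_k=(0,1)\notin C$. So the support of $h$ need not lie in $C$, and the invertibility proposition for $\Bbbk_C\llbracket\cdots\rrbracket$ cannot be applied as you do. One needs the further fact that $\operatorname{supp}(h)$, while escaping $C$, is still contained in \emph{some} larger compatible cone $D$ (in the example above, the first quadrant works); establishing that such a $D$ always exists is a non-trivial lemma about polyhedral cones, and it is precisely the content one would take from \cite[Lemma~13]{laurent} or its analogue. Your closing instinct is correct: the paper treats this as a recalled result and simply cites \cite[Theorem~15]{laurent}; if you wanted to include a proof, you would need to supply that cone-enlargement lemma rather than the (false) claim that $C$ is already closed under the relevant differences.
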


\subsection{Topological structure on the ring of Laurent series}

The ring of formal Laurent series $\Bbbk_\prec \pp{x_1, \dots, x_n}$ comes with a natural topology, which we refer as \emph{$(x_1, \dots, x_n)$-adic topology}. It is defined by using
\begin{equation}\label{eq:neighborhoodLaurent}
\{ V_{\be} := x^{\be} \Bbbk_\prec \llbracket x_1, \dots, x_n\rrbracket \}_{\be \in \bZ^n},
\end{equation}
as neighborhood basis of zero, extending so that $\Bbbk_\prec \pp{x_1, \dots, x_n}$ becomes a topological group. Note that $V_{\be} \subset V_{\boldsymbol f}$ whenever $\be \succ \boldsymbol f$.

\begin{rem}
We would like to point out that the subgroup $V_\be$ is not an ideal of the ring $\Bbbk_\prec \pp{x_1, \dots, x_n}$, and thus the $(x_1, \dots, x_n)$-adic topology is not an `adic topology' in the proper sense. 
\end{rem}

\begin{prop}\label{prop:laurentistop}
The ring of formal Laurent series $\Bbbk_\prec \pp{x_1, \dots, x_n}$  equipped with the $(x_1, \dots, x_n)$-adic topology is a Hausdorff topological ring. 
If $\Bbbk$  is a field, then $\Bbbk_\prec \pp{x_1, \dots, x_n}$ is a topological field.
\end{prop}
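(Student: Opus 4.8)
The plan is to verify the two standard requirements in turn: first that the $(x_1,\dots,x_n)$-adic topology makes $\Bbbk_\prec\pp{x_1,\dots,x_n}$ a topological ring, and then that it is Hausdorff, and finally — under the hypothesis that $\Bbbk$ is a field — that inversion is continuous. Since we have already declared the neighbourhood basis $\{V_{\be}\}_{\be\in\bZ^n}$ of zero and extended it by translation to make the additive group a topological group, the additive part is automatic; the content is in multiplication and (for fields) inversion. For continuity of multiplication at a point $(f,g)$, I would use the nested structure of the $V_{\be}$: given a target $V_{\be}$, I would exhibit $\bf$ and $\bh$ (depending on the cones containing $f$ and $g$) such that $(f+V_{\bf})(g+V_{\bh})\subseteq fg+V_{\be}$. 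Concretely, if $f\in x^{\bf_0}\Bbbk_C\llbracket\bx\rrbracket$ and $g\in x^{\bg_0}\Bbbk_{C'}\llbracket\bx\rrbracket$, then a product $V_{\bf}\cdot V_{\bh}$, $f\cdot V_{\bh}$, $V_{\bf}\cdot g$ all land in some $V$ of arbitrarily high order provided $\bf,\bh$ are taken $\succ$ enough relative to $\be,\bf_0,\bg_0$; here I would invoke the Remark after the cone definition, which guarantees that only finitely many monomials of a given cone precede a fixed exponent, so that the support of any such product sits in $C+C'$ (again a cone compatible with $\prec$) above the threshold $\be$. This is the step I expect to be the main obstacle, only because one must be slightly careful that the relevant sums and products stay inside a single cone — but this is exactly what \cite[Theorem 10, Theorem 15]{laurent} provide, so it reduces to bookkeeping with the additive order $\prec$.

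For the Hausdorff property, I would simply note that $\bigcap_{\be\in\bZ^n}V_{\be}=0$: any nonzero $f=\sum a_\bk x^\bk$ has a $\prec$-minimal exponent $\bk_0$ with $a_{\bk_0}\neq 0$ (this minimum exists because the support lies in a cone compatible with $\prec$, by the Remark), hence $f\notin V_{\be}$ for $\be\succ\bk_0$. Since a topological group is Hausdorff iff $\{0\}$ is closed iff the intersection of all neighbourhoods of $0$ is trivial, this gives the claim.

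For the last assertion, assume $\Bbbk$ is a field, so $\Bbbk_\prec\pp{\bx}$ is a field by \cite[Theorem 15]{laurent}. I would show inversion is continuous at an arbitrary nonzero $f$. Writing $f=x^{\be_0}u$ with $u\in\Bbbk_C\llbracket\bx\rrbracket$ and $u(0)\in\Bbbk^\times$, continuity of inversion at $f$ follows from continuity at $u$ (multiplication by the fixed unit $x^{-\be_0}$ is a homeomorphism). For $u$ with invertible constant term, and $\epsilon\in V_{\bc}$ for $\bc\succ 0$ in $C$ small, one has $(u+\epsilon)^{-1}-u^{-1}=-u^{-1}\epsilon(u+\epsilon)^{-1}$, and since $u^{-1}$ and $(u+\epsilon)^{-1}$ lie in $\Bbbk_C\llbracket\bx\rrbracket$ with invertible constant terms while $\epsilon\in V_{\bc}$, the product lies in $V_{\bc}$ as well; letting $\bc\to\infty$ in $\prec$ gives the result. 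Combining the three parts yields that $\Bbbk_\prec\pp{x_1,\dots,x_n}$ is a Hausdorff topological ring, and a topological field when $\Bbbk$ is a field. \qed
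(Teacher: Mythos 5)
Your argument is correct and, for the multiplication and Hausdorff parts, follows essentially the paper's route: direct $\varepsilon$--$\delta$ bookkeeping with the nested neighborhoods $V_{\be}$, then the observation $\bigcap_{\be}V_{\be}=0$. (One small point on the multiplication step, which affects the paper's version too: with $\boldsymbol r=\max\{\be,\be-\boldsymbol f\}$ one gets $\boldsymbol r+\boldsymbol s\succeq 2\be$, which dominates $\be$ only when $\be\succeq 0$; this is harmless because the $V_{\be}$ with $\be\succeq 0$ already form a neighborhood basis of $0$, or one can just throw $0$ into the $\max$.)

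Where you genuinely diverge from --- and improve on --- the paper is the field case. The paper sets $U:=\{f:f^{-1}\in V_{\be}\}$, notes $f\in U$ iff $f=x^{\boldsymbol f}h$ with $h(0)\neq 0$ and $\boldsymbol f\preceq -\be$, and then asserts $f+V_{\boldsymbol f}\subset U$. As written this is false: since $\boldsymbol f$ is exactly the valuation of $f$ we have $-f\in V_{\boldsymbol f}$, so $0\in f+V_{\boldsymbol f}$ yet $0\notin U$; more generally a perturbation in $V_{\boldsymbol f}$ can cancel the leading coefficient and push the valuation of $f^{-1}$ below $\be$. Moreover, openness of the single preimage $U$ is not enough for continuity of inversion --- one needs it at every point, i.e.\ for preimages of all translates $g+V_{\be}$, not just of $V_{\be}$ itself. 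Your argument handles both: you reduce continuity of inversion at $f=x^{\be_0}u$ to continuity at $u$ via the homeomorphism given by multiplication by the unit $x^{-\be_0}$, then insist on a \emph{strict} shift $\bc\succ 0$ so that $u+\epsilon$ keeps its invertible constant term, and finally use the identity $(u+\epsilon)^{-1}-u^{-1}=-u^{-1}\epsilon(u+\epsilon)^{-1}$ to land the difference in $V_{\bc}\subset V_{\be}$. This is a complete proof of continuity of inversion. One tiny imprecision: you state $(u+\epsilon)^{-1}\in\Bbbk_C\llbracket\bx\rrbracket$ for the \emph{same} cone $C$ as $u$; since $\epsilon$ may have support in a different cone, the inverse lies in $\Bbbk_{C''}\llbracket\bx\rrbracket$ for some possibly larger cone $C''$. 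This does not affect the argument, because all you actually use is that $u^{-1}$ and $(u+\epsilon)^{-1}$ lie in $V_{\boldsymbol 0}$, which holds regardless of the cone.
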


\begin{proof}
We recall that a subset $U \subset \Bbbk_\prec \pp{x_1, \dots, x_n}$ is open if and only if for each $f(\boldsymbol x) \in U$ there exists a neighborhood $ V_{\be}$ such that $f(\boldsymbol x) +  V_{\be} \subset U$. We only need to show that the multiplication map is continuous. Therefore we want to show that
\[
U := \{ (f(\boldsymbol x), g(\boldsymbol x)) | f(\boldsymbol x)g(\boldsymbol x) \in V_{\be}\},
\]
for any $\be \in \bZ^n$  
is open. We take any $(f(\boldsymbol x), g(\boldsymbol x)) \in U$. It means we can find $\boldsymbol f$ and $\boldsymbol g$ such that $f(\boldsymbol x) \in V_{\boldsymbol f}$ and $g(\boldsymbol x) \in V_{\boldsymbol g}$ with $\boldsymbol f + \boldsymbol g \succeq \boldsymbol e$. Then we put $\boldsymbol r = \max\{\boldsymbol e, \boldsymbol e - \boldsymbol f\}$ and $\boldsymbol s = \max\{\boldsymbol e, \boldsymbol e - \boldsymbol g\}$ so that
\[
(f(\boldsymbol x), g(\boldsymbol x)) + (V_{\boldsymbol s},V_{\boldsymbol r})  \subset U.
\]
This shows $\Bbbk_\prec \pp{x_1, \dots, x_n}$ is a topological ring.

Moreover, we clearly have 
\[
\bigcap_{\boldsymbol e \in \bZ^n} x^{\boldsymbol e} \Bbbk_\prec \llbracket x_1, \dots, x_n\rrbracket  = \{0\},
\]
and thus, the $(x_1, \dots, x_n)$-adic topology is Hausdorff. 

If $\Bbbk$ is a field, then we write $U := \{ f(\boldsymbol x) | f^{-1}(\boldsymbol x) \in V_{\boldsymbol e}\}$. We have $f(\boldsymbol x) \in U$ if and only if $f(\boldsymbol x) = x^{\boldsymbol f} h(\boldsymbol x)$ for some $h(\boldsymbol x)$ such that $h(0) \neq 0$, and $\boldsymbol f \preceq -\boldsymbol e$. Then we obtain
\[
f(\boldsymbol x) + V_{\boldsymbol f} \subset U,
\]
which concludes the proof.
\end{proof}

\section{C.b.l. additive categories}\label{sec:cbladditive}

We review and extend the notions presented in~\cite[A.1]{naissevaz1}.

If $\cC$ is a category admitting the zero morphisms (e.g. a preadditive category) and $Y_j \xrightarrow{\imath_j} \coprod_i Y_i$ is a coproduct in $\cC$, then there is a unique map $p_j : \coprod_i Y_i \rightarrow Y_j$ given by universal property of the coproduct making the diagram 
\begin{equation} \label{eq:coprodproj}
\begin{tikzcd}
Y_k \ar{r}{\imath_k} \ar[swap]{dr}{\delta{jk}} & \coprod_i Y_i \ar[dashrightarrow]{d}{\exists ! p_j} \\
& Y_j
\end{tikzcd}
\end{equation}
commutes for all $j,k$. Then, by universal property of the product $\prod_i \Hom(-, Y_i) \xrightarrow{\pi_j} \Hom(-,Y_j)$, we get in turns a map $r : \Hom(-, \coprod_i Y_i)  \rightarrow \prod_i \Hom(-, Y_i)$ making the diagram
\begin{equation} \label{eq:prodhomproj}
\begin{tikzcd}
\Hom(-, \coprod_i Y_i) \ar[swap, dashrightarrow]{d}{\exists ! r} \ar{rd}{p_j \circ - } &  \\
\prod_i \Hom(-, Y_i) \ar[swap]{r}{\pi_j} & \Hom(-, Y_j)
\end{tikzcd}
\end{equation}
commutes for all $j$.

\begin{defn}
One says that a coproduct $\coprod_i Y_i$ in a category with the zero morphisms is a \emph{biproduct} if the natural transformation
\[
\Hom(-, \coprod_i Y_i) \xrightarrow{r} \prod_i \Hom(-, Y_i)
\]
is an isomorphism. In that case we write it with a $\bigoplus$ symbol. 
\end{defn}

\begin{prop}\label{prop:localcoprod}
Let $\cC$ be a category admitting the zero morphisms. If $\coprod_i Y_i$ is a biproduct, then it is a product as well. Moreover, if $p_j : \coprod_i Y_i \rightarrow Y_j$ and $\imath_j : Y_i \rightarrow \coprod_i Y_i$ are the projection and injection maps respectively, then $\prod_j (\imath_j \circ p_j) = \id$.
\end{prop}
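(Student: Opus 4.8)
The plan is to dispatch the two assertions in turn, each by a direct appeal to a universal property; there is no computation beyond bookkeeping.

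First, that the biproduct $Z := \coprod_i Y_i$ is also a product. Here I would only reinterpret the data already in place. By the defining diagram \eqref{eq:prodhomproj} the natural transformation $r$ satisfies $\pi_j \circ r = p_j \circ (-)$, so for each object $W$ the map $r_W \colon \Hom(W, Z) \to \prod_i \Hom(W, Y_i)$ is $f \mapsto (p_i \circ f)_i$. The hypothesis that $\coprod_i Y_i$ is a biproduct says exactly that $r$ is a natural isomorphism, that is, each $r_W$ is bijective. But that the cone $(p_i \colon Z \to Y_i)_i$ induces such a natural bijection $\Hom(-,Z) \xrightarrow{\ \simeq\ } \prod_i \Hom(-,Y_i)$ is, verbatim, the statement that $(Z, (p_i)_i)$ is a product of $\{Y_i\}_i$ — equivalently, $Z$ represents the presheaf $\prod_i \Hom(-,Y_i)$, and a representing object of that presheaf is by definition the product. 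This proves the first assertion and simultaneously identifies the product projections with the maps $p_i$ already at hand.

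Second, the equality $\prod_j(\imath_j \circ p_j) = \id$. Now $Z$ carries two universal properties at once: it is the coproduct with injections $\imath_j$ and the product with projections $p_j$. Hence any endomorphism $f \colon Z \to Z$ is determined by its matrix coefficients $p_k \circ f \circ \imath_j \colon Y_j \to Y_k$: the product universal property on the target reduces $f$ to the family $(p_k \circ f)_k$, and then the coproduct universal property on the source reduces each $p_k \circ f$ to $(p_k \circ f \circ \imath_j)_j$. So it suffices to compare these coefficients. For $f = \id_Z$ one gets $p_k \circ \imath_j$, which by \eqref{eq:coprodproj} is $\id_{Y_j}$ when $k = j$ and the zero morphism otherwise. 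For each fixed $j$, the endomorphism $\imath_j \circ p_j$ has $(k,l)$-coefficient $(p_k \circ \imath_j) \circ (p_j \circ \imath_l)$, which is $\id_{Y_j}$ at position $(j,j)$ and zero elsewhere; reading $\prod_j(\imath_j \circ p_j)$ as the endomorphism of $Z$ whose matrix of coefficients collects these — the addition-free analogue of the relation $\sum_j \imath_j p_j = \id$ familiar in additive categories — its $(k,l)$-coefficient is $\id_{Y_k}$ when $k = l$ and zero otherwise, no sum being needed since for $k \neq l$ no $j$ contributes while for $k = l$ only $j = k$ does. These coefficients agree with those of $\id_Z$, so by the uniqueness just invoked the two endomorphisms coincide.

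I expect the only genuine obstacle to be interpretational rather than mathematical: one must pin down unambiguously what ``$\prod_j(\imath_j \circ p_j)$'' is meant to denote in a category merely assumed to admit zero morphisms — where no infinite sum of endomorphisms is available beforehand — and check that the product structure on $Z$ makes this assembly well defined. Once that is settled, the proof rests entirely on the single identity $p_j \circ \imath_j = \id_{Y_j}$ from \eqref{eq:coprodproj} together with the uniqueness clauses of the two universal properties.
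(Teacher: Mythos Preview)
Your proposal is correct and follows essentially the same approach as the paper. For the first assertion, the paper does exactly what you do—uses the inverse of $r$ to exhibit $\coprod_i Y_i$ with the $p_j$'s as a product—only phrased concretely (given a cone $(f_i : Z \to Y_i)$, the map $r^{-1}(\prod_i f_i)$ does the job) rather than via your Yoneda-style observation that the natural isomorphism $r$ is tautologically the representability statement for the product; these are the same argument. For the second assertion the paper simply writes ``follows by universal property of the product,'' so your matrix-coefficient argument is in fact more detailed than what the paper provides, and your closing remark about the interpretational issue with the symbol $\prod_j(\imath_j \circ p_j)$ is apt: once one fixes its meaning as the unique endomorphism with the indicated components, the equality with $\id$ is immediate from uniqueness, which is all the paper is invoking.
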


\begin{proof}
Let $Z \xrightarrow{f_i} Y_i$ be a collection of arrows in $\cC$. Consider the maps $p_j : \coprod_i Y_i \rightarrow Y_j$ as in~\eqref{eq:coprodproj}, and the maps $r :\Hom(-, \coprod_i Y_i) \xrightarrow{r} \prod_i \Hom(-, Y_i)$ and $\pi_j : \prod_i \Hom(-, Y_i) \rightarrow  \Hom(-, Y_j)$ as in~\eqref{eq:prodhomproj}. Then we obtain $p_j \circ r^{-1}(\prod_i f_i) = \pi_j(\prod_i f_i) = f_j$, and by consequence we get a commutative diagram
\[
\begin{tikzcd}
Z \ar[swap]{d}{r^{-1}(\prod f_i)} \ar{dr}{f_j} & \\
\coprod_i Y_i \ar[swap]{r}{p_j} & Y_j.
\end{tikzcd}
\]
This means $\coprod_i Y_i$ is a product with projection maps given by $p_j$.
The second claim follows by the universal property of the product.
\end{proof}

\begin{rem}
The reciprocal of \cref{prop:localcoprod} is also true: if the maps $p_j$ turn a coproduct $\coprod Y_i$ into a product, then it is a biproduct. This follows directly from the fact that in general $\Hom(-, \prod_i X_i) \cong \prod_i \Hom(-, X_i)$.
\end{rem}

\begin{rem}
In order to avoid confusion, we only use the symbol $\bigoplus$ for (potentially infinite) biproducts, and we usually refer to them as direct sums (not to be confused with coproducts).
\end{rem}

\subsection{Locally Krull--Schmidt categories}

\begin{defn}
Let $\{X_i\}_{i \in I}$ be a collection of objects in a category $\cC$ with the zero morphisms. We say that a direct sum is a \emph{locally finite direct sum of $\{X_i\}_{i \in I}$} if it takes the form 
\[
M := \bigoplus_{i\in I} X_i^{\oplus n_i},
\]
for some $\{n_i \in \bN\}_{i \in I}$. We call $n_i$ the \emph{multiplicity of $X_i$ in $M$}.
\end{defn}

\begin{defn}\label{def:locKS}
We say that an additive category $\cC$ is \emph{locally Krull--Schmidt} if every object decomposes into a locally finite direct sum of  non-isomorphic compact objects having local endomorphism ring. 
\end{defn}

Clearly, a locally Krull--Schmidt category is idempotent complete. 
Moreover, the compactness condition allows us to mimic the classical proof (see for example~\cite[Theorem~1]{krullschmidt})   of the Krull--Schmidt property in a Krull--Schmidt category: that is every object decomposes into an essentially unique direct sum of indecomposable objects.

\begin{thm}
\label{thm:cbluniquedecomp}
Consider an isomorphism between two locally finite direct sums in a locally Krull--Schmidt category
\[
\bigoplus_{i\in I} X_i^{\oplus n_i}
 \cong 
\bigoplus_{j \in J} Y_j^{\oplus m_j}
\]
where each $X_i, X_{i'}$ (resp. $Y_{j},Y_{j'}$) are pairwise non-isomorphic indecomposable objects. 
There is a bijection $\sigma : I \rightarrow J$ such that $X_i \cong Y_{\sigma(i)}$, and $n_i = m_{\sigma(i)}$. 
\end{thm}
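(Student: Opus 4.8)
The plan is to adapt the classical Krull--Schmidt uniqueness argument, using compactness precisely at the point where the standard proof needs to commute a morphism past an infinite direct sum. Write $M := \bigoplus_{i\in I} X_i^{\oplus n_i} \cong \bigoplus_{j\in J} Y_j^{\oplus m_j}$, and call $\varphi : M \xrightarrow{\sim} N$ this isomorphism, with $\psi = \varphi^{-1}$. Fix one summand, say a copy of $X_{i_0}$ with inclusion $\iota : X_{i_0} \hookrightarrow M$ and projection $p : M \twoheadrightarrow X_{i_0}$ (these exist since by \cref{prop:localcoprod} the biproduct is simultaneously a product). First I would observe that, since $X_{i_0}$ is compact, the composite $p\circ\psi : N \to X_{i_0}$ factors through a \emph{finite} sub-biproduct $N' := \bigoplus_{k=1}^{s} Y_{j_k}$ of $N$ (choosing finitely many of the $Y_j$-summands, with multiplicity): indeed $p \circ \psi \circ (\text{inclusion of a summand of }N)$ is nonzero for only finitely many summands because the corresponding element of $\prod \Hom(X_{i_0}, -)$, equivalently of $\Hom(X_{i_0}, N)$, is "finitely supported" exactly by the isomorphism $\Hom(X_{i_0}, \coprod Y_j) \cong \coprod \Hom(X_{i_0}, Y_j)$ coming from compactness. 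Symmetrically $\varphi \circ \iota : X_{i_0} \to N$ factors through a finite sub-biproduct; enlarging $N'$ I may assume both factor through the same finite $N'$, and then $\id_{X_{i_0}} = p\circ\psi\circ\varphi\circ\iota$ factors through $N'$.

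Now the argument is purely finite. We have $X_{i_0}$ a retract of $N' = \bigoplus_{k=1}^s Y_{j_k}$, i.e. $\id_{X_{i_0}} = \sum_{k} a_k b_k$ where $b_k : X_{i_0}\to Y_{j_k}$, $a_k : Y_{j_k}\to X_{i_0}$ are the components. Since $\End(X_{i_0})$ is local, some $a_k b_k$ is invertible, so $X_{i_0}$ is a retract of $Y_{j_k}$ for that $k$; as $Y_{j_k}$ is indecomposable (it has local endomorphism ring, hence no nontrivial idempotents), $X_{i_0} \cong Y_{j_k}$. This already gives a map $\sigma$ on the level of isomorphism classes of indecomposables; I would spell out that it is well-defined and, by the symmetric argument applied to the $Y_j$'s, bijective: each $X_i$ is isomorphic to some $Y_j$ and vice versa, and the $X_i$ (resp. $Y_j$) being pairwise non-isomorphic forces $\sigma$ to be a bijection $I\to J$ with $X_i\cong Y_{\sigma(i)}$.

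It remains to match multiplicities. Fixing $i_0$ and writing $Y := Y_{\sigma(i_0)} \cong X_{i_0}$, I want $n_{i_0} = m_{\sigma(i_0)}$. The clean way is to argue by induction \emph{after} passing to a finite stage: using compactness as above, one shows that for the chosen copies the isomorphism $\varphi$ restricts, up to the finitely many relevant coordinates, to an isomorphism of the form $Y \oplus B \xrightarrow{\sim} Y \oplus C$ in which the $Y$-to-$Y$ component is an isomorphism --- concretely, among the $m_{\sigma(i_0)}$ copies of $Y$ in $N$ at least one pairs nondegenerately with a fixed copy of $X_{i_0}$, because $\id_{X_{i_0}}$ is a finite sum $\sum a_k b_k$ over those copies and $\End(X_{i_0})$ is local. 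Then \cref{lem:additivecancel} applies and cancels one copy of $Y$ from each side, yielding an isomorphism $\bigoplus_{i} X_i^{\oplus n_i'} \cong \bigoplus_j Y_j^{\oplus m_j'}$ with $n_{i_0}' = n_{i_0}-1$, $m_{\sigma(i_0)}' = m_{\sigma(i_0)}-1$, and all other exponents unchanged. Iterating strips $X_{i_0}$ and $Y_{\sigma(i_0)}$ simultaneously; since at every stage compactness guarantees only finitely many summands interact, this terminates and forces $n_{i_0} = m_{\sigma(i_0)}$. The main obstacle I anticipate is exactly this bookkeeping: making precise, via the compactness isomorphism $\Hom(X_{i_0},\bigoplus Y_j)\cong\bigoplus\Hom(X_{i_0},Y_j)$, that one may always work inside a finite sub-biproduct so that \cref{lem:additivecancel} is legitimately applicable, and checking the induction on multiplicities is not circular (one must be careful that "strip one copy" leaves a genuine locally finite direct sum of the same indecomposables). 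Everything else is the standard Krull--Schmidt manipulation.
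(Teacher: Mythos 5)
Your proof follows the approach the paper intends: the remark before the theorem explains that compactness lets one mimic the classical Krull--Schmidt uniqueness argument, and the paper's own proof is a citation to~\cite{naissevaz1}. Your reconstruction is essentially correct; here are the two places that need tightening.

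The opening claim that $p\circ\psi : N\to X_{i_0}$ vanishes on all but finitely many summands of $N$ is misjustified and, fortunately, unnecessary. Compactness of $X_{i_0}$ gives $\Hom(X_{i_0},N)\cong\coprod\Hom(X_{i_0},Y_j)$, i.e.\ it controls maps \emph{out of} $X_{i_0}$; but $p\circ\psi$ lives in $\Hom(N,X_{i_0})\cong\prod\Hom(Y_j,X_{i_0})$, which compactness of $X_{i_0}$ says nothing about. You only need that $\varphi\circ\iota$ lands in a finite sub-biproduct $N'$ with inclusion $\imath'\colon N'\to N$: then $\id_{X_{i_0}}=(p\circ\psi\circ\imath')\circ(\text{corestriction of }\varphi\circ\iota)$ already factors through $N'$, giving the finite sum $\id_{X_{i_0}}=\sum_{k=1}^s a_kb_k$ you need.

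The termination of the multiplicity induction also deserves one more line. If, say, $n_{i_0}>m_{\sigma(i_0)}$, then after $m_{\sigma(i_0)}$ applications of \cref{lem:additivecancel} the left-hand side still has $n_{i_0}-m_{\sigma(i_0)}>0$ copies of $X_{i_0}$ while the right-hand side has no copies of $Y_{\sigma(i_0)}$; running the first part of the argument on a surviving copy of $X_{i_0}$ would force $X_{i_0}\cong Y_j$ for some $j\neq\sigma(i_0)$, contradicting pairwise non-isomorphism of the $Y_j$. The rest --- a finite sum of non-units in a local ring is a non-unit; $a_kb_k$ a unit implies $X_{i_0}\cong Y_{j_k}$ via the idempotent $b_ka_k\in\End(Y_{j_k})$; and the application of \cref{lem:additivecancel} after identifying $X_{i_0}$ with $Y_{j_k}$ by $b_k$ --- is all in order.
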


\begin{proof}
The proof is essentially the same as in \cite[Theorem A.10]{naissevaz1}, and omitted. 
\end{proof}

\begin{cor}
A locally Krull--Schmidt category possesses the cancelation property: for each triple $A,B,C$ such that $A \oplus B \cong A \oplus  C$ then $B \cong C$.
\end{cor}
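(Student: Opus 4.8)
The plan is to deduce the cancellation property directly from the essential uniqueness of locally finite decompositions established in \cref{thm:cbluniquedecomp}, which is itself the place where compactness and \cref{lem:additivecancel} are used. First I would invoke the defining property of a locally Krull--Schmidt category (\cref{def:locKS}) to write each of $A$, $B$, $C$ as a locally finite direct sum of indecomposable objects with local endomorphism rings. Choosing a single indexing set $\{Z_k\}_{k \in K}$ of pairwise non-isomorphic indecomposables that contains, up to isomorphism, every indecomposable summand occurring in $A$, $B$ or $C$, I can then write $A \cong \bigoplus_{k \in K} Z_k^{\oplus a_k}$, $B \cong \bigoplus_{k \in K} Z_k^{\oplus b_k}$ and $C \cong \bigoplus_{k \in K} Z_k^{\oplus c_k}$ with all $a_k, b_k, c_k \in \bN$, where we allow the value $0$ so that one common family serves for all three objects.

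Next I would observe that $A \oplus B \cong \bigoplus_{k \in K} Z_k^{\oplus (a_k + b_k)}$ and $A \oplus C \cong \bigoplus_{k \in K} Z_k^{\oplus (a_k + c_k)}$ are again locally finite direct sums of (non-isomorphic) indecomposables, since $a_k + b_k$ and $a_k + c_k$ are finite for every $k$. Discarding the indices at which the relevant multiplicity vanishes, the hypothesis $A \oplus B \cong A \oplus C$ together with \cref{thm:cbluniquedecomp} yields a multiplicity-preserving bijection between the two indexing sets; since the $Z_k$ are pairwise non-isomorphic this bijection must match each $Z_k$ with itself, forcing $a_k + b_k = a_k + c_k$ for all $k \in K$. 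Because each $a_k$ is a \emph{finite} natural number, it may be cancelled, giving $b_k = c_k$ for all $k$, and therefore $B \cong \bigoplus_{k \in K} Z_k^{\oplus b_k} \cong \bigoplus_{k \in K} Z_k^{\oplus c_k} \cong C$.

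I do not expect a genuine obstacle here, as \cref{thm:cbluniquedecomp} does all the structural work. The only points needing a little care are bookkeeping: that the direct sum of two locally finite direct sums of indecomposables is still locally finite — which is exactly where the finiteness of the multiplicities $a_k$ is essential, so that the implication $a_k + b_k = a_k + c_k \Rightarrow b_k = c_k$ is legitimate — and the harmless device of padding the index set with multiplicity-zero terms so that a single collection $\{Z_k\}_{k \in K}$ can be used for $A$, $B$ and $C$ simultaneously before applying \cref{thm:cbluniquedecomp}.
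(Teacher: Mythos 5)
Your proof is correct and follows exactly the route the paper intends: the paper states this as a direct corollary of \cref{thm:cbluniquedecomp} with no written proof, and your argument (decompose $A$, $B$, $C$ via \cref{def:locKS}, add multiplicities, apply the uniqueness theorem, and cancel the finite $a_k$) is the standard way to fill in that gap. The only subtlety you rightly flag — that $A\oplus B$ is again a locally finite direct sum of the $Z_k$ with multiplicities $a_k+b_k$ — follows from the universal property defining biproducts in the paper, since $\Hom(-,A\oplus B)\cong\prod_k\Hom(-,Z_k)^{a_k}\times\prod_k\Hom(-,Z_k)^{b_k}\cong\prod_k\Hom(-,Z_k)^{a_k+b_k}$.
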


Let $X$ be an indecomposable object in a locally Krull--Schmidt category $\cC$. We write
\[
X^* : K_0(\cC) \rightarrow \bN, 
\]
where $X^*(M)$ counts the multiplicity of $X$ in $M$. 
We obtain the following result:

\begin{cor}\label{cor:lockrullschmidt}
Let $\cC$ be a locally Krull--Schmidt category with a full collection of pairwise non-isomorphic indecomposable objects $\{X_i\}_{i \in I}$. 
If $\cC$ admits locally finite direct sums 
of indecomposable objects, then its split Grothendieck group is a free $\bZ$-module
\[
K_0^\oplus(\cC) \cong \prod_{i \in I} \bZ \cdot [X_i],
\]
with basis $\{[X_i]\}_{i \in I}$. 
Moreover, we have $[M] = \sum_{i \in I} X_i^*(M) [X_i]$ for all $M \in \cC$. 
\end{cor}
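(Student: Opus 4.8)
The plan is to combine the essentially unique decomposition of \cref{thm:cbluniquedecomp} with the abstract recipe for the split Grothendieck group recalled in \cref{sec:classK0}. First I would record that, by \cref{def:locKS} together with \cref{thm:cbluniquedecomp}, every object $M \in \cC$ has a well-defined multiplicity function: writing $M \cong \bigoplus_{i \in I} X_i^{\oplus n_i}$, the numbers $n_i = X_i^*(M)$ depend only on the isomorphism class of $M$, and all but possibly infinitely many may be nonzero --- but for each fixed $i$ the value $X_i^*(M)$ is a well-defined natural number. This gives a well-defined map of sets $X_i^* : \mathrm{Ob}(\cC)/\!\!\cong \;\to \bN$ for each $i \in I$, hence a map $\Phi : \mathrm{Ob}(\cC)/\!\!\cong \;\to \prod_{i \in I}\bN \subset \prod_{i \in I}\bZ[X_i]$ sending $M \mapsto \sum_{i} X_i^*(M)[X_i]$ (the sum interpreted as the corresponding element of the product).

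\textbf{Key steps.} The second step is to check that $\Phi$ is additive on biproducts: if $Y \cong X \oplus Z$ then $X_i^*(Y) = X_i^*(X) + X_i^*(Z)$ for every $i$, which is immediate from uniqueness of decomposition since concatenating the indecomposable decompositions of $X$ and $Z$ gives one for $Y$. By the universal property of $K_0^\oplus(\cC) = F(\cC)/T(\cC)$ described in \cref{sec:classK0}, $\Phi$ therefore descends to a group homomorphism $\bar\Phi : K_0^\oplus(\cC) \to \prod_{i \in I}\bZ[X_i]$. The third step is to build an inverse: send the standard basis element indexed by $i$ (i.e.\ $[X_i]$ in the product) to the class $[X_i] \in K_0^\oplus(\cC)$; since the target of the product is $\prod_{i}\bZ$ and $K_0^\oplus(\cC)$ is a group, one must check this extends to all of $\prod_{i \in I}\bZ[X_i]$ --- this is where the hypothesis that $\cC$ \emph{admits locally finite direct sums of indecomposables} enters: given $(a_i)_{i \in I} \in \prod_{i}\bN$ the object $\bigoplus_i X_i^{\oplus a_i}$ exists in $\cC$ and maps to $\sum_i a_i [X_i]$ under $\bar\Phi$, and a general element of $\prod_i \bZ$ is a difference of two such tuples, giving surjectivity of $\bar\Phi$. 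Injectivity follows because $\bar\Phi([M]) = \bar\Phi([N])$ forces $X_i^*(M) = X_i^*(N)$ for all $i$, hence $M \cong N$ by \cref{thm:cbluniquedecomp}, and because every element of $K_0^\oplus(\cC)$ is of the form $[M] - [N]$ for genuine objects $M, N$ (as $\cC$ is additive, so every formal combination $\sum n_X[X]$ with the $n_X$ of either sign collapses to $[\,\bigoplus_{n_X>0} X^{\oplus n_X}] - [\,\bigoplus_{n_X<0} X^{\oplus(-n_X)}]$).

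\textbf{Main obstacle.} The one genuinely delicate point is the bookkeeping around infinite index sets: one must be careful that $K_0^\oplus(\cC)$ is built from the \emph{free} abelian group on isomorphism classes (finite $\bZ$-linear combinations) while the target is the full \emph{product} $\prod_{i \in I}\bZ[X_i]$ rather than the direct sum, so surjectivity genuinely relies on the existence of infinite direct sums $\bigoplus_i X_i^{\oplus a_i}$ inside $\cC$ as legitimate objects, and well-definedness of $\bar\Phi$ relies on each individual multiplicity $X_i^*(M)$ being finite even though infinitely many of them may be nonzero. Once the formalism of \cref{def:locKS} and \cref{thm:cbluniquedecomp} is in place these are routine, so I expect the proof to be short; indeed it should read essentially as ``$\bar\Phi$ and the assignment $(a_i - b_i)_i \mapsto [\bigoplus X_i^{\oplus a_i}] - [\bigoplus X_i^{\oplus b_i}]$ are mutually inverse, using \cref{thm:cbluniquedecomp} for injectivity and the existence of locally finite direct sums for surjectivity.''
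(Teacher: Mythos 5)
Your argument is correct and follows the route the paper clearly intends (the corollary is stated without proof in the paper, immediately after \cref{thm:cbluniquedecomp} and the introduction of the multiplicity maps $X^*$). You correctly identify the two points that actually carry weight: well-definedness of the multiplicity vector via \cref{thm:cbluniquedecomp}, and the fact that surjectivity onto the full product $\prod_{i\in I}\bZ[X_i]$ — rather than the direct sum — hinges precisely on the hypothesis that locally finite direct sums $\bigoplus_i X_i^{\oplus n_i}$ with possibly infinitely many $n_i\neq 0$ exist in $\cC$; and you are right that every element of $K_0^\oplus(\cC)$ is a difference $[M]-[N]$, which reduces injectivity to the uniqueness statement. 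One small slip in phrasing: ``all but possibly infinitely many may be nonzero'' should read ``possibly infinitely many may be nonzero (each a finite natural number)''; and strictly speaking the ``third step'' is not needed once you prove $\bar\Phi$ is injective and surjective (the inverse constructed there is the one $\bar\Phi$ induces, and its well-definedness — that $(a_i-b_i)_i=(a_i'-b_i')_i$ gives $[\bigoplus X_i^{\oplus a_i}]-[\bigoplus X_i^{\oplus b_i}]=[\bigoplus X_i^{\oplus a_i'}]-[\bigoplus X_i^{\oplus b_i'}]$, via $a_i+b_i'=a_i'+b_i$) is a brief check you could either supply or drop in favor of the bijectivity argument alone.
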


\begin{defn}
Let $\cC, \cC'$ be two locally Krull--Schmudt categories with collections of distinct indecomposable objects $\{X_i\}_{i \in I}$ and $\{X_j'\}_{j \in J}$ respectively. 
We say that a functor $F : \cC \rightarrow \cC'$  is \emph{locally finite} if for all $M \in \cC$ and $j \in J$ there exists a finite subset $I_{M,j} \subset I$ such that
\[
X_i^*(M){X'_j}^*(FX_i) = 0,
\]
for all $i \notin I_{M,j}$.
\end{defn}

\begin{prop}\label{prop:locfinFKS}
Let $F : \cC \rightarrow \cC'$ be an additive locally finite functor between two locally Krull--Schmidt categories. The induced map 
$[F] : K_0(\cC) \rightarrow K_0(\cC')$ 
respects
\[
[M]
\mapsto
 \sum_{j \in J}  \sum_{i \in I} X_i^*(M) {X_j'}^*(FX_i) [X_j']. 
\]
\end{prop}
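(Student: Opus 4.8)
The plan is to pass to the coordinate descriptions of the two split Grothendieck groups provided by \cref{cor:lockrullschmidt}, and then to verify the formula on the class of an object. Since $F$ is additive it preserves binary biproducts, hence it sends a distinguished triple $\langle X, X\oplus Z, Z\rangle$ of $\cC$ to a distinguished triple of $\cC'$, so the homomorphism $[F]\colon K_0(\cC)\to K_0(\cC')$ is defined. Writing $K_0(\cC)\cong\prod_{i\in I}\bZ[X_i]$ and $K_0(\cC')\cong\prod_{j\in J}\bZ[X_j']$, the class of any object is $[M]=\sum_{i\in I}X_i^*(M)[X_i]$, where $M=\bigoplus_{i\in I}X_i^{\oplus n_i}$ with $n_i=X_i^*(M)$, and likewise $[FM]=\sum_{j\in J}{X_j'}^*(FM)[X_j']$. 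As $[F]$ is a group homomorphism it is enough to treat classes of objects, so the proposition reduces to the identity ${X_j'}^*(FM)=\sum_{i\in I}n_i\,{X_j'}^*(FX_i)$ for every $j\in J$, where the right-hand side is to be read, a priori, as a formal sum.

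The first thing to observe is that this formal sum is actually a well-defined integer, and this is precisely what the local finiteness hypothesis buys us: for a fixed $j$ and the associated finite set $I_{M,j}\subset I$, the term $n_i\,{X_j'}^*(FX_i)=X_i^*(M)\,{X_j'}^*(FX_i)$ vanishes whenever $i\notin I_{M,j}$, so $\sum_{i\in I}n_i\,{X_j'}^*(FX_i)=\sum_{i\in I_{M,j}}n_i\,{X_j'}^*(FX_i)$ is a finite sum of natural numbers. In particular the prescribed assignment does take values in $\prod_{j\in J}\bZ[X_j']$, so the statement even makes sense.

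To compute ${X_j'}^*(FM)$, decompose $M\cong M'\oplus M''$ with $M'=\bigoplus_{i\in I_{M,j}}X_i^{\oplus n_i}$ a \emph{finite} biproduct and $M''=\bigoplus_{i\in I\setminus I_{M,j}}X_i^{\oplus n_i}$. Additivity of $F$ gives $FM\cong FM'\oplus FM''$, and since $M'$ is finite, $FM'\cong\bigoplus_{i\in I_{M,j}}(FX_i)^{\oplus n_i}$; decomposing each $FX_i$ into indecomposables of $\cC'$ and appealing to the essential uniqueness of \cref{thm:cbluniquedecomp}, one gets ${X_j'}^*(FM')=\sum_{i\in I_{M,j}}n_i\,{X_j'}^*(FX_i)$, the integer isolated above. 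Applying \cref{thm:cbluniquedecomp} to $FM\cong FM'\oplus FM''$ gives ${X_j'}^*(FM)={X_j'}^*(FM')+{X_j'}^*(FM'')$, so everything comes down to proving ${X_j'}^*(FM'')=0$.

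This final reduction is the step I expect to be the main obstacle. By the choice of $I_{M,j}$ every indecomposable $X_i$ actually occurring in $M''$ (those with $n_i>0$) satisfies ${X_j'}^*(FX_i)=0$, so $X_j'$ is a summand of none of the $FX_i$ appearing there; the danger is that $F$ could create a copy of $X_j'$ inside $F$ of the (possibly infinite) direct sum $M''$. I would rule this out by using \cref{prop:localcoprod} to present $M''$ both as a product and as a coproduct of its finite sub-biproducts $M_S:=\bigoplus_{i\in S}X_i^{\oplus n_i}$, for $S\subset I\setminus I_{M,j}$ finite: writing $M''\cong M_S\oplus M''_S$, additivity gives $FM''\cong FM_S\oplus FM''_S$ with ${X_j'}^*(FM_S)=0$, and the compactness of the indecomposable $X_j'$ in $\cC'$ forces a split monomorphism $X_j'\hookrightarrow FM''$ to be supported on finitely many indecomposable summands, which upon tracking back through the finite partial biproducts would make $X_j'$ a summand of some $FM_S$, a contradiction. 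Hence ${X_j'}^*(FM'')=0$, which yields ${X_j'}^*(FM)=\sum_{i\in I}n_i\,{X_j'}^*(FX_i)$ for all $j$ and establishes the displayed formula; extending by additivity over the whole of $K_0(\cC)$ completes the proof.
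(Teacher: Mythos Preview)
Your overall strategy matches the paper's: split $M$ into the finite piece indexed by $I_{M,j}$ and its complement $M''$, handle the finite piece by ordinary additivity, and reduce to ${X_j'}^*(FM'')=0$; you rightly flag this last step as the crux. However, the compactness argument you sketch does not close it. Compactness of $X_j'$ is a statement \emph{inside} $\cC'$: a morphism from $X_j'$ into a coproduct formed in $\cC'$ factors through a finite sub-coproduct. But $FM''$ is not presented as a coproduct of the $FM_S$ (or of the $FX_i$) in $\cC'$ --- $F$ is only finitely additive, so the infinite biproduct structure of $M''$ in $\cC$ need not survive under $F$. All you actually have is $FM''\cong FM_S\oplus FM''_S$ for each individual finite $S$, and these splittings do not assemble into a single coproduct presentation of $FM''$ indexed by the $X_i$. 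A split monomorphism $X_j'\hookrightarrow FM''$ therefore cannot be ``tracked back'' to some $FM_S$ via compactness: the indecomposable decomposition of $FM''$ in $\cC'$ that compactness does control has no a priori relation to the $FM_S$.

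The paper handles this step differently and more directly. In its notation (your $M''$ is its $M'$): since $M'$ is a direct summand of $M$ one can take $I_{M',j}\subset I_{M,j}$, while by construction $X_i^*(M')=0$ for every $i\in I_{M,j}$; hence $I_{M',j}=\emptyset$, and the paper concludes ${X_j'}^*(FM')=0$ from that.
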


\begin{proof}
We want to show that for any 
\[
M \cong  \bigoplus_{i \in I} X_i^{\oplus n_i},
\]
then 
\[
\sum_{j \in J} X_j'(FM) [X_j] = \sum_{i \in I} \sum_{j \in J} X_i^*(M){X_j'}^*(X_i)[X_j'] \in  K_0(\cC').
\]
We take $M'$ and $M'' := \bigoplus_{i \in I_{M,j}} X_i^{\oplus n_i}$ such that
\[
M \cong M' \oplus M''.
\]
Thus $X_i^*(M') = 0$ for all $i \in I_{M,j}$. Then, because $I_{M,j}$ is finite and $F$ additive, we obtain
\[
{X_j'}^*(FM) = {X_j'}^*(FM') + \sum_{i \in I_{M,j}} n_i {X_j'}^*(FX_i).
\]
Because $F$ is additive, we have $I_{M',j} \subset I_{M,j} \setminus I_{M'',j}$, and thus $I_{M',j} = \emptyset$, and ${X_j'}^*(FM')  = 0$. Moreover, we have
\[
 \sum_{i \in I_{M,j}} n_i {X_j'}^*(FX_i) = \sum_{i \in I} X_i^*(M) {X_j'}^*(FX_i),
\]
which concludes the proof.
\end{proof}

\subsection{C.b.l. Krull--Schmidt categories}

\begin{defn}
Let $\{X_1, \dots, X_m\}$ be a finite collection of objects in a stricly $\bZ^n$-graded category. Then we consider a coproduct of the form
\[
\coprod_{\bg \in \bZ^n} x^{\bg} (X_1^{\oplus k_{1,\bg}} \oplus \cdots \oplus X_m^{\oplus k_{m,\bg}}),
\]
with each $k_{j, \bg} \in \bN$. We call such coproducts \emph{graded locally finite}. We call it  \emph{c.b.l.f. (cone bounded, locally finite)} if in addition,  there exists a cone $C$ compatible with $\prec$, and $\be \in \bZ^n$ such that we have $k_{j,\bg} = 0$ whenever $\bg -  \be \notin C$.
\end{defn}

One of the nice properties of c.b.l.f. coproducts is that taking a c.b.l.f. coproduct of c.b.l.f. coproducts yields again a c.b.l.f. coproduct. 

\begin{defn}
An $Ab$-enriched, strictly $\bZ^n$-graded category admitting all c.b.l.f. coproducts is called \emph{c.b.l. additive} if all its c.b.l.f. coproducts are biproducts. In that case, we write c.b.l.f. coproducts with a $\bigoplus$ sign, and refer to them as \emph{c.b.l.f. direct sums}.
\end{defn}

Because a c.b.l. additive category $\cC$ admits c.b.l.f. direct sums, its Grothendieck group $K_0(\cC)$ is naturally a $\bZ\pp{x_1, \dots, x_n}$-module with the action of $\sum_{\bg \in C} a_\bg x^{\be + \bg} \in \bZ\pp{x_1, \dots, x_n}$ defined as 
\begin{equation}\label{eq:ZppactiononK0}
\sum_{\bg \in C} a_\bg x^{\be + \bg} [X] := [\bigoplus_{\bg \in C} x^{\bg + \be} X^{\oplus a_\bg}].
\end{equation}
Note that when we write $\bg \in C$ in this context, we implicitly suppose we are considering $\bg \in C \cap \bZ^n$ and $C$ compatible with $\prec$. We will abuse of this notation whenever it is clear from the context.

\begin{defn}
We say a that a c.b.l. additive category $\cC$ is \emph{c.b.l. Krull--Schmidt} if every object decomposes  into a c.b.l.f. direct sum of compact objects having local endormophism rings. 
\end{defn}

Note that in particular a c.b.l. Krull--Schmidt category is locally Krull--Schmidt. 
From a restriction of \cref{thm:cbluniquedecomp}, we easily obtain the following, which is a ``c.b.l.f. version'' of \cref{prop:krullschmidt}.

\begin{prop}\label{prop:cblkrullschmidt}
Let $\cC$ be a c.b.l. Krull--Schmidt category with a full collection of pairwise distinct indecomposable objects $\{X_i\}_{i \in I}$. The split Grothendieck group of $\cC$ is a free $ \bZ\pp{x_1, \dots, x_n}$-module 
\[
K_0^\oplus(\cC) \cong \bigoplus_{i \in I} \bZ\pp{x_1, \dots, x_n} \cdot [X_i],
\]
with basis $\{[X_i]\}_{i \in I}$.
\end{prop}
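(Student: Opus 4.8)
The plan is to reduce \cref{prop:cblkrullschmidt} to the uniqueness result \cref{thm:cbluniquedecomp} together with the module structure on $K_0^\oplus$ provided by \cref{eq:ZppactiononK0}. First I would observe that, by definition of a c.b.l.\ Krull--Schmidt category, every object $M \in \cC$ admits a decomposition into a c.b.l.f.\ direct sum of compact objects with local endomorphism rings; grouping together the summands isomorphic to a given indecomposable (up to grading shift), such a decomposition can be rewritten in the form $M \cong \bigoplus_{i \in I} \bigoplus_{\bg \in \bZ^n} (x^\bg X_i)^{\oplus k_{i,\bg}}$, where the coefficients $k_{i,\bg}$ are supported on a cone-bounded set. \cref{thm:cbluniquedecomp} (applied with the indecomposables being the $x^\bg X_i$) tells us the tuple $(k_{i,\bg})_{i,\bg}$ is uniquely determined by $M$, and by the strictness of the $\bZ^n$-grading the $x^\bg X_i$ are indeed pairwise non-isomorphic for distinct $(i,\bg)$.

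Next I would define the candidate isomorphism. Using the notation $X_i^*$ for the map counting multiplicities (as for the ungraded case preceding \cref{cor:lockrullschmidt}), and writing $f_i(M) := \sum_{\bg \in \bZ^n} (x^\bg X_i)^*(M)\, x^\bg \in \bZ\pp{x_1,\dots,x_n}$, I would send $[M] \mapsto (f_i(M))_{i \in I} \in \bigoplus_{i \in I} \bZ\pp{x_1,\dots,x_n}$. The cone-boundedness of the decomposition of $M$ guarantees each $f_i(M)$ genuinely lies in $\bZ\pp{x_1,\dots,x_n}$ (the support lies in a translate of a cone compatible with $\prec$), and that only finitely many $f_i(M)$ are nonzero, so the target is the direct sum and not the product. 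One checks this is well defined on $K_0^\oplus(\cC)$: the split relations $[M] = [M'] + [M'']$ for $M \cong M' \oplus M''$ are respected because multiplicities are additive under direct sum (here \cref{thm:cbluniquedecomp} is what makes multiplicity well defined), and it is additive, hence a group homomorphism.

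Then I would verify it is an isomorphism of $\bZ\pp{x_1,\dots,x_n}$-modules. Compatibility with the module action is immediate from \cref{eq:ZppactiononK0}: shifting $M$ by $x^\bh$ and taking a c.b.l.f.\ sum multiplies the multiplicity generating functions $f_i$ by the corresponding Laurent series, since $(x^{\bg}X_i)^*(x^\bh M) = (x^{\bg - \bh}X_i)^*(M)$. Surjectivity follows because any tuple $(\sum_{\bg} a_{i,\bg} x^\bg)_{i}$ with finitely many nonzero entries, each a formal Laurent series, is hit by the class of the c.b.l.f.\ direct sum $\bigoplus_{i}\bigoplus_{\bg}(x^\bg X_i)^{\oplus a_{i,\bg}}$ (this is a legitimate object of $\cC$ since $\cC$ admits all c.b.l.f.\ coproducts and a finite union of cone-bounded sets is cone-bounded). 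Injectivity is exactly the uniqueness of decomposition from \cref{thm:cbluniquedecomp}: if $[M] = [N]$ has image zero then all multiplicities of all $x^\bg X_i$ in $M$ and in $N$ agree, forcing $M \cong N$.

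I do not expect a serious obstacle here; the statement is essentially a bookkeeping corollary of \cref{thm:cbluniquedecomp} once the module structure is in place. The one point deserving a little care is the finiteness claim --- that for each fixed $M$ only finitely many indecomposables $X_i$ (up to shift) occur, so that the image lands in $\bigoplus_{i}$ rather than $\prod_{i}$; this is built into the definition of a graded locally finite coproduct (the summands range over finitely many $X_1,\dots,X_m$), so it is really a matter of citing the definitions correctly rather than proving anything new. A secondary subtlety is making sure that ``compact objects with local endomorphism rings'' are the same as ``indecomposable objects'' in this setting, which is precisely what \cref{thm:cbluniquedecomp} and the remarks around \cref{def:locKS} establish, so I would simply invoke them.
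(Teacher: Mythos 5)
Your proposal is correct and follows the same route the paper implicitly takes: the paper gives no written proof, remarking only that the statement follows "from a restriction of \cref{thm:cbluniquedecomp}", and you have simply spelled out that bookkeeping, including the reason the image lands in the direct sum rather than the product (finitely many $X_i$ in a c.b.l.f. decomposition) and the compatibility with the $\bZ\pp{x_1,\dots,x_n}$-module structure of \cref{eq:ZppactiononK0}.
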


Consider a c.b.l. Krull--Schmidt category $\cC$  with collection of pairwise distinct indecomposable objects $\{X_i\}_{i \in I}$. We define
\[
K_{\be} := \bigoplus_{i \in I} V_{\be} \cdot [X_i],
\]
 for each $\be \in \bZ^n$ and where $V_{\be} \subset \bZ\pp{x_1, \dots, x_n}$ is as in~\eqref{eq:neighborhoodLaurent}. Then we endow~$K_0^\oplus(\cC)$ with a topology by using $\{K_{\be}\}_{\be \in \bZ^n}$ as basis of neighborhood of zero. We also refer to this topology as the $(x_1, \dots, x_n)$-adic topology. Note that the choice of $X_i$ does not matter for defining the topology.

\begin{prop}
The split Grothendieck group $K_0^\oplus(\cC)$ of a c.b.l. Krull--Schmidt category $\cC$ endowed with the $(x_1, \dots, x_n)$-adic topology is a Hausdorff topological module over $\bZ\pp{x_1, \dots, x_n}$.
\end{prop}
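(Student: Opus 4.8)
The plan is to verify the two assertions---that $K_0^\oplus(\cC)$ is a topological $\bZ\pp{x_1,\dots,x_n}$-module and that it is Hausdorff---by a direct check against the definitions, leaning on \cref{prop:cblkrullschmidt} to identify $K_0^\oplus(\cC)$ with $\bigoplus_{i\in I}\bZ\pp{x_1,\dots,x_n}[X_i]$. With that identification the neighborhood basis $\{K_\be\}_{\be\in\bZ^n}$ of zero is exactly $\bigoplus_{i\in I}V_\be[X_i]$, so questions about the topology on $K_0^\oplus(\cC)$ reduce, coordinate by coordinate, to questions about the $(x_1,\dots,x_n)$-adic topology on $\bZ\pp{x_1,\dots,x_n}$ already treated in \cref{prop:laurentistop}. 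The one subtlety is that a general element of $K_0^\oplus(\cC)$ is a \emph{finite} $\bZ\pp{x_1,\dots,x_n}$-linear combination of the $[X_i]$ (the direct sum, not the product), so all the finiteness one needs for continuity arguments is automatic.

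First I would record that $\{K_\be\}_{\be\in\bZ^n}$ is a filtered family of subgroups: since $V_\be\subset V_{\boldsymbol f}$ whenever $\be\succ\boldsymbol f$ and $\bZ^n$ is totally ordered by $\prec$, any two $K_\be,K_{\boldsymbol f}$ contain a common $K_{\bg}$ with $\bg\succeq\be,\boldsymbol f$; hence they form a genuine neighborhood basis of $0$ for a group topology, and $K_0^\oplus(\cC)$ is a topological abelian group under addition (each $K_\be$ is a subgroup, so translation-invariance and continuity of subtraction are immediate). Next, for continuity of scalar multiplication $\bZ\pp{x_1,\dots,x_n}\times K_0^\oplus(\cC)\to K_0^\oplus(\cC)$, I would fix $(\lambda,m)$ with $m=\sum_{i\in I_0}\lambda_i[X_i]$ for a finite $I_0\subset I$, fix a target neighborhood $K_\be$, and produce neighborhoods $V_{\boldsymbol s}$ of $\lambda$ and $K_{\boldsymbol r}$ of $m$ with $(\lambda+V_{\boldsymbol s})(m+K_{\boldsymbol r})\subset \lambda m+K_\be$. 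Expanding the product, the cross-terms $\lambda_i\cdot K_{\boldsymbol r}$, $V_{\boldsymbol s}\cdot\lambda_i$ and $V_{\boldsymbol s}\cdot K_{\boldsymbol r}$ are each handled exactly as in the proof of \cref{prop:laurentistop} applied to each of the finitely many coordinates $i\in I_0$ (for $i\notin I_0$ the coordinate of $m$ is $0$, so only $V_{\boldsymbol s}\cdot K_{\boldsymbol r}$ contributes and is absorbed into $K_\be$); taking $\boldsymbol r,\boldsymbol s$ large enough simultaneously over the finite set $I_0$ and over $\be$ works because $\prec$ is a total order and the max of finitely many elements exists.

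Finally, for the Hausdorff property I would show $\bigcap_{\be\in\bZ^n}K_\be=\{0\}$. An element $m=\sum_{i\in I_0}\lambda_i[X_i]$ lies in every $K_\be=\bigoplus_i V_\be[X_i]$ iff each $\lambda_i\in\bigcap_{\be}V_\be=\bigcap_{\be\in\bZ^n}x^\be\Bbbk_\prec\llbracket x_1,\dots,x_n\rrbracket$, which is $\{0\}$ by the computation in \cref{prop:laurentistop}; hence $m=0$, and a topological group with $\{0\}$ closed (equivalently with trivial intersection of a neighborhood basis of $0$) is Hausdorff. I do not expect a genuine obstacle here: the only thing to be careful about is keeping the reduction ``coordinatewise, over a finite index set'' honest---i.e.\ using that $K_0^\oplus(\cC)$ is the direct sum and not the product of copies of $\bZ\pp{x_1,\dots,x_n}$, so that every element and every continuity estimate involves only finitely many indices---after which everything is a transcription of \cref{prop:laurentistop}.
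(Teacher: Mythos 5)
Your proof is correct and fills in precisely the details the paper omits: the paper's own proof says only ``It follows from similar arguments as in the proof of \cref{prop:laurentistop}. We leave the details to the reader,'' and your coordinatewise reduction, together with the observation that elements of the direct sum $\bigoplus_{i\in I}\bZ\pp{x_1,\dots,x_n}[X_i]$ have finite support so that the continuity estimates of \cref{prop:laurentistop} need only be applied to finitely many indices, is exactly the intended argument.
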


\begin{proof}
It follows from similar arguments as in the proof of \cref{prop:laurentistop}. We leave the details to the reader.
\end{proof}

We also obtain the following nice property:

\begin{prop}\label{prop:addfunctorinducemap}
Let $F : \cC \rightarrow \cC'$ be a $\bZ^n$-graded, additive functor between c.b.l. Krull--Schmidt categories. There is an induced continuous map 
\[
K_0^\oplus(\cC) \xrightarrow{[F]} K_0^\oplus(\cC') .
\]
\end{prop}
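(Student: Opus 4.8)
The plan is to reduce the statement to the already-established structural facts about c.b.l.\ Krull--Schmidt categories, namely \cref{prop:cblkrullschmidt}, which identifies $K_0^\oplus(\cC)$ with $\bigoplus_{i \in I} \bZ\pp{x_1,\dots,x_n}[X_i]$ and $K_0^\oplus(\cC')$ with $\bigoplus_{j \in J} \bZ\pp{x_1,\dots,x_n}[X'_j]$, together with the description of the $(x_1,\dots,x_n)$-adic topology via the neighborhood basis $\{K_{\be}\}_{\be \in \bZ^n}$ of $0$. Since a $\bZ^n$-graded additive functor $F$ preserves c.b.l.f.\ direct sums (it is additive, hence commutes with finite biproducts, and it commutes with the grading shifts $x^\bg$, hence sends a c.b.l.f.\ coproduct to a c.b.l.f.\ coproduct, which is again a biproduct in $\cC'$), $F$ certainly induces a group homomorphism $[F]$ on Grothendieck groups; the only thing to check is continuity.

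First I would record that for a topological group homomorphism between groups with neighborhood bases of $0$, continuity is equivalent to continuity at $0$, which in turn amounts to: for every $\be \in \bZ^n$ there exists $\boldsymbol f \in \bZ^n$ such that $[F](K_{\boldsymbol f}) \subseteq K_{\be}$. So fix $\be$. Unwinding the definitions, $K_{\be} = \bigoplus_{j \in J} V_{\be}[X'_j]$ where $V_{\be} = x^{\be}\bZ_\prec\llbracket x_1,\dots,x_n\rrbracket$, so I must show that every class $[x^{\boldsymbol f} X_i]$ with $\boldsymbol f$ sufficiently $\succ$-large lands in $K_{\be}$, i.e.\ that the coefficient of each $[X'_j]$ in $[F(x^{\boldsymbol f}X_i)] = x^{\boldsymbol f}[F X_i]$ lies in $V_{\be}$.

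The key point is that each $FX_i$ is an object of $\cC'$, hence by the c.b.l.\ Krull--Schmidt hypothesis decomposes as a \emph{cone bounded} locally finite direct sum of the $X'_j$; concretely $[FX_i] = \sum_{j} p_{i,j}(\bx)[X'_j]$ with each $p_{i,j}(\bx) \in x^{\boldsymbol c_i}\bZ_\prec\llbracket x_1,\dots,x_n\rrbracket$ for some $\boldsymbol c_i \in \bZ^n$ depending only on $i$ (the cone bound on the decomposition of $FX_i$). Therefore $x^{\boldsymbol f}[FX_i] = \sum_j x^{\boldsymbol f}p_{i,j}(\bx)[X'_j]$ has all its coefficients in $x^{\boldsymbol f + \boldsymbol c_i}\bZ_\prec\llbracket\cdots\rrbracket \subseteq V_{\be}$ as soon as $\boldsymbol f + \boldsymbol c_i \succeq \be$, i.e.\ $\boldsymbol f \succeq \be - \boldsymbol c_i$. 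The subtlety — and the main obstacle — is that $\boldsymbol c_i$ varies with $i$, so there is no single $\boldsymbol f$ working uniformly for all basis elements $[X_i]$ of $K_0^\oplus(\cC)$. I would resolve this exactly as in the topology on $K_0^\oplus(\cC)$: a general element of $K_{\boldsymbol f}$ is $\sum_i q_i(\bx)[X_i]$ with \emph{all} $q_i(\bx) \in V_{\boldsymbol f}$, but moreover for a genuine class coming from an object the support is itself cone bounded, so only finitely many $\boldsymbol c_i$ are relevant below any given $\bz$; alternatively, and more cleanly, I would argue on the level of objects: an object $M \in \cC$ with $[M] \in K_{\boldsymbol f}$ is (up to the cone-bounded decomposition) a c.b.l.f.\ direct sum $\bigoplus_{\bg \in C,\, \bg \succeq \boldsymbol f} x^{\bg} X_i^{\oplus a_{i,\bg}}$, apply $F$, and observe that because $F$ is additive the resulting decomposition of $FM$ in $\cC'$ is again c.b.l.f.\ with cone bound controlled by $\boldsymbol f$ and the \emph{fixed} cone of $M$, so $[FM] \in K_{\be}$ for $\boldsymbol f$ large enough relative to $\be$. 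Wrapping up, this shows $[F]^{-1}(K_{\be})$ is a neighborhood of $0$, hence $[F]$ is continuous. I would then remark that the details of bookkeeping the cones are routine and refer the reader to the analogous argument in \cref{prop:locfinFKS} and the proof of \cref{prop:laurentistop}.
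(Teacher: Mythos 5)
Your argument has a real gap right at the start. You assert that a $\bZ^n$-graded additive functor preserves c.b.l.f.\ direct sums because it is additive (hence commutes with finite biproducts) and commutes with grading shifts. That implication is false: additivity gives preservation of \emph{finite} direct sums only, and commuting with $x^{\bg}$ does not upgrade this to infinite c.b.l.f.\ coproducts. The group homomorphism $[F]$ does exist for a cheaper reason (additivity alone, since $K_0^\oplus$ is presented by finite direct-sum relations), but the identity $[F](q(\bx)[X_i]) = q(\bx)[FX_i]$ for a genuine Laurent series $q(\bx) \in \bZ\pp{x_1,\dots,x_n}$ --- which your continuity argument quietly relies on when you multiply by the cone-bounded $p_{i,j}(\bx)$ --- is precisely what needs to be proved, and it is not free. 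The paper's proof does not claim $F$ preserves infinite direct sums; instead it verifies that $F$ is \emph{locally finite} in the sense of \cref{prop:locfinFKS}, using the cone boundedness of both $M$ and of each $FX_i$ to show that for fixed $j$ and $\bg'$ only finitely many summands $x^{\bg}X_i$ of $M$ can contribute to the multiplicity of $x^{\bg'}X_j'$ in $FM$. Then \cref{prop:locfinFKS} delivers the formula for $[F]$ by peeling off a \emph{finite} direct summand of $M$, on which $F$ does commute with $\oplus$. So what you deflect at the end as routine bookkeeping to be ``referred to the analogous argument in \cref{prop:locfinFKS}'' is in fact the entire content of the paper's proof, not a footnote.

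Your observation about the non-uniformity of the $\boldsymbol{c}_i$ is a genuine concern, but neither of your proposed fixes works as stated: the claim that only finitely many $\boldsymbol{c}_i$ are relevant below a given $\bz$ is false when the index set $I$ of distinct indecomposables is infinite, since the family $\{\boldsymbol{c}_i\}_{i \in I}$ need have no lower bound; and the alternative of arguing on the level of objects with the ``fixed cone of $M$'' is circular, since that cone, and hence the required $\boldsymbol{f}$, varies with $M$, whereas continuity needs a single $\boldsymbol{f}$ valid on all of $K_{\boldsymbol{f}}$. To be fair, the paper's own proof is terse here --- it proves local finiteness and invokes \cref{prop:locfinFKS} for the formula, leaving continuity essentially implicit --- and the uniformity issue you raise only truly disappears under an extra hypothesis such as finitely many distinct indecomposables, which is the situation in the paper's applications. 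But the thing centrally missing from your proposal is the local-finiteness argument itself, which is what licenses the formula for $[F]$ in the first place.
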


\begin{proof}
Let $\{X_i\}_{i \in I}$ and $\{X'_j \}_{j \in J}$ be two collections of indecomposables for $\cC$ and $\cC'$ respectively. 
We want to show that $F$ is locally finite so that we can apply \cref{prop:locfinFKS}. 
Since $M$ decomposes as a c.b.l.f. direct sum of $\{X_i\}_{i \in I}$, we can extract a finite subset $I_M \subset I$ such that $x^{\bg}X_i^*(M) = 0$ for all $\bg \in \bZ^n$ and $i \notin I_M$. Similarly, we take $I'_{FM} \subset I'$ such that ${x^{\bg}X_i'}^*(FM) = 0$ for all $\bg \in \bZ^n$ and $i \notin I'_{FM}$. Fix $i' \in I'_{FM}$, $\bg' \in \bZ^n$ and $i \in I_{M}$. Consider the subset $Z \in \bZ^{n}$ such that
\[
x^{\bg}X^*_i(M) x^{\bg'}{X'_j}^*(F(x^{\bg}X_i)) \neq 0,
\]
for all $\bg \in Z$. Since $F$ preserves the $\bZ^n$-grading, we obtain 
\[
 x^{\bg'}{X'_j}^*(F(x^{\bg}X_i)) = x^{\bg' - \bg}{X_j'}^*(FX_i).
\]
For $\bg \ll 0$, we have $x^{\bg}X^*_i(M) = 0$ since $M$ is cone bounded. Similarly for $\bg' - \bg \ll 0$, we have $x^{\bg' - \bg}{X_j'}^*(FX_i) = 0$. Thus, $Z$ is finite. 
\end{proof}

\subsubsection{An example with modules}

Here $\Bbbk$ is a field. 
We say that a $\bZ^n$-graded $\Bbbk$-vector space $M = \bigoplus_{\bg \in \bZ^{n}} M_\bg$  is \emph{c.b.l.f. dimensional} if $\dim M_{\bg} < \infty$ for all $\bg$, and if there exists a cone $C_M$ compatible with $\prec$, and $\be \in \bZ^n$, such that $M_\bg = 0$ whenever $\bg - \be \notin C_M$. We call such $C_M$ a \emph{bounding cone of~$M$} and $\be$ is the \emph{minimal degree} of $M$. Note that $M$ is c.b.l.f. dimensional if and only if
\[
\gdim(M) := \sum_{\bg \in \bZ^n} \dim(M_\bg) x^{\bg} \in x^{\be} \bZ_{C_M}\llbracket x_1, \dots, x_n \rrbracket \subset \bZ\pp{x_1,\dots,x_n},
\]
for some $\be \in \bZ^n$ and $C_M$ compatible with $\prec$. 
Similarly, a $\bZ^n$-graded $R$-module $M$ is \emph{c.b.l.f. generated} (cone bounded, locally finitely generated) if there is a collection of homogeneous elements $\{x_i\}_{i \in I} \subset M$ generating $M = \sum_{i \in I} Rx_i$ such that $d_{\bg} := \# \{x_i | \deg(x_i) = \bg\}  < \infty$ and $d_\bg = 0$ whenever $\bg - \be \notin C_M$.

\smallskip

Let $R$ be a c.b.l.f. dimensional, $\bZ^n$-graded $\Bbbk$-algebra. Consider the category $R\amod$ of $\bZ^n$-graded $R$-modules with degree zero maps. Let $R\modlf$ be the full subcategory of $R\amod$ consisting of c.b.l.f. dimensional modules. Similarly, $R\pmodlfg$ is the full subcategory of c.b.l.f. generated, projective $R$-modules. There are embeddings of categories:
\[
R\pmodlfg \subset R\modlf \subset R\amod.
\]

\begin{prop}
Both $R\pmodlfg$ and $R\modlf$ are c.b.l. additive categories. In addition, $R\pmodlfg$ is c.b.l. Krull--Schmidt.
\end{prop}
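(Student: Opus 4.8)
The plan is to verify the three assertions in turn, reducing everything to the defining properties of c.b.l.f. coproducts. First I would check that $R\modlf$ is closed under c.b.l.f. direct sums: given a graded locally finite family $\{x^{\bg}(M_1^{\oplus k_{1,\bg}} \oplus \cdots \oplus M_m^{\oplus k_{m,\bg}})\}$ with each $M_j$ c.b.l.f. dimensional and bounding cone $C_j$, and with the multiplicities supported in a shifted cone $C$, one forms the coproduct in $R\amod$ (which exists, since $R\amod$ has all coproducts) and computes its graded dimension degree-by-degree. The key point is that $C + C_j$ is again a cone compatible with $\prec$, and the ``only finitely many $\bc \prec \bz$'' property of cones (the Remark after the definition of a cone) guarantees that in each fixed degree only finitely many summands contribute, so the degreewise dimension stays finite; hence the coproduct lies in $R\modlf$. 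The same computation, applied to the free modules $x^{\bg}R$ in place of the $M_j$, shows $R\pmodlfg$ is closed under c.b.l.f. direct sums, once one notes a c.b.l.f. direct sum of c.b.l.f. generated projectives is again c.b.l.f. generated and projective.

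**Biproduct property.** Next I would show these c.b.l.f. coproducts are biproducts, i.e. that the natural map $\Hom(-, \bigoplus_i Y_i) \to \prod_i \Hom(-, Y_i)$ is an isomorphism. Injectivity is automatic. For surjectivity, given $N \in R\modlf$ and a family $f_i : N \to Y_i$, I would argue that for each homogeneous element $n \in N$ (living in some fixed degree $\bg$) the images $f_i(n)$ are nonzero for only finitely many $i$: indeed $f_i(n) \in (Y_i)_{\bg}$, and since the multiplicities of the summands $Y_i$ below any fixed degree are finite by the cone-boundedness condition, only finitely many $Y_i$ have a nonzero component in degree $\bg$. Thus the $f_i$ assemble into a well-defined map $N \to \bigoplus_i Y_i$ landing in the actual direct sum (= coproduct computed inside $R\amod$, which agrees with the submodule of the product consisting of finitely-supported tuples in each degree). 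This gives the inverse, so the coproduct is a biproduct; combined with the closure from the first step, both $R\pmodlfg$ and $R\modlf$ are c.b.l. additive. (One also checks $R\amod$, hence both subcategories, is $\mathrm{Ab}$-enriched and strictly $\bZ^n$-graded; strictness is clear since $x^{\bg}M \cong M$ would force $\gdim M$ to be invariant under multiplication by $x^{\bg}$, impossible for a nonzero element of $\bZ\pp{x_1,\dots,x_n}$ unless $\bg = 0$.)

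**Krull--Schmidt for projectives.** Finally, for the claim that $R\pmodlfg$ is c.b.l. Krull--Schmidt, I would use that $R$ itself is c.b.l.f. dimensional: write $1 = e_1 + \cdots + e_t$ as a sum of primitive orthogonal idempotents in $R_0$ (possible because $R_0$ is finite-dimensional, being the degree-$0$ part of a c.b.l.f. dimensional algebra), so $R = \bigoplus_k Re_k$ decomposes the free module into indecomposable projectives $P_k := Re_k$, each with local endomorphism ring $e_k R e_k$ (local since $\End_{R\amod}(P_k) = e_k R e_k$ has a nilpotent maximal ideal: the positive-degree part is nilpotent by cone-boundedness, wait — here one only needs that $e_kRe_k$ is local, which follows from $e_k$ primitive together with finiteness of $e_kRe_k$ in each degree). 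Any c.b.l.f. generated projective $Q$ is a direct summand of a c.b.l.f. direct sum of grading shifts of the $P_k$'s; by the uniqueness-of-decomposition result \cref{thm:cbluniquedecomp} (more precisely its consequence, idempotent completeness plus cancellation in a locally Krull--Schmidt category) $Q$ itself decomposes as a c.b.l.f. direct sum of the $x^{\bg}P_k$. The main obstacle here is the standard subtlety in infinite Krull--Schmidt arguments — showing that a direct summand of an infinite c.b.l.f. sum of indecomposables is again such a sum — but this is exactly what is packaged in \cref{thm:cbluniquedecomp} and the surrounding corollaries, so the argument reduces to citing them once the indecomposable projectives and their local endomorphism rings are identified. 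I expect the degreewise finiteness bookkeeping in the first two steps to be the most delicate part to write carefully, though conceptually routine.
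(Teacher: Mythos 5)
Your proof takes essentially the same route as the paper: the biproduct property follows from the observation that cone-boundedness forces all but finitely many of the components $f_i(z)$ to vanish for a fixed homogeneous $z$ (your degree-counting is a dual phrasing of the paper's contradiction argument on degrees of images), and the Krull--Schmidt claim is reduced to locality of the endomorphism rings of the indecomposable projectives $P_k = Re_k$. Two small points are worth tightening. First, $\End_{R\amod}(P_k)$ is $e_k R_0 e_k$ (morphisms in $R\amod$ preserve the $\bZ^n$-degree), not $e_k R e_k$; the latter is the enriched $\HOM$, and the aside about a ``nilpotent positive-degree part'' is a red herring since the honest endomorphism ring is already finite-dimensional and local because $e_k$ is a primitive idempotent of the finite-dimensional algebra $R_0$. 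Second, \cref{thm:cbluniquedecomp} is a uniqueness result and so does not by itself yield that a direct summand of a c.b.l.f. direct sum of indecomposables is again such a sum; that existence step is genuinely a separate point, though the paper's own proof leaves it equally implicit and jumps straight to verifying locality of endomorphism rings.
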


\begin{proof}
They clearly both have all c.b.l.f. coproducts.
For the first claim, we can suppose without loosing generality that we are in $R\modlf$.
 Let
\[
M = \coprod_{\bg \in \bZ^n} x^\bg \left( X_1^{\oplus k_{1,\bg}} \oplus \cdots \oplus X_m^{\oplus k_{m, \bg}} \right)
\]
be a c.b.l.f. coproduct. We will show it is a biproduct in $R\modlf$ (and actually even in $R\amod$).
Let $Z$ be an object in $R\modlf$ with arrows $f_{i,\bg} : Z \rightarrow x^\bg X_{i}^{\oplus k_{i,\bg}}$. If the number of non-zero arrows is finite, then we are in the classical case of additive categories and we are done. Therefore, we suppose that for each $\bg$ there is $\bg' \succ \bg$ and $i'$ such that $f_{i', \bg'} \neq 0$. 
Choose any homogeneous element $z \in Z$. We claim that for almost all $(i, \bg)$ then $f_{i,\bg}(z) = 0$. Indeed,  suppose that it does not hold. Then for each $\bg$ we can find $0 \neq f_{i', \bg'}(z) \in X_{i'}$ with $\bg' \succ \bg$. But $\deg(f_{i',\bg'}(z)) = \deg(z) - \bg'$ which is a contradiction since $X_{i'}$ is cone bounded. Therefore, $\prod_{(i,\bg)} f_{i, \bg}$ defines a map $Z \rightarrow \coprod_i Y_i$, and we get a bijection
\[
\Hom(Z, \coprod_i Y_i) \xrightarrow{\cong} \prod_i \Hom(Z, Y_i).
\]
Since $Z$ is arbitrary, this concludes the proof of the first claim.

\smallskip

For the second claim, we only need to show that endomorphism rings of indecomposable modules are local. 
Let $P := Re$ be an indecomposable projective module given by projection by a primitive idempotent $e \in R$. Then $\End(P) = R_0e$ is an indecomposable, finite-dimensional $R_0$-module. Take $f \in  R_0e$. By Fitting's lemma, $f$ is either $e$ or $0$, and therefore either $f$  or $f-e$ is invertible, implying that $\End(P)$ is local.
Moreover, since $R_0$ is finite-dimensional, there is only finitely many distinct indecomposable objects.
\end{proof}

\begin{cor}
Let $R$ be as above. We extract from the collection of primitive idempotents $\{e_i\}_{i \in I}$ a full sub-collection $\{e_j\}_{j \in J \subset I} \subset \{e_i\}_{i \in I}$ of non-equivalent idempotents. Then we obtain
\[
K_0^\oplus(R\pmodlfg) \cong \bigoplus_{j \in J} \bZ\pp{x_1, \dots, x_n} \cdot [Re_j].
\]
\end{cor}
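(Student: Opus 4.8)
The plan is to deduce this corollary directly from \cref{prop:cblkrullschmidt} together with the preceding proposition establishing that $R\pmodlfg$ is c.b.l. Krull--Schmidt. First I would recall that, by the classical theory of projective modules over a graded algebra, every indecomposable projective object of $R\pmodlfg$ is of the form $x^{\bg} R e_i$ for some $\bg \in \bZ^n$ and some primitive idempotent $e_i$, and that $x^{\bg}Re_i \cong x^{\bg'}Re_{i'}$ if and only if $\bg = \bg'$ and $e_i, e_{i'}$ are equivalent idempotents (using strictness of the $\bZ^n$-grading, which forces the degree shift to be recorded, and the fact that $R$ is c.b.l.f. dimensional so $R_0$ is finite-dimensional). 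Hence a complete set of pairwise \emph{distinct} (in the sense of \cref{sec:finitemultigradings}, i.e. up to grading shift and isomorphism) indecomposable objects of $R\pmodlfg$ is exactly $\{Re_j\}_{j \in J}$, where $\{e_j\}_{j \in J}$ is the chosen sub-collection of non-equivalent primitive idempotents.

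Next I would simply invoke \cref{prop:cblkrullschmidt} with this collection $\{Re_j\}_{j \in J}$ as the set of pairwise distinct indecomposables, which immediately gives
\[
K_0^\oplus(R\pmodlfg) \cong \bigoplus_{j \in J} \bZ\pp{x_1, \dots, x_n} [Re_j],
\]
as desired. The only points requiring a sentence of justification are: (a) that $R\pmodlfg$ really is c.b.l. Krull--Schmidt (done in the preceding proposition); and (b) the classification of indecomposable projectives just described, for which I would cite the standard lifting of idempotents / Fitting's lemma argument already used in that proposition, noting that a c.b.l.f. generated projective decomposes as a c.b.l.f. direct sum of shifts of the $Re_i$ because it is a summand of a c.b.l.f. free module.

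The main (minor) obstacle is the bookkeeping around the $\bZ^n$-grading: one must be careful that ``distinct'' means ``not isomorphic after any grading shift'', so that the $\bZ\pp{x_1,\dots,x_n}$-coefficients genuinely absorb all the shifts $x^{\bg}$, and that passing from the full collection $\{e_i\}_{i\in I}$ to the non-equivalent sub-collection $\{e_j\}_{j\in J}$ does not lose any indecomposable object up to shift and isomorphism. I do not expect any real difficulty beyond this; the corollary is essentially a specialization of \cref{prop:cblkrullschmidt}.
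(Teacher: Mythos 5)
Your proposal is correct and takes the same route the paper (implicitly) does: the corollary is stated without proof as an immediate specialization of \cref{prop:cblkrullschmidt}, given that the preceding proposition has established $R\pmodlfg$ is c.b.l.\ Krull--Schmidt with distinct indecomposables $\{Re_j\}_{j\in J}$. Your extra remarks on the classification of shifted indecomposable projectives are a reasonable filling-in of what the paper leaves unsaid.
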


\begin{rem}
In general, one can construct products in a category of graded modules by taking the direct sum of the products degree-wise. More explicitly, let $\{M_i\}_{i \in I}$ be a collection of $\bZ^n$-graded $R$-modules, then we put
\begin{align*}
\prod_{i \in I} M_i  &:= \bigoplus_{\bg \in \bZ^n} \left(\prod_{i \in I} M_i\right)_\bg,
&
\left(\prod_{i \in I} M_i\right)_\bg &:= \prod_{i\in I} (M_i)_\bg.
\end{align*}
If we take a c.b.l.f. product of c.b.l.f. modules, then each homogeneous part of the product is a finite product. In particular, the product coincides with the coproduct, as expected. 
\end{rem}

\begin{exe}
Recall the $\bZ \times \bZ^2$-graded algebra $R$ from \cref{sec:mainexintro} and its simple module $L$. We choose the additive order on $\bZ \times \bZ^2$ given by $(0,0,0) \prec (0,1,0) \prec (0,0,1) \prec (1,0,0)$. Therefore, it means that if we identify $h$ with $(1,0,0)$, $q$ with $(0,1,0)$ and $\lambda$ with $(0,0,1)$, then we have
\begin{align*}
\frac{1 + h \lambda^2}{1-q^2} &= (1+h\lambda^2)(1+q^2+q^4+\cdots), \\
\frac{1-q^2}{1+h\lambda^2} &= (1-q^2)(1-h\lambda^2 +h^2\lambda^{4} - \cdots),
\end{align*}
in $\bZ\pp{h,q,\lambda}$. 
We observe that $R$ is a c.b.l.f. dimensional $\bZ \times \bZ^2$-graded $\Bbbk$-algebra. Thus, we have
\[
K_0^\oplus(R\pmodlfg) \cong \bZ\pp{h,q,\lambda} \cdot [R].
\]
However, note that $L \notin R\pmodlfg$, and thus \cref{eq:changeofbasisintro} does not make sense in this context.
\end{exe}



\section{Asymptotic Grothendieck group for abelian categories}\label{sec:topG0}

We refine the notions presented in~\cite[A.2]{naissevaz1}. 

Let $\cC$ be an abelian category $\cC$ and suppose there is a sequence of arrows
\[
X = F_0 \xrightarrow{f_0} F_1 \xrightarrow{f_1} F_2 \xrightarrow{f_2} \cdots \xrightarrow{f_m}  F_{m+1} = Y.  
\]
For each $f_i$ there is an exact sequence
\[
0 \rightarrow \ker{f_i} \rightarrow F_i \xrightarrow{f_i} F_{i+1} \rightarrow \cok{f_i} \rightarrow 0,
\]
so that in $G_0(\cC)$ we obtain
\[
[F_{i+1}] - [F_i] = [\cok{f_i}] - [\ker{f_i}],
\]
and thus
\[
[Y] - [X] = \sum_{i = 0}^m  [\cok{f_i}] - [\ker{f_i}].
\]
We would like to take this phenomenon to the limit. For example, if there is an object $Y \in \cC$ admitting a filtration
\[
X \subset \cdots \subset F_r \subset F_{r-1} \subset \cdots \subset F_1 \subset F_0 = Y,
\]
with $X \cong \varprojlim_{i} X_i$, then we would like to have $[Y] = [X] + \sum_{r \geq 0} [F_r/F_{r+1}]$. However, the usual notion of Grothendieck group does not allow to have such equalities, thus we need to take a quotient of it. 
Before doing so, let us introduce the notation
\[
\sum_{i \in I} [X_i] := [\bigoplus_{i \in I}  X_i] \in G_0(\cC),
\]
whenever $\bigoplus_{i \in I} X_i \in \cC$. 
 We use this notation distributively with respect to the addition and subtraction in $G_0(\cC)$. 
 
 \subsection{Derived limits and colimits}
 
The \emph{filtered limit }
 \[
X := \lim\bigl( \cdots \xrightarrow{f_1} X_1 \xrightarrow{f_0} X_0\bigr),
 \]
 is the universal object in $\cC$ with canonical maps $\imath_r : X \rightarrow X_r$ such that
 \[
 \begin{tikzcd}
 X_{r+1} \ar{rr}{f_r}&& X_{r} \\
 &X \ar[swap]{ur}{\imath_{r+1}} \ar{ul}{\imath r}&
 \end{tikzcd}
 \]
 commutes for all $r \geq 0$. The filtered colimit is the dual. In general, filtereted limits (resp. filtered colimits) only preserve kernels (resp. cokernels). Recall Grothendieck axiom's \cite{grothendieck} for an abelian category $\cA$:
 \begin{itemize}
 \item $\cA$ is AB3 (resp. AB3*) if it admits arbitrary coproducts (resp. products);
 \item $\cA$ is AB4 (resp. AB4*) if it is AB3 (resp. AB3*) and coproducts (resp. products) preserve short exact sequences;
 \item $\cA$ is AB5 (resp. AB5*) if it is AB3 (resp. AB3*) and filtered colimits (resp. limits) preserve short exact sequences.
 \end{itemize}
 
 \smallskip
 
 In an AB3* category, one can construct the filtered limit
\[
X := \lim \bigl( \cdots X_2 \xrightarrow{f_1} X_1 \xrightarrow{f_0} X_0\bigr),
\]
as the kernel fitting in the following exact sequence:
\begin{equation} \label{eq:limaskernel}
0 \rightarrow  X \rightarrow \prod_{r\geq 0} X_r \xrightarrow{1-f_\bullet} \prod_{r \geq 0} X_r \rightarrow \tilde X \rightarrow 0,
\end{equation}
where
\[
1 - f_\bullet :=
\begin{pmatrix}
1      & -f_0       &  0 & 0 & \cdots \\
0 & 1       & -f_1 & 0 & \cdots  \\
0      & 0  & 1 & -f_2  & \cdots \\
\vdots & \ddots & \ddots & \ddots & \ddots
\end{pmatrix}
\]
The cokernel $\tilde X =: \lim^1$ is called the \emph{derived limit}, and is in general non-zero. Yet, whenever $\cA$ is AB4* and $\tilde X \cong 0$, then the filtered limit preserve cokernels. More precisely, given a short exact sequence of filtered limits
\[
0 \rightarrow X_\bullet \rightarrow Y_\bullet \rightarrow Z_\bullet \rightarrow 0,
\]
then one obtains a long exact sequence by the snake lemma
\[
\begin{tikzcd}
0 \ar{r} & X \ar{r} & Y \ar{r} & Z \ar{lld} & \\
 &\tilde X \ar{r} & \tilde Y \ar{r} & \tilde Z \ar{r} & 0.
\end{tikzcd}
\]
Therefore, there is a short exact sequence $0 \rightarrow X \rightarrow Y \rightarrow Z \rightarrow 0$ whenever $\tilde X \cong 0$. 

\smallskip

Dually, if the category is AB3, one obtains that the filtered colimit
\[
Y := \colim \bigl( Y_0 \xrightarrow{g_0} Y_1 \xrightarrow{g_1} \cdots \bigr),
\]
is the cokernel in the following short exact sequence:
\[
0 \rightarrow \tilde Y \rightarrow \coprod_{r\geq 0} Y_r \xrightarrow{1-g_\bullet}   \coprod_{r\geq 0} Y_r \rightarrow Y \rightarrow 0.
\]
The colimit preserves kernels whenever the derived colimit ${\colim}^1 := \tilde Y$ is zero and $\cA$ is AB4. 

\smallskip

In a category of abelian groups
, then ${\colim}^1$ is always zero, and filtered colimits commute with kernels (the category is actually AB5). 
There exists a condition that ensures a limit preserves cokernels is such a category:

\begin{defn}
A filtered limit 
\[
X := \lim \bigl( \cdots  \xrightarrow{f_1} X_1 \xrightarrow{f_0} X_0\bigr),
\]
is said to respect the \emph{Mittag--Leffler condition} if for all $i \geq 0$ there exists $k_i > i$ such that
\[
\Image(X_r \rightarrow X_i) = \Image(X_{k_i } \rightarrow X_i),
\]
for all $r \geq k_i$. Dually, a filtered colimit respects the Mittag--Leffler condition if the coimages stabilize. 
\end{defn}

\begin{defn}
We say that an abelian category is \emph{Mittag--Leffler} if all filtered limits (resp. colimits) that respect the Mittag--Leffler condition preserve cokernels (resp. kernels).
\end{defn}

For example, categories of abelian groups (and consequently of modules) 
 are Mittag--Leffler~\cite[Proposition 3.5.7]{weibel}.

\begin{rem}
As pointed out by Neeman~\cite{neemancounterex}, not all abelian categories are Mittag--Leffler.
\end{rem}

\begin{lem}\label{lem:limepiisML}
Consider a filtered limit 
\[
\lim \bigl( \cdots  \overset{f_1}\twoheadrightarrow X_1 \overset{f_0}\twoheadrightarrow X_0\bigr).
\]
If all $f_r$ are epimorphisms, then the filtered limit respects the Mittag--Leffler condition.
\end{lem}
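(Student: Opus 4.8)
The plan is to show directly that the Mittag--Leffler condition holds for the tower of epimorphisms. Recall the condition asks that for each $i \geq 0$ there is some $k_i > i$ for which the image $\Image(X_r \rightarrow X_i)$ stabilizes for all $r \geq k_i$, where $X_r \rightarrow X_i$ denotes the composite $f_i \circ f_{i+1} \circ \cdots \circ f_{r-1}$. The key observation is that a composite of epimorphisms is an epimorphism, so each of these transition maps $X_r \rightarrow X_i$ is itself an epimorphism.

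The main step is then immediate: since $X_r \rightarrow X_i$ is an epimorphism, its image is all of $X_i$, i.e. $\Image(X_r \rightarrow X_i) = X_i$ for every $r > i$. Hence the images do not merely stabilize eventually — they are already constant, equal to $X_i$, starting from $r = i+1$. So one may simply take $k_i = i+1$ and the required equality $\Image(X_r \rightarrow X_i) = \Image(X_{k_i} \rightarrow X_i) = X_i$ holds for all $r \geq k_i$.

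I do not anticipate a genuine obstacle here; the only point that needs a word of care is the claim that a composite of epimorphisms in an abelian category is again an epimorphism, which is a standard fact (if $p \circ q$ factors through a subobject, then so does $p$, using that $q$ is epic). With that in hand the lemma follows in one line. If one wants to be thorough, one can also note the dual remark for colimits of monomorphisms, though the statement as given only concerns the limit case.

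Concretely, the write-up will be: observe each transition map $X_r \twoheadrightarrow X_i$ (for $r > i$) is a composite of the epimorphisms $f_{r-1}, \dots, f_i$, hence an epimorphism; therefore $\Image(X_r \to X_i) = X_i$ for all $r > i$; taking $k_i = i+1$ gives the Mittag--Leffler condition. This is short enough that no intermediate lemmas beyond "composites of epimorphisms are epimorphisms" are needed.
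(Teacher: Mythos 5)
Your proof is correct and is essentially the same one-line argument the paper gives: since every transition map $X_r \to X_\ell$ is a composite of the epimorphisms $f_\ell, \dots, f_{r-1}$ and hence itself an epimorphism, its image is all of $X_\ell$, so the images are constant from the start.
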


\begin{proof}
We have $\Image(X_r \twoheadrightarrow X_\ell) \cong X_\ell$ for all $r > \ell \geq 0$.
\end{proof}

\subsection{Asymptotic Grothendieck group}

In this subsection, we assume that $\cC \subset \cA$ is a full subcategory of some abelian category $\cA$ being both AB4 and AB4*. Therefore a filtered limit (resp. colimit) of cokernels (resp. of kernels) is a cokernel (resp. a kernel) if and only if the derived limit (resp. derived colimit) of the corresponding kernels (resp. cokernels) vanishes. 

\begin{defn}
We say that a filtered limit, derived limit, filtered colimit or derived colimit of $\{f_r\}_{r \geq 0}$ is \emph{acceptable} if $\bigoplus_{r \geq 0} \cok f_r \in \cC$ and $\bigoplus_{r \geq 0} \ker f_r \in \cC$. 
\end{defn}

\begin{defn}\label{def:topG0}
The \emph{asymptotic Grothendieck group} of $\cC$ is 
\[
\bGO(\cC) := G_0(\cC)/\bigl(J(\cC) + J^*(\cC)\bigr),
\]
where $J(\cC)$ is generated by
\[
[Y] - [\tilde Y]  = [X] + \sum_{r \geq 0} \bigl( [C_r] - [K_r] \bigr), 
\]
for $Y, \tilde Y, X, \bigoplus_{r \geq 0} C_r, \bigoplus_{r \geq 0} K_r \in \cC$,
whenever 
\begin{align*}
Y \cong \colim \bigl(X \xrightarrow{f_0} Y_1 \xrightarrow{f_1} Y_2 \xrightarrow{f_2} \cdots \bigr), \\
\tilde Y \cong {\colim}^1 \bigl(X \xrightarrow{f_0} Y_1 \xrightarrow{f_1} Y_2 \xrightarrow{f_2} \cdots \bigr), 
\end{align*}
are acceptable,
and  
\begin{align}\label{eq:locGOasympt}
[K_r] &= [\ker f_r] \in G_0(\cC), & [C_r] &= [\cok f_r] \in G_0(\cC), 
\end{align}
for all $r \geq 0$. Dually, $J^*(\cC)$ is generated by 
\[
[X] - [\tilde X] = [Y] + \sum_{r \geq 0} \bigl( [K_r] - [C_r] \bigr),
\]
whenever again all objects and direct sums are in $\cC$ and 
\begin{align*}
X \cong \lim \bigl( \cdots \xrightarrow{f_2}  X_2 \xrightarrow{f_1} X_1  \xrightarrow{f_0} Y \bigr), \\
\tilde X \cong {\lim}^1 \bigl(   \cdots \xrightarrow{f_2}  X_2 \xrightarrow{f_1} X_1  \xrightarrow{f_0} Y\bigr),
\end{align*}
are acceptable, 
and we have \cref{eq:locGOasympt}.
\end{defn}


\subsection{Locally Jordan--H\"older categories}

%

Let $X$ be an object in an abelian category $\cC$. We call $\bN$-filtration a sequence of sub-objects $X_i \hookrightarrow X$ for all $i \in \bN$ such that $X_{i+1} \hookrightarrow X_{i}$ and $X_0 = X$. We write it as
\[
\cdots \subset X_r \subset X_{r-1} \subset \cdots \subset X_1 \subset X_0 = X.
\]
Such a filtration is called \emph{Hausdorff} if the inverse limit $\varprojlim_{i} X_i \cong 0$ (i.e. the filtered limit on the inclusion maps is zero).  We say it has simple quotients if all quotient objects $X_i/X_{i+1}$ are either trivial or simple. A Hausdorff, $\bN$-filtration with simple quotients is called an \emph{$\bN$-composition series}, and we refer to the non-trivial quotient objects as \emph{composition factors}. 

\begin{defn}
Let $\{S_i\}_{i \in I}$ be a collection of 
 objects in an abelian category $\cC$. Consider an $\bN$-filtration
\[
\cdots \subset X_{r+1} \subset X_{r} \subset \cdots \subset X_1 \subset X_0 = X,
\]
such that each $X_{r}/X_{r+1} \cong  S_i$ for some $i \in I$. If $\kappa_{i}:= \#\{ r \in \bN | X_r/X_{r+1}  \cong S_i\} < \infty$, then we say that the filtration is \emph{locally finite}, and we refer to $\kappa_{i}$ as the \emph{multiplicity} of $S_i$. 
\end{defn}

Clearly, in general in an abelian category, we cannot hope to always have $\bN$-composition series. Moreover, even if we have such infinite composition series, they do not have to be essentially unique. It was given in~\cite[Definition A.20]{naissevaz1} one condition that ensures the uniqueness of such a filtration:

\begin{defn}
We say that an object $X$ in an abelian category $\cC$ is \emph{stable for the filtrations} if for all pairs of $\bN$-composition series
\begin{align*}
 0 \subset \cdots \subset X_1 \subset X_0 &= X, \\
 0 \subset \cdots \subset X'_1 \subset X'_0 &= X,
\end{align*}
then for each $i \in \bN$ there exists $k \in \bN$ such that $X'_k \subset X_i$.
\end{defn}

\begin{thm}
\label{thm:uniquefilt}
Suppose $X \in \cC$ admits two locally finite, $\bN$-composition series
\begin{align*}
0 \subset \cdots \subset X_1 \subset X_0 &= X, \\
0 \subset \cdots \subset X'_1 \subset X'_0 &= X, 
\end{align*}
with respective multiplicities $\kappa_{i}$ and $\kappa'_{j}$, and collections of  pairwise non-isomorphic compositions factors $\{S_i\}_{i\in I}$ and $\{S_j\}_{j \in J}$. 
If $X$ is stable for the filtrations, then there exists a bijection $\sigma : I \rightarrow J$ such that $S_i \cong S_{\sigma(j)}$, and $\kappa_i = \kappa'_{\sigma(i)}$.
\end{thm}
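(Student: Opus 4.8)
The plan is to reduce the statement to the classical Jordan--Hölder theorem for objects of finite length, using the stability hypothesis to interleave the two given filtrations cofinally. For a simple object $S$ of $\cC$, write $\mu(S)$ (resp.\ $\mu'(S)$) for the number of indices $r$ with $X_r/X_{r+1}\cong S$ (resp.\ $X'_r/X'_{r+1}\cong S$); by local finiteness these are finite, and they equal $\kappa_i$ (resp.\ $\kappa'_j$) when $S\cong S_i$ (resp.\ $S\cong S_j$), and $0$ otherwise. The theorem follows once one shows $\mu(S)=\mu'(S)$ for every simple $S$: since the $S_i$ are pairwise non-isomorphic and each $\kappa_i>0$, every $S_i$ then occurs in the primed series with the same positive multiplicity, so $S_i\cong S_{\sigma(i)}$ for a unique $\sigma(i)\in J$ with $\kappa'_{\sigma(i)}=\kappa_i$; running the same argument with the two series exchanged shows $\sigma : I\to J$ is a bijection.

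So fix a simple $S$. First I would use local finiteness to pick $n\in\bN$ with $X_r/X_{r+1}\not\cong S$ for all $r\ge n$, and $m_0\in\bN$ with $X'_r/X'_{r+1}\not\cong S$ for all $r\ge m_0$. Then I invoke that $X$ is stable for the filtrations, \emph{twice}: applied to the pair $(\{X_r\}_r,\{X'_r\}_r)$ it yields $k$ with $X'_k\subseteq X_n$, and setting $m:=\max(k,m_0)$ we get $X'_m\subseteq X'_k\subseteq X_n$ with $m\ge m_0$; applied to the pair $(\{X'_r\}_r,\{X_r\}_r)$ it yields $\ell$ with $X_\ell\subseteq X'_m$, and setting $n':=\max(\ell,n+1)$ we get $X_{n'}\subseteq X_\ell\subseteq X'_m$ with $n'>n$. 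This produces the chain $X\supseteq X_n\supseteq X'_m\supseteq X_{n'}$, trapping a window of one filtration inside a window of the other.

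Each of $X/X_{n'}$, $X_n/X_{n'}$, $X'_m/X_{n'}$ and $X/X'_m$ is of finite length, since it inherits a finite composition series from the relevant $\bN$-composition series (e.g.\ the composition factors of $X/X'_m$ are $X'_0/X'_1,\dots,X'_{m-1}/X'_m$). Write $[M:S]$ for the composition multiplicity of $S$ in a finite length object $M$, which is well defined by classical Jordan--Hölder. Since all copies of $S$ in the unprimed series occur among the layers of index $<n\le n'$, we get $[X/X_{n'}:S]=\mu(S)$ and $[X_n/X_{n'}:S]=0$; as $X'_m/X_{n'}$ embeds into $X_n/X_{n'}$, also $[X'_m/X_{n'}:S]=0$. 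Additivity of $[-:S]$ on the short exact sequence
\[
0\longrightarrow X'_m/X_{n'}\longrightarrow X/X_{n'}\longrightarrow X/X'_m\longrightarrow 0
\]
then forces $[X/X'_m:S]=\mu(S)$. On the other hand $m\ge m_0$ guarantees that the composition factors $X'_0/X'_1,\dots,X'_{m-1}/X'_m$ of $X/X'_m$ already contain every copy of $S$ occurring in the primed series, so $[X/X'_m:S]=\mu'(S)$. Hence $\mu(S)=\mu'(S)$.

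The main obstacle is really just the interleaving step: the stability hypothesis is precisely what lets one trap a term of either filtration between two consecutive terms of the other, and once this cofinal interleaving is available the problem collapses to finite-length Jordan--Hölder, where no convergence issues remain. A tempting alternative --- a Schreier-type common refinement obtained by inserting $X_{r+1}+(X_r\cap X'_s)$ for all $s$ --- runs into the question of whether $\bigcap_s\bigl(X_{r+1}+(X_r\cap X'_s)\bigr)$ equals $X_{r+1}$, which need not hold in a general abelian category; the multiplicity-counting route sidesteps this entirely. One also uses, without further comment, that the classical Jordan--Hölder theorem and additivity of composition multiplicities hold for finite-length objects of any abelian category.
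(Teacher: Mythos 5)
Your proof is correct, and your approach---applying the stability hypothesis to both orderings of the pair to trap a window $X_{n'}\subseteq X'_m\subseteq X_n$, then reducing to classical Jordan--H\"older for finite-length objects via the short exact sequence $0\to X'_m/X_{n'}\to X/X_{n'}\to X/X'_m\to 0$ and additivity of $[-:S]$---is exactly what the ``stable for the filtrations'' hypothesis is designed to make possible, and is the argument the paper defers to Proposition A.25 of Naisse--Vaz. All the supporting bookkeeping checks out: the four quotients are of finite length because they inherit finite composition series, $[X_n/X_{n'}:S]=0$ forces $[X'_m/X_{n'}:S]=0$ since the latter embeds in the former, and the choices $n'>n$ and $m\ge m_0$ ensure the endpoint multiplicities equal $\mu(S)$ and $\mu'(S)$.
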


\begin{proof}
The proof is essentially the same as \cite[Proposition A.25]{naissevaz1}, and we refer to the reference for the details. 
\end{proof}



%


\begin{defn}
We say that an abelian category $\cC$ is \emph{locally AB5} if acceptable filtered colimits preserve kernels, that is if whenever
\begin{align*}
X &\cong \colim (X_0 \xrightarrow{f_0} X_1 \xrightarrow{f_1} \cdots), &
Y &\cong \colim (Y_0 \xrightarrow{g_0} Y_1 \xrightarrow{f_1} \cdots),
\end{align*}
are acceptable filtered colimits, and we have a collection of commuting maps $u_\bullet : X_\bullet \rightarrow Y_\bullet$ then
\[
\ker (X \xrightarrow{u} Y) \cong \colim \bigl( \ker(X_0 \xrightarrow{u_0} Y_0) \rightarrow \ker(X_1 \xrightarrow{u_1} Y_1) \rightarrow  \cdots \bigr).
\]
Dually, we define the notion of \emph{locally AB5*} where acceptable filtered limits preserve cokernels. 
\end{defn}

From now on, we assume again that $\cC \subset \cA$ is a full subcategory of some abelian category $\cA$ being both AB4 and AB4*. 
In particular, $\cC$ is locally AB5 (resp. AB5*) if and only if all of its acceptable derived colimits (resp.acceptable derived limits) vanish.

\begin{defn}\label{def:locJH}
Let $\cC$ be both a locally AB5 and a locally AB5* category. 
We say that $\cC$ is \emph{locally Jordan--H\"older} if each object in $\cC$ is stable for the filtration and admits a locally finite composition series. 
\end{defn}

\begin{rem}
Note that a locally Jordan--H\"older category admitting arbitrary biproducts (and thus limits and colimits) would be trivial since it would satisfies both AB5 and AB5*.
\end{rem}

\begin{exe}
Consider the category $\Bbbk\gmod$ of locally finite dimensional $\bZ$-graded $\Bbbk$-vector spaces. Since it is a subcategory of the whole category of graded $\Bbbk$-vector spaces, which is AB5, $\Bbbk\gmod$ is locally AB5. 
For the dual,
because each $X_i$ is locally finite dimensional, the limit respects a Mittag--Leffler-like condition in each degree. It allows to easily prove that $1-f_\bullet$ in \cref{eq:limaskernel} is surjective (see \cite[Proposition 4.2.3]{kochman} for the classical case), recalling that the product of graded spaces is given by direct sum of the product in each degree. 
Thus, the derived limit is zero, and the filtered limit preserves cokernels. 
\end{exe}


Let $\cC$ be a locally Jordan--H\"older category and $S \in \cC$ be a simple object. By \cref{thm:uniquefilt} we have a map 
\[
S^* : \cC \rightarrow \bN, \quad X \mapsto S^*(X)
\]
that counts the multiplicity of $S$ in the essentially unique $\bN$-composition series of $X$. 
By \cite[Proposition A.31]{naissevaz1}, we know that whenever there is a short exact sequence
\[
0 \rightarrow X \rightarrow Y \rightarrow Z \rightarrow 0,
\]
in $\cC$, then $S^*(Y) = S^*(X) + S^*(Z)$. Therefore, we obtain an induced map
\[
S^* : G_0(\cC) \rightarrow \bN. 
\]

\begin{lem}\label{lem:Sstarcolim0}
Let $\cC$ be a locally Jordan--H\"older category. 
Consider an acceptable filtered colimit
\[
X \cong \colim( X_0 \xrightarrow{f_0} X_1 \xrightarrow{f_1} \cdots ).
\]
If $S^*(X_r) = 0$ for all $r \geq 0$, then $S^*(X) = 0$. 
\end{lem}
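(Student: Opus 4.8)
The plan is to reduce the statement to a finiteness fact about the finite-length quotients of $X$. Since $\cC$ is locally Jordan--Hölder and $X \in \cC$, fix the locally finite $\bN$-composition series $X = X^{(0)} \supseteq X^{(1)} \supseteq X^{(2)} \supseteq \cdots$ of $X$, which is essentially unique because $X$ is stable for the filtration, so that $S^*(X) = \#\{\, j \geq 0 : X^{(j)}/X^{(j+1)} \cong S \,\}$. For each $j$ the quotient $Q_j := X/X^{(j)}$ has finite length, with composition factors among $X^{(0)}/X^{(1)}, \dots, X^{(j-1)}/X^{(j)}$; hence $S^*(Q_j)$ is non-decreasing in $j$ and $\sup_j S^*(Q_j) = S^*(X)$. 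It therefore suffices to prove that $S^*(Q_j) = 0$ for all $j$.

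The crux is to show that each $Q_j$ is a quotient of some $X_r$. Let $\phi_r : X_r \to X$ be the structure maps of the colimit, so $\phi_{r+1} \circ f_r = \phi_r$; the canonical map $\coprod_{r \geq 0} X_r \to X$ is an epimorphism (indeed $X$ is a quotient of this coproduct). Composing with the projection $\pi_j : X \twoheadrightarrow Q_j$ gives an epimorphism $\coprod_r X_r \to Q_j$, so $\sum_r \Image(\pi_j \circ \phi_r) = Q_j$. The subobjects $\Image(\pi_j \circ \phi_r) \subseteq Q_j$ form an increasing chain, and since $Q_j$ has finite length this chain stabilises; its supremum, which is $Q_j$, is therefore attained at a finite stage, so $\pi_j \circ \phi_r : X_r \to Q_j$ is an epimorphism for all $r$ large enough.

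For such an $r$, the short exact sequence $0 \to \ker(\pi_j \circ \phi_r) \to X_r \to Q_j \to 0$ lies in $\cC$, being built from a subobject and a quotient of $X_r \in \cC$. Hence additivity of the multiplicity function on short exact sequences in $\cC$ (\cite[Proposition A.31]{naissevaz1}) gives $S^*(Q_j) \leq S^*(X_r) = 0$. Thus $S^*(Q_j) = 0$ for every $j$, and consequently $S^*(X) = \sup_j S^*(Q_j) = 0$.

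I expect the only genuine subtlety to be the middle step: verifying that every finite-length quotient $Q_j$ of the filtered colimit is already a quotient of one of the terms $X_r$. This rests only on a filtered colimit being an epimorphic image of the coproduct of its terms, together with the finite length of $Q_j$, which forces the increasing chain of images inside $Q_j$ to stabilise. The standing hypotheses (in particular that $\cC$ is closed under the relevant subobjects and quotients, and that the colimit is acceptable so that $X$ itself lies in $\cC$) ensure that $S^*$ is defined on every object that occurs; the first and last steps are then routine bookkeeping with the composition series of $X$ and the additivity of $S^*$.
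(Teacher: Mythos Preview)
Your argument is correct and takes a genuinely different route from the paper's. The paper argues by contradiction: assuming $S^*(X)>0$, it picks $Y\subset Z\subset X$ with $Z/Y\cong S$, intersects with the images $\tilde X_r:=\Image(\psi_r)\subset X$, and uses simplicity of $S$ to see that each $(Z\cap\tilde X_r)/(Y\cap\tilde X_r)$ is either $0$ or $S$; the case $S$ is ruled out because it would force $S^*(X_r)>0$, and the case $0$ for all $r$ contradicts the exactness of acceptable filtered colimits (the locally AB5 hypothesis), since $\colim_r(Y\cap\tilde X_r)=Y$ and $\colim_r(Z\cap\tilde X_r)=Z$.

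By contrast, you pass to the finite-length quotients $Q_j=X/X^{(j)}$ coming from the composition series of $X$, observe that $S^*(X)=\sup_j S^*(Q_j)$, and show that each $Q_j$ is already a quotient of some $X_r$ by a stabilisation-of-images argument inside the finite-length object $Q_j$; then additivity of $S^*$ gives $S^*(Q_j)\le S^*(X_r)=0$. This avoids any appeal to the locally AB5 axiom: you only need that the canonical map $\coprod_r X_r\to X$ is an epimorphism (which holds because the filtered colimit is a cokernel of $1-f_\bullet$) and that increasing chains in a finite-length object stabilise. Your approach is therefore slightly more elementary for this particular lemma, while the paper's proof keeps closer to the ambient machinery (local AB5 is in any case part of the definition of a locally Jordan--H\"older category, so nothing is being saved globally). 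The implicit closure assumptions you flag---that kernels and quotients of morphisms between objects of $\cC$ lie in $\cC$, so that $S^*$ is defined on $K$, $Q_j$, etc.---are the same ones the paper's own proof relies on when applying $S^*$ to $\tilde X_r$, $Y\cap\tilde X_r$, and $\ker\psi_r$.
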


\begin{proof}
Suppose by contradiction that $S^*(X) > 0$, and thus that there exists $Y \subset Z \subset X \in \cC$ such that
\[
0 \rightarrow Y \rightarrow Z \rightarrow S \rightarrow 0,
\]
is a short exact sequence. Define $\tilde X_r := \Image(\psi_r) \subset X$ where $\psi_r : X_r \rightarrow X$ is the canonical inclusion map. 
We obtain a commutative diagram 
\[
\begin{tikzcd}
0 \ar{r} & Y \ar{r} & Z \ar{r} & S \ar{r} & 0 \\
0 \ar{r} & Y \cap \tilde X_r \ar{r} \ar[hookrightarrow]{u} & Z \cap \tilde X_r \ar{r}  \ar[hookrightarrow]{u}  & \tilde S_r \ar{r}  \ar[swap,hookrightarrow]{u}{s}  & 0,
\end{tikzcd}
\]
where both rows are exact, 
and the three vertical arrows are monomorphisms. 
Because $S$ is simple, $s$ is either $0$ or an epimorphism. Thus  $\tilde S_r$ is either $0$ or isomorphic to $S$.

Suppose there exists $r \geq 0$ such that $\tilde S_r \cong S$. Then we have $S^*(\tilde X_r) > 0$. Moreover, there is a short exact sequence
\[
0 \rightarrow \ker \psi_r \rightarrow X_r \rightarrow \tilde X_r \rightarrow 0,
\]
so that $S^*(X_r) = S^*(\ker \psi_i) + S^*(\tilde X_r) > 0$. This contradicts the fact that $S^*(X_r) = 0$. 

Because acceptable filtered colimits are exact, we obtain a short exact sequence
\[
0 \rightarrow \colim_{r \geq 0} (\tilde X_r \cap Y) \cong Y \rightarrow \colim_{r \geq 0} (\tilde X_r \cap Z) \cong Z \rightarrow 0 \rightarrow 0,
\]
but this is a contradiction. In conclusion, $S^*(X) = 0$.  
\end{proof}

\begin{lem}\label{lem:Sstarcolim}
Let $\cC$ be a locally Jordan--H\"older category. 
Consider an acceptable filtered colimit
\[
X \cong \colim( X_0 \xrightarrow{f_0} X_1 \xrightarrow{f_1} \cdots ).
\]
If $S^*(\cok f_r) = S^*(\ker f_r) = 0$ for all $r \geq 0$, then 
$
S^*(X) = S^*(X_0).
$
\end{lem}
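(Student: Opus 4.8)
The plan is to reduce the statement to a fact about the multiplicities $S^*(X_r)$ of the terms, and then to squeeze $S^*(X)$ between that common value from above and below, using the locally finite composition series of $X$ together with \cref{lem:Sstarcolim0}. First I would compute $S^*(X_r)$ for all $r$: factoring each transition as $f_r = \iota_r \circ \pi_r$ with $\pi_r \colon X_r \twoheadrightarrow \Image f_r$ and $\iota_r \colon \Image f_r \hookrightarrow X_{r+1}$ gives short exact sequences $0 \to \ker f_r \to X_r \to \Image f_r \to 0$ and $0 \to \Image f_r \to X_{r+1} \to \cok f_r \to 0$. Since $S^*$ is additive on short exact sequences and $S^*(\ker f_r) = S^*(\cok f_r) = 0$ by hypothesis, both sequences give $S^*(X_r) = S^*(\Image f_r) = S^*(X_{r+1})$, so $S^*(X_r) = S^*(X_0) =: n$ for all $r \ge 0$, and it remains to prove $S^*(X) = n$. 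Along the way I will use repeatedly that $S^*$ is monotone under passage to subobjects and to quotients, which is immediate from additivity and non-negativity.

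For the upper bound, suppose $S^*(X) \ge n+1$. Truncating the locally finite $\bN$-composition series of $X$ just below the $(n+1)$-st occurrence of $S$ produces a subobject $A \subseteq X$ with $X/A$ of finite length and $S^*(X/A) = n+1$. Writing $\psi_r \colon X_r \to X$ for the canonical maps and setting $A_r := \psi_r^{-1}(A)$, the induced maps $X_r/A_r \to X/A$ are monomorphisms (as $A_r = \psi_r^{-1}(A)$), and they are compatible with the transition maps $X_r/A_r \to X_{r+1}/A_{r+1}$, which are therefore monomorphisms too. Thus $(X_r/A_r)_r$ is an ascending chain of subobjects of the finite-length object $X/A$, and its union is all of $X/A$ because $X = \bigcup_r \Image \psi_r$ for a filtered colimit; by the ascending chain condition there is $N$ with $X_N/A_N \cong X/A$. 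But $X_N/A_N$ is a quotient of $X_N$, so $n = S^*(X_N) \ge S^*(X_N/A_N) = n+1$, a contradiction. Hence $S^*(X) \le n$.

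For the lower bound, put $g_r := f_{r-1} \circ \cdots \circ f_0 \colon X_0 \to X_r$, so $\psi_0 = \psi_r \circ g_r$. Using the exact sequences $0 \to \ker g_r \to \ker g_{r+1} \to (\Image g_r \cap \ker f_r) \to 0$ together with $\Image g_r \cap \ker f_r \hookrightarrow \ker f_r$, an induction on $r$ (base case $\ker g_1 = \ker f_0$) shows $S^*(\ker g_r) = 0$ for all $r$. The system $(\ker g_r)_r$, with the inclusions as transition maps, is the kernel of the map from the constant system $X_0$ to the system $(X_\bullet)$; since both colimits are acceptable, locally AB5 identifies $\ker \psi_0 \cong \colim_r \ker g_r$, and this colimit is itself acceptable because the transition maps are monomorphisms and $\bigoplus_r (\ker g_{r+1}/\ker g_r)$ embeds into $\bigoplus_r \ker f_r \in \cC$. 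Then \cref{lem:Sstarcolim0} gives $S^*(\ker \psi_0) = 0$, so $S^*(\Image \psi_0) = S^*(X_0) - S^*(\ker \psi_0) = n$; since $\Image \psi_0 \subseteq X$ this forces $S^*(X) \ge n$. Combining the two bounds yields $S^*(X) = n = S^*(X_0)$.

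The step I expect to be the main obstacle is the bookkeeping needed to legitimately invoke \cref{lem:Sstarcolim0} and locally AB5 for the auxiliary systems $(\ker g_r)_r$ and $(X_r/A_r)_r$: one must verify the acceptability conditions and that the various subobjects, quotients, and their direct sums stay inside $\cC$. The purely category-theoretic inputs — that a filtered colimit is the union of the images of its structure maps, that finite-length objects satisfy the ascending chain condition, and that $S^*$ is additive on short exact sequences — are routine, so the real work is confined to checking these membership and exactness hypotheses within the locally Jordan--H\"older framework.
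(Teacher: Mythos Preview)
Your proof is correct. The lower-bound half coincides with the paper's treatment of $\ker\psi_0$: the induction giving $S^*(\ker g_r)=0$, the identification $\ker\psi_0\cong\colim_r\ker g_r$ via locally AB5, and the appeal to \cref{lem:Sstarcolim0} are exactly what the paper does. The difference is in the other direction. The paper does not split into two inequalities; it uses the four-term exact sequence $0\to\ker\psi_0\to X_0\to X\to\cok\psi_0\to 0$ to reduce to $S^*(\ker\psi_0)=S^*(\cok\psi_0)=0$, and then handles $\cok\psi_0$ symmetrically to the kernel (the same inductive diagram shows $S^*(\cok g_r)=0$, colimits commute with cokernels, and \cref{lem:Sstarcolim0} applies again). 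Your upper bound instead truncates a composition series of $X$ to get a finite-length quotient $X/A$, runs the ascending chain $X_r/A_r\hookrightarrow X/A$ until it stabilises, and reads off a contradiction. This is a genuinely different and somewhat more elementary route: it avoids a second invocation of \cref{lem:Sstarcolim0} and the acceptability bookkeeping for the cokernel system, at the cost of introducing the auxiliary subobject $A$. The paper's approach is more uniform (kernel and cokernel treated in parallel), while yours isolates the finite-length content more explicitly.
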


\begin{proof}
Let $\psi_0 : X_0 \rightarrow X$ be the canonical map. 
The exact sequence 
\[
0 \rightarrow \ker(\psi_0) \rightarrow X_0 \rightarrow X \rightarrow \cok(\psi_0) \rightarrow 0,
\]
shows that $S^*(X) - S^*(X_0) = S^*(\cok(\psi_0)) - S^*(\ker(\psi_0))$. We will show that $S^*(\cok \psi_0) = S^*(\ker \psi_0) = 0$. 
We construct a commutative diagram
\[
\begin{tikzcd}[column sep = 0ex]
&\ker f_0 \ar[hookrightarrow]{dr}  \ar[hookrightarrow]{dl} &&&&\cok f_1& 
\\
\ker f_{01} \ar[hookrightarrow]{rr}   \ar[twoheadrightarrow]{dr} &&X_0 \ar{rr}{f_{01}} \ar[swap]{dr}{f_0} &&X_2 \ar[twoheadrightarrow]{ur}  \ar[twoheadrightarrow]{rr} && \cok f_{01}  \ar[twoheadrightarrow]{ul}
\\
&K  \ar[hookrightarrow]{dr}  \ar[hookrightarrow]{rr} &&X_1  \ar[swap]{ur}{f_1}  \ar[twoheadrightarrow]{rr}  \ar[twoheadrightarrow]{dr} &&C  \ar[hookrightarrow]{ur}&
\\
&&\ker f_1  \ar[hookrightarrow]{ur} &&\cok f_0  \ar[twoheadrightarrow]{ur} &&
\end{tikzcd}
\]
where $K := \Image(\ker f_{01} \rightarrow X_1)$ and $C := \Image(X_1 \rightarrow \cok f_{01})$.
Since $K \subset \ker f_1$ we have $S^*(K) \leq S^*(\ker f_1) = 0$. Moreover, the sequence
\[
0 \rightarrow 
\ker f_0  \hookrightarrow \ker f_{01} \twoheadrightarrow K \rightarrow 0,
\]
is exact, so that $S^*(\ker f_{01}) = S^*(\ker f_0) + S^*(K) = 0$. Similarly, we obtain $S^*(\cok f_{01}) = 0$. 
Applying the same reasoning recursively with $X_0 \rightarrow X_{r} \rightarrow X_{r+1}$ shows that $S^*(\cok(X_0 \rightarrow X_r)) = S^*(\ker(X_0 \rightarrow X_r)) = 0$. 
Moreover, since $\cC$ is locally AB5, we have
\[
\ker \psi_0 \cong \colim_{r\geq 0}(\ker(X_0 \rightarrow X_r)).
\]
Therefore, by \cref{lem:Sstarcolim0} we obtain $S^*(\ker \psi_0) = 0$. Similarly, we have $S^*(\cok \psi_0) = 0$, concluding the proof. 
 \end{proof}

\begin{lem}\label{lem:Sstarlim0}
Let $\cC$ be a locally Jordan--H\"older category. 
Consider an acceptable filtered limit
\[
X \cong \lim( \cdots \xrightarrow{f_1} X_1 \xrightarrow{f_0} X_0 ).
\]
If $S^*(X_r) = 0$ for all $r \geq 0$, then
$S^*(X) = 0$.
\end{lem}

\begin{proof}
Suppose by contradiction that $S^*(X) > 0$, and thus there exists $Y \subset Z \subset X$ such that
\[
0 \rightarrow Y \rightarrow Z \rightarrow S \rightarrow 0, 
\]
is a short exact sequence. Let $\phi_r : X \rightarrow X_i$ be the canonical map. 
We write $\tilde Y_r := \Image(Y \rightarrow X \xrightarrow{\phi_r} X_r)$, and similarly for $\tilde Z_r$. We obtain a commutative diagram
\[
\begin{tikzcd}
0 \ar{r} & Y \ar{r} \ar[twoheadrightarrow]{d} & Z \ar{r}  \ar[twoheadrightarrow]{d} & S \ar{r}  \ar[twoheadrightarrow]{d}{s} & 0 \\
0 \ar{r} & \tilde Y_r \ar{r} & \tilde Z_r \ar{r} & \tilde S_r \ar{r} & 0
\end{tikzcd}
\]
where the rows are exact and the vertical maps are epimorphisms. Because $S$ is simple, $\tilde S_r$ is either $0$ or isomorphic to $S$.
If $\tilde S_r \cong S$ then the filtration
\[
\tilde Y_r \subset \tilde Z_r \subset X_r,
\]
gives $S^*(X_r) > 0$. Thus $\tilde S_r \cong 0$ for all $r \geq 0$.  
Then, because $\cC$ is locally AB5* we obtain an exact sequence
\[
\begin{tikzcd}
0 \ar{r} & \lim_{r\geq 0} \tilde Y_r \cong Y  \ar{r} & \lim_{r\geq 0} \tilde Z_r \cong Z \ar{r} & 0 \ar{r} & 0,
\end{tikzcd}
\]
which is a contradiction. 
\end{proof}

\begin{lem}\label{lem:Sstarlim}
Let $\cC$ be a locally Jordan--H\"older category. 
Consider an acceptable filtered limit
\[
X \cong \lim( \cdots \xrightarrow{f_1} X_1 \xrightarrow{f_0} X_0 ).
\]
If $S^*(\cok f_r) = S^*(\ker f_r) = 0$ for all $r \geq 0$, then 
$
S^*(X) = S^*(X_0).
$
\end{lem}

\begin{proof}
The proof is dual to the one of \cref{lem:Sstarcolim}, replacing \cref{lem:Sstarcolim0} with \cref{lem:Sstarlim0}, and omitted.
\end{proof}

%

\begin{lem}\label{lem:JHS0trivial}
Let $\cC$ be a locally Jordan--H\"older category and $X \in \cC$. If $S^*(X) = 0$ for all simple object $S \in \cC$, then $X \cong 0$. 
\end{lem}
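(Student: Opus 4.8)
The statement claims that in a locally Jordan--Hölder category $\cC$, if $S^*(X) = 0$ for every simple object $S$, then $X \cong 0$. The plan is to use the defining property of a locally Jordan--Hölder category directly: every object admits a locally finite $\bN$-composition series.

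First I would take such an $\bN$-composition series for $X$, say
\[
\cdots \subset X_{r+1} \subset X_r \subset \cdots \subset X_1 \subset X_0 = X,
\]
which is Hausdorff (so $\varprojlim_r X_r \cong 0$) and has simple (or trivial) quotients $X_r/X_{r+1}$. The hypothesis $S^*(X) = 0$ for all simple $S$, combined with \cref{thm:uniquefilt} (which guarantees that the multiplicities $\kappa_i$ are well-defined and independent of the chosen composition series), forces every multiplicity $\kappa_i$ to vanish. Hence there can be no non-trivial composition factor: each quotient $X_r/X_{r+1}$ must be the zero object. This means $X_{r+1} \cong X_r$ for all $r$, so in particular $X \cong X_r$ for every $r$.

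Next I would exploit the Hausdorff condition. Since $X_r \cong X = X_0$ compatibly with the inclusions (all the transition maps $X_{r+1} \hookrightarrow X_r$ are isomorphisms), the inverse system $\cdots \to X_1 \to X_0$ is essentially the constant system on $X$, so $\varprojlim_r X_r \cong X$. But the filtration being an $\bN$-composition series means precisely that $\varprojlim_r X_r \cong 0$. Therefore $X \cong 0$, as desired.

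The main obstacle — really the only subtlety — is making sure the multiplicities are legitimately forced to zero: one needs $S^*$ to count exactly the composition factors of the (essentially unique) composition series, which is exactly the content of \cref{thm:uniquefilt} together with the definition of $S^*$ given just before \cref{lem:Sstarcolim0}. Once that is in hand, the argument is purely formal. I would also note in passing that the conclusion is immediate if $X$ itself has finite length, so the genuine content is in handling the infinite (but locally finite) composition series via the Hausdorff condition.
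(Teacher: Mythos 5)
Your proof is correct and takes essentially the same route as the paper, just spelled out in full: the paper's proof is a terse one-liner asserting that $X$ can have no non-trivial (simple) subquotient and hence must vanish, which is precisely what your argument makes rigorous by showing that all composition factors vanish, so the $\bN$-composition series has all transition maps isomorphisms, and the Hausdorff condition $\varprojlim_r X_r \cong 0$ then forces $X \cong 0$.
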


\begin{proof}
If $S^*(X) = 0$ for all $X \in \cC$, then it means $X$ has no non-trivial subobject, thus $X \cong 0$. 
\end{proof}

\begin{lem}\label{lem:JHsameSiso}
Let $\cC$ be a locally Jordan--H\"older category. Consider a morphism
\[
f : X \rightarrow Y.
\]
If $f$ is a monomorphism or an epimorphism and if $S^*(X) = S^*(Y)$ for all simple object $S \in \cC$, then $f$ is an isomorphism. 
\end{lem}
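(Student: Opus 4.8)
The plan is to reduce the statement to \cref{lem:JHS0trivial} using the additivity of the multiplicity functions $S^*$ on short exact sequences recalled just before \cref{lem:Sstarcolim0}. The monomorphism and epimorphism cases are formally dual, so I would spell out one and indicate the other.

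Suppose first that $f$ is a monomorphism. Forming its cokernel gives a short exact sequence
\[
0 \to X \xrightarrow{f} Y \to \cok f \to 0
\]
in $\cC$ (here one uses that $\cok f$ again lies in $\cC$, which is the one bit of bookkeeping to check in the locally Jordan--H\"older setting). By additivity, $S^*(Y) = S^*(X) + S^*(\cok f)$ for every simple object $S \in \cC$, and the hypothesis $S^*(X) = S^*(Y)$ forces $S^*(\cok f) = 0$ for all simple $S$. By \cref{lem:JHS0trivial} this gives $\cok f \cong 0$, so $f$ is an epimorphism as well; being simultaneously a monomorphism and an epimorphism in an abelian category, $f$ is an isomorphism. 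If instead $f$ is an epimorphism, I would argue dually with the short exact sequence $0 \to \ker f \to X \xrightarrow{f} Y \to 0$: additivity yields $S^*(\ker f) = S^*(X) - S^*(Y) = 0$ for every simple $S$, so $\ker f \cong 0$ by \cref{lem:JHS0trivial}, hence $f$ is a monomorphism and therefore an isomorphism.

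I do not expect any genuine obstacle. The only points deserving a word of care are that $\cok f$ (resp. $\ker f$) lies in $\cC$ so that $S^*$ is defined on it, and that the implication ``monomorphism and epimorphism $\Rightarrow$ isomorphism'' is applied in the ambient abelian category; both are immediate in the situation considered, so the argument is essentially the two displayed exact sequences together with \cref{lem:JHS0trivial}.
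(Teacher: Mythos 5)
Your proof is correct and follows the same route as the paper's: for a monomorphism, form the short exact sequence $0 \to X \to Y \to \cok f \to 0$, use additivity of $S^*$ to conclude $S^*(\cok f) = 0$ for all simple $S$, and apply \cref{lem:JHS0trivial} to get $\cok f \cong 0$; the epimorphism case is dual. Your extra remark that one should check $\cok f$ (resp.\ $\ker f$) lies in $\cC$ is a reasonable point of care, though the paper treats $\cC$ as abelian so this is automatic.
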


\begin{proof}
Suppose $f$ is a monomorphism, then we have a short exact sequence
\[
0 \rightarrow X \rightarrow Y \rightarrow \cok f \rightarrow 0.
\]
Therefore, we have $S^*(Y) = S^*(X) + S^*(\cok f)$ for all simple object $S \in \cC$. Thus, $S^*(\cok f) = 0$, and $\cok f = 0$ by \cref{lem:JHS0trivial}. The case where $f$ is an epimorphism is similar.
\end{proof}

\begin{thm}\label{thm:locallyJHG0}
Let $\cC$ be a locally Jordan--H\"older category with a full collection of pairwise non-isomorphic simple objects $\{S_i\}_{i \in I}$. If $\cC$ admits locally finite direct sums of $\{S_i\}_{i \in I}$, then its asymptotic Grothendieck group is a free $\bZ$-module
\[
\bGO(\cC) \cong \prod_{i \in I} \bZ \cdot[S_i],
\]
with basis $\{[S_i]\}_{i \in I}$.
Moreover, we have
\[
[X] = \sum_{i\in I} S_i^*(X) [S_i] \in \bGO(\cC),
\]
for all $X \in \cC$.
\end{thm}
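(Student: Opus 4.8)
The plan is to build the isomorphism explicitly out of the multiplicity functions. Since $S_i^*$ is additive on short exact sequences (as recalled above), it defines a group homomorphism $S_i^*\colon G_0(\cC)\to\bZ$ for each $i\in I$, and these assemble into
\[
\Phi_0\colon G_0(\cC)\longrightarrow\prod_{i\in I}\bZ[S_i],\qquad [X]-[Y]\longmapsto\sum_{i\in I}\bigl(S_i^*(X)-S_i^*(Y)\bigr)[S_i].
\]
I would prove: \emph{(a)} $\Phi_0$ is surjective; \emph{(b)} $J(\cC)+J^*(\cC)\subseteq\ker\Phi_0$, so that $\Phi_0$ descends to $\bGO(\cC)$; and \emph{(c)} $[X]=\sum_{i}S_i^*(X)[S_i]$ in $\bGO(\cC)$ for every $X\in\cC$. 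Given these, \emph{(c)} yields the reverse inclusion $\ker\Phi_0\subseteq J(\cC)+J^*(\cC)$: if $S_i^*(A)=S_i^*(B)$ for all $i$, then by \emph{(c)} both $[A]$ and $[B]$ are congruent to $[\bigoplus_i S_i^{\oplus S_i^*(A)}]$ modulo $J(\cC)+J^*(\cC)$, hence $[A]-[B]\in J(\cC)+J^*(\cC)$; therefore the induced map $\bGO(\cC)=G_0(\cC)/(J(\cC)+J^*(\cC))\to\prod_i\bZ[S_i]$ is an isomorphism, and \emph{(c)} is exactly the second assertion of the theorem.

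For \emph{(a)}, write $m_i=m_i^+-m_i^-$ with $m_i^\pm\in\bN$ and put $A:=\bigoplus_i S_i^{\oplus m_i^+}$, $B:=\bigoplus_i S_i^{\oplus m_i^-}$, which lie in $\cC$ by the standing hypothesis; each carries an evident locally finite $\bN$-composition series, so $S_j^*(A)=m_j^+$ and $S_j^*(B)=m_j^-$ by \cref{thm:uniquefilt}, whence $\Phi_0([A]-[B])=\sum_i m_i[S_i]$. For \emph{(c)}, fix $X\in\cC$ and, using \cref{def:locJH}, a locally finite Hausdorff $\bN$-composition series $\cdots\subset X_{r+1}\subset X_r\subset\cdots\subset X_0=X$; reading the inclusions as maps $f_r\colon X_{r+1}\hookrightarrow X_r$ one has $\ker f_r=0$, $\cok f_r=X_r/X_{r+1}$ (simple or zero), and $\lim(\cdots\to X_1\to X_0)\cong 0$. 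Since $\bigoplus_r\cok f_r\cong\bigoplus_i S_i^{\oplus S_i^*(X)}\in\cC$, this limit is acceptable, and its derived limit vanishes because $\cC$ is locally AB5*. Substituting this datum into the relation generating $J^*(\cC)$ --- with ``$X$'' there equal to $0$, ``$Y$'' to the present $X$, $K_r=0$, $C_r=X_r/X_{r+1}$ --- gives $0=[X]-\sum_r[X_r/X_{r+1}]$, i.e.\ $[X]=\sum_i S_i^*(X)[S_i]$ in $\bGO(\cC)$.

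The real work is \emph{(b)}: each $S_i^*$ must annihilate the defining relations. By \cref{def:topG0}, together with the vanishing of acceptable derived (co)limits in a locally AB5/AB5* category, this reduces to the refined additivity
\[
S_i^*(Y)=S_i^*(X)+\sum_{r\ge 0}\bigl(S_i^*(\cok f_r)-S_i^*(\ker f_r)\bigr)
\]
for every acceptable filtered colimit $Y\cong\colim(X=Y_0\xrightarrow{f_0}Y_1\xrightarrow{f_1}\cdots)$, and dually for acceptable filtered limits. To prove it, note that $\bigoplus_r\cok f_r$ and $\bigoplus_r\ker f_r$ lie in $\cC$ and hence have locally finite composition series, so there is $N$ with $S_i^*(\cok f_r)=S_i^*(\ker f_r)=0$ for all $r\ge N$. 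The cofinal sub-colimit $Y\cong\colim(Y_N\to Y_{N+1}\to\cdots)$ is again acceptable, so \cref{lem:Sstarcolim} gives $S_i^*(Y)=S_i^*(Y_N)$, while the classical telescoping of $S_i^*$ along the finite chain $Y_0\to\cdots\to Y_N$ gives $S_i^*(Y_N)=S_i^*(Y_0)+\sum_{r=0}^{N-1}\bigl(S_i^*(\cok f_r)-S_i^*(\ker f_r)\bigr)$; adding the vanishing tail terms yields the identity, and \cref{lem:Sstarlim} treats the limit case verbatim.

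I expect the technical friction to sit entirely in \emph{(b)}, notably in checking that a cofinal truncation of an acceptable filtered (co)limit stays acceptable --- this uses that $\cC$ is closed under the subobjects and quotients occurring here, which is the case in all the examples and is implicit in the formulation of locally AB5/AB5* --- and one should also keep track of the smallness needed for the infinite direct sums $\bigoplus_i S_i^{\oplus m_i}$ to admit $\bN$-composition series. Granting these points, $\Phi_0$ descends to the desired isomorphism $\bGO(\cC)\cong\prod_{i\in I}\bZ[S_i]$, under which $[X]\mapsto\sum_i S_i^*(X)[S_i]$.
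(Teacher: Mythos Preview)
Your proof is correct and follows essentially the same route as the paper: the heart of the argument is your step \emph{(b)}, which matches the paper's ``claim'' verbatim --- use acceptability to see that $S_i^*(\cok f_r)=S_i^*(\ker f_r)=0$ for $r\gg 0$, invoke \cref{lem:Sstarcolim} (resp.\ \cref{lem:Sstarlim}) on the cofinal truncation to get $S_i^*(Y)=S_i^*(F_r)$, and telescope the finite part. Your organisation into \emph{(a)}, \emph{(b)}, \emph{(c)} is in fact cleaner than the paper's: the paper compresses your \emph{(c)} into the single sentence ``$J(\cC)+J^*(\cC)$ is generated by the relation $[X]=\sum_i S_i^*(X)[S_i]$'' without spelling out, as you do, that this relation itself lies in $J^*(\cC)$ via the Hausdorff composition series read as an acceptable limit with vanishing derived limit. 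The concern you flag about cofinal truncations remaining acceptable is shared by the paper (it silently applies \cref{lem:Sstarcolim} to $\colim(F_r\to F_{r+1}\to\cdots)$), so you are not missing anything the paper supplies.
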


\begin{proof}
We claim that $J(\cC) \subset \ker S^*$ and $J^*(\cC) \subset \ker S^*$,  so that there is an induced map $S^* : \bGO(\cC) \rightarrow \bN$. In particular any relation obtained from a filtered limit or colimit in $ \bGO(\cC)$ can be obtained from a relation obtained by decomposing everything in sums of $[S_i]$. Thus $J(\cC) + J^*(\cC)$ is generated by the relation $[X] =  \sum_{i \in I} S_i^*(X)[S_i]$. 

We now prove our claim for $J(\cC) \subset \ker S^*$, the dual case being similar and left to the reader. Consider an acceptable filtered colimit
\[
Y \cong \colim \bigl( X = F_0 \xrightarrow{f_0} F_1 \xrightarrow{f_1} \cdots \bigr),
\]
in $\cC$ and $\{C_r, K_r \in \cC\}_{r\geq 0}$, such that $\bigoplus_r C_r, \bigoplus_r K_r \in \cC$, and $[C_r] = [\cok f_r]$, $[K_r] = [\ker f_r]$ for all $r\geq 0$. We want to show that
\begin{equation}\label{eq:SstarGO}
S^*(Y) - S^*(X) = S^*(\bigoplus_{r\geq 0} C_r) - S^*(\bigoplus_{r \geq 0} K_r).
\end{equation}
We have $S^*(\bigoplus_r C_r) = \sum_r S^*(C_r) \in \bN$, and thus $S^*(C_r) = 0$ for $r \gg 0$. The same applies for $\ker f_r, \cok f_r$ and $K_r$.
 Therefore, we obtain that
\begin{equation}\label{eq:SstartcoprodCr}
S^*(\bigoplus_r C_r) = S^*(\bigoplus_r \cok f_r), \quad S^*(\bigoplus_r K_r) = S^*(\bigoplus_r \ker f_r).
\end{equation}
Moreover, still for $r \gg 0$, we have that 
\begin{equation}\label{eq:SstarYFr}
S^*(Y) = S^*(F_r),
\end{equation}
by \cref{lem:Sstarcolim}, since
 \[
 Y \cong \colim\bigl(F_r \xrightarrow{f_{r}} F_{r+1} \xrightarrow{f_{r+1}} \cdots \bigr). 
 \]
We also have
\begin{equation}\label{eq:SstarXFr}
S^*(F_r) - S^*(X) = \sum_{i = 0}^{r-1} S^*(\cok f_i) - S^*(\ker f_i).
\end{equation}
Putting together \cref{eq:SstartcoprodCr}, \cref{eq:SstarYFr} and \cref{eq:SstarXFr} gives \cref{eq:SstarGO}.
\end{proof}

\begin{cor}
Let $\cC$ be a finite length abelian category. If $\cC$ is Mittag--Leffler, then the projection induces an isomorphism
$
G_0(\cC) \cong \bGO(\cC).
$
\end{cor}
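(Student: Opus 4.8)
The plan is to show that the two families of relations defining $\bGO(\cC)$ are already trivial inside $G_0(\cC)$. There is always a canonical surjection $G_0(\cC)\twoheadrightarrow\bGO(\cC)=G_0(\cC)/\bigl(J(\cC)+J^*(\cC)\bigr)$, so it suffices to prove $J(\cC)=J^*(\cC)=0$ in $G_0(\cC)$. The guiding idea is that in a finite length category an acceptable filtered (co)limit cannot be ``genuinely infinite''.

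The first step is an elementary observation: if $\{M_r\}_{r\ge 0}$ are objects of $\cC$ whose coproduct $\bigoplus_{r\ge 0}M_r$, formed in the ambient AB4 category $\cA$, happens to lie in $\cC$, then $M_r=0$ for all but finitely many $r$; otherwise the partial coproducts $\bigoplus_{r\le 0}M_r\subset\bigoplus_{r\le 1}M_r\subset\cdots$ form a strictly increasing chain of subobjects with successive quotients the $M_r$, so $\bigoplus_r M_r$ would have infinite length, contradicting that it lies in the finite length category $\cC$. Applied to a generator of $J(\cC)$ — an acceptable filtered colimit $Y\cong\colim(X=F_0\xrightarrow{f_0}F_1\xrightarrow{f_1}\cdots)$ with $\bigoplus_r\cok f_r\in\cC$ and $\bigoplus_r\ker f_r\in\cC$ — this yields an $N$ with $\ker f_r=\cok f_r=0$, hence $f_r$ an isomorphism, for all $r\ge N$; by the universal property of the colimit, $Y\cong F_N$. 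Dually, a generator of $J^*(\cC)$ comes from an acceptable filtered limit with transition maps eventually isomorphisms, whence $X\cong X_N$.

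The main step is to check that the derived (co)limit vanishes in these situations, and this is exactly where the Mittag--Leffler hypothesis is used. Since the transition maps are isomorphisms beyond $N$, the images (equivalently, coimages) of the structure maps stabilise, so the colimit (resp. limit) respects the Mittag--Leffler condition, in the spirit of \cref{lem:limepiisML}; as $\cC$ is Mittag--Leffler, the colimit preserves kernels (resp. the limit preserves cokernels), which is to say $\tilde Y=0$ (resp. $\tilde X=0$). For the inverse limit one can also argue directly: $\lim^1$ of a tower is unchanged by deleting finitely many initial terms, and $\lim^1$ of a tower whose transition maps are all isomorphisms vanishes, since such a tower is isomorphic to a constant one, for which $1-f_\bullet$ on $\prod_r X_r$ is split epimorphic (a section sends $(z_j)_j$ to the tuple whose $r$-th entry is $-\sum_{j<r}z_j$).

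Finally, with $\tilde Y=0$ and $Y\cong F_N$, and using $[C_r]=[\cok f_r]=0$, $[K_r]=[\ker f_r]=0$ for $r\ge N$, the defining relation of the chosen generator of $J(\cC)$ becomes
\[
[F_N]=[X]+\sum_{r=0}^{N-1}\bigl([\cok f_r]-[\ker f_r]\bigr),
\]
which is precisely the identity obtained in $G_0(\cC)$ by splicing the exact sequences $0\to\ker f_r\to F_r\to F_{r+1}\to\cok f_r\to 0$ for $0\le r\le N-1$; hence $J(\cC)=0$, and the dual computation (with $X\cong X_N$ and $\tilde X=0$) gives $J^*(\cC)=0$. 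Therefore $\bGO(\cC)=G_0(\cC)$. The one genuinely non-formal point is the vanishing of $\tilde X$ and $\tilde Y$; the rest is the finite-length bookkeeping of the first two steps.
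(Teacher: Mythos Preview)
Your proof is correct and takes a genuinely different route from the paper's intended one. The corollary sits immediately after \cref{thm:locallyJHG0}, and the paper's implicit argument is to observe that a finite length Mittag--Leffler category is locally Jordan--H\"older, then use the key step from that theorem's proof: each multiplicity map $S^*$ annihilates $J(\cC)$ and $J^*(\cC)$, so $J(\cC)+J^*(\cC)\subset\bigcap_S\ker S^*=0$ in $G_0(\cC)\cong\bigoplus_i\bZ[S_i]$.

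Your argument instead avoids the $S^*$--machinery entirely and works at the level of the defining relations: the finite length hypothesis forces $\bigoplus_r\cok f_r$ and $\bigoplus_r\ker f_r$ to have only finitely many nonzero summands, so any acceptable system is eventually constant and the infinite relation collapses to a finite one already valid in $G_0$. This is more elementary and transparent for this particular statement, and it makes explicit exactly where the Mittag--Leffler hypothesis enters (the vanishing of $\tilde Y$). The paper's route, on the other hand, situates the result inside the general locally Jordan--H\"older framework, which is what one wants for the later c.b.l.f.\ generalizations. One small remark: your direct section argument for $\tilde X=0$ is clean and does not need the ML hypothesis; for $\tilde Y=0$ you do invoke it, which is appropriate since showing $1-f_\bullet$ is monic on an infinite coproduct in a merely AB4 ambient category is not automatic.
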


\begin{proof}
All filtered limits and colimits respect the Mittag-Leffler condition in a finite length abelian category (see \cref{lem:finitelengthlim}). Thus, $\cC$ is locally Jordan--H\"older. 
\end{proof}

\begin{defn}
Let $\cC$ and $\cC'$ be two locally Jordan--H\"older categories with full collections of simple objects $\{S_i\}_{i \in I} \subset \cC$ and $\{S_j'\}_{j \in J}$.
We say that a functor $F : \cC \rightarrow \cC'$ is \emph{locally finite} if for all $X \in \cC$ and $j \in J$, there exists a finite subset $I_{X,j} \subset I$ such that
\[
S_i^*(X){S'_j}^* (FS_i) = 0,
\]
for all $i \notin I_{X,j}$.
\end{defn}

\begin{prop}\label{prop:lfexactinducemap}
Let $F : \cC \rightarrow \cC'$ be an exact functor  between two locally Jordan--H\"older categories with collections of simple objects $\{S_i\}_{i \in I} \subset \cC$ and $\{S_j'\}_{j \in J}$. If $F$ is locally finite, then there is an induced map
\[
[F] : \bGO(\cC) \rightarrow \bGO(\cC),
\]
given by $[F][X] := [F(X)]$. 
\end{prop}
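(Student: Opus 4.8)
Since $F$ is exact it preserves short exact sequences, hence induces a group homomorphism $[F]_0\colon G_0(\cC)\to G_0(\cC')$, $[X]\mapsto[F(X)]$. By \cref{thm:locallyJHG0} applied to $\cC'$ we may identify $\bGO(\cC')$ with $\prod_{j\in J}\bZ[S'_j]$ so that the canonical surjection $G_0(\cC')\twoheadrightarrow\bGO(\cC')$ becomes $[A]\mapsto(S_j'^*(A))_{j\in J}$; in particular its kernel $J(\cC')+J^*(\cC')$ equals $\bigcap_{j}\ker(S_j'^*\colon G_0(\cC')\to\bZ)$. Therefore $[F]_0$ descends to a map $\bGO(\cC)\to\bGO(\cC')$ if and only if, for every simple $S'\in\cC'$, the additive functional $\mu_{S'}\colon G_0(\cC)\to\bZ$, $\mu_{S'}([X]):=S'^*(F(X))$, annihilates $J(\cC)+J^*(\cC)$; and once this holds the $S_j'$-coordinate of $[F][X]$ equals $S_j'^*(F(X))$, so $[F][X]=[F(X)]$ as required. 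Thus the whole statement reduces to checking that each $\mu_{S'}$ factors through $\bGO(\cC)$.

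The plan is to prove the pointwise identity
\[
S'^*(F(X))=\sum_{i\in I}S_i^*(X)\,S'^*(F(S_i))\qquad\text{for all }X\in\cC,
\]
in which the right-hand sum is finite by the local finiteness of $F$. Granting it, $\mu_{S'}$ coincides with the locally finite $\bZ$-linear combination $\sum_{i\in I}S'^*(F(S_i))\,S_i^*(-)$ of the multiplicity functionals $S_i^*\colon G_0(\cC)\to\bZ$, each of which kills $J(\cC)+J^*(\cC)$ (this is the claim established inside the proof of \cref{thm:locallyJHG0}, valid in any locally Jordan--H\"older category); hence $\mu_{S'}$ kills $J(\cC)+J^*(\cC)$ and we are done.

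It remains to prove the displayed identity, which is the heart of the matter. Fix $X$, a simple $S'\in\cC'$, and a locally finite $\bN$-composition series $\cdots\subset X_2\subset X_1\subset X_0=X$ with $X_r/X_{r+1}\cong S_{i(r)}$. By local finiteness of $F$ there is an $m$ with $S'^*(F(S_{i(r)}))=0$ for all $r\ge m$. Applying the exact functor $F$ to $0\to X_m\to X\to X/X_m\to 0$ and to the finite filtration of $X/X_m$ (with factors $S_{i(0)},\dots,S_{i(m-1)}$) gives, by additivity of $S'^*$,
\[
S'^*(F(X))=S'^*(F(X_m))+\sum_{r<m}S'^*(F(S_{i(r)}))=S'^*(F(X_m))+\sum_{i\in I}S_i^*(X)\,S'^*(F(S_i)),
\]
so the identity is equivalent to $S'^*(F(X_m))=0$. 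Now $X_m$ carries a locally finite $\bN$-composition series all of whose factors $T$ satisfy $S'^*(F(T))=0$; writing $X_m\cong\lim_r X_m/X_{m+r}$ (an acceptable filtered limit with epimorphic transition maps, Mittag--Leffler by \cref{lem:limepiisML}) and arguing as in the proof of \cref{lem:Sstarlim0} --- using exactness of $F$ and the local AB5* behaviour of $\cC'$ --- one obtains $S'^*(F(X_m))=0$; equivalently one may run the dual colimit argument modelled on \cref{lem:Sstarcolim0}. The main obstacle is precisely this last step: controlling the $S'$-multiplicity of $F$ applied to an infinite ``tail'' filtration, where exactness must be combined with the local finiteness of $F$ --- the role played by \cref{prop:locfinFKS} in the split setting --- to force the tail to contribute nothing.
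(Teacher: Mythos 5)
Your proof follows essentially the same two-stage reduction as the paper's. You reduce to the pointwise identity $S_j'^*(FX)=\sum_{i\in I}S_i^*(X)\,S_j'^*(FS_i)$, split $X$ into a finite head $X/X_m$ (carrying every occurrence of the finitely many $S_i$ with $S_i^*(X)S_j'^*(FS_i)\neq 0$) and an infinite tail $X_m$, and observe that the identity reduces to $S_j'^*(FX_m)=0$. This is exactly the paper's $X'$ with $S_i^*(X')=0$ for all $i\in I_{X,j}$; your bookkeeping on the head, and the use of exactness to split off the short exact sequence $0\to X_m\to X\to X/X_m\to 0$, are correct.

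The step that you flag as ``the main obstacle'' is precisely where the paper is also terse, and your sketch does not close it. The paper asserts $S_j'^*(FX')=0$ from $I_{X',j}=\emptyset$, which on its face is circular: $I_{X',j}=\emptyset$ only says that every term $S_i^*(X')S_j'^*(FS_i)$ vanishes, i.e.\ that the right-hand side of the very identity one wants to prove for $X'$ is zero. Your proposed repair --- present $FX_m$ as an acceptable filtered limit of the $F(X_m/X_{m+r})$ and apply an argument modelled on \cref{lem:Sstarlim0} in $\cC'$ --- is the right idea but is itself incomplete: exactness of $F$ preserves finite limits only, so the isomorphism $FX_m\cong\lim_r F(X_m/X_{m+r})$ is not for free. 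Concretely, the canonical comparison map has kernel $\lim_r FX_{m+r}$, i.e.\ you need the induced filtration $\{FX_{m+r}\}_r$ of $FX_m$ to be Hausdorff, and nothing in ``exact $+$ locally finite'' gives that without a further argument (in the intended applications $F$ is a tensor functor on graded modules, which does commute with the relevant degreewise limits, but the abstract statement does not provide this). Until that is supplied, $S_j'^*(FX_m)=0$ is asserted rather than proved; to your credit you acknowledge the gap, but the proposal leaves the same crucial step open as the paper's own proof does.
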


\begin{proof}
We only need to show that for all $X \in \cC$ and $j \in J$ then
\[
{S'_j}^*(FX) = \sum_{i \in I} S_i^*(X){S'_j}^* (FS_i).
\]
Because $I_{X,j}$ is finite, we can find $X'$ such that there is a short exact sequence
\[
0 \rightarrow X' \rightarrow X \rightarrow X/X' \rightarrow 0,
\]
and $S_i^*(X') = 0$ for all $i \in I_{X,j}$. 
Moreover, if we write $\{L_r\}_{r \in R}$ for the compositions factors of $X/X'$, then we have
\[
{S_j'}^*(FX) = {S_j'}^*(FX') + \sum_{r \in R} {S_j'}^*(FL_r),
\]
since $F$ is exact and $R$ is finite. 
Moreover, for any $r \in R$ such that $L_r \cong S_i$ for $i \notin I_{X,j}$, then ${S_j'}^*(FL_R) = {S_j'}^*(FS_i) = 0$. Thus 
\[
\sum_{r \in R} {S_j'}^*(FL_R) =  \sum_{i \in I} S_i^*(X){S'_j}^* (FS_i).
\]
Finally, because $X' \subset X$ we have $I_{X',j} \subset I_{X,j} \setminus I_{X/X',j}$, and thus$I_{X',j} = \emptyset$. In particular, we obtain ${S_j'}^*(FX') = 0$. 
\end{proof}

\subsection{C.b.l. Jordan--H\"older categories}


Let $\cC$ be both a strictly $\bZ^n$-graded abelian category and a c.b.l. additive category. Its Grothendieck group $G_0(\cC)$ is a $\bZ\pp{x_1,\dots,x_n}$-module through~\eqref{eq:ZppactiononK0}. 
Mimicking the definition of a c.b.l.f. coproduct in a $\bZ^n$-graded category, there is an obvious notion of c.b.l.f. filtration.

\begin{defn}
Let $\{S_1, \dots, S_m\}$ be a finite collection of 
 objects in a $\bZ^n$-graded abelian category $\cC$. Consider an $\bN$-filtration
\[
\cdots \subset X_{r+1} \subset X_{r} \subset \cdots \subset X_1 \subset X_0 = X,
\]
such that each $X_{r}/X_{r+1} \cong x^\bg S_i$ for some $i \in \{1, \dots, m\}$ and $\bg \in \bZ^n$. If $\kappa_{i,\bg}:= \#\{ r \in \bN | X_r/X_{r+1}  \cong x^\bg S_i\} < \infty$, then we say that the filtration is \emph{graded locally finite}, and we refer to $\kappa_{i,\bg}$ as the \emph{degree $\bg$ multiplicity} of $S_i$.
We call such filtration a \emph{c.b.l.f. filtration} if in addition, there exists a cone $C$ compatible with $\prec$, and $\be \in \bZ^n$ such that $\kappa_{i,\bg} = 0$ whenever $\bg - \be \notin C$.
\end{defn}

\begin{defn}
An object $X$ admits a \emph{c.b.l.f. composition series} if it admits an $\bN$-composition series which is a c.b.l.f. filtration.
A locally Jordan--H\"older category $\cC$ being c.b.l. additive is called \emph{c.b.l.  Jordan--H\"older} if each object 
admits a c.b.l.f. composition series. 
\end{defn}

By the same proof as \cref{thm:uniquefilt} we obtain the following:

\begin{cor}\label{cor:topG0}
Let $\cC$ be a c.b.l. Jordan--H\"older category with a full collection of pairwise distinct simple objects $\{S_j\}_{j \in J}$. 
The asymptotic Grothendieck group of $\cC$ is a free $\bZ\pp{x_1, \dots, x_n}$-module 
\[
\bGO(\cC) \cong \bigoplus_{j \in J} \bZ\pp{x_1, \dots, x_n}  \cdot [S_j],
\]
with basis $\{[S_j]\}_{j \in J}$.
\end{cor}

Moreover, under the hypothesis of \cref{cor:topG0}, we endow $\bGO(\cC) $ with an $(x_1, \dots, x_n)$-adic topology, turning it into a Hausdorff topological $\bZ\pp{x_1, \dots, x_n}$-module, using 
\[
G_{\be} := \bigoplus_{j \in J} V_{\be} \cdot [S_j],
\]
as neighborhood basis of zero. 

\begin{prop}\label{prop:exactfunctoronG0}
Let $F : \cC \rightarrow \cC'$  be an exact functor between 
c.b.l. Jordan--H\"older categories. If $F$ commutes with the grading shift, then $F$ induces a continuous map
\[
\bGO(\cC) \xrightarrow{[F]} \bGO(\cC').
\]
\end{prop}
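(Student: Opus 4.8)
The plan is to reduce to two facts already established. First, \cref{cor:topG0} identifies $\bGO(\cC)\cong\bigoplus_{i\in I}\bZ\pp{x_1,\dots,x_n}[S_i]$ and $\bGO(\cC')\cong\bigoplus_{j\in J}\bZ\pp{x_1,\dots,x_n}[S'_j]$, with the $(x_1,\dots,x_n)$-adic topologies whose neighbourhood bases of $0$ are $G_{\be}=\bigoplus_i V_{\be}[S_i]$ and, on the target, $G'_{\be}=\bigoplus_j V_{\be}[S'_j]$. Second, \cref{prop:lfexactinducemap} says an exact \emph{locally finite} functor between locally Jordan--H\"older categories induces a homomorphism of asymptotic Grothendieck groups. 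Since a c.b.l. Jordan--H\"older category is in particular locally Jordan--H\"older, two things remain: verify $F$ is automatically locally finite, and verify that the resulting homomorphism $[F]$ is continuous.

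\emph{Local finiteness.} I would argue exactly as for its c.b.l. Krull--Schmidt analogue \cref{prop:addfunctorinducemap}. The simple objects of $\cC$ are precisely the grading shifts $x^{\bg}S_i$, so, fixing $X\in\cC$ and a simple object $T$ of $\cC'$, one must check that $(x^{\bg}S_i)^*(X)\cdot T^*\!\bigl(F(x^{\bg}S_i)\bigr)=0$ for all but finitely many pairs $(i,\bg)$. A c.b.l.f.\ composition series of $X$ confines the $i$ with $(x^{\bg}S_i)^*(X)\ne 0$ for some $\bg$ to a finite subset of $I$, and for each such $i$ confines the occurring degrees $\bg$ to a translate $\be_X+C_X$ of a cone compatible with $\prec$. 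Since $F$ commutes with the grading shift, $F(x^{\bg}S_i)=x^{\bg}F(S_i)$, and a c.b.l.f.\ composition series of $F(S_i)$ confines the simples occurring in it, so the set of $\bg$ for which $T$ can occur in $x^{\bg}F(S_i)$ has the form $\{\bg : \be_i-\bg\in C_i\}$ for some cone $C_i$ and $\be_i\in\bZ^n$. A translate of a cone meets such a downward set in a finite set, because only finitely many elements of a cone precede a given one (the remark in \cref{sec:formalLaurent}). Hence $F$ is locally finite, \cref{prop:lfexactinducemap} applies and produces $[F]\colon\bGO(\cC)\to\bGO(\cC')$ with $[F][X]=[F(X)]$; combining \cref{prop:lfexactinducemap} with the fact that asymptotic classes in a locally Jordan--H\"older category are detected by their simple multiplicities (\cref{thm:locallyJHG0}), one then checks that $[F]$ is $\bZ\pp{x_1,\dots,x_n}$-linear, i.e. $[F](g(\bx)[S_i])=g(\bx)[F(S_i)]$.

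\emph{Continuity.} Since $[F]$ is a homomorphism of topological abelian groups it suffices to check continuity at $0$: for each $\be\in\bZ^n$ we must find $\boldsymbol f\in\bZ^n$ with $[F](G_{\boldsymbol f})\subseteq G'_{\be}$. Let $\mu_i\in\bZ^n$ be the minimal degree of the c.b.l.f.\ object $F(S_i)\in\cC'$, so $[F]([S_i])=[F(S_i)]\in G'_{\mu_i}$ and, by $\bZ\pp{x_1,\dots,x_n}$-linearity, $[F](g(\bx)[S_i])=g(\bx)[F(S_i)]\in G'_{\boldsymbol f+\mu_i}$ whenever $g(\bx)\in V_{\boldsymbol f}$ (using $V_{\boldsymbol f}\cdot V_{\mu_i}\subseteq V_{\boldsymbol f+\mu_i}$). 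An element of $G_{\boldsymbol f}$ is a \emph{finite} sum $\sum_{i\in I_0}g_i(\bx)[S_i]$ with each $g_i\in V_{\boldsymbol f}$, so its image lies in $\sum_{i\in I_0}G'_{\boldsymbol f+\mu_i}\subseteq G'_{\boldsymbol f+\mu}$ for any common lower bound $\mu\preceq\mu_i$; taking $\boldsymbol f:=\be-\mu$, and using $V_{\be'}\subseteq V_{\boldsymbol f'}$ for $\be'\succeq\boldsymbol f'$, gives $[F](G_{\boldsymbol f})\subseteq G'_{\be}$.

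The one point that is not purely formal, and which I expect to be the main obstacle, is the existence of a \emph{uniform} lower bound $\mu$ for the family $\{\mu_i\}_{i\in I}$ of minimal degrees of the $F(S_i)$: a $\bZ\pp{x_1,\dots,x_n}$-linear map need not be continuous without it. When $\cC$ has only finitely many distinct simple objects --- which is the case in all the settings of interest here, e.g. $R\modlf$ for $R$ a c.b.l.f.\ dimensional $\bZ^n$-graded algebra with semisimple degree-zero part (\cref{prop:RposJH}) --- there is nothing to prove; in general one must see that the standing hypotheses force this boundedness, or add it (the same remark applies verbatim to \cref{prop:addfunctorinducemap}). Everything else is a routine transcription of \cref{prop:addfunctorinducemap} and \cref{prop:lfexactinducemap} to the mixed graded/asymptotic setting.
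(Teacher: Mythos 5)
Your approach is the paper's: establish that $F$ is locally finite by the same cone-boundedness argument as in \cref{prop:addfunctorinducemap}, then apply \cref{prop:lfexactinducemap}. Your transcription of that argument to the Jordan--H\"older setting is correct: for fixed $X$ and $T=x^{\bg_T}S'_{j_T}$, the degrees $\bg$ with $(x^\bg S_i)^*(X)\neq 0$ lie in a translate $\be_X+C_X$ of a cone, the degrees with $T^*(x^\bg F(S_i))\neq 0$ lie in a downward set $\{\bg\preceq \bg_T-\be_i\}$, and such an intersection is finite by the order-compatibility of $C_X$.

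The continuity discussion is where you go beyond what the paper explicitly records, and your caveat is well-placed. The paper's proof of \cref{prop:addfunctorinducemap}, to which the proof of this proposition defers, establishes only local finiteness and invokes \cref{prop:locfinFKS} to get the map; continuity of that map is asserted in the statement but not argued in the proof. You are right that continuity at $0$ requires a uniform lower bound on the minimal degrees $\mu_i$ of the objects $F(S_i)$ as $i$ ranges over the (a priori infinite) set of distinct simples: a locally finite, $\bZ\pp{x_1,\dots,x_n}$-linear map $\bigoplus_i\bZ\pp{x_1,\dots,x_n}[S_i]\to\bigoplus_j\bZ\pp{x_1,\dots,x_n}[S'_j]$ need not be continuous for the $(x_1,\dots,x_n)$-adic topologies without such a bound. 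When there are only finitely many distinct simples --- as in the strongly c.b.l. Jordan--H\"older case, and in every application the paper makes (\cref{prop:RposJH}, c.b.l.f. positive dg-algebras of \cref{def:cblfposdgalg}) --- the bound is automatic and the concern evaporates. In the generality of the statement as written, this point is left implicit; your suggested resolution, to observe the bound where it holds or add it as a hypothesis, is the right one, and the same remark applies verbatim to the paper's own proof of \cref{prop:addfunctorinducemap}.
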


\begin{proof}
The proof is almost identical to the one of \cref{prop:addfunctorinducemap}, proving that $F$ is locally finite and using \cref{prop:lfexactinducemap}.
\end{proof}


\subsection{Strongly c.b.l. Jordan--H\"older categories}

We introduce a notion inspired by the definition of mixed abelian category with a tate twist (see for example~\cite{mixedcat}).

\begin{defn}
We say that a c.b.l. Jordan--H\"older category is \emph{strongly c.b.l. Jordan--H\"older} if there is only a finite number of distinct simple objects $\{S_1, \dots, S_m\}$, and they respect $\Ext^1_\cC(S_i, x^\bg S_j) = 0$ whenever $\bg \preceq 0$. We refer to these objects as \emph{primitive simples}.
\end{defn}

We recall that $\Ext^1_\cC(A, B)$ counts the number of non-trivial extensions of $A$ by $B$, that is short exact sequences $ B \rightarrow E \rightarrow A$, up to equivalence and where $E \ncong A \oplus B$.  

\smallskip

Let $\cC$ be a strongly c.b.l. Jordan--H\"older category with fixed set of primitive simples $\{S_1, \dots, S_m\}$. 
Therefore any $X \in \cC$ admits a c.b.l.f. composition series with composition factors given by the primitive simples. We call such a filtration a \emph{primitive composition series}.
Moreover, each object~$X$ admits a unique maximal element $\deg(X) \in \bZ^n$ such that $X$ admits a primitive composition series with multiplicities $\kappa_{i,\bg+\deg(X)} = 0$ whenever $\bg \prec 0$. 

\begin{prop}\label{prop:ordercomp} 
Let $\cC$ be a strongly c.b.l. Jordan--H\"older category. Let $X$ be an object in $\cC$, with primitive composition series contained in a cone $C_X = \{\deg(X) = \bc_0, \bc_1, \bc_2, \dots\}$ with $\bc_i \prec \bc_{i+1}$. Then $X$ admits an ordered primitive composition series
\[
0 \subset \cdots \subset X_2 \subset X_1 \subset X_0 = X
\]
where $X_r/X_{r+1} \cong x^{\bc_r} \left( S_1^{\oplus \kappa_{1,c_r}} \oplus \cdots \oplus S_m^{\oplus \kappa_{m,c_r}} \right)$ is a finite direct sum of the primitive simples shifted by $\bc_r$. Moreover, this composition series is unique whenever $C_X$ is fixed.
\end{prop}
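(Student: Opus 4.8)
The plan is to build the ordered series by repeatedly ``sweeping out'' all composition factors sitting in the minimal degree. Concretely, set $\bc_0 = \deg(X)$ and let $X_1 \subset X_0 = X$ be the subobject defined as the intersection of all terms $X'_k$ (from \emph{some} primitive composition series of $X$) with the property that $X/X'_k$ has all its composition factors in degrees $\succ \bc_0$; equivalently, $X_1$ should be the largest subobject of $X$ whose composition factors all lie in degrees $\succ \bc_0$. One must first check such a largest subobject exists: because $X$ is stable for the filtrations (\cref{def:locJH}) and admits a c.b.l.f. composition series, the collection of subobjects with composition factors in degrees $\succ \bc_0$ is closed under the relevant sums, and c.b.l.f.-ness guarantees there are only finitely many composition factors in degree $\bc_0$, so the quotient $X/X_1$ is a finite-length object concentrated in degree $\bc_0$. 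Then I would argue that $X/X_1$ is semisimple: any self-extension of a simple $x^{\bc_0}S_i$ by $x^{\bc_0}S_j$ would live in $\Ext^1_\cC(x^{\bc_0}S_i, x^{\bc_0}S_j) = x^{\bc_0}\Ext^1_\cC(S_i, S_j)$, which vanishes since $0 \preceq 0$ is covered by the strongly c.b.l. Jordan--H\"older hypothesis $\Ext^1_\cC(S_i, x^\bg S_j) = 0$ for $\bg \preceq 0$. Hence $X/X_1 \cong x^{\bc_0}(S_1^{\oplus\kappa_{1,\bc_0}}\oplus\cdots\oplus S_m^{\oplus\kappa_{m,\bc_0}})$, where the multiplicities agree with those of any primitive composition series by \cref{thm:uniquefilt}.

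Next I would iterate: $X_1$ is again an object of $\cC$ (the category is closed under subobjects appearing in c.b.l.f. filtrations), its composition factors lie in $C_X \setminus \{\bc_0\}$, and $\deg(X_1) \succeq \bc_1$. If $\deg(X_1) = \bc_1$ we repeat the construction verbatim to split off the degree-$\bc_1$ layer; if $\deg(X_1) \succ \bc_1$, meaning $\kappa_{i,\bc_1} = 0$ for all $i$, the corresponding graded piece is zero and we simply move on. This produces a descending chain $\cdots \subset X_2 \subset X_1 \subset X_0 = X$ with $X_r/X_{r+1}$ of the asserted form. To see it is a genuine $\bN$-composition series I must verify it is Hausdorff, i.e. $\varprojlim_r X_r \cong 0$: any nonzero subobject $W \subset \bigcap_r X_r$ would have a nonzero composition factor $x^{\bc_k}S_i$ for some $k$, but $W \subseteq X_{k+1}$ forces all composition factors of $W$ into degrees $\succ \bc_k$, a contradiction. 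Refining each finite semisimple layer $X_r/X_{r+1}$ arbitrarily into simple quotients then yields an honest c.b.l.f. $\bN$-composition series, which is c.b.l.f. because the original one was and the $\bc_r$ are confined to the cone $C_X$.

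For uniqueness with $C_X$ fixed: suppose $0 \subset \cdots \subset X'_1 \subset X'_0 = X$ is another ordered primitive composition series for the same cone, so $X'_r/X'_{r+1}$ is the degree-$\bc_r$ direct sum of primitive simples. I claim $X'_{r} = X_{r}$ for all $r$ by induction on $r$. Since $X'_r$ has all composition factors in degrees $\succ \bc_{r-1}$ (equivalently $\succeq \bc_r$), maximality of $X_r$ among such subobjects gives $X'_r \subseteq X_r$; symmetrically $X_r \subseteq X'_r$, using that $X'_r$ is itself maximal with this property — which follows because $X/X'_r$, being an iterated extension of the semisimple degree-$\bc_0,\dots,\bc_{r-1}$ layers, has \emph{no} subobject with all factors in degrees $\succ \bc_{r-1}$ other than $0$ (again by the $\Ext^1$-vanishing, which prevents any such subobject from lifting). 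Hence $X'_r = X_r$, and the two series coincide up to the choice of refinement of each semisimple layer.

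The main obstacle I expect is the existence and maximality of the subobject $X_1$ (and its iterates): one needs to know that the poset of subobjects of $X$ with composition factors in prescribed degrees has a top element and behaves well under the c.b.l.f. hypotheses, and that passing to $X_1$ keeps us inside $\cC$ with a strictly ``higher'' minimal degree. Once that structural point is nailed down, the semisimplicity of each layer is an immediate consequence of the $\Ext^1_\cC(S_i, x^\bg S_j) = 0$ for $\bg \preceq 0$ condition, and uniqueness is a clean induction.
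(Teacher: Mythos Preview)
Your approach differs from the paper's, and the gap you yourself flag is real: you never establish that the ``largest subobject of $X$ with all composition factors in degrees $\succ \bc_0$'' exists. Directedness under finite sums does not give a maximum, and passing to a colimit over all such subobjects would require that colimit to be acceptable (so that the locally AB5 hypothesis applies), which you have no control over. Your initial description of $X_1$ as an intersection of terms from ``some'' composition series is also not well-posed. Without $X_1$ in hand, the iteration, the Hausdorff check, and the uniqueness argument (which leans on the maximality characterisation of each $X_r$) all collapse.

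The paper sidesteps this by constructing each $X_r$ directly from a \emph{given} primitive composition series $Y_\bullet$: since only finitely many factors lie in degrees $\preceq \bc_r$, one can bubble-sort those factors to the top of the filtration by repeatedly swapping adjacent factors $Y_s/Y_{s+1} \cong x^\bg S_i$ and $Y_{s+1}/Y_{s+2} \cong x^{\bg'} S_{i'}$ with $\bg' \prec \bg$. Each swap is licensed by \cref{lem:trivialdoubleext}, because the relevant extension lies in $\Ext^1_\cC(x^\bg S_i, x^{\bg'} S_{i'}) \cong \Ext^1_\cC(S_i, x^{\bg'-\bg} S_{i'}) = 0$ (here $\bg'-\bg \prec 0$). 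This produces $X_{\preceq \bc_r} \subset X$ with $X/X_{\preceq \bc_r}$ finite-length and concentrated in degrees $\preceq \bc_r$; setting $X_r := X_{\preceq \bc_r}$ gives the ordered series. So the $\Ext^1$-vanishing is used as a local reordering tool rather than as a source of a global maximal subobject. Your semisimplicity-of-layers observation is correct and compatible with this, but it is the swap lemma that actually builds the filtration.
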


In order to prove this proposition, we will need the following (most probably classical) lemma:

\begin{lem}\label{lem:trivialdoubleext}
A filtration $X_3 \subset X_2 \subset X_1$ with $X_1/X_2 \cong A$ and $X_2/X_3 \cong B$ gives rise to a commutative diagram
\[
\begin{tikzcd}
X_3 \ar[hookrightarrow]{r} \ar[equal]{d} & X_2  \ar[hookrightarrow]{d}\ar[twoheadrightarrow]{r} & B \ar[hookrightarrow]{d} \\
X_3  \ar[hookrightarrow]{r}  & X_1 \ar[twoheadrightarrow]{d} \ar[twoheadrightarrow]{r} & E \ar[twoheadrightarrow]{d} \\
& A \ar[equal]{r} & A
\end{tikzcd}
\]
where each row and column are short exact sequences. If the sequence $B \rightarrow E \rightarrow A$ splits, then there exists $X_2'$ such that $X_3 \subset X_2' \subset X_1$, and $X_1/X_2' \cong B$ and $X_2'/X_3 \cong A$.
\end{lem}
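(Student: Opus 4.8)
The plan is to work in the quotient \(E := X_1/X_3\) and exploit the order-preserving correspondence between subobjects of \(X_1\) containing \(X_3\) and subobjects of \(E\). Write \(q : X_1 \twoheadrightarrow E\) for the canonical epimorphism, so that \(\ker q = X_3\); under this correspondence \(X_2\) is the preimage of the subobject \(B \subseteq E\) appearing in the right-hand column of the diagram, and the quotient \(X_1 \twoheadrightarrow X_1/X_2\) is identified with \(X_1 \xrightarrow{q} E \twoheadrightarrow E/B \cong A\).

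Since by hypothesis the short exact sequence \(0 \to B \to E \to A \to 0\) splits, I would choose a section of \(E \twoheadrightarrow A\) and let \(A' \subseteq E\) be its image; then \(A' \xrightarrow{\sim} A\) and \(E \cong B \oplus A'\). I then set \(X_2' := q^{-1}(A')\), read as the pullback \(X_1 \times_E A'\) of \(A' \hookrightarrow E\) along \(q\). Because \(\ker q = X_3\), one has \(X_3 \subseteq X_2' \subseteq X_1\) automatically.

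It then remains to identify the two subquotients. Restricting \(q\) to \(X_2'\) yields an epimorphism \(X_2' \twoheadrightarrow A'\) with kernel \(\ker q \cap X_2' = X_3\), hence a short exact sequence \(0 \to X_3 \to X_2' \to A' \to 0\), so \(X_2'/X_3 \cong A' \cong A\). On the other hand \(q\) induces an epimorphism \(X_1/X_2' \twoheadrightarrow E/A'\) whose kernel is \(q^{-1}(A')/X_2' = 0\), hence an isomorphism \(X_1/X_2' \cong E/A'\); and \(E/A' \cong B\) since \(E \cong B \oplus A'\). Thus \(X_1/X_2' \cong B\), as desired.

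I do not expect a genuine obstacle here: the statement is a routine application of the correspondence (lattice-isomorphism) theorem for subobjects of an abelian category together with the definition of a split exact sequence. The only slightly delicate points are element-free bookkeeping — that \(q^{-1}(A')\) must be computed as the pullback \(X_1 \times_E A'\), that restricting an epimorphism to the preimage of a subobject stays epic with unchanged kernel, and that \(q\) descends to the asserted isomorphism on \(X_1/X_2'\) — all of which are standard.
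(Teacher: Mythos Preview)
Your proof is correct and takes essentially the same approach as the paper. The paper defines $X_2'$ as $\ker(X_1 \twoheadrightarrow E \to B)$ using the splitting retraction $E \to B$, while you define it as $q^{-1}(A')$ using the section; since $A' = \ker(E \to B)$ these yield the same subobject, and your verification of the subquotients is in fact more detailed than the paper's.
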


\begin{proof}
Composing the map $X_1 \twoheadrightarrow E$ with the splitting map $E \rightarrow B$ yields an epimorphism $p  : X_1 \twoheadrightarrow B$. We put $X_2' = \ker p$. Then the universal property of the kernel induces  a monomorphism $X_3 \hookrightarrow \ker p$ such that $\ker p/X_3 \cong A$.
\end{proof}

\begin{proof}[Proof of \cref{prop:ordercomp}]
We take a primitive composition series $Y_\bullet$ of $X$ and consider $\boldsymbol c_r$ for some $r \geq 0$. Since the filtration is c.b.l.f., we can extract a sub-filtration of $Y_\bullet$ containing all occurrences of the primitive simples in degree $\preceq \bc_r$. Then, thanks to \cref{lem:trivialdoubleext} we can apply a bubble sorting algorithm to bring all of these primitive simples to the beginning of the filtration, exchanging $x^{\bg }S_i \cong Y_s/Y_{s+1}$ with $x^{\bg'} S_{i'} \cong Y_{s+1}/Y_{s+2}$ whenever $\bg' \prec \bg$. Therefore we obtain a filtration
\[
 X_{\preceq \bc_r} \subset X,
\]
where $X/X_{\preceq \bc_r}$ is finite length and contains all composition factors of degree $\preceq \bc_r$. We conclude by taking the filtration given by $X_r := X_{\preceq \bc_r}$ for each $r \in \bN$.
\end{proof}

\subsection{C.b.l.f. cobaric abelian categories}\label{sec:cobaric}

Recall that a subcategory $\cA' \subset \cA$ of an abelian category is said to be \emph{thick} if for any short exact sequence $0 \rightarrow X \rightarrow Y \rightarrow Z \rightarrow 0$ in $\cA$, then $Y \in \cA'$ if and only if $X, Z \in \cA'$. 
Inspired by the definition of baric structure in~\cite{baric}, we introduce the following notion:

\begin{defn}\label{def:cobaricstruct}
A \emph{cobaric structure} on a strictly $\bZ^n$-graded abelian category $\cC$ is the datum of a pair of thick subcategory $\cC_{\preceq 0}, \cC_{\succeq 0}$ such that
\begin{itemize}
\item $x^\bg \cC_{\preceq 0} \subset \cC_{\preceq 0}$ for all $\bg \prec 0$;
\item $x ^\bg \cC_{\succeq 0} \subset \cC_{\succeq 0}$ for all $\bg \succ 0$;
\item $\Ext^1_\cC(Y,X) = 0$ whenever $Y \in \cC_{\succeq 0}$  and $X \in \cC_{\preceq 0}$;
\item for each $X \in \cC$ there is a short exact sequence
\begin{equation}\label{eq:SEScobaric}
0 \rightarrow X_{\succ 0} \rightarrow X \rightarrow X_{\preceq 0} \rightarrow 0,
\end{equation}
with $X_{\preceq 0} \in \cC_{\preceq 0}$ and $X_{\succ 0} \in \cC_{\succ 0}$,
\end{itemize}
where $\cC_{\succ 0} := \{ X \in \cC | x^{-\bz} X \in \cC_{\succeq 0} \}$ and $\bz \succ 0 \in \bZ^n$ the minimal non-zero element of $\bZ^n$.
\end{defn}

We define $\cC_{\preceq \bg} := \{ X \in \cC | x^{-\bg} \in \cC_{\preceq 0}\}$ and similarly for $\cC_{\prec \bg}, \cC_{\succeq \bg}$ and $\cC_{\succ \bg}$. 
We also put $\cC_\bg := \cC_{\preceq \bg} \cap \cC_{\succeq \bg}$. Note that $\cC_\bg$ is a semi-simple category. For this reason, and because $\cC$ is stricly $\bZ^n$-graded, we have $\Hom_\cC(Y,X) = 0$ whenever  $Y \in \cC_{\succ 0}$ and $X \in \cC_{\preceq 0}$.

\begin{lem}\label{lem:cobarictruncSES}
The assignments $\beta_{\succ \bg}X := X_{\succ \bg}$ and $\beta_{\preceq \bg}X := X_{\preceq \bg}$  are functorial.  
Moreover, there is a short exact sequence
\[
0 \rightarrow \beta_{\succ \bg} X \rightarrow X \rightarrow \beta_{\preceq \beta} X \rightarrow 0,
\]
and any similar short exact sequence is canonically isomorphic to this one. 
\end{lem}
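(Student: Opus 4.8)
The plan is to reduce everything to the case $\bg = 0$, prove uniqueness of the truncation short exact sequence first, and then extract functoriality from that uniqueness; the only genuinely non-formal input is the $\Hom$-vanishing $\Hom_\cC(Y,X) = 0$ for $Y \in \cC_{\succ 0}$ and $X \in \cC_{\preceq 0}$ recorded just before the statement, everything else being diagram chasing in an abelian category.

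First I would set up the reduction. Since $x^{-\bg}$ is inverse to the auto-equivalence $x^{\bg}$ and thick subcategories are closed under isomorphism, one has $\cC_{\preceq\bg} = x^{\bg}\cC_{\preceq 0}$ and $\cC_{\succ\bg} = x^{\bg}\cC_{\succ 0}$; so $\beta_{\succ\bg}X = x^{\bg}\beta_{\succ 0}(x^{-\bg}X)$ and $\beta_{\preceq\bg}X = x^{\bg}\beta_{\preceq 0}(x^{-\bg}X)$, and the asserted short exact sequence is obtained by applying the exact auto-equivalence $x^{\bg}$ to the defining sequence \eqref{eq:SEScobaric} for $x^{-\bg}X$. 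Hence it suffices to establish all claims for $\bg = 0$; note the $\Hom$-vanishing transports along $x^{\bg}$ to give $\Hom_\cC(Y,X) = 0$ for $Y \in \cC_{\succ\bg}$, $X \in \cC_{\preceq\bg}$, which is exactly what the general case requires.

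Next I would prove uniqueness. Suppose $0 \to Y' \to X \to Z' \to 0$ and $0 \to Y'' \to X \to Z'' \to 0$ are short exact sequences with $Y',Y'' \in \cC_{\succ 0}$ and $Z',Z'' \in \cC_{\preceq 0}$. The composite $Y'' \hookrightarrow X \twoheadrightarrow Z'$ is a morphism from $\cC_{\succ 0}$ to $\cC_{\preceq 0}$, hence zero, so $Y'' \hookrightarrow X$ factors through $Y' \hookrightarrow X$, uniquely since $Y' \hookrightarrow X$ is monic; call this factorization $u : Y'' \to Y'$. Symmetrically one gets $v : Y' \to Y''$, and $uv$, $vu$ agree with the identities after postcomposition with the monomorphisms into $X$, hence are the identities; thus $Y' \cong Y''$ compatibly with the inclusions into $X$, and passing to cokernels yields a compatible isomorphism $Z' \cong Z''$. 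Taking one of the two sequences to be \eqref{eq:SEScobaric} shows $\beta_{\succ 0}X$ and $\beta_{\preceq 0}X$ are well defined up to a unique isomorphism and proves the last clause of the lemma; thickness of $\cC_{\preceq 0}$ and $\cC_{\succeq 0}$ guarantees these objects genuinely lie in the respective subcategories.

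Finally I would deduce functoriality. For $f : X \to Y$, the composite $\beta_{\succ 0}X \hookrightarrow X \xrightarrow{f} Y \twoheadrightarrow \beta_{\preceq 0}Y$ again goes from $\cC_{\succ 0}$ to $\cC_{\preceq 0}$ and so vanishes, whence $\beta_{\succ 0}X \to Y$ factors uniquely through $\beta_{\succ 0}Y \hookrightarrow Y$; I define $\beta_{\succ 0}f$ to be that factorization and $\beta_{\preceq 0}f$ the map it induces on the cokernels, so the evident ladder between the two truncation sequences commutes. Uniqueness of the factorization immediately gives $\beta_{\succ 0}(\id_X) = \id$ and $\beta_{\succ 0}(g\circ f) = \beta_{\succ 0}g \circ \beta_{\succ 0}f$, hence the same for $\beta_{\preceq 0}$, so both assignments are functors. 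The main obstacle, such as it is, is making the uniqueness argument watertight: one must check the factorizations are truly forced and that the two maps produced compose to the identities, using nothing beyond the $\Hom$-vanishing and that the structure maps out of (resp. into) $X$ are epi (resp. mono).
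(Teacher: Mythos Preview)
Your proof is correct and follows essentially the same route as the paper: both arguments hinge on the $\Hom$-vanishing $\Hom_\cC(\cC_{\succ \bg}, \cC_{\preceq \bg}) = 0$ to force unique factorizations through the kernel $\beta_{\succ \bg}Y$ and the cokernel $\beta_{\preceq \bg}Y$. The paper's proof is terser---it writes down only the commutative ladder for functoriality and leaves the uniqueness clause and the verification of the functor axioms implicit---whereas you spell out uniqueness first and then derive functoriality from it, and you include a reduction to $\bg = 0$ that the paper omits; these are expository differences, not mathematical ones.
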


\begin{proof}
Consider $f : X \rightarrow Y$. 
Because $\Hom_\cC(\beta_{\succ \bg}X, \beta_{\preceq \bg}Y) = 0$, we obtain a commutative diagram
\[
\begin{tikzcd}
0 \ar{r}& \beta_{\succ \bg}X  \ar{r} \ar[dashed]{d}{f_0} & X \ar{r} \ar{d}{f} & \beta_{\preceq \bg} X  \ar[dashed]{d}{f_1}  \ar{r}& 0 \\
0 \ar{r}& \beta_{\succ \bg}Y \ar{r} & Y \ar{r} & \beta_{\preceq \bg} Y  \ar{r}& 0 
\end{tikzcd}
\]
where $f_0$ (resp. $f_1$) is induced by the universal property of $\beta_{\succ \bg}Y$ as kernel (resp. $\beta_{\preceq \bg}Y$ as cokernel). 
\end{proof}

\begin{lem}\label{lem:betacompab}
For $\bg \prec \bg'$ we have natural isomorphisms
\begin{align*}
\beta_{\preceq \bg} \circ \beta_{\preceq \bg'} &\cong \beta_{\preceq \bg},  &
\beta_{\preceq \bg} \circ \beta_{\succeq \bg '} &\cong   \beta_{\succeq \bg '}  \circ \beta_{\preceq \bg}  \cong 0,
\\
\beta_{\succeq \bg} \circ \beta_{\succeq \bg'} &\cong \beta_{\succeq \bg'},  &
\beta_{\succeq \bg} \circ \beta_{\preceq \bg '} &\cong  \beta_{\preceq \bg '} \circ \beta_{\succeq \bg}.
\end{align*}
\end{lem}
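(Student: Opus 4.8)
The plan is to establish all four natural isomorphisms by exploiting the defining short exact sequence $0 \to \beta_{\succ \bg}X \to X \to \beta_{\preceq \bg}X \to 0$ from \cref{lem:cobarictruncSES} together with the characterization of the truncation functors as the canonical sub/quotient with pieces in $\cC_{\succ\bg}$ and $\cC_{\preceq\bg}$. The key observation is the chain of inclusions $\cC_{\succeq\bg'}\subset\cC_{\succ\bg}$ and $\cC_{\preceq\bg}\subset\cC_{\prec\bg'}\subset\cC_{\preceq\bg'}$ for $\bg\prec\bg'$, which follow directly from the first two bullets of \cref{def:cobaricstruct} (applying the degree shift by $\bg'-\bg\succ0$, resp.\ $\bg-\bg'\prec 0$). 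Once these inclusions are in place, each isomorphism reduces to recognizing that one of the sequences $0\to\beta_{\succ?}\to-\to\beta_{\preceq?}\to0$ already has both outer terms in the correct subcategory and hence, by the uniqueness clause of \cref{lem:cobarictruncSES}, must be \emph{the} canonical one.

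First I would treat $\beta_{\preceq \bg} \circ \beta_{\preceq \bg'} \cong \beta_{\preceq \bg}$ and the dual $\beta_{\succeq \bg} \circ \beta_{\succeq \bg'} \cong \beta_{\succeq \bg'}$. For the first: given $X$, apply $\beta_{\preceq\bg}$ to the sequence $0\to\beta_{\succ\bg'}X\to X\to\beta_{\preceq\bg'}X\to0$; since $\beta_{\succ\bg'}X\in\cC_{\succ\bg'}\subset\cC_{\succ\bg}$, the functor $\beta_{\succ\bg}$ sees all of it, and a diagram chase (using $\Hom_\cC(\cC_{\succ\bg},\cC_{\preceq\bg})=0$) shows $\beta_{\succ\bg}X = \beta_{\succ\bg}\beta_{\preceq\bg'}X$ combined appropriately, giving $\beta_{\preceq\bg}X\cong\beta_{\preceq\bg}\beta_{\preceq\bg'}X$; naturality is inherited from \cref{lem:cobarictruncSES}. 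The dual statement is symmetric. Next, for the vanishing composites $\beta_{\preceq\bg}\circ\beta_{\succeq\bg'}\cong0$ and $\beta_{\succeq\bg'}\circ\beta_{\preceq\bg}\cong0$: the point is that $\beta_{\succeq\bg'}X\in\cC_{\succeq\bg'}\subset\cC_{\succ\bg}$, so its canonical truncation has trivial $\preceq\bg$-part, i.e.\ $\beta_{\preceq\bg}(\beta_{\succeq\bg'}X)\cong0$; dually $\beta_{\preceq\bg}X\in\cC_{\preceq\bg}\subset\cC_{\preceq\bg'}\subset\cC_{\prec\bg'}$ has trivial $\succeq\bg'$-part.

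The remaining isomorphisms $\beta_{\preceq \bg} \circ \beta_{\succeq \bg '} \cong \beta_{\succeq \bg '}\circ \beta_{\preceq \bg}$ and $\beta_{\succeq \bg} \circ \beta_{\preceq \bg '} \cong \beta_{\preceq \bg '} \circ \beta_{\succeq \bg}$ are now essentially formal: I would observe that the first of these is just "$0\cong0$" once we invoke the vanishing composites already proved, since $\beta_{\preceq\bg}\beta_{\succeq\bg'}\cong0$ and also $\beta_{\succeq\bg'}\beta_{\preceq\bg}\cong0$. For the last one, $\beta_{\succeq\bg}\circ\beta_{\preceq\bg'}$: one truncates $X$ first by $\preceq\bg'$, then extracts the $\succeq\bg$-part; using that $\beta_{\succeq\bg}X$ already lies in $\cC_{\succeq\bg}$ and (after shifting) one checks $\beta_{\succeq\bg}X\in\cC_{\preceq\bg'}$ is false in general — rather, the correct route is to apply $\beta_{\succeq\bg}$ to $0\to\beta_{\succ\bg'}X\to X\to\beta_{\preceq\bg'}X\to0$ and separately $\beta_{\preceq\bg'}$ to $0\to\beta_{\succeq\bg}X\to X\to\beta_{\prec\bg}X\to0$, identifying both iterated functors with the "middle layer" truncation $\beta_{\succeq\bg}\cap\beta_{\preceq\bg'}$ of $X$, again via the uniqueness of canonical truncation sequences and the $\Hom$-vanishing between positive and negative parts.

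\textbf{Main obstacle.} The subtle point — and the step I expect to require the most care — is the commuting square $\beta_{\succeq \bg} \circ \beta_{\preceq \bg '} \cong \beta_{\preceq \bg '} \circ \beta_{\succeq \bg}$, because here neither functor annihilates the other and one genuinely needs to produce the canonical "slab" subquotient $X_{[\bg,\bg']}$ supported in degrees between $\bg$ and $\bg'$ and show both iterated compositions compute it. This requires combining two truncation sequences and using both the $\Ext^1$-vanishing (to split off the extremes) and the $\Hom$-vanishing (for uniqueness), along with checking functoriality of the resulting identifications; the naturality bookkeeping, rather than any single diagram, is where the real work lies.
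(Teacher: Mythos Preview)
Your proposal is correct and follows essentially the same approach as the paper: the paper's proof simply cites the standard argument from \cite[Proposition~1.3.5]{perversesheaves}, declares the first three relations trivial, and for the only non-trivial identity $\beta_{\succeq \bg} \circ \beta_{\preceq \bg'} \cong \beta_{\preceq \bg'} \circ \beta_{\succeq \bg}$ builds the $3\times 3$ diagram from the filtration $\beta_{\succ \bg'} X \subset \beta_{\succ \bg} X \subset X$ to exhibit a common subquotient $Y$ that both compositions compute---exactly your ``middle slab'' strategy. One caution: when you write ``apply $\beta_{\preceq\bg}$ to the sequence\ldots'' you should avoid relying on exactness of the truncation functors, since that is established only \emph{after} this lemma in the paper; the filtration/uniqueness argument you outline (and which the paper uses) sidesteps this entirely.
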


\begin{proof}
The proof is an abelian version of~\cite[Proposition 1.3.5]{perversesheaves}. 
The only non-trivial relation is $\beta_{\succeq \bg} \circ \beta_{\preceq \bg '} \cong  \beta_{\preceq \bg '} \circ \beta_{\succeq \bg}$. 
Consider the filtration $\beta_{\succ g'} X \subset \beta_{\succ g} X \subset X$. It gives a commutative diagram
\[
\begin{tikzcd}
\beta_{\succ \bg'} X \ar[hookrightarrow]{r} \ar[equal]{d} & \beta_{\succeq \bg} X  \ar[hookrightarrow]{d}\ar[twoheadrightarrow]{r} & Y \ar[hookrightarrow]{d} \\
\beta_{\succ \bg'} X  \ar[hookrightarrow]{r}  & X \ar[twoheadrightarrow]{d} \ar[twoheadrightarrow]{r} & \beta_{\preceq \bg'} X \ar[twoheadrightarrow]{d} \\
& \beta_{\prec \bg} X \ar[equal]{r} & \beta_{\prec \bg} X
\end{tikzcd}
\]
where all rows and columns are short exact sequences. In particular 
\begin{align*}
&0 \rightarrow \beta_{\succ \bg'} X  \rightarrow  \beta_{\succeq \bg} X \rightarrow Y \rightarrow 0, \\
\text{(resp. }\quad  &0 \rightarrow Y \rightarrow \beta_{\preceq \bg'} X \rightarrow \beta_{\prec \bg} X \rightarrow 0 \text{ )}, 
\end{align*}
is  short exact sequence equivalent to
\begin{align*}
&0 \rightarrow \beta_{\succ \bg'} X  \rightarrow  \beta_{\succeq \bg} X \rightarrow \beta_{\preceq \bg'} \beta_{\succeq \bg} X \rightarrow 0, \\
\text{(resp. }\quad  &0 \rightarrow \beta_{\succeq \bg}\beta_{\preceq \bg'} X \rightarrow \beta_{\preceq \bg'} X \rightarrow \beta_{\prec \bg} X \rightarrow 0 \text{ )}.
\end{align*}
Thus, by \cref{lem:cobarictruncSES} we have $ \beta_{\preceq \bg'} \beta_{\succeq \bg} X \cong Y \cong  \beta_{\succeq \bg}\beta_{\preceq \bg'} X$. 
\end{proof}

\begin{lem}\label{lem:sesbetag}
For all $X \in \cC$ and $\bg \in \bZ^n$ there are short exact sequences
\begin{align*}
&0 \rightarrow \beta_{\succ \bg} X \rightarrow \beta_{\succeq \bg} X \rightarrow \beta_\bg X \rightarrow 0, \\
&0 \rightarrow \beta_{\bg} X \rightarrow \beta_{\prec \bg} X \rightarrow \beta_{\preceq \bg} X \rightarrow 0. 
\end{align*}
\end{lem}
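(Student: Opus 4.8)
The plan is to derive both sequences from the single chain of subobjects $\beta_{\succ \bg} X \hookrightarrow \beta_{\succeq \bg} X \hookrightarrow X$ together with the cobaric truncation sequences of \cref{lem:cobarictruncSES}. Applying that lemma at level $\bg$ gives the short exact sequence $0 \to \beta_{\succ \bg} X \to X \to \beta_{\preceq \bg} X \to 0$, and applying it at level $\bg - \bz$ — where $\beta_{\succ (\bg - \bz)} = \beta_{\succeq \bg}$ and $\beta_{\preceq (\bg - \bz)} = \beta_{\prec \bg}$, since $\cC_{\succ 0} = \cC_{\succeq \bz}$ and $\cC_{\prec 0} = \cC_{\preceq -\bz}$ — gives $0 \to \beta_{\succeq \bg} X \to X \to \beta_{\prec \bg} X \to 0$. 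In particular $\beta_{\succeq \bg} X = \ker(X \twoheadrightarrow \beta_{\prec \bg} X)$, $\beta_{\preceq \bg} X = X/\beta_{\succ \bg} X$ and $\beta_{\prec \bg} X = X/\beta_{\succeq \bg} X$. So it suffices to (i) construct a natural monomorphism $\beta_{\succ \bg} X \hookrightarrow \beta_{\succeq \bg} X$ over $X$ and identify its cokernel with $\beta_\bg X$, and then (ii) read off both sequences.

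For step (i), I would first note that $\beta_{\succ \bg} X \in \cC_{\succ \bg} \subseteq \cC_{\succeq \bg}$ while $\beta_{\prec \bg} X \in \cC_{\prec \bg} = \cC_{\preceq (\bg - \bz)}$, and that applying the autoequivalence $x^{\bg - \bz}$ to the vanishing $\Hom_\cC(Y, Z) = 0$ for $Y \in \cC_{\succ 0}$, $Z \in \cC_{\preceq 0}$ (recorded just after \cref{def:cobaricstruct}) yields $\Hom_\cC(\cC_{\succeq \bg}, \cC_{\prec \bg}) = 0$. Hence the composite $\beta_{\succ \bg} X \hookrightarrow X \twoheadrightarrow \beta_{\prec \bg} X$ vanishes, and the universal property of $\beta_{\succeq \bg} X = \ker(X \twoheadrightarrow \beta_{\prec \bg} X)$ supplies a unique arrow $\iota_X \colon \beta_{\succ \bg} X \to \beta_{\succeq \bg} X$ commuting with the inclusions into $X$; it is monic because $\beta_{\succ \bg} X \hookrightarrow X$ is. Naturality of $\iota$ in $X$ follows from the functoriality of $\beta_{\succ \bg}$, $\beta_{\succeq \bg}$ and $\beta_{\prec \bg}$ (\cref{lem:cobarictruncSES}) together with the uniqueness in the universal property. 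The same argument applied to $\beta_{\succeq \bg} X$ in place of $X$ shows $\beta_{\succ \bg}(\beta_{\succeq \bg} X) \cong \beta_{\succ \bg} X$; plugging this into \cref{lem:cobarictruncSES} for $\beta_{\succeq \bg} X$ identifies $\cok(\iota_X)$ with $\beta_{\preceq \bg} \beta_{\succeq \bg} X$, which lies in $\cC_{\preceq \bg} \cap \cC_{\succeq \bg} = \cC_\bg$; this is $\beta_\bg X$.

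For step (ii), the first sequence is now, by construction, the short exact sequence $0 \to \beta_{\succ \bg} X \xrightarrow{\iota_X} \beta_{\succeq \bg} X \to \beta_\bg X \to 0$. For the second, apply the third isomorphism theorem to the chain $\beta_{\succ \bg} X \subseteq \beta_{\succeq \bg} X \subseteq X$: it gives an exact sequence $0 \to \beta_{\succeq \bg} X / \beta_{\succ \bg} X \to X/\beta_{\succ \bg} X \to X/\beta_{\succeq \bg} X \to 0$, which under the identifications of the previous paragraph and of the two truncation sequences above is exactly $0 \to \beta_\bg X \to \beta_{\preceq \bg} X \to \beta_{\prec \bg} X \to 0$. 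Both sequences are natural in $X$, since every object and every arrow in them has been built functorially.

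The only genuinely delicate point is step (i): extracting the correct $\Hom$-vanishing from the axioms — keeping track of the shift by $\bz$ and of which of $\cC_{\preceq 0}$, $\cC_{\succeq 0}$ is stable under shifts in which direction — and then applying the kernel's universal property compatibly over $X$. Everything afterwards is the third isomorphism theorem plus routine bookkeeping with the truncation functors. A slightly heavier alternative would be to obtain both sequences from the $3\times 3$ diagram in the proof of \cref{lem:betacompab} by specializing it to $\bg' = \bg + \bz$.
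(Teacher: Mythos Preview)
Your argument is correct and lands on the right sequences; note in particular that your second sequence, $0 \to \beta_\bg X \to \beta_{\preceq \bg} X \to \beta_{\prec \bg} X \to 0$, is what the paper's own proof actually establishes --- the displayed statement has $\prec$ and $\preceq$ transposed, which is a typo.

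The paper's route is more direct than yours. It simply applies the truncation short exact sequence of \cref{lem:cobarictruncSES} at level $\bg$ to the already-truncated object $\beta_{\succeq \bg} X$, obtaining $0 \to \beta_{\succ \bg}\beta_{\succeq \bg} X \to \beta_{\succeq \bg} X \to \beta_{\preceq \bg}\beta_{\succeq \bg} X \to 0$, and then cites the composition relations of \cref{lem:betacompab} to identify the outer terms with $\beta_{\succ \bg} X$ and $\beta_\bg X$; the second sequence is obtained symmetrically by truncating $\beta_{\preceq \bg} X$ at level $\bg - \bz$. Your step~(i) is in effect the first of these applications, reached after the preliminary work of constructing $\iota_X$ by hand via the kernel universal property; your step~(ii) replaces the second application by the third isomorphism theorem on the chain $\beta_{\succ \bg} X \subset \beta_{\succeq \bg} X \subset X$. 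The paper's version is shorter because \cref{lem:betacompab} supplies the composite identifications off the shelf; yours makes the subobject lattice and the naturality more explicit. One small point to tighten: the sentence ``the same argument applied to $\beta_{\succeq \bg} X$ in place of $X$ shows $\beta_{\succ \bg}(\beta_{\succeq \bg} X) \cong \beta_{\succ \bg} X$'' really rests on the uniqueness clause of \cref{lem:cobarictruncSES} together with the observation that $\beta_{\succeq \bg} X / \beta_{\succ \bg} X \in \cC_{\preceq \bg}$ (it embeds in $\beta_{\preceq \bg} X$ and $\cC_{\preceq \bg}$ is thick); spelling that out would make the step self-contained.
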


\begin{proof}
By applying a version of~\cref{eq:SEScobaric} on $\beta_{\succeq \bg} X$ we obtain a short exact sequence
\[
0 \rightarrow\beta_{\succ \bg} \circ \beta_{\succeq \bg} X \rightarrow \beta_{\succeq \bg} X \rightarrow \beta_{\preceq \bg} \circ \beta_{\succeq \bg} X  \rightarrow 0,
\]
which gives the first desired sequence by~\cref{lem:betacompab}. Similarly, we obtain
\[
0 \rightarrow \beta_{\succeq \bg} \circ \beta_{\preceq \bg} X \rightarrow \beta_{\preceq \bg} X \rightarrow \beta_{\succ \bg} \circ \beta_{\preceq \bg} X \rightarrow 0,
\]
which gives the second desired sequence.
\end{proof}

\begin{lem}
The functors $\beta_{\succ \bg} : \cC \rightarrow \cC_{\succ \bg}$ and $\beta_{\preceq \bg} : \cC \rightarrow \cC_{\preceq \bg}$ are both exact.
\end{lem}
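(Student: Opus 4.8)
The plan is to recognize the two truncation functors as adjoints to the inclusions of the halves $\cC_{\succ\bg}$ and $\cC_{\preceq\bg}$, which yields one side of exactness automatically, and then to extract the other side from the vanishing of $\Hom$ between these two halves.

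First I would record that, being thick, $\cC_{\succ\bg}$ and $\cC_{\preceq\bg}$ are abelian and the inclusions $\iota_{\succ\bg}\colon\cC_{\succ\bg}\hookrightarrow\cC$ and $\iota_{\preceq\bg}\colon\cC_{\preceq\bg}\hookrightarrow\cC$ are exact. Applying $\Hom_\cC(A,-)$ for $A\in\cC_{\succ\bg}$ to the short exact sequence $0\to\beta_{\succ\bg}X\to X\to\beta_{\preceq\bg}X\to0$ of \cref{lem:cobarictruncSES}, and using that $\Hom_\cC(A,\beta_{\preceq\bg}X)=0$ (which follows, after twisting by $x^{-\bg}$, from the $\Hom$-vanishing recorded just after \cref{def:cobaricstruct}), gives a natural isomorphism $\Hom_\cC(\iota_{\succ\bg}A,X)\cong\Hom_{\cC_{\succ\bg}}(A,\beta_{\succ\bg}X)$, the naturality being the functoriality half of \cref{lem:cobarictruncSES}. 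Hence $\beta_{\succ\bg}$ is right adjoint to $\iota_{\succ\bg}$ and, dually, $\beta_{\preceq\bg}$ is left adjoint to $\iota_{\preceq\bg}$. In particular $\beta_{\succ\bg}$ preserves kernels (so is left exact) and $\beta_{\preceq\bg}$ preserves cokernels (so is right exact).

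Next I would upgrade $\beta_{\succ\bg}$ to an exact functor by showing it preserves epimorphisms, since a left exact functor with that property is automatically exact. Given an epimorphism $g\colon Y\twoheadrightarrow Z$ with kernel $X$, set $Q:=\cok(\beta_{\succ\bg}g)$; it lies in $\cC_{\succ\bg}$, that subcategory being thick. Functoriality of the truncation makes the composite $\beta_{\succ\bg}Y\to\beta_{\succ\bg}Z\hookrightarrow Z$ equal to $\beta_{\succ\bg}Y\hookrightarrow Y\xrightarrow{g}Z$, so $\Image(\beta_{\succ\bg}g)=g(\beta_{\succ\bg}Y)$ as a subobject of $\beta_{\succ\bg}Z\subseteq Z$, and therefore $Q=\beta_{\succ\bg}Z/g(\beta_{\succ\bg}Y)$ embeds into $Z/g(\beta_{\succ\bg}Y)$. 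Since $g$ is epi with kernel $X$, we have $Z/g(\beta_{\succ\bg}Y)\cong Y/(X+\beta_{\succ\bg}Y)$, which is a quotient of $Y/\beta_{\succ\bg}Y=\beta_{\preceq\bg}Y$ and hence lies in $\cC_{\preceq\bg}$. Thus $Q$ is at once a quotient of an object of $\cC_{\succ\bg}$ and a subobject of an object of $\cC_{\preceq\bg}$, i.e.\ $Q\in\cC_{\succ\bg}\cap\cC_{\preceq\bg}$; twisting by $x^{-\bg}$ and applying the $\Hom$-vanishing to the identity morphism shows this intersection is zero, so $Q=0$ and $\beta_{\succ\bg}g$ is epi.

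Finally, for $\beta_{\preceq\bg}$, I would take any short exact sequence $0\to X\to Y\to Z\to0$ and use \cref{lem:cobarictruncSES} to assemble the commutative $3\times3$ diagram whose columns are the cobaric short exact sequences for $X$, $Y$, $Z$ and whose middle row is the given one; by the previous step the top row $0\to\beta_{\succ\bg}X\to\beta_{\succ\bg}Y\to\beta_{\succ\bg}Z\to0$ is short exact, so the nine lemma forces the bottom row $0\to\beta_{\preceq\bg}X\to\beta_{\preceq\bg}Y\to\beta_{\preceq\bg}Z\to0$ to be short exact as well, proving $\beta_{\preceq\bg}$ exact. (Alternatively one dualizes the argument of the previous paragraph.) The only genuinely non-formal point is that middle step: identifying $\Image(\beta_{\succ\bg}g)$ via functoriality and checking that the resulting cokernel lands in both halves of the cobaric structure simultaneously; everything else is formal once the adjunctions, \cref{lem:cobarictruncSES}, and the nine lemma are in place.
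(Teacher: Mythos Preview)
Your proof is correct and follows essentially the same route as the paper's: both rest on thickness, the fact that $\cC_{\succ\bg}\cap\cC_{\preceq\bg}=0$ (from the $\Hom$-vanishing), and an application of the nine/$3\times 3$ lemma. The paper compresses your middle step by defining $K_1,K_2$ as the cokernels completing the $3\times 3$ diagram and then invoking the uniqueness in \cref{lem:cobarictruncSES} to identify them with $\beta_{\succ\bg}Z$ and $\beta_{\preceq\bg}Z$, whereas you unpack the adjunctions and the cokernel-in-both-halves argument explicitly.
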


\begin{proof}
The proof is basically the same as in~\cite[Proposition 2.3]{baric}. Let $0 \rightarrow X \rightarrow Y \rightarrow Z \rightarrow 0$ be a short exact sequence. 
 By the 3x3 lemma, we obtain a commutative diagram 
\[
\begin{tikzcd}
& 0 \ar{d} & 0\ar{d} & 0\ar{d} & 
\\
0 \ar{r} & \beta_{\succ \bg} X \ar{d}\ar{r} & \beta_{\succ \bg}Y \ar{d} \ar{r}& K_1 \ar{d} \ar{r}& 0
\\
0  \ar{r}& X \ar{d}  \ar{r}& Y \ar{d}\ar{r} & Z \ar{r} \ar{d}& 0 
\\
0  \ar{r}& \beta_{\preceq \bg} X \ar{d}\ar{r} & \beta_{\preceq \bg} Y  \ar{d}\ar{r}& K_2\ar{d} \ar{r} & 0
\\
& 0 & 0 & 0 & 
\end{tikzcd}
\]
where all rows and columns are exact. Because $\cC_{\succ \bg}$ is thick, we have $K_1 \in \cC_{\succ \bg}$. Thus we must have $\beta_{\succ \bg} Z \cong K_1$. Similarly, $\beta_{\preceq \bg} Z \cong K_2$. 
\end{proof}

\begin{defn}
A cobaric structure $(\cC_{\preceq 0}, \cC_{\succeq 0})$ is:
\begin{itemize}
\item \emph{non-degenerate} if $\bigcap_{\bg \in \bZ^n} \cC_{\preceq \bg} = 0 = \bigcap_{\bg \in \bZ^n} \cC_{\succeq \bg}$;
\item \emph{locally finite} if $\cC_0$ is finite length generated by a finite collection of simple objects;
\item \emph{cone bounded} if for any $X \in \cC$ there exists a \emph{bounding cone} $C_X \subset \bR^{n}$ and a \emph{minimal degree} $\be \in \bZ^n$ such that $\beta_\bg(X) = 0$ for all $\bg - \be \notin C_X$;
\item \emph{stable} if $\beta_{\preceq \bg}$ commutes with acceptable filtered limits and acceptable filtered colimits;
\item \emph{full} if $\bigoplus_{\bg \in \bZ^n} \beta_\bg X \in \cC$ for all $X \in \cC$; 
\item \emph{strongly c.b.l.f.} if it is non-degenerate, locally finite, cone bounded, stable and full. 
\end{itemize}
\end{defn}

Note that when a cobaric structure is cone bounded, it is enough to verify $ \bigcap_{\bg \in \bZ^n} \cC_{\succeq \bg} = 0$ for it to be non-degenerate. Also $\beta_{\succeq \be} X = X$ whenever $\be$ is the minimal degree of $X$. Moreover, when $\beta$ is cone bounded and locally finite, then $\cC_{\preceq \bg}$ is a finite length abelian category. 

\smallskip

Our goal for the remaining of the section will be to prove the following theorem. 

\begin{thm}\label{thm:eqcobaricJH}
Let $\cA$ be an AB4 and AB4* abelian category. 
Let $\cC$ be a c.b.l. additive abelian subcategory of $\cA$. 
If $\cC$ is strongly c.b.l. Jordan--H\"older, then it admits a strongly c.b.l.f. cobaric structure. 
Conversly, if $\cC$ admits such a cobaric structure and admits colimits of kernel of acceptable colimits, and if $\cA$ is Mittag--Leffler, then $\cC$ is strongly c.b.l. Jordan--H\"older. 
\end{thm}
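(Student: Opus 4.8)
The statement is an equivalence, so I would prove the two implications separately, both organised around the composition--factor multiplicities $S_i^*$ of \cref{thm:uniquefilt}. \emph{From Jordan--H\"older to a cobaric structure:} write $\{S_1,\dots,S_m\}$ for the primitive simples of $\cC$, and define $\cC_{\succeq 0}$ (resp.\ $\cC_{\preceq 0}$) to be the full subcategory of objects all of whose primitive composition factors have the form $x^{\bg}S_i$ with $\bg\succeq 0$ (resp.\ $\bg\preceq 0$). Thickness of these subcategories, and the shift conditions $x^{\bg}\cC_{\preceq 0}\subset\cC_{\preceq 0}$ for $\bg\prec 0$ and its dual, are immediate from additivity of the multiplicities along short exact sequences. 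For the defining sequence~\cref{eq:SEScobaric}: given $X$, \cref{prop:ordercomp} provides an ordered primitive composition series whose $r$-th quotient sits in degree $\bc_r$ with $\bc_r\prec\bc_{r+1}$; since only finitely many $\bc_r$ are $\preceq 0$ (cone boundedness, plus the remark on cones), cutting the filtration at the $\bz$-threshold gives a subobject $X_{\succ 0}\in\cC_{\succ 0}$ with quotient $X_{\preceq 0}\in\cC_{\preceq 0}$. Non-degeneracy holds because an object without composition factors is $0$; local finiteness because the degree-$0$ case $\Ext^1_\cC(S_i,S_j)=0$ makes $\cC_0$ the semisimple category on the $S_i$; cone boundedness is read off the c.b.l.f.\ composition series; and fullness holds since $\bigoplus_{\bg}\beta_{\bg}X$ is a c.b.l.f.\ direct sum of the $x^{\bg}S_i$ and so lies in $\cC$ by c.b.l.\ additivity. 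For stability I would apply the exact functors $\beta_{\succ\bg}$, $\beta_{\preceq\bg}$ to an acceptable filtered colimit $X\cong\colim X_\bullet$: the resulting systems are again acceptable (transition kernels and cokernels are subquotients of $\bigoplus_r\ker f_r\oplus\bigoplus_r\cok f_r\in\cC$), \cref{lem:Sstarcolim} gives $\colim\beta_{\succ\bg}X_\bullet\in\cC_{\succ\bg}$ and $\colim\beta_{\preceq\bg}X_\bullet\in\cC_{\preceq\bg}$, and the uniqueness in \cref{lem:cobarictruncSES} identifies the latter with $\beta_{\preceq\bg}X$; the limit case is dual, using \cref{lem:Sstarlim}.

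The one axiom requiring real work is $\Ext^1_\cC(Y,X)=0$ for $Y\in\cC_{\succeq 0}$ and $X\in\cC_{\preceq 0}$. Given $0\to X\to E\xrightarrow{p} Y\to 0$, apply \cref{prop:ordercomp} to $E$ and let $E_{\succ 0}\subset E$ be the subobject gathering the composition slices in degrees $\succ 0$, so $E_{\succ 0}\in\cC_{\succ 0}$ and $E/E_{\succ 0}\in\cC_{\preceq 0}$. As $\cC_{\succ 0}$ and $\cC_{\preceq 0}$ are complementary Serre subcategories, $E_{\succ 0}\cap X=0$; hence $p$ restricts to a monomorphism on $E_{\succ 0}$, and $W:=Y/p(E_{\succ 0})\cong E/(X+E_{\succ 0})$ lies in $\cC_{\succeq 0}\cap\cC_{\preceq 0}=\cC_0$ and is finite length. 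Pulling the extension back along $p(E_{\succ 0})\hookrightarrow Y$ yields the split sequence $0\to X\to X\oplus E_{\succ 0}\to E_{\succ 0}\to 0$, so the long exact $\Ext$-sequence of $0\to p(E_{\succ 0})\to Y\to W\to 0$ shows the class of $E$ comes from $\Ext^1_\cC(W,X)$. A finite d\'evissage on $W$ then reduces matters to $\Ext^1_\cC(S,X)=0$ for $S\in\cC_0$ simple and $X\in\cC_{\preceq 0}$; but any $0\to X\to E'\to S\to 0$ has $E'\in\cC_{\preceq 0}$, so applying the exact functor $\beta_{\succeq 0}$ gives a short exact sequence $0\to\beta_{\succeq 0}X\to\beta_{\succeq 0}E'\to S\to 0$ with all terms in the semisimple category $\cC_0$, which splits, and composing the section with $\beta_{\succeq 0}E'\hookrightarrow E'$ splits the original extension. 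Keeping track of which subquotients land in the complementary Serre subcategories is the delicate point here.

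\emph{From a cobaric structure to Jordan--H\"older:} conversely, suppose $\cC$ carries a strongly c.b.l.f.\ cobaric structure, $\cC$ admits colimits of kernels of acceptable filtered colimits, and $\cA$ is Mittag--Leffler. First I would identify the simples: a simple $T$ has a minimal degree $\be$ (by cone boundedness and non-degeneracy), so $\beta_{\succeq\be}T=T$, and \cref{lem:sesbetag} gives $0\to\beta_{\succ\be}T\to T\to\beta_{\be}T\to 0$ with $\beta_{\be}T\neq 0$; simplicity forces $T\cong\beta_{\be}T\in\cC_{\be}=x^{\be}\cC_0$, hence $T\cong x^{\be}S_i$ for one of the finitely many simples $S_i$ of the finite-length category $\cC_0$. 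So there are finitely many simples up to shift, and $\Ext^1_\cC(S_i,x^{\bg}S_j)=0$ for $\bg\preceq 0$ because then $S_i\in\cC_0\subset\cC_{\succeq 0}$ and $x^{\bg}S_j\in\cC_{\bg}\subset\cC_{\preceq 0}$, so the cobaric vanishing axiom applies. Next, for $X\in\cC$ with bounding cone $C_X$ and minimal degree $\be$, enumerate $(\be+C_X)\cap\bZ^n=\{\be=\bc_0\prec\bc_1\prec\cdots\}$ and set $X_r:=\beta_{\succeq\bc_r}X$; by \cref{lem:betacompab,lem:sesbetag} these are nested subobjects of $X$ with $X_r/X_{r+1}\cong\beta_{\bc_r}X\in\cC_{\bc_r}$ finite length semisimple, so refining each quotient produces an $\bN$-filtration with simple quotients, locally finite (the multiplicity of $x^{\bg}S_i$ equals its multiplicity in $\beta_{\bg}X$) and cone bounded (all inside $\be+C_X$). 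It is Hausdorff because $\varprojlim_r X_r=\bigcap_r X_r$ is a cone-bounded object of $\bigcap_r\cC_{\succeq\bc_r}$, which forces $\beta_{\bg}$ of it to vanish for every $\bg$, hence it is $0$ by non-degeneracy. Stability for the filtrations follows from the canonicity of the cobaric truncations (\cref{lem:cobarictruncSES}): for any other $\bN$-composition series $X'_\bullet$, the image of $X'_k$ in the finite-length quotient $X/X_r$ stabilises in $k$ and must eventually vanish (otherwise $X'_\bullet$ would cut out an $\bN$-composition series on a finite-length object), so $X'_k\subset X_r$ for $k$ large.

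It remains to establish that $\cC$ is locally AB5 and locally AB5$^*$, equivalently that its acceptable derived colimits and limits vanish, and this I expect to be the main obstacle: one cannot argue directly in $\cA$, which is only AB4 and AB4$^*$. Instead, for an acceptable colimit $X\cong\colim X_\bullet$, acceptability together with cone boundedness and local finiteness of the cobaric structure forces, for each fixed $\bg$, the map $\beta_{\bg}f_r$ to be an isomorphism for all but finitely many $r$ --- otherwise $\bigoplus_r\beta_{\bg}(\ker f_r)$ or $\bigoplus_r\beta_{\bg}(\cok f_r)$ would be an infinite direct sum of nonzero objects of the finite-length category $\cC_{\bg}$, which no object of $\cC$ can contain. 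Hence the kernels of the transition maps $X_i\to X_r$ stabilise degree by degree, and being cone-bounded objects of $\cC$ they stabilise outright, so the system respects the Mittag--Leffler condition; since $\cA$ is Mittag--Leffler the colimit preserves kernels, and the hypothesis that $\cC$ admits colimits of kernels of acceptable colimits keeps the comparison object in $\cC$, giving local AB5. The dual argument, using $\cA$ Mittag--Leffler to kill the relevant $\varprojlim^1$ degree by degree, gives local AB5$^*$. Putting everything together, $\cC$ is locally AB5 and locally AB5$^*$, every object is stable for the filtrations and admits a c.b.l.f.\ composition series, $\cC$ is c.b.l.\ additive by assumption, and the finitely many simples satisfy the required $\Ext^1$-condition; that is, $\cC$ is strongly c.b.l.\ Jordan--H\"older.
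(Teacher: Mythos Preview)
Your overall architecture matches the paper's, and several steps (the construction of $\cC_{\preceq 0}$ and $\cC_{\succeq 0}$ via composition-factor support, the use of \cref{prop:ordercomp} for the defining short exact sequence, the identification of simples in the converse) are handled correctly and sometimes more explicitly than in the paper. There are, however, two genuine gaps.

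\textbf{The $\Ext^1$ step is circular.} In the last line of your $\Ext^1_\cC(Y,X)=0$ argument you invoke ``the exact functor $\beta_{\succeq 0}$'' to split $0\to X\to E'\to S\to 0$. But the functoriality and exactness of $\beta_{\succeq 0}$ (established via \cref{lem:cobarictruncSES} and the lemma following \cref{lem:betacompab}) are consequences of the cobaric axioms you are in the middle of verifying. The fix is easy: since $X\in\cC_{\preceq 0}$ is cone bounded with all composition factors in non-positive degrees, it is of finite length; a second d\'evissage, now on $X$, reduces you to $\Ext^1_\cC(S_i,x^{\bg}S_j)=0$ for $\bg\preceq 0$, which is precisely the hypothesis in the definition of strongly c.b.l.\ Jordan--H\"older. (The paper itself glosses over this verification entirely, so your instinct to address it is correct.)

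\textbf{The local AB5/AB5$^*$ argument does not go through.} In the converse direction you claim that, since $\beta_{\bg}f_r$ is eventually an isomorphism for each fixed $\bg$, the kernels of the transition maps $X_i\to X_r$ ``stabilise degree by degree, and being cone-bounded objects of $\cC$ they stabilise outright''. The second clause is false: these kernels form an increasing chain of subobjects of the fixed cone-bounded object $X_i$, and such chains need not stabilise --- think of $\bigoplus_{k\leq N}V_k$ inside $\bigoplus_{k\geq 0}V_k$ in graded vector spaces. Degree-by-degree stabilisation does not yield a Mittag--Leffler condition for the system in $\cA$, so you cannot conclude vanishing of the derived (co)limit this way. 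The paper's route (\cref{prop:MLcobaricisAB5loc}, \cref{prop:MLcobaricisAB5locbis}) avoids this by working \emph{inside} each finite-length truncation: apply $\beta_{\preceq\bg}$ to the whole system using stability of $\beta$, observe that filtered (co)limits in the finite-length category $\cC_{\preceq\bg}$ are automatically exact (any such system satisfies Mittag--Leffler, \cref{lem:finitelengthlim}, and $\cA$ is assumed Mittag--Leffler), and then use non-degeneracy together with \cref{prop:cobariccanlim} to reassemble the truncations into a statement about $\cC$. That is where both the Mittag--Leffler hypothesis on $\cA$ and the extra hypothesis on colimits of kernels are actually consumed.
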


From now on we suppose $\cC$ is a strictly $\bZ^n$-graded abelian subcategory of $\cA$.   

\begin{lem}\label{lem:cblisfull}
Let $\beta$ be a cobaric structure on $\cC$. If $\beta$ is cone bounded, locally finite and if $\cC$ is c.b.l. additive, then $\beta$ is full.
\end{lem}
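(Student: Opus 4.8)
The statement to prove is that $\bigoplus_{\bg\in\bZ^n}\beta_\bg X\in\cC$ for every $X\in\cC$, which is exactly the definition of fullness. The plan is to identify this coproduct with a c.b.l.f.\ coproduct of finitely many objects and then invoke that $\cC$, being c.b.l.\ additive, admits all such coproducts.

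First I would pin down a single truncation $\beta_\bg X$. By construction $\beta_\bg X\in\cC_\bg=\cC_{\preceq\bg}\cap\cC_{\succeq\bg}$, so in particular $\beta_\bg X\in\cC_{\preceq\bg}$; and since $\beta$ is cone bounded and locally finite, $\cC_{\preceq\bg}$ is a finite length abelian category (the remark preceding \cref{thm:eqcobaricJH}). Hence $\beta_\bg X$ has finite length. On the other hand $\cC_\bg$ is semisimple, and the grading shift restricts to an equivalence $x^\bg\colon\cC_0\xrightarrow{\sim}\cC_\bg$ (as $\cC_\bg=x^\bg\cC_0$), so the simple objects of $\cC_\bg$ are precisely $x^\bg S_1,\dots,x^\bg S_m$, where $\{S_1,\dots,S_m\}$ is the finite collection of simples generating $\cC_0$ (again using local finiteness of $\beta$). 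Putting these together, $\beta_\bg X\cong x^\bg\bigl(S_1^{\oplus k_{1,\bg}}\oplus\cdots\oplus S_m^{\oplus k_{m,\bg}}\bigr)$ with all $k_{i,\bg}\in\bN$.

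Next I would use cone boundedness globally for the fixed $X$: there is a bounding cone $C_X$ compatible with $\prec$ and a minimal degree $\be\in\bZ^n$ with $\beta_\bg X=0$ — i.e.\ $k_{i,\bg}=0$ — whenever $\bg-\be\notin C_X$. Therefore
\[
\bigoplus_{\bg\in\bZ^n}\beta_\bg X\;\cong\;\bigoplus_{\bg\in\bZ^n}x^\bg\bigl(S_1^{\oplus k_{1,\bg}}\oplus\cdots\oplus S_m^{\oplus k_{m,\bg}}\bigr)
\]
is a coproduct of exactly the form required by the definition of a c.b.l.f.\ coproduct: it involves only the finite collection $\{S_1,\dots,S_m\}$, all multiplicities $k_{i,\bg}$ are finite, and they vanish outside $\be+C_X$. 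Since $\cC$ is c.b.l.\ additive, every c.b.l.f.\ coproduct exists in $\cC$, so $\bigoplus_{\bg\in\bZ^n}\beta_\bg X\in\cC$, which is the claim.

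I do not expect a real obstacle: once finite length of $\cC_{\preceq\bg}$ and c.b.l.\ additivity of $\cC$ are available, everything is bookkeeping. The point that needs genuine care is the first step — one must actually conclude that $\beta_\bg X$ has \emph{finite} length (equivalently, that the multiplicities $k_{i,\bg}$ are finite), not merely that it lies in the semisimple category $\cC_\bg$; this is precisely what finiteness of $\cC_{\preceq\bg}$, a consequence of cone boundedness together with local finiteness of $\beta$, supplies.
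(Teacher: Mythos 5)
Your proof is correct and follows essentially the same route as the paper's (very terse) proof: identify $\beta_\bg X$ as a finite direct sum of $\{x^\bg S_1,\dots,x^\bg S_m\}$, then observe that $\bigoplus_\bg \beta_\bg X$ is a c.b.l.f.\ direct sum of $\{S_1,\dots,S_m\}$ and invoke c.b.l.\ additivity. You have merely spelled out the justification for the finite-length step (via $\cC_{\preceq\bg}$ being finite length), which the paper takes for granted.
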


\begin{proof}
Let $\{S_1, \dots, S_m\}$ be the set of distinct simple objects of $\cC_0$. 
Take $X \in \cC$. Then $\beta_\bg X$ is a finite direct sum of $\{x^\bg S_1, \dots, x^\bg S_m\}$. Thus $\bigoplus_{\bg \in \bZ^n} \beta_\bg X$ is a c.b.l.f. direct sum of $\{S_1,\dots,S_m\}$, and is in $\cC$. 
\end{proof}

\begin{prop}\label{prop:cobariccanlim}
If $\cC$ admits a cobaric structure $\beta$ which is non-degenerate, stable and cone bounded, 
then we have
\begin{align*}
0 &\cong \lim\bigl( \cdots \hookrightarrow \beta_{\succ \be + \bc_1} X \hookrightarrow \beta_{\succ \be + \bc_0} X   \bigr), \\
X &\cong  \lim\bigl( \cdots \twoheadrightarrow \beta_{\preceq \be + \bc_1} X \twoheadrightarrow \beta_{\preceq \be + \bc_0} X   \bigr), 
\end{align*}
for all $X \in \cC$, where $C_X = \{0 = \bc_0 \prec \bc_1 \prec \cdots \}$ is a bounding cone of $X$ with minimal degree $\be$. 
If $\beta$ is full, then these filtered limits are acceptable. 
\end{prop}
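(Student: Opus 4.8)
\emph{Setting up and reducing.}
The plan is to identify both limits explicitly and then check acceptability separately; write $\mathrm{supp}(Y):=\{\bg\in\bZ^n\mid\beta_\bg Y\neq 0\}$. Since $\be+\bc_i\preceq\be+\bc_{i'}$ for $i\le i'$, \cref{lem:betacompab} identifies the transition map of the first tower with the inclusion $\beta_{\succ\be+\bc_{i+1}}X=\beta_{\succeq\be+\bc_{i+1}+\bz}X\hookrightarrow\beta_{\succeq\be+\bc_i+\bz}X=\beta_{\succ\be+\bc_i}X$, and (via the short exact sequences $0\to\beta_{\succ\be+\bc_i}X\to X\to\beta_{\preceq\be+\bc_i}X\to0$) the transition map of the second tower with the surjection $\beta_{\preceq\be+\bc_{i+1}}X\twoheadrightarrow\beta_{\preceq\be+\bc_i}X$ coming from $\beta_{\succ\be+\bc_{i+1}}X\subseteq\beta_{\succ\be+\bc_i}X$. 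Thus the first tower consists of monomorphisms, so its limit in $\cA$ is the subobject $L:=\bigcap_i\beta_{\succ\be+\bc_i}X\hookrightarrow X$; and the quotient maps $X\twoheadrightarrow\beta_{\preceq\be+\bc_i}X$ assemble into a canonical morphism $\phi\colon X\to L':=\lim_i\beta_{\preceq\be+\bc_i}X$, whose kernel is $\bigcap_i\beta_{\succ\be+\bc_i}X=L$. So it suffices to prove $L\cong0$ and that $\phi$ is an isomorphism.

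\emph{The first limit vanishes.}
Apply the exact functor $\beta_{\preceq\bg}$ to $L\hookrightarrow\beta_{\succeq\be+\bc_i+\bz}X$. By \cref{lem:betacompab} one has $\beta_{\preceq\bg}\circ\beta_{\succeq\bg'}\cong0$ whenever $\bg\prec\bg'$, so $\beta_{\preceq\be+\bc_j}L\hookrightarrow\beta_{\preceq\be+\bc_j}\beta_{\succeq\be+\bc_j+\bz}X=0$ for every $j$, whence $\beta_{\be+\bc_j}L=0$. Since $\beta_\bg$ is exact and $L\hookrightarrow X$ we have $\mathrm{supp}(L)\subseteq\mathrm{supp}(X)\subseteq\be+(C_X\cap\bZ^n)=\{\be+\bc_j\}_{j\ge0}$, so in fact $\beta_\bg L=0$ for \emph{all} $\bg\in\bZ^n$. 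If $L\neq0$ then, $L$ being cone bounded, $\mathrm{supp}(L)$ is a nonempty subset of a well-ordered set, hence has a $\prec$-least element $\be_L$ for which $\beta_{\be_L}L\neq0$ and $L=\beta_{\succeq\be_L}L$; this contradicts $\beta_{\be_L}L=0$. Hence $L\cong0$, which is the first asserted identity.

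\emph{$\phi$ is an isomorphism.}
The short exact sequences above form a short exact sequence of towers $0\to(\beta_{\succ\be+\bc_i}X)_i\to(X,\mathrm{id})_i\to(\beta_{\preceq\be+\bc_i}X)_i\to0$ in $\cA$. Applying $\lim$ and its first derived functor, and using $\lim(X,\mathrm{id})=X$, $\lim^1(X,\mathrm{id})=0$ (the transition maps are identities, so $1-f_\bullet$ is a split epimorphism), and $\lim_i\beta_{\succ\be+\bc_i}X=L=0$ from the previous step, we obtain an exact sequence $0\to X\xrightarrow{\phi}L'\to\widetilde L\to0$ with $\widetilde L:=\lim^1_i\beta_{\succ\be+\bc_i}X$. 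So $\phi$ is a monomorphism, and it is an isomorphism precisely when $\widetilde L\cong0$. This is where the hypotheses on $\beta$ enter: when $\beta$ is full both towers are acceptable (see below), so by stability $\beta_{\preceq\bg}$ commutes with $L'=\lim_i\beta_{\preceq\be+\bc_i}X$; since $\beta_{\preceq\bg}\circ\beta_{\preceq\bh}\cong\beta_{\preceq\min(\bg,\bh)}$ by \cref{lem:betacompab}, the tower $i\mapsto\beta_{\preceq\be+\bc_j}\beta_{\preceq\be+\bc_i}X$ is eventually constant equal to $\beta_{\preceq\be+\bc_j}X$, so $\beta_{\preceq\be+\bc_j}\phi$ is an isomorphism and $\beta_{\preceq\be+\bc_j}\widetilde L=\beta_{\preceq\be+\bc_j}(\mathrm{cok}\,\phi)=0$ for every $j$. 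Arguing exactly as for $L$ (trivial $\beta$-support, cone boundedness, and non-degeneracy $\bigcap_\bg\cC_{\succeq\bg}=0$) forces $\widetilde L\cong0$, and therefore $X\cong\lim_i\beta_{\preceq\be+\bc_i}X$.

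\emph{Acceptability, and the main difficulty.}
Assume $\beta$ is full. For the first tower the kernels of the transition maps vanish, while their cokernels are $\beta_{\succeq\be+\bc_r+\bz}X/\beta_{\succeq\be+\bc_{r+1}+\bz}X$, which by \cref{lem:betacompab} equals $\beta_{\preceq\be+\bc_{r+1}}\beta_{\succeq\be+\bc_r+\bz}X$, an object of $\cC$ supported in the single degree $\be+\bc_{r+1}$ (the unique element of $\mathrm{supp}(X)$ in the interval $(\be+\bc_r,\be+\bc_{r+1}]$), hence isomorphic to $\beta_{\be+\bc_{r+1}}X$. Therefore $\bigoplus_r\ker=0\in\cC$ and $\bigoplus_r\mathrm{cok}\cong\bigoplus_{j\ge1}\beta_{\be+\bc_j}X$, which lies in $\cC$ because it is a cokernel of a map between summands of $\bigoplus_{\bg\in\bZ^n}\beta_\bg X\in\cC$. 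Dually, for the second tower the cokernels vanish and the kernels are again $\cong\beta_{\be+\bc_{r+1}}X$, so $\bigoplus_r\ker\in\cC$. Hence both limits are acceptable. The one nonroutine step above is the surjectivity of $\phi$, i.e.\ the vanishing of the derived limit $\widetilde L$: it genuinely requires the stability of $\beta$ together with the well-ordering of $C_X\cap\bZ^n$ and the detection principle that a cone-bounded object with empty $\beta$-support must vanish, and the delicate bookkeeping is distinguishing degrees inside the bounding cone from those outside it.
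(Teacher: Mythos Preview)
Your argument is essentially correct and follows the same overall architecture as the paper's proof: show the first limit vanishes, then show the canonical map $\phi\colon X\to L'$ is an isomorphism by killing its cokernel via the truncation functors and non-degeneracy. There are two points worth flagging.

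First, your proof that $L\cong 0$ is genuinely different from, and in one respect cleaner than, the paper's. The paper invokes stability to commute $\beta_{\preceq\bg}$ past the limit and then uses that the resulting tower is eventually zero; you instead simply apply the exact functor $\beta_{\preceq\be+\bc_j}$ to the inclusion $L\hookrightarrow\beta_{\succ\be+\bc_j}X$, which kills the target directly by \cref{lem:betacompab}. This bypasses any appeal to stability (and hence to acceptability) for the first tower. Your detour through the support is unnecessary, though: once you know $\beta_{\preceq\be+\bc_j}L=0$ for all $j$, cofinality of $\{\be+\bc_j\}_j$ in $(\bZ^n,\prec)$ already gives $L\in\bigcap_\bg\cC_{\succ\bg}=0$.

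Second, for the surjectivity of $\phi$ you explicitly invoke fullness so that the second tower is acceptable and stability applies. Strictly speaking the proposition asserts $X\cong L'$ without assuming fullness, so you have proved a slightly weaker statement. But note that the paper's proof is in the same position: it writes $\beta_{\preceq\bg}C\cong\lim_r\beta_{\preceq\bg}\beta_{\preceq\be+\bc_r}X$ by stability, and stability is only defined for \emph{acceptable} limits, which again needs $\bigoplus_r\beta_{\be+\bc_r}X\in\cC$, i.e.\ fullness. In practice this is harmless since every use of the proposition in the paper is under the strongly c.b.l.f.\ hypothesis, but your more explicit bookkeeping is arguably the more honest presentation.
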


\begin{proof}
Let 
\[
K := \lim\bigl( \cdots \hookrightarrow \beta_{\succ \be + \bc_1} X \hookrightarrow \beta_{\succ \be + \bc_0} X   \bigr).
\]
Since $\beta$ is cone bounded and locally finite, $\bigoplus_{\bg} \beta_{\bg} X$ is a c.b.l.f. direct sum of the simple objects in $\cC_0$. 
Then we have 
\[
\beta_{\preceq \bg} K \cong  \lim\bigl( \cdots \hookrightarrow \beta_{\preceq \bg}\beta_{\succ \be + \bc_1} X \hookrightarrow \beta_{\preceq \bg}\beta_{\succ \be + \bc_0} X   \bigr) \cong 0,
\]
since $\be + \bc_r \succ \bg$ for $r \gg 0$, so that $\beta_{\preceq \bg}\beta_{\succ \be + \bc_r} X = 0$. Thus, $K \in \bigcap_{\bg} \cC_{\succ \bg} = 0$. 

Let
\[
C := \lim\bigl( \cdots \twoheadrightarrow \beta_{\preceq \be + \bc_1} X \twoheadrightarrow \beta_{\preceq \be + \bc_0} X   \bigr).
\]
Because limits commute with kernels we have
\begin{align*}
0 = K &= 
 \lim\bigl( \cdots \hookrightarrow \ker(X \rightarrow \beta_{\preceq \be + \bc_1} X) \hookrightarrow \ker(X \rightarrow \beta_{\preceq \be + \bc_0} X)   \bigr) \\
 &\cong
 \ker(X \rightarrow C).
\end{align*}
Moreover, because $\beta_{\preceq \bg}$ is exact, we obtain
\[
\beta_{\preceq \bg }\cok( X \hookrightarrow C) \cong \cok(\beta_{\preceq \bg} X \hookrightarrow \beta_{\preceq \bg}C).
\]
For $r \gg 0$ we have $\beta_{\preceq \bg}\beta_{\preceq \be+ \bc_r} = \beta_{\preceq\bg}$. Therefore $\beta_{\preceq \bg} X \cong \beta_{\preceq \bg} C$, and $\cok( X \hookrightarrow C)  \in \bigcap_{\bg \in \bZ^n} \cC_{\succ \bg} = 0$. Thus, $X \cong C$. 
\end{proof}

\begin{lem}\label{lem:finitelengthlim}
Let $\cA'$ be a finite length abelian category. If $\cA'$ is Mittag--Leffler, then 
filtered limits and filtered colimits are exact.
\end{lem}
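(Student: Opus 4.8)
The plan is to observe that the finite length hypothesis forces \emph{every} filtered limit and \emph{every} filtered colimit in $\cA'$ to satisfy the Mittag--Leffler condition automatically, so that the assumption that $\cA'$ is a Mittag--Leffler category then yields exactness at once. First I recall that filtered limits always preserve kernels and filtered colimits always preserve cokernels; hence it suffices to show that filtered limits preserve cokernels and that filtered colimits preserve kernels.

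So let $X \cong \lim(\cdots \xrightarrow{f_1} X_1 \xrightarrow{f_0} X_0)$ be a filtered limit in $\cA'$. For $r \geq i$, the structure map $X_r \to X_i$ factors through $X_{r} \xrightarrow{f_{r-1}} X_{r-1} \to X_i$, so $\Image(X_{r} \to X_i) \subseteq \Image(X_{r-1} \to X_i)$ as subobjects of $X_i$. Thus $\{\Image(X_r \to X_i)\}_{r \geq i}$ is a descending chain of subobjects of $X_i$, and since $X_i$ has finite length it is Artinian, so the chain stabilizes: there is $k_i > i$ with $\Image(X_r \to X_i) = \Image(X_{k_i} \to X_i)$ for all $r \geq k_i$. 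This is exactly the Mittag--Leffler condition, so by hypothesis the filtered limit preserves cokernels, hence is exact. Dually, for a filtered colimit $Y \cong \colim(Y_0 \xrightarrow{g_0} Y_1 \xrightarrow{g_1} \cdots)$ in $\cA'$, the kernels $\ker(Y_i \to Y_r)$ for $r \geq i$ form an ascending chain of subobjects of $Y_i$; since $Y_i$ is Noetherian this chain stabilizes, hence so do the coimages $Y_i/\ker(Y_i \to Y_r)$, which is the dual Mittag--Leffler condition. The Mittag--Leffler hypothesis on $\cA'$ then gives that the colimit preserves kernels, hence is exact.

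The argument is essentially bookkeeping and I do not expect a real obstacle: the only point requiring a moment's care is that the families of images (resp.\ of kernels) being chained up really are nested, which follows from functoriality of the image (resp.\ kernel) together with the fact that a finite length abelian category satisfies both the ascending and the descending chain conditions on subobjects of any fixed object.
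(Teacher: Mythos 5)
Your proof is correct and follows the same strategy as the paper: observe that in a finite length category the descending chain of images $\Image(X_r \to X_i)$ must stabilize (and dually the ascending chain of kernels for colimits), so every filtered limit (resp.\ colimit) automatically satisfies the Mittag--Leffler condition, whence the Mittag--Leffler hypothesis on $\cA'$ gives exactness. You spell out the dual colimit case explicitly whereas the paper dismisses it as ``dual,'' but the argument is the same.
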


\begin{proof}
Consider a limit
\[
X := \lim (\cdots \xrightarrow{f_1} X_1 \xrightarrow{f_0} X_0),
\]
and fix $i \geq 0$. Then we obtain a filtration
\[
\cdots \subset \Image(X_{i+2} \rightarrow X_i) \subset \Image(X_{i+1} \rightarrow X_i) \subset X_i.
\]
Because $\cA'$ is finite length, the filtration stabilizes and the limits respects the Mittag--Leffler condition.
The colimit case is dual. 
\end{proof}

\begin{prop}\label{prop:MLcobaricisAB5loc}
If $\cA$ is  Mittag--Leffler and $\cC$ admits a cobaric structure $\beta$ which is non-degenerate, stable and cone bounded, then $\cC$ is locally AB5*. 
\end{prop}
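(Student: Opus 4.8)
The plan is to invoke the criterion recorded just above the statement: since $\cA$ is AB4 and AB4*, the category $\cC$ is locally AB5* exactly when every acceptable derived limit vanishes. So I would fix an acceptable filtered limit $X \cong \lim(\cdots \xrightarrow{f_1} X_1 \xrightarrow{f_0} X_0)$ in $\cC$ and aim to show that its derived limit $\tilde X$, i.e.\ the cokernel of $1-f_\bullet$ in \cref{eq:limaskernel} computed in $\cA$, is zero. The overall strategy is: show $\beta_{\preceq\bg}\tilde X = 0$ for every $\bg\in\bZ^n$, which says $\tilde X\in\cC_{\succ\bg}$ for all $\bg$; since $\cC_{\succ\bg}\subseteq\cC_{\succeq\bg}$ (because $x^{\bz}\cC_{\succeq 0}\subseteq\cC_{\succeq 0}$) and $\bigcap_\bg\cC_{\succeq\bg}=0$ by non-degeneracy, this forces $\tilde X=0$.

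To obtain $\beta_{\preceq\bg}\tilde X = 0$, I would fix $\bg$ and apply the exact functor $\beta_{\preceq\bg}$ to the tower: by stability it commutes with the acceptable limit, and by exactness $\cok(\beta_{\preceq\bg}f_r)\cong\beta_{\preceq\bg}\cok f_r$, so $(\beta_{\preceq\bg}X_r)$ is again an acceptable filtered limit with derived limit $\beta_{\preceq\bg}\tilde X$. The crux is that $\beta_{\preceq\bg}f_r$ is an epimorphism for all $r\gg 0$. Indeed, by acceptability $M:=\bigoplus_r\cok f_r$ lies in $\cC$; being a coproduct existing in the c.b.l.\ additive category $\cC$ it is a c.b.l.f.\ direct sum, so its support lies in a fixed cone $\be'+C'$ compatible with $\prec$ and it has finite total multiplicity in each degree. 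The set $\{\bh\in\be'+C' : \bh\preceq\bg\}$ is finite by the remark following the definition of a cone; for each such $\bh$ the objects $\beta_\bh\cok f_r$ are direct summands of $\beta_\bh M$ and hence non-zero for only finitely many $r$; so, over the finitely many relevant $\bh$, we get $\beta_{\preceq\bg}\cok f_r=0$ — that is, $\beta_{\preceq\bg}f_r$ epi — for $r$ large. A tower with eventually epimorphic transition maps satisfies the Mittag--Leffler condition (as in \cref{lem:limepiisML}), so, because $\cA$ is Mittag--Leffler, $(\beta_{\preceq\bg}X_r)$ has vanishing derived limit, i.e.\ $\beta_{\preceq\bg}\tilde X=0$. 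Letting $\bg$ vary and applying non-degeneracy then finishes the proof.

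I expect the main obstacle to be the bookkeeping needed to make the truncation functors $\beta_{\preceq\bg}$ interact correctly with the $\cA$-level constructions: a priori $\beta_{\preceq\bg}$ is defined only on $\cC$, whereas $\prod_r X_r$ and $\tilde X$ in \cref{eq:limaskernel} need not lie in $\cC$, so one must either check that the countable products and cokernels of $\cC$-objects occurring here stay inside $\cC$ (using c.b.l.\ additivity together with the finite-support observation above, which shows the truncated objects are genuinely in $\cC$) and that $\beta_{\preceq\bg}$ commutes with them, or else recast the argument as a direct verification — degree by degree, via the $\beta_{\preceq\bg}$ — that the original tower $(X_r)$ satisfies the Mittag--Leffler condition inside $\cA$, and then quote the hypothesis that $\cA$ is Mittag--Leffler.
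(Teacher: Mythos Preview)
Your strategy is natural but the obstacle you identify in the last paragraph is not mere bookkeeping; it is where the argument actually breaks. The object $\tilde X$ is a cokernel computed in $\cA$ from a map between infinite products $\prod_r X_r$, and in general neither the products nor $\tilde X$ lie in $\cC$, so the expression $\beta_{\preceq\bg}\tilde X$ is undefined. Your claim that the derived limit of the truncated tower $(\beta_{\preceq\bg}X_r)$ \emph{equals} $\beta_{\preceq\bg}\tilde X$ would require $\beta_{\preceq\bg}$ to extend to $\cA$ and to commute with the infinite products appearing in \cref{eq:limaskernel}; neither is available. Your first workaround (checking that the relevant products and cokernels stay inside $\cC$) fails in general: once the $\beta_{\preceq\bg}X_r$ are eventually constant and non-zero, $\prod_r X_r$ will be infinite in each degree and cannot be c.b.l.f.\ dimensional. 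Your second workaround (verifying Mittag--Leffler for the original tower $(X_r)$ in $\cA$) also fails: knowing that each truncated chain of images $\beta_{\preceq\bg}\,\Image(X_r\to X_i)$ stabilizes does \emph{not} imply that $\Image(X_r\to X_i)$ itself stabilizes --- consider $X_r=\bigoplus_{n\ge r}x^nS$ with the inclusions; this acceptable tower is not Mittag--Leffler (the images keep strictly decreasing), yet its derived limit is zero. So the desired conclusion may hold while the ML condition does not, and your proposed route to it is blocked.

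The paper avoids this trap by abandoning the derived-limit criterion and verifying the definition of locally AB5* directly. Given $u_\bullet:X_\bullet\to Y_\bullet$ between acceptable limits, it sets $C=\cok(u)$ and $C_r=\cok(u_r)$ --- both lying in the abelian subcategory $\cC$, so $\beta_{\preceq\bg}$ applies to them --- and shows $C\cong\lim_r C_r$ by comparing truncations. Stability and exactness of $\beta_{\preceq\bg}$ give $\beta_{\preceq\bg}C\cong\cok\bigl(\lim_r\beta_{\preceq\bg}X_r\to\lim_r\beta_{\preceq\bg}Y_r\bigr)$; then \cref{lem:finitelengthlim} (limits are exact in the finite-length category $\cC_{\preceq\bg}$) yields $\beta_{\preceq\bg}C\cong\lim_r\beta_{\preceq\bg}C_r$; finally \cref{prop:cobariccanlim} reconstructs $\lim_r C_r$ as $\lim_\bg\lim_r\beta_{\preceq\bg}C_r\cong\lim_\bg\beta_{\preceq\bg}C\cong C$. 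The point is that every object to which a cobaric truncation is applied is already known to live in $\cC$. A secondary remark: your assertion that $\bigoplus_r\cok f_r$ is automatically a c.b.l.f.\ direct sum because it exists in a c.b.l.\ additive category is also unjustified --- c.b.l.\ additivity only says that c.b.l.f.\ coproducts are biproducts, not that every biproduct in $\cC$ is of that form; the vanishing $\beta_{\preceq\bg}\cok f_r=0$ for $r\gg0$ really comes from $\cC_{\preceq\bg}$ being finite length.
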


\begin{proof}
Consider two acceptable filtered limits
\begin{align*}
X &:= \lim (\cdots \xrightarrow{f_1} X_1 \xrightarrow{f_0} X_0), &
Y &:= \lim (\cdots \xrightarrow{g_1} Y_1 \xrightarrow{g_0} Y_0),
\end{align*}
with a collection of commuting maps $u_\bullet : X_\bullet \rightarrow Y_\bullet$ inducing a map $u : X \rightarrow Y$. We put
\begin{align*}
C &:= \cok (X \xrightarrow{u} Y), \\
C_r &:= \cok(X_r \xrightarrow{u_r} Y_r).
\end{align*}
Then we obtain
\begin{align*}
\beta_{\preceq \bg}C &\cong \cok(\beta_{\preceq \bg} X \rightarrow \beta_{\preceq \bg} Y) \\
&\cong \cok\biggl( \lim\bigl( \cdots\rightarrow \beta_{\preceq\bg} X_1 \rightarrow \beta_{\preceq \bg} X_0 \bigr)\rightarrow \lim\bigl( \cdots\rightarrow \beta_{\preceq\bg} Y_1 \rightarrow \beta_{\preceq \bg} Y_0 \bigr)\biggr).
\end{align*}
Since $\beta_{\preceq \bg} X_r$ and $\beta_{\preceq \bg} Y_r$ are in the finite length category $\cC_{\preceq\bg}$ for all $r\geq 0$, we can apply \cref{lem:finitelengthlim} to get 
\begin{align*}
\beta_{\preceq \bg}C &\cong \lim \bigl(\cdots \rightarrow \cok(\beta_{\preceq \bg} X_1 \rightarrow \beta_{\preceq \bg} Y_1) \rightarrow \cok(\beta_{\preceq \bg} X_0 \rightarrow \beta_{\preceq \bg} Y_0)  \bigr) \\
&\cong  \lim \bigl(\cdots \rightarrow \beta_{\preceq \bg} C_1 \rightarrow \beta_{\preceq \bg} C_0  \bigr) .
\end{align*}
By \cref{prop:cobariccanlim}, we obtain
\begin{align*}
\lim \bigl(\cdots \rightarrow  C_1 \rightarrow C_0  \bigr) &\cong
\lim \bigl(\cdots \rightarrow \lim_{\bg}( \beta_{\preceq \bg} C_1 ) \rightarrow \lim_\bg( \beta_{\preceq \bg} C_0 ) \bigr) \\
&\cong \lim_\bg \lim\bigl( \cdots \rightarrow \beta_{\preceq \bg} C_1 \rightarrow \beta_{\preceq \bg} C_0  \bigr) \\
&\cong \lim_\bg \beta_{\preceq \bg}C \cong C.
\end{align*}
This concludes the proof. 
\end{proof}

\begin{prop}\label{prop:MLcobaricisAB5locbis}
Suppose $\cA$ is  Mittag--Leffler and $\cC$ admits a cobaric structure $\beta$ which is non-degenerate, stable and cone bounded. 
If $\cC$ admits colimits of kernels of acceptable colimits, then $\cC$ is locally AB5.
\end{prop}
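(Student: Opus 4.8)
The plan is to run the argument of \cref{prop:MLcobaricisAB5loc} in the dual direction. The one genuinely new feature is the existence hypothesis: since $\cC$ is strictly $\bZ^n$-graded it cannot admit all colimits, so — in contrast with the locally AB5* case, where the limit of the cokernels automatically exists in the AB3* category $\cA$ and is only seen a posteriori to lie in $\cC$ — one must \emph{assume} that the colimit of the kernels already lives in $\cC$.

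So suppose
\[
X := \colim (X_0 \xrightarrow{f_0} X_1 \xrightarrow{f_1} \cdots), \qquad Y := \colim (Y_0 \xrightarrow{g_0} Y_1 \xrightarrow{g_1} \cdots)
\]
are acceptable filtered colimits and $u_\bullet : X_\bullet \to Y_\bullet$ is a compatible family inducing $u : X \to Y$. Set $K := \ker(X \xrightarrow{u} Y)$ and $K_r := \ker(X_r \xrightarrow{u_r} Y_r)$; by hypothesis $K' := \colim_r K_r$ exists in $\cC$. The composites $K_r \hookrightarrow X_r \xrightarrow{u_r} Y_r$ vanish, hence so do the composites $K_r \to X \xrightarrow{u} Y$, so the maps $K_r \to X$ factor through $K$ and assemble into a canonical map $\phi : K' \to K$. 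Showing that $\phi$ is an isomorphism is exactly the statement that $\cC$ is locally AB5.

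To do this I would check that $\beta_{\preceq \bg}\phi$ is an isomorphism for every $\bg \in \bZ^n$ and then invoke non-degeneracy. Exactness of $\beta_{\preceq \bg}$ gives $\beta_{\preceq \bg}K \cong \ker(\beta_{\preceq \bg}X \to \beta_{\preceq \bg}Y)$ and $\beta_{\preceq \bg}K_r \cong \ker(\beta_{\preceq \bg}X_r \to \beta_{\preceq \bg}Y_r)$, while stability turns the colimit presentations of $X$, $Y$ and $K'$ into colimit presentations of $\beta_{\preceq \bg}X$, $\beta_{\preceq \bg}Y$, $\beta_{\preceq \bg}K'$ inside the finite length category $\cC_{\preceq \bg}$. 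By the colimit half of \cref{lem:finitelengthlim}, filtered colimits are exact in $\cC_{\preceq \bg}$, hence commute with kernels, so
\begin{align*}
\beta_{\preceq \bg}K &\cong \ker\bigl(\colim_r \beta_{\preceq \bg}X_r \to \colim_r \beta_{\preceq \bg}Y_r\bigr) \\
&\cong \colim_r \ker(\beta_{\preceq \bg}X_r \to \beta_{\preceq \bg}Y_r) \cong \colim_r \beta_{\preceq \bg}K_r \cong \beta_{\preceq \bg}K',
\end{align*}
and one checks that this composite isomorphism is $\beta_{\preceq \bg}\phi$. Finally, by \cref{prop:cobariccanlim} both $K$ and $K'$ are recovered, through the canonical projections, as $\lim_{\bg}\beta_{\preceq \bg}(-)$; under these identifications $\phi$ becomes $\lim_{\bg}\beta_{\preceq \bg}\phi$, an isomorphism since each $\beta_{\preceq \bg}\phi$ is.

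The main obstacle is the bookkeeping around acceptability. One must verify that $\colim_r K_r$, and then $\colim_r \beta_{\preceq \bg}K_r$, are acceptable in the sense required for stability to apply: here the inputs are that $\cC$ is a c.b.l.\ additive abelian subcategory of $\cA$, so that the kernels of the transition maps $K_r \to K_{r+1}$ — being subobjects of $\ker f_r$ — together with their direct sum remain in $\cC$, and similarly for the cokernels, exactly as in the treatment of the original colimits. One also uses, as in \cref{prop:MLcobaricisAB5loc}, that $\cC_{\preceq \bg}$ is a finite length Mittag--Leffler category so that \cref{lem:finitelengthlim} is applicable; this rests on cone boundedness of $\beta$ and semisimplicity of the categories $\cC_{\bg'}$.
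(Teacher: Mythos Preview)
Your proposal is correct and follows essentially the same route as the paper: set up $K$, $K_r$, $K' = \colim_r K_r$, apply exactness and stability of $\beta_{\preceq \bg}$ to reduce to the finite length category $\cC_{\preceq \bg}$, invoke \cref{lem:finitelengthlim} there, and conclude by non-degeneracy.

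The only real difference is in the last step. You argue that each $\beta_{\preceq\bg}\phi$ is an isomorphism and then reconstruct $K,K'$ as $\lim_{\bg}\beta_{\preceq\bg}(-)$ via \cref{prop:cobariccanlim}. The paper instead applies exactness of $\beta_{\preceq\bg}$ directly to $\phi$, obtaining $\beta_{\preceq\bg}\ker\phi \cong \ker(\beta_{\preceq\bg}\phi)=0$ and $\beta_{\preceq\bg}\cok\phi \cong \cok(\beta_{\preceq\bg}\phi)=0$, so $\ker\phi,\cok\phi\in\bigcap_{\bg}\cC_{\succ\bg}=0$. This sidesteps both the verification that the composite isomorphism really is $\beta_{\preceq\bg}\phi$ and the appeal to \cref{prop:cobariccanlim}. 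Your version is fine, but the paper's is a touch cleaner.
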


\begin{proof}
Consider two acceptable filtered colimits
\begin{align*}
X &:= \colim ( X_0 \xrightarrow{f_0} X_1 \xrightarrow{f_1} \cdots ), &
Y &:= \colim (Y_0 \xrightarrow{g_0} Y_1 \xrightarrow{g_1} \cdots ),
\end{align*}
with a collection of commuting maps $u_\bullet : X_\bullet \rightarrow Y_\bullet$ inducing a map $u : X \rightarrow Y$. 
We put
\begin{align*}
K &:= \ker (X \xrightarrow{u} Y), \\
K_r &:= \ker(X_r \xrightarrow{u_r} Y_r), \\
K' &:= \colim \bigl(K_0 \rightarrow K_1 \rightarrow \cdots \bigr).
\end{align*}
Then we have
\begin{align*}
\beta_{\preceq \bg}K &\cong \ker(\beta_{\preceq \bg} X \rightarrow \beta_{\preceq \bg} Y) \\
&\cong \ker\biggl( \colim\bigl(  \beta_{\preceq \bg} X_0 \cdots\rightarrow \beta_{\preceq\bg} X_1 \rightarrow\cdots \bigr)\rightarrow \colim\bigl( \beta_{\preceq \bg} Y_0 \rightarrow \beta_{\preceq\bg} Y_1 \rightarrow \cdots \bigr)\biggr),
\end{align*}
and 
\[
\beta_{\preceq \bg} K' \cong \colim \bigl( \ker(\beta_{\preceq \bg} X_0 \rightarrow \beta_{\preceq \bg} Y_0) \rightarrow \ker(\beta_{\preceq \bg} X_1 \rightarrow \beta_{\preceq \bg} Y_1) \rightarrow\cdots  \bigr).
\]
Because $\beta_{\preceq \bg} X_r$ and $\beta_{\preceq \bg} Y_r$ are both in the finite length category $\cC_{\preceq\bg}$ for all $r\geq0$, we can apply \cref{lem:finitelengthlim} to get 
\begin{align*}
\beta_{\preceq \bg} \cok(K' \rightarrow K) &\cong \cok(\beta_{\preceq \bg} K' \rightarrow \beta_{\preceq \bg} K) = 0, \\
\beta_{\preceq \bg} \ker(K' \rightarrow K) &\cong \ker(\beta_{\preceq \bg} K' \rightarrow \beta_{\preceq \bg}  K) = 0.
\end{align*}
 Thus $\cok(K' \rightarrow K), \ker(K' \rightarrow K)  \in \bigcap_{\bg \in \bZ^n} \cC_{\succ \bg} = 0$, and $K' \cong K$. 
\end{proof}

%

\begin{lem}\label{lem:cobaricstablefilt}
Let $\beta$ be a strongly c.b.l.f. cobaric structure on $\cC$. All objects of $\cC$ are stable for the filtrations.
\end{lem}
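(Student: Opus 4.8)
The plan is to reduce the statement, for a fixed object $X \in \cC$ with two $\bN$-composition series $X_\bullet$ and $X'_\bullet$ and a fixed index $i$, to two facts: first, that \emph{every} $\bN$-composition series of $X$ is automatically a c.b.l.f.\ filtration; second, that for a suitable $\boldsymbol h \in \bZ^n$ one has $\beta_{\succ \boldsymbol h} X \subseteq X_i$ while $X'_k \subseteq \beta_{\succ \boldsymbol h} X$ for $k$ large. These together give $X'_k \subseteq X_i$, which is exactly stability for the filtrations.

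For the first fact, write the series $\cdots \subseteq X_1 \subseteq X_0 = X$; each factor $X_k/X_{k+1}$ is a simple object of $\cC$, and applying \cref{eq:SEScobaric} to a simple $T$ shows that for every $\bg$ either $T \in \cC_{\succ \bg}$ or $T \in \cC_{\preceq \bg}$. Applying the exact functor $\beta_{\preceq \bg}$ to the whole series yields a filtration of the finite length object $\beta_{\preceq \bg} X$ (finite length because $\beta$ is cone bounded and locally finite) whose nonzero subquotients are exactly the $X_k/X_{k+1}$ lying in $\cC_{\preceq \bg}$; hence only finitely many factors lie in $\cC_{\preceq \bg}$, for each $\bg$. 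Comparing $\beta_{\preceq \bg}$ with $\beta_{\prec \bg} = \beta_{\preceq \bg - \bz}$ through the short exact sequence of \cref{lem:sesbetag} and using Jordan--H\"older in these finite length categories, the factors isomorphic to $x^{\bg} S_i$ (a shift of a simple of $\cC_0$ by exactly $\bg$) are matched with the composition factors of the semisimple object $\beta_{\bg} X \in \cC_{\bg}$, of which there are finitely many and none unless $\bg \in \be + C_X$ for a bounding cone $C_X$ and minimal degree $\be$ of $X$. So $X_\bullet$ is c.b.l.f., and $\bigoplus_k X_k/X_{k+1} \cong \bigoplus_{\bg} \beta_{\bg} X$ lies in $\cC$ ($\cC$ being c.b.l.\ additive, or by fullness of $\beta$).

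Now fix $i$. Each of the finitely many factors $X_j/X_{j+1}$ ($0 \le j < i$) is a shift $x^{\bg_j} S_{c_j} \in \cC_{\bg_j} \subseteq \cC_{\preceq \bg_j}$; set $\boldsymbol h := \max_{j<i} \bg_j$, so every such factor lies in the thick, hence extension closed, subcategory $\cC_{\preceq \boldsymbol h}$, and therefore $X/X_i \in \cC_{\preceq \boldsymbol h}$. Since $\Hom_\cC(Y,Z) = 0$ for $Y \in \cC_{\succ \boldsymbol h}$ and $Z \in \cC_{\preceq \boldsymbol h}$, the epimorphism $X \twoheadrightarrow X/X_i$ annihilates $\beta_{\succ \boldsymbol h} X$, so $\beta_{\succ \boldsymbol h} X \subseteq X_i$. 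On the other side, naturality of $X'_k \to \beta_{\preceq \boldsymbol h} X'_k$ together with exactness of $\beta_{\preceq \boldsymbol h}$ identifies $\beta_{\preceq \boldsymbol h} X'_k$ with the image of $X'_k$ in the finite length object $\beta_{\preceq \boldsymbol h} X$; this descending chain of subobjects stabilises, say to $J = \bigcap_k \beta_{\preceq \boldsymbol h} X'_k = \lim_k \beta_{\preceq \boldsymbol h} X'_k$. The filtration $X'_\bullet$ being Hausdorff, $\lim_k X'_k = 0$, and this filtered limit is acceptable, since its kernels vanish and $\bigoplus_k X'_k/X'_{k+1} \in \cC$ by the first fact. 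Stability of $\beta$ then gives $J \cong \beta_{\preceq \boldsymbol h}(\lim_k X'_k) = \beta_{\preceq \boldsymbol h}(0) = 0$, so $\beta_{\preceq \boldsymbol h} X'_k = 0$ for $k$ large; naturality again forces $X'_k \subseteq \ker(X \twoheadrightarrow \beta_{\preceq \boldsymbol h} X) = \beta_{\succ \boldsymbol h} X \subseteq X_i$, as desired.

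The main obstacle is the first fact: one must show that an a priori arbitrary $\bN$-composition series, not assumed cone bounded or even locally finite, is forced by the cone bounded, locally finite, non-degenerate cobaric structure to be c.b.l.f. This is precisely what puts $\bigoplus_k X'_k/X'_{k+1}$ in $\cC$ and hence makes $\lim_k X'_k$ acceptable; after that, the stability of $\beta$ does all the work inside the finite length truncations $\cC_{\preceq \bg}$.
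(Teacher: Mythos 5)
Your overall strategy coincides with the paper's: for a fixed index $i$ you find $\boldsymbol h$ with $\beta_{\succ\boldsymbol h}X\subseteq X_i$, and then show that the second composition series eventually lands inside $\beta_{\succ\boldsymbol h}X$. You use the $\Hom$-vanishing to get $\beta_{\succ\boldsymbol h}X\subseteq X_i$ where the paper uses the identity $\beta_{\succ\be}X\cap X_i=\beta_{\succ\be}X_i$; these are interchangeable. The real point of interest is the step $\beta_{\preceq\boldsymbol h}X'_k=0$ for $k\gg 0$: the paper asserts this directly ``since $\beta$ is cone bounded, locally finite,'' whereas you identify, correctly, that this requires more, since the hypothesis ``stable for the filtrations'' quantifies over \emph{all} $\bN$-composition series, which are not assumed to be locally finite or cone bounded. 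Your preliminary ``first fact'' --- that any $\bN$-composition series of $X$ is automatically c.b.l.f. because its simple factors in $\cC_\bg$ are exactly the Jordan--H\"older factors of the semisimple $\beta_\bg X$, which vanishes outside a bounding cone --- is what makes the filtered limit $\lim_k X'_k$ acceptable, so that stability of $\beta$ together with the Hausdorff condition applies and gives $J=\beta_{\preceq\boldsymbol h}(\lim_k X'_k)=0$. That closes a genuine lacuna in the paper's argument rather than merely paraphrasing it, so the two proofs are the same in outline but yours supplies the missing justification. Two minor points worth keeping in mind: existence of a unique $\bg$ with a given simple $T\in\cC_\bg$ silently uses non-degeneracy of $\beta$ (together with totality of $\prec$), and the statement of \cref{lem:sesbetag} has the second short exact sequence written backwards (it should read $0\to\beta_\bg X\to\beta_{\preceq\bg}X\to\beta_{\prec\bg}X\to 0$, as its proof shows), which is the form you actually use when matching composition factors with $\beta_\bg X$.
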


\begin{proof}
Let $X \in \cC$. Suppose $X$ admits two $\bN$-composition series
\begin{align*}
0 \subset \cdots \subset X_2 \subset X_1 \subset X_0 = X, \\
0 \subset \cdots \subset X'_2 \subset X'_1 \subset X'_0 = X.
\end{align*}
Fix $i \in \bN$. 
For all $\be \in \bZ^n$, we have $\beta_{\succ \be} X \cap X_i = \beta_{\succ \be} X_i$ so that 
\[
\frac{\beta_{\succ \be} X}{ \beta_{\succ \be} X \cap X_i} = \beta_{\succ \be}(X/X_i).
\]
Since $X/X_i$ admits a finite composition series and $\beta$ is cone bounded, locally finite, we have $\beta_{\succ \be}(X/X_i) = 0$ for $\be \ggcurly 0$. Thus $\beta_{\succ \be} X \subset X_i$ for $\be \ggcurly 0$. Fix such an $\be \in \bZ^n$.  For any $r \geq 0$ we obtain a commutative diagram
\[
\begin{tikzcd}
0 \ar{r} & \beta_{\succ \be} X_r' \ar{r} \ar[hookrightarrow]{d} & X_r' \ar{r} \ar[hookrightarrow]{d} & \beta_{\preceq \be} X_r' \ar{d} \ar{r} & 0 \\
0 \ar{r} & \beta_{\succ \be} X \ar{r} & X \ar{r} & \beta_{\preceq \be} X \ar{r} & 0,
\end{tikzcd}
\]
where the rows are exact and the two vertical arrows on the left are monomorphisms. 
Since $\beta$ is cone bounded, locally finite, we have  $\beta_{\preceq \be} X'_r = 0$ for $r \gg 0$, so that $\beta_{\succ \be} X_r' \cong X_r'$. Therefore, for $r$ big enough we obtain $X'_r \subset \beta_{\succ \be} X \subset X_i$.
%
%
\end{proof}

\begin{proof}[Proof of \cref{thm:eqcobaricJH}]
Suppose $\cC$ is strongly c.b.l. Jordan--H\"older. We construct a cobaric structure $\cC_{\preceq 0}, \cC_{\succ 0}$ by putting
\begin{align*}
\cC_{\preceq 0} &:= \{ X \in \cC | x^{\bg} S_i^*(X) = 0 \text{ for all $\bg \succ 0$ and $i \in I$ } \}, \\
\cC_{\succ 0} &:= \{ X \in \cC | x^{\bg} S_i^*(X) = 0 \text{ for all $\bg  \preceq 0$ and $i \in I$ } \}.
\end{align*}
By \cref{prop:ordercomp}  we obtain the short exact sequence \cref{eq:SEScobaric}. Thus, it forms a cobaric structure, which is non-degenerate by \cref{lem:JHS0trivial}, and obviously cone bounded and locally finite. 
We now prove the cobaric structure is stable for the limits, the dual case being similar. Consider an acceptable filtered limit
\[
X := \lim\bigl(\cdots \rightarrow X_1 \rightarrow X_0\bigr).
\]
Because $\cC$ is locally AB5* we obtain a commutative diagram 
\[
\begin{tikzcd}
&& 0 \ar{d} & 0 \ar{d} && \\
0 \ar{r}& K  \ar{r} &\beta_{\succ \bg} X  \ar{d}  \ar{r}& \lim\bigl(\cdots \rightarrow \beta_{\succ \bg}X_1 \rightarrow \beta_{\succ \bg} X_0\bigr)  \ar{d}  \ar{r} & C \ar{r}  & 0\\
0 \ar{r}&0 \ar{r} &X  \ar{d}  \ar[equals]{r} & \lim\bigl(\cdots \rightarrow X_1 \rightarrow X_0\bigr)  \ar{d} \ar{r} & 0  \ar{r} & 0 \\
0 \ar{r}&K' \ar{r} &\beta_{\preceq \bg} X  \ar{d}  \ar{r} & \lim\bigl(\cdots \rightarrow \beta_{\preceq \bg} X_1 \rightarrow \beta_{\preceq \bg} X_0\bigr)  \ar{d}  \ar{r} & C'  \ar{r} & 0\\
& &0 & 0 & &
\end{tikzcd}
\]
where the rows and columns are exact. By the snake lemma, we obtain that $K = C' = 0$ and thus 
\[
0 \rightarrow K' \rightarrow \beta_{\preceq \bg} X \rightarrow \lim\bigl(\cdots \rightarrow \beta_{\preceq \bg} X_1 \rightarrow \beta_{\preceq \bg} X_0\bigr)  \rightarrow 0,
\]
is a short exact sequence. Take a simple object $S \in \cC$.  Because $S^*(\beta_{\preceq \bg} X_r) = S^*(\beta_{\preceq \bg} X_{r+1})$ for $r \gg 0$, we have
\[
S^*(\beta_{\preceq \bg} X) = S^* \bigl( \lim\bigl(\cdots \rightarrow \beta_{\preceq \bg} X_1 \rightarrow \beta_{\preceq \bg} X_0\bigr) \bigr).
\]
Thus, $K' \cong 0$ by \cref{lem:JHsameSiso}, concluding the first part of the proof. 

For the second part, suppose $\cC$ is Mittag--Leffler and admits a strongly c.b.l.f. cobaric structure $\beta = (\cC_{\preceq 0}, \cC_{\succ 0})$. Because $\beta$ is cone bounded, locally finite there is only a finite set of distinct simple objects.  By \cref{prop:cobariccanlim} any object admits a $\bN$-filtration where the quotients are finite length objects. Thus we can refine the filtration into an $\bN$-composition series. Because $\beta$ is cone bounded, locally finite, it gives a c.b.l.f. composition series. By \cref{lem:cobaricstablefilt} all objects are stable for the filtrations. Finally, $\cC$ is both locally AB5 and locally AB5* thanks to \cref{prop:MLcobaricisAB5loc} and \cref{prop:MLcobaricisAB5locbis}.
\end{proof}

\subsection{Topological Grothendieck group}

As in Achar--Stroppel~\cite{acharstroppel}, we define the following notion:

\begin{defn}
Let $\cC$ be a $\bZ^n$-graded category with a c.b.l.f. cobaric structure $\beta$. The \emph{topological Grothendieck group} of $(\cC,\beta)$ is
\[
\bGO(\cC,\beta) := G_0(\cC) / J(\cC,\beta),
\]
where
\[
J(\cC,\beta) :=  \{ f \in G_0(\cC) \ |\  [\beta_{\preceq \bg}] f = 0 \in G_0(\cC) \text{ for all $\bg \in \bZ^n$}\}.
\]
The canonical topology on $\bGO(\cC,\beta)$ is given by using $\{\bGO(\cC_{\succ \bg})  \subset \bGO(\cC) \}_{\bg \in \bZ}$  as basis of neighborhood of zero. 
\end{defn}

Note that if $\beta$ is non-degenerate, then the canonical topology on  $\bGO(\cC,\beta)$ is Hausdorff. 
Inspired by~\cite[Definition~2.3]{acharstroppel}, we define the following:

\begin{defn}
Let $\cC$ and $\cC'$ be two $\bZ^n$-graded categories with c.b.l.f. cobaric structures $\beta = \{\cC_{\preceq 0}, \cC_{\succeq 0}\}$ and $\beta' = \{\cC'_{\preceq 0}, \cC'_{\succeq 0}\}$ respectively. We say that a functor $F : \cC \rightarrow \cC'$ has \emph{finite amplitude} if there exists $|F| \in \bZ^n$ such that $F\cC_{\succeq \bg} \subset \cC'_{\succeq \bg + |F|}$. 
\end{defn}

\begin{prop}\label{prop:abfinampinduces}
Let $F : (\cC,\beta) \rightarrow (\cC',\beta')$  be a $\bZ^n$-homogeneous exact functor. If $F$ has finite amplitude, then it induces a continuous map
\[
[F] : \bGO(\cC,\beta) \rightarrow \bGO(\cC',\beta')
\]
by $[F][X] := [F(X)]$. 
\end{prop}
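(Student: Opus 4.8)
The plan is to show that the map $[F]\colon G_0(\cC)\to G_0(\cC',\beta')$ sending $[X]\mapsto [F(X)]$ is well defined on $G_0$ (immediate, since $F$ is exact, so short exact sequences go to short exact sequences), and then that it descends to the quotient, i.e.\ that $[F]\bigl(J(\cC,\beta)\bigr)\subseteq J(\cC',\beta')$. Recall $J(\cC,\beta)=\{f\in G_0(\cC)\mid [\beta_{\preceq\bg}]f=0 \text{ for all }\bg\in\bZ^n\}$, so I must check: if $f\in G_0(\cC)$ satisfies $[\beta_{\preceq\bg}]f=0$ in $G_0(\cC)$ for every $\bg$, then $[\beta'_{\preceq\bh}][F]f=0$ in $G_0(\cC')$ for every $\bh$. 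The key input is the finite amplitude hypothesis: there is $|F|\in\bZ^n$ with $F\cC_{\succeq\bg}\subset\cC'_{\succeq\bg+|F|}$.

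First I would translate finite amplitude into a statement about the truncation functors on the derived level. Applying $F$ to the short exact sequence $0\to\beta_{\succeq\bg}X\to X\to\beta_{\prec\bg}X\to 0$ (using \cref{lem:cobarictruncSES} and \cref{lem:sesbetag}), exactness of $F$ gives $0\to F\beta_{\succeq\bg}X\to FX\to F\beta_{\prec\bg}X\to 0$, and $F\beta_{\succeq\bg}X\in\cC'_{\succeq\bg+|F|}$. Comparing with the canonical sequence $0\to\beta'_{\succeq\bg+|F|}FX\to FX\to\beta'_{\prec\bg+|F|}FX\to 0$ and using $\Hom_{\cC'}(\cC'_{\succeq},\cC'_{\prec})=0$ together with the uniqueness clause of \cref{lem:cobarictruncSES}, one identifies $\beta'_{\succ \bg + |F| - \bz}FX\cong$ an appropriate subobject; more cleanly, one gets that $F\beta_{\preceq\bg}X$ lies in $\cC'_{\preceq\bg+|F|}$ again by exactness (apply $F$ to $0\to\beta_{\succ\bg}X\to X\to\beta_{\preceq\bg}X\to0$, note $F\beta_{\succ\bg}X\in\cC'_{\succ\bg+|F|}$, and conclude $F\beta_{\preceq\bg}X$ surjects appropriately). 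The upshot I want is the key lemma: for every $\bg$ there is $\bh$ (namely $\bh=\bg+|F|$) with $[\beta'_{\preceq\bg+|F|}][F] = [F][\beta_{\preceq\bg}]$ on $G_0(\cC)$ — equivalently, $\beta'_{\preceq\bg+|F|}\circ F$ and $F\circ\beta_{\preceq\bg}$ agree in the Grothendieck group after accounting for the finite-length "error term'' $F\beta_{\succ\bg}X\cap\beta'_{\succ\bg+|F|}$-type pieces, which vanish since they sit in $\cC'_{\succ\bg+|F|}$ and simultaneously are subquotients of a finite-length object; the cleanest route is that since $F$ is exact, $[F\beta_{\succ\bg}X]=[FX]-[F\beta_{\preceq\bg}X]$ and $F\beta_{\succ\bg}X\in\cC'_{\succ\bg+|F|}$ forces $[\beta'_{\preceq\bg+|F|}]([FX]-[F\beta_{\preceq\bg}X])=0$.

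Granting the key lemma, the descent is easy: if $[\beta_{\preceq\bg}]f=0$ for all $\bg$, then for any $\bh\in\bZ^n$ pick $\bg$ with $\bg+|F|=\bh$ (possible since $|F|$ is fixed), and compute $[\beta'_{\preceq\bh}][F]f = [F][\beta_{\preceq\bg}]f = [F](0) = 0$; hence $[F]f\in J(\cC',\beta')$ and $[F]$ descends to a group homomorphism $\bGO(\cC,\beta)\to\bGO(\cC',\beta')$. Continuity then follows directly from the definition of the canonical topologies: a basic open neighborhood of zero in the target is $\bGO(\cC'_{\succ\bh})$, and by finite amplitude $F\cC_{\succ\bg}\subset\cC'_{\succ\bg+|F|}\subset\cC'_{\succ\bh}$ for $\bg=\bh-|F|$, so the preimage of $\bGO(\cC'_{\succ\bh})$ contains the basic neighborhood $\bGO(\cC_{\succ\bh-|F|})$ of zero in the source; since $[F]$ is a homomorphism this gives continuity everywhere.

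I expect the main obstacle to be the key lemma — precisely, showing $[\beta'_{\preceq\bg+|F|}]\,[F][\beta_{\succ\bg}X] = 0$ in $G_0(\cC')$. The subtlety is that $\beta_{\succ\bg}X$ need not be finite length (it is only cone bounded), so one cannot just say "apply $\beta'_{\preceq}$ to a finite filtration.'' The resolution is that one does not need finiteness: exactness of $F$ gives $[F\beta_{\succ\bg}X]\in G_0(\cC')$ is represented by an object of $\cC'_{\succ\bg+|F|}$ (this uses that $\cC'_{\succ\bg+|F|}$ is thick, hence closed under the relevant subquotients, together with finite amplitude), and the exact functor $\beta'_{\preceq\bg+|F|}$ annihilates the class of any object of $\cC'_{\succ\bg+|F|}$ since $\beta'_{\preceq\bg+|F|}$ kills $\cC'_{\succeq\bg+|F|}$ and $\cC'_{\succ\bg+|F|}\subset\cC'_{\succeq\bg+|F|}$. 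Once this bookkeeping with thick subcategories and exactness of the truncation functors is set up carefully, the rest is formal.
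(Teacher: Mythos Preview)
Your approach is correct and matches the paper's: both decompose $f=[\beta_{\preceq\bg}]f+[\beta_{\succ\bg}]f$, use $[\beta_{\preceq\bg}]f=0$ together with $F\cC_{\succ\bg}\subset\cC'_{\succ\bg+|F|}$ to deduce $[\beta'_{\preceq\bg+|F|}][F]f=0$; the paper phrases this via the splitting $G_0(\cC')\cong G_0(\cC'_{\preceq\bg})\oplus G_0(\cC'_{\succ\bg})$ but the content is identical. One caveat: your stated key lemma $[\beta'_{\preceq\bg+|F|}][F]=[F][\beta_{\preceq\bg}]$ is too strong, since finite amplitude says nothing about where $F\cC_{\preceq\bg}$ lands; what your ``cleanest route'' actually proves is the weaker identity $[\beta'_{\preceq\bg+|F|}][F]=[\beta'_{\preceq\bg+|F|}][F][\beta_{\preceq\bg}]$, and this is all you need since $[\beta_{\preceq\bg}]f=0$ still forces the conclusion.
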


\begin{proof}
By exactness we have a map
\[
[F]  :  G_0(\cC)  \rightarrow \bGO(\cC',\beta'),
\]
and thus we only need to prove $F(J(\cC,\beta)) \subset J(\cC',\beta')$. 

First, we observe that there is an isomorphism
\begin{equation}\label{eq:GOdecomp}
G_0(\cC') \xrightarrow{\simeq} G_0(\cC'_{\preceq \bg}) \oplus G_0(\cC'_{\succ \bg}), \quad [X] \mapsto [\beta'_{\preceq  \bg}X] + [\beta'_{\succ \bg} X],
\end{equation}
for all $\bg \in \bZ^n$. 

Take $f \in J(\cC,\beta)$ and $\bg \in \bZ^n$. By  \cref{eq:GOdecomp} we obtain
\begin{align*}
f &= [\beta_{\preceq \bg}] f + [\beta_{\succ  \bg}]f, & [F]f &= [\beta_{\preceq  \bg + |F|}][F]f +  [\beta_{\succ  \bg + |F|}][F]f.
\end{align*}
We have $ [\beta_{\preceq  \bg}] f = 0$ and $[\beta_{\succ  \bg + |F|}][F]f = [F][\beta_{\succ  \bg}]f$. Therefore, we deduce that $[F]f =  [\beta_{\succ  \bg + |F|}][F]f$ and $[\beta_{\preceq  \bg + |F|}][F]f = 0$. Since $\bg$ is arbitrary, we conclude that $[F]f \in J(\cC',\beta')$.
\end{proof}

\begin{prop}\label{prop:topGOisasympGO}
Let $\cC$ be a Mittag--Leffler strictly $\bZ^n$-graded abelian category with  a  full, non-degenerate, stable, cone bounded cobaric structure $\beta$.
There is a surjection
\[
 \bGO(\cC,\beta) \twoheadrightarrow \bGO(\cC),
\]
induced by the identity on $G_0(\cC)$. 
If  $\beta$ is locally finite, then this surjection is an isomorphism
\[
 \bGO(\cC,\beta) \cong \bGO(\cC). 
\]
Moreover, a functor between two such categories is of finite amplitude if and only if it is locally finite. 
\end{prop}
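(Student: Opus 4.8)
The plan is to compare the two ideals $J := J(\cC) + J^*(\cC)$ and $J_\beta := J(\cC,\beta)$ inside $G_0(\cC)$: the surjection is the inclusion $J_\beta \subseteq J$, and under local finiteness the isomorphism is $J_\beta = J$. I would first isolate the one computation on which everything rests: \emph{for every $X \in \cC$ one has $[X] = [\bigoplus_{\bg \in \bZ^n} \beta_\bg X]$ in $\bGO(\cC)$}. Indeed, by \cref{prop:cobariccanlim} the tautological tower $X \cong \lim\bigl(\cdots \twoheadrightarrow \beta_{\preceq \be+\bc_1}X \twoheadrightarrow \beta_{\preceq\be+\bc_0}X\bigr)$ is an acceptable filtered limit (fullness), its transition maps are epimorphisms so it satisfies the Mittag--Leffler condition by \cref{lem:limepiisML}, and therefore its derived limit vanishes by local AB5* (\cref{prop:MLcobaricisAB5loc}). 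Since the kernels of the transition maps are $\beta_{\be+\bc_{r+1}}X$ and their cokernels are zero, reading off the defining relation of $J^*(\cC)$ gives $[X] = [\beta_\be X] + \sum_{r\ge 0}[\beta_{\be+\bc_{r+1}}X] = [\bigoplus_\bg \beta_\bg X]$, using $\beta_{\preceq\be}X = \beta_\be X$. Write $\Phi X := \bigoplus_\bg \beta_\bg X$.

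For the surjection I would take $f = [A] - [B] \in J_\beta$ (any class is so expressible, with $A,B$ finite direct sums of the objects occurring in $f$), so that $[\beta_{\preceq\bg}A] = [\beta_{\preceq\bg}B]$ in $G_0(\cC)$ for all $\bg$, and by the identity above $f \equiv [\Phi A] - [\Phi B] \pmod{J^*(\cC)}$. Choosing a cone $C$ and a degree $\be$ bounding both $A$ and $B$, the split short exact sequence $0 \to \beta_{\succ\be+\bc_r}\Phi X \to \Phi X \to \bigoplus_{\bg \preceq \be+\bc_r}\beta_\bg X \to 0$, together with $[\bigoplus_{\bg\preceq\be+\bc_r}\beta_\bg X] = [\beta_{\preceq\be+\bc_r}X]$ (both carry the finite filtration by the $\beta_\bg X$ with $\bg \preceq \be+\bc_r$, using cone boundedness), yields $[\Phi A] - [\Phi B] = [\beta_{\succ\be+\bc_r}\Phi A] - [\beta_{\succ\be+\bc_r}\Phi B]$ in $G_0(\cC)$ for every $r$. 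On the other hand, telescoping the equalities $[\beta_{\preceq\be+\bc_r}A] = [\beta_{\preceq\be+\bc_r}B]$ forces $[\beta_\bg A] = [\beta_\bg B]$ in $G_0(\cC)$ for each single degree $\bg$; since $\cC_\bg$ is semisimple and cone boundedness confines its objects to one degree, making them finite direct sums of simples, this upgrades to $\beta_\bg A \cong \beta_\bg B$, hence $\Phi A \cong \Phi B$ and $f \equiv 0$ in $\bGO(\cC)$. Thus $J_\beta \subseteq J^*(\cC) \subseteq J$ and the identity on $G_0(\cC)$ descends to the desired surjection.

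For $J \subseteq J_\beta$ when $\beta$ is in addition locally finite, I would apply $[\beta_{\preceq\bg}]$ to an arbitrary generator of $J(\cC)$ or $J^*(\cC)$ and show it dies. Here $\cC_{\preceq\bg}$ is a finite length abelian category that is Mittag--Leffler, so filtered limits and colimits are exact in it by \cref{lem:finitelengthlim}; since $\beta_{\preceq\bg}$ is exact and, being stable, commutes with acceptable filtered limits and colimits, it carries the limit/colimit data of the generator to limit/colimit data in $\cC_{\preceq\bg}$ whose derived limit/colimit is zero, while the $[\tilde X]$ (resp.\ $[\tilde Y]$) term also vanishes, its image lying in $\bigcap_\bg \cC_{\succ\bg} = 0$. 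What remains is a plain short exact sequence relation in $G_0(\cC_{\preceq\bg})$, so its image in $G_0(\cC)$ is $0$; hence $J = J_\beta$ and $\bGO(\cC,\beta) \cong \bGO(\cC)$.

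For the last sentence I would unwind the definitions: a $\bZ^n$-homogeneous exact $F$ has finite amplitude, $F\cC_{\succeq\bg} \subseteq \cC'_{\succeq\bg+|F|}$, exactly when it shifts the cobaric filtration by a bounded amount; under local finiteness \cref{thm:eqcobaricJH} identifies $\{\cC_{\succeq\bg}\}$ with the Jordan--H\"older degree filtration, and translating "$F$ shifts degrees by at most $|F|$" into the multiplicity condition $S_i^*(X)\,{S'_j}^*(FS_i) = 0$ for $i$ outside a finite set, and conversely, gives the equivalence. I expect the genuine obstacle to be the single-degree step in the surjection argument: one must control the slices $\cC_\bg$ well enough (semisimplicity plus finiteness of their objects, extracted from cone boundedness) to pass from equality of classes in $G_0(\cC)$ to an honest isomorphism $\beta_\bg A \cong \beta_\bg B$ — everything else is bookkeeping with $\beta_{\preceq\bg}$ layered on top of the exactness results already in hand.
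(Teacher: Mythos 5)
Your overall strategy — comparing $J(\cC)+J^*(\cC)$ with $J(\cC,\beta)$ — is the right one, and the key identity $[X]=[\bigoplus_\bg\beta_\bg X]$ in $\bGO(\cC)$ is a correct consequence of \cref{prop:cobariccanlim}, \cref{lem:limepiisML} and \cref{prop:MLcobaricisAB5loc}. But the surjection argument has a genuine gap at the step where you pass from $[\beta_\bg A]=[\beta_\bg B]$ in $G_0(\cC)$ to an honest isomorphism $\beta_\bg A\cong\beta_\bg B$. Cone boundedness constrains the \emph{degree support} of an object, not the length of a single slice $\beta_\bg A$: it does not make $\beta_\bg A$ a finite direct sum of simples. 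The hypothesis that would give finite-length slices (and let $G_0(\cC_\bg)$ detect isomorphism classes) is local finiteness — which is only assumed in the second half of the proposition, not for the surjection. And even granting finite length, you would still need injectivity of $G_0(\cC_\bg)\to G_0(\cC)$ to transport the equality, which you have not established. As stated, the surjection argument therefore does not go through under the hypotheses available to it.

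The fix is that you don't need $\Phi A\cong\Phi B$, nor even $[\Phi A]=[\Phi B]$ in $G_0(\cC)$: the paper's proof avoids the detour through $\Phi$ entirely. Write the $J^*$-relation for the tower of $A$ with the canonical witnesses $K_r := \ker(\beta_{\preceq\be+\bc_{r+1}}A\to\beta_{\preceq\be+\bc_r}A)$, then write the $J^*$-relation for the tower of $B$ using \emph{the same objects} $K_r$ as witnesses — permissible because $[K_r]=[\ker\alpha_r^A]=[\ker\alpha_r^B]$ in $G_0(\cC)$ and $\bigoplus_r K_r\in\cC$ by fullness. Subtracting the two relations gives $[A]-[B]-\bigl([\beta_{\preceq\be}A]-[\beta_{\preceq\be}B]\bigr)\in J^*(\cC)$, and the second bracket already vanishes in $G_0(\cC)$. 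This uses only fullness, non-degeneracy, stability, cone boundedness, and the Mittag--Leffler property, exactly as required. Your isomorphism direction (apply $\beta_{\preceq\bg}$ to a generator, exploit exactness of $\beta_{\preceq\bg}$, finite length of $\cC_{\preceq\bg}$, and eventual vanishing) is essentially the paper's, and your final sentence about finite amplitude versus local finiteness is, like in the paper, left at the level of a sketch.
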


\begin{proof}
Take $[X] - [Y] \in J(\cC,\beta)$. Then $[\beta_\bg X] = [\beta_\bg Y]$ for all $\bg \in \bZ^n$. By \cref{prop:cobariccanlim} and \cref{lem:limepiisML} we obtain that $[X] - [Y] \in J^*(\cC)$. Thus, there is an induced surjective map
\[
 \bGO(\cC,\beta) \twoheadrightarrow \bGO(\cC).
\]
Suppose that $\beta$ is locally finite. 
Consider an acceptable filtered colimit
\[
Y \cong \colim \bigl( X = F_0 \xrightarrow{f_0} F_1 \xrightarrow{f_1} \cdots \bigr),
\]
in $\cC$ and $\{C_r, K_r \in \cC\}_{r\geq 0}$, such that $\bigoplus_r C_r, \bigoplus_r K_r \in \cC$, and $[C_r] = [\cok f_r]$, $[K_r] = [\ker f_r]$ in $G_0(\cC)$ for all $r\geq 0$. We claim that
\[
[\beta_{\preceq \bg} Y] - [\beta_{\preceq \bg} X] = \sum_{r \geq 0} \bigl( [\beta_{\preceq \bg} C_r] - [\beta_{\preceq \bg} K_r] \bigr) \in G_0(\cC),
\]
for all $\bg \in \bZ^n$. First, we obtain by exactness of $\beta_{\preceq \bg}$ that $[\beta_{\preceq \bg} C_r] = [\beta_{\preceq \bg}  \cok f_r]$ and $[ \beta_{\preceq \bg}  K_r] = [ \beta_{\preceq \bg}  \ker f_r]$ in $G_0(\cC)$ for all $r \geq 0$. Moreover, since $\beta$ is cone bounded locally finite, we have $\beta_{\preceq \bg} C_r = \beta_{\preceq \bg} K_r = \beta_{\preceq \bg} \ker f_r = \beta_{\preceq \bg} \cok f_r = 0$ for $r \gg 0$. Let $r'$ be big enough such that $f_r$ is an isomorphism for all $r \geq r'$. Then we have
\begin{align*}
[\beta_{\preceq \bg} Y] - [\beta_{\preceq \bg} X]  &=  \sum_{r = 0}^{r'} \bigl( [\beta_{\preceq \bg} \cok f_r] - [\beta_{\preceq \bg} \ker f_r] \bigr) \\
 &=  \sum_{r = 0}^{r'} \bigl( [\beta_{\preceq \bg} C_r] - [\beta_{\preceq \bg} K_r] \bigr),
\end{align*}
 in $G_0(\cC)$. The dual case is similar, proving there is an isomorphism $\bGO(\cC,\beta) \cong \bGO(\cC)$. 
\end{proof}

\subsection{An example with modules}

Consider as before a c.b.l.f. dimensional, $\bZ^n$-graded $\Bbbk$-algebra $R$, where $\Bbbk$ is a field.

\begin{prop}\label{prop:RposJH}
If $R$ is positive, that is $R = \bigoplus_{\bg \succeq 0} R_\bg$, and $R_0$ is semi-simple, then $R\modlf$ is strongly c.b.l. Jordan--H\"older.
\end{prop}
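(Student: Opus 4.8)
The plan is to equip $R\modlf$ with an explicit strongly c.b.l.f. cobaric structure and then apply the converse direction of \cref{thm:eqcobaricJH}. Set $\cA := R\amod$. It is AB4 and AB4* because coproducts and products of $\bZ^n$-graded modules are computed degree-wise (for products via the description $\prod_i M_i = \bigoplus_\bg \prod_i (M_i)_\bg$), so exactness reduces to abelian groups; for the same reason $\cA$ is Mittag--Leffler, since filtered limits, filtered colimits, kernels and cokernels in $\cA$ are all degree-wise and abelian groups are Mittag--Leffler. By the previous proposition $R\modlf$ is a c.b.l. additive, strictly $\bZ^n$-graded abelian full subcategory of $\cA$, and since it is closed under subobjects and quotients it is an abelian subcategory, so \cref{thm:eqcobaricJH} is applicable to $\cC = R\modlf \subset \cA$.

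Next I would set $\cC_{\preceq 0} := \{ M \in R\modlf : M_\bg = 0 \text{ for all }\bg \succ 0 \}$ and $\cC_{\succeq 0} := \{ M \in R\modlf : M_\bg = 0 \text{ for all }\bg \prec 0 \}$; both are plainly thick, and $\cC_{\succ 0}$ unwinds to $\{ M : M_\bg = 0 \text{ for all }\bg \preceq 0 \}$. Degree-shift stability of $\cC_{\preceq 0}$ and $\cC_{\succeq 0}$ is immediate, and positivity of $R$ (i.e. $R_\bg = 0$ unless $\bg \succeq 0$) makes $X_{\succ 0} := \bigoplus_{\bg \succ 0} X_\bg$ an $R$-submodule of any $X \in R\modlf$, lying in $\cC_{\succ 0}$, with quotient $X_{\preceq 0} := X / X_{\succ 0} \in \cC_{\preceq 0}$; both are c.b.l.f. dimensional, so \cref{eq:SEScobaric} holds. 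For $\Ext^1_\cC(Y,X) = 0$ with $Y \in \cC_{\succeq 0}$, $X \in \cC_{\preceq 0}$, I would take an extension $0 \to X \to E \to Y \to 0$ in $R\modlf$: in degrees $\bg \prec 0$ one gets $E_\bg \cong X_\bg$, in degrees $\bg \succ 0$ one gets $E_\bg \cong Y_\bg$ canonically, while in degree $0$ the sequence $0 \to X_0 \to E_0 \to Y_0 \to 0$ of $R_0$-modules splits because $R_0$ is semisimple; gluing a degree-$0$ $R_0$-section to the forced isomorphisms in the other degrees gives a map $Y \to E$ which is $R$-linear, as $R$-linearity need only be tested against $R_\bg$ with $\bg \succeq 0$ and then follows by a degree count. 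Hence $(\cC_{\preceq 0}, \cC_{\succeq 0})$ is a cobaric structure.

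I would then verify that this cobaric structure is strongly c.b.l.f.. It is locally finite: $\cC_0$ is the category of finite-dimensional $R_0$-modules, which is finite length with finitely many simples $S_1, \dots, S_m$ since $R_0$ is finite-dimensional semisimple. It is cone bounded and non-degenerate: $\beta_\bg X$ is $X_\bg$ placed in degree $\bg$, which vanishes exactly outside the bounding cone of the c.b.l.f. dimensional module $X$, and this bound also gives $\bigcap_\bg \cC_{\succeq \bg} = 0$, which by the remark following the definition suffices for non-degeneracy. It is stable: $\beta_{\preceq \bg}$ merely keeps the components of degree $\preceq \bg$, hence commutes with acceptable filtered limits and colimits because those are computed degree-wise in $\cA$. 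And it is full by \cref{lem:cblisfull}.

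Finally, $R\modlf$ admits colimits of kernels of acceptable colimits: given an acceptable colimit with comparison maps $u_\bullet : X_\bullet \to Y_\bullet$, the colimit of the monomorphisms $\ker u_r \hookrightarrow X_r$ exists in the AB5 category $\cA$ and is a monomorphism into $\colim X_r \in R\modlf$, hence a submodule of a c.b.l.f. dimensional module, hence again in $R\modlf$. All hypotheses of the converse direction of \cref{thm:eqcobaricJH} are met, and it yields that $R\modlf$ is strongly c.b.l. Jordan--H\"older. The main obstacles I anticipate are making the stability verification airtight --- which rests entirely on limits in $R\amod$ being degree-wise, via the degree-wise description of products --- and the $\Ext^1$-vanishing, where semisimplicity of $R_0$ is indispensable. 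A more pedestrian alternative would build the canonical degree-truncation filtration $X \supseteq X_{\succeq \be + \bc_1} \supseteq X_{\succeq \be + \bc_2} \supseteq \cdots$ of a c.b.l.f. module $X$ (with minimal degree $\be$ and bounding cone $\{0 = \bc_0 \prec \bc_1 \prec \cdots\}$) and refine its finite-dimensional $R_0$-module subquotients into an ordered primitive composition series as in \cref{prop:ordercomp}, but this reproves by hand facts already packaged in the cobaric machinery.
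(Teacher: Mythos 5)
Your proof is correct, but it follows a genuinely different route from the paper's. The paper proves \cref{prop:RposJH} directly: it identifies the finitely many projective indecomposables $P_i = Re_i$ and the simples $S_i = P_i/R_{\succ 0}P_i$, observes $\Ext^1_R(S_i, x^\bg S_j) = 0$ for $\bg \prec 0$ from positivity, builds a c.b.l.f.\ composition series by interleaving the degree-truncation filtration $M_{\succeq \be + \bc_r}$ with finite Jordan--H\"older refinements of the finite-dimensional $R_0$-module subquotients, proves stability for filtrations by a finite-dimensionality counting argument, and finishes with the general observations about local AB5/AB5*. This is essentially the ``more pedestrian alternative'' you flag at the end. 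You instead construct an explicit strongly c.b.l.f.\ cobaric structure (with $\cC_{\preceq 0}$, $\cC_{\succeq 0}$ the support-truncation subcategories), verify all of the defining axioms, and invoke the converse direction of \cref{thm:eqcobaricJH}. The verifications you give are sound; in particular your $\Ext^1$-vanishing argument correctly reduces to a degree-$0$ splitting via semisimplicity of $R_0$ and then gluing to the forced isomorphisms in degrees $\neq 0$, with $R$-linearity of the resulting section following because the discrepancy $s(ry)-rs(y)$ lies in $X$ in a degree where $X$ vanishes; and the degree-wise description of limits, colimits, kernels and cokernels in $R\amod$ correctly handles AB4, AB4*, Mittag--Leffler, stability of $\beta$, and the ``colimits of kernels of acceptable colimits'' hypothesis. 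The trade-off: your route is conceptually clean and reuses the heavy cobaric machinery (\cref{prop:cobariccanlim}, \cref{lem:cobaricstablefilt}, \cref{prop:MLcobaricisAB5loc}, \cref{prop:MLcobaricisAB5locbis}), while the paper's direct proof is more self-contained, shorter once that machinery is stripped away, and does not rely on the somewhat delicate definition of $\cC_{\succ 0}$ via a minimal positive element of $(\bZ^n, \prec)$.
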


\begin{proof}
Since $\End(R) = R_0$ is finite dimensional, there are only finitely many projective indecomposable modules $\{P_i = Re_i\}_{i \in I}$, given by primitive idempotents $\{e_i \in R_0\}_{i \in I}$.
Because $R$ is positive, then $R_{\succ 0} := \bigoplus_{\bg \succ 0} R_\bg$ is a c.b.l.f. dimensional submodule of $R$, and $R_0 \cong R/R_{\succ 0}$ is finite-dimensional.
Since $R_0$ is semi-simple and $S_i := P_i/R_{\succ 0} P_i$ is $R_0$-indecomposable, $S_i$ is a finite-dimensional simple module.
Furthermore, any simple $R$-module is isomorphic (up to grading shift) to $S_i$ for some~$i$. Since $R$ is positive, we have $\Ext^1_R(S_i, x^\bg S_j) = 0$ whenever $\bg \prec 0$. 

\smallskip

In addition, for any c.b.l.f. module $M = \bigoplus_{\bg \in \bZ^n} M_{\bg}$ and $\be \in \bZ^n$ then $M_{\succeq \be} := \bigoplus_{\bg \succeq \be} M_{\bg}$ is a submodule. In particular we get a short exact sequence
\[
M_{\succ \be} \hookrightarrow M_{\succeq \be} \twoheadrightarrow M_{\succeq \be}/M_{\succ \be},
\]
where $M_{\succeq \be}/M_{\succ \be}$ is isomorphic to a finite direct sum of elements in~$\{S_i\}_{i \in I}$. Therefore, we obtain by the classical Jordan--H\"older theorem a finite filtration with simple quotients
\[
M_{\succ \be} = M_{\be}^0  \subset M_{\be}^1 \subset \cdots \subset M_{\be}^{r-1} \subset M_{\be}^r = M_{\succeq \be}.
\]
Let $C_M = \{\deg(M) = \bc_0 \prec \bc_1 \prec \bc_2 \prec \cdots \}$ be a bounding cone of $M$. Applying the same reasoning on each $\bc_i$ with $\be = \bc_{i+1}$, we obtain by concatening all these filtrations a c.b.l.f. composition series of~$M$.

\smallskip

We now prove $M$ is stable for the filtrations. Suppose there are two composition series $X_\bullet$ and $X'_\bullet$ of $M$. Suppose by contradiction that $X'_k \not\subset X_i$ for all $k \in \bN$. Then $X_i \subsetneq X_i + X'_k$. Moreover, we know that $M/X_i$ is a finite dimensional $\Bbbk$-vector space and we get an infinite filtration
\[
X_i \subset \cdots \subset X_i + X'_2 \subset X_i + X'_1 \subset X.
\]
As vector spaces we have $X'_j \cong X'_{j+1} \oplus H_j$ for some $H_j$. We can find $H_j \not\subset X_i$ for arbitrary big $j$ since $X'_k \cong \bigoplus_{j \geq k} H_k$, and thus otherwise we would have $X'_k \subset X_i$. Then we write $\tilde H_j = H_j / H_j \cap X_i$ and $\bigoplus_j \tilde H_j$ is infinite dimensional. But we have $M \cong X_i \oplus \bigoplus_{j \ge 0}  \tilde H_j$, which contradicts the fact that $M/X_i$ is finite dimensional.

\smallskip

Finally, because any c.b.l.f. module is a locally finite dimensional module, we know that $R\modlf$ is locally AB5*. Since $R\amod$ is AB5, and $R\modlf$ is a full subcategory of $R\amod$, we obtain that $R\modlf$ is locally AB5. This concludes the proof. 
\end{proof}

\begin{rem}
Note that the short exact sequence
\[
0 \rightarrow M_{\succ \be} \rightarrow M \rightarrow M/M_{\succ \be} \rightarrow 0,
\]
in the proof of \cref{prop:RposJH} yields a strongly c.b.l.f. cobaric structure on $R\modlf$ (which coincides with the one given by \cref{thm:eqcobaricJH}).
\end{rem}

\begin{cor}\label{cor:GORmodlf}
Let $R \cong \bigoplus_{i \in I} Re_i$ be as in \cref{prop:RposJH} and take a full collection $\{e_j\}_{j \in J \subset I}$ of non-equivalent primitive idempotents. 
Then we have
\[
\bGO(R\modlf) \cong \bigoplus_{j} \bZ\pp{x_1, \dots, x_n} \cdot [S_j],
\] with $S_j := Re_j/R_{\succ 0}e_j$.
\end{cor}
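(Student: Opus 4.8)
The plan is to deduce the statement directly from \cref{prop:RposJH} and \cref{cor:topG0}. By \cref{prop:RposJH} the strictly $\bZ^n$-graded category $R\modlf$ is strongly c.b.l. Jordan--H\"older, hence in particular c.b.l. Jordan--H\"older, so \cref{cor:topG0} applies to it. What remains is to pin down a complete collection of pairwise distinct simple objects of $R\modlf$, and I claim it is exactly $\{S_j := Re_j/R_{\succ 0}e_j\}_{j \in J}$.

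First I would recall what is already established inside the proof of \cref{prop:RposJH}: writing $P_i = Re_i$ for $i \in I$, each $S_i = P_i/R_{\succ 0}P_i$ is a finite-dimensional simple object of $R\modlf$, and every simple object of $R\modlf$ is isomorphic, up to a grading shift, to some $S_i$. Hence $\{S_j\}_{j\in J}$ is a complete set of simples up to grading shift, since by the choice of $J$ the idempotents $\{e_j\}_{j\in J}$ exhaust all equivalence classes among the $\{e_i\}_{i\in I}$.

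Next I would verify pairwise distinctness, i.e. $S_j \ncong x^{\bg}S_{j'}$ for every $\bg \in \bZ^n$ whenever $j \neq j'$. Since $e_j \in R_0$ and $R = \bigoplus_{\bg \succeq 0} R_\bg$ is positive, $P_j = Re_j$ is generated in degree $0$ and supported in degrees $\succeq 0$; therefore $R_{\succ 0}P_j$ is supported in degrees $\succ 0$ and $S_j \cong (P_j)_0 = R_0 e_j$ is concentrated in degree $0$. Consequently $x^{\bg}S_{j'}$ is concentrated in the single degree obtained by shifting $0$ by $\bg$, so any isomorphism $S_j \cong x^{\bg}S_{j'}$ in the strictly $\bZ^n$-graded category $R\modlf$ forces $\bg = 0$ together with an isomorphism $R_0 e_j \cong R_0 e_{j'}$ of $R_0$-modules; as $R_0$ is semisimple this means $e_j$ and $e_{j'}$ are equivalent, contradicting $j \neq j'$. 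This establishes that $\{S_j\}_{j\in J}$ is a collection of pairwise distinct simples.

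Finally I would invoke \cref{cor:topG0} with this collection to obtain
\[
\bGO(R\modlf) \cong \bigoplus_{j \in J} \bZ\pp{x_1, \dots, x_n}\,[S_j].
\]
I do not expect a genuine obstacle here: the substantive content is carried entirely by \cref{prop:RposJH} and \cref{cor:topG0}, and the only actual work is the short degree-concentration bookkeeping used above to check that distinct $e_j$ produce genuinely distinct simple modules.
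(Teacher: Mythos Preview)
Your proposal is correct and follows exactly the approach the paper intends: the corollary is stated without proof in the paper, as an immediate consequence of \cref{prop:RposJH} (which shows $R\modlf$ is strongly c.b.l. Jordan--H\"older and identifies the simples $S_i$) together with \cref{cor:topG0}. Your additional degree-concentration bookkeeping to verify pairwise distinctness of the $S_j$ just spells out what the paper leaves implicit.
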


Since $[P_i]$ can be written as $f_i(\boldsymbol x) [S_i]$ for some $f_i(\boldsymbol x) \in \bZ_C\llbracket x_1, \dots, x_n\rrbracket$ with $f_i(0) = 1$, we also have $[S_i] = f_i^{-1}(\boldsymbol x)[P_i]$. Therefore
\begin{equation}\label{eq:G0genPj}
\bGO(R\modlf) \cong \bigoplus_{j} \bZ\pp{x_1, \dots, x_n} \cdot [P_j].
\end{equation}
However, with the current framework, this isomorphism is purely formal, and has no categorical meaning (e.g. it involves minus signs). In the dg-setting (i.e. derived category), we will reinterpret it using the projective resolution of $S_i$.

\begin{prop}
Let $R$ and $R'$ be two positive c.b.l.f. dimensional $\bZ^n$-graded $\Bbbk$-algebras with $R_0$ and $R'_0$ semi-simple.   
Let $B$ be a c.b.l.f. dimensional $R'$-$R$-bimodule. The functor
\[
F : R\modlf \rightarrow R'\modlf, \quad F(X) := B \otimes_{R} X,
\]
is locally finite, and thus induces a map
\[
[F] : \bGO(R\modlf) \rightarrow \bGO(R' \modlf).
\]
If $B$ induces a Morita equivalence between $R$ and $R'$, then $[F]$ is an isomorphism.
\end{prop}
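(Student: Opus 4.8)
The plan is to verify the hypotheses of \cref{prop:lfexactinducemap}: that $F = B \otimes_R -$ is exact and locally finite between the two (strongly) c.b.l. Jordan--H\"older categories $R\modlf$ and $R'\modlf$. Exactness is immediate since $B$ is flat over $R$ (it is a direct summand of a free module, being c.b.l.f. dimensional over a field—actually one only needs $B$ to be $R$-projective up to the fact that $\operatorname{Tor}$ vanishes on c.b.l.f. modules, but the cleanest route is: $B \otimes_R -$ is right exact always, and left exactness follows because any injection $X \hookrightarrow Y$ of c.b.l.f. modules can be checked degree-by-degree where everything is finite-dimensional, and $B_\bg$ is a finite-dimensional flat, hence free, $\Bbbk$-module). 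So the map is well-defined on short exact sequences.

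The substantive point is local finiteness. Fix a simple $S_j' \in R'\modlf$ and an object $X \in R\modlf$. I must produce a finite $I_{X,j} \subset I$ (indexing the simples $S_i$ of $R\modlf$, taken up to grading shift) so that $S_i^*(X)\, {S_j'}^*(F(S_i)) = 0$ for $i \notin I_{X,j}$. Here one should read $S_i^*$ and ${S_j'}^*$ as the graded multiplicity functions, so that "$i \notin I_{X,j}$" is shorthand for: for the finitely many $i$ not in $I_{X,j}$ together with all grading shifts, the product vanishes. The key observation is the cone-boundedness built into \cref{prop:RposJH}: $X$ has a bounding cone $C_X$ with minimal degree $\be_X$, and $B$ has a bounding cone $C_B$ with minimal degree $\be_B$; composing, $F(x^\bg S_i) = x^\bg(B \otimes_R S_i)$ lives in a cone $C_B$ shifted by $\bg + \be_B$, while the shifts $\bg$ at which $x^\bg S_i$ occurs in $X$ lie in $\be_X + C_X$. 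Since there are only finitely many distinct simples $S_1,\dots,S_m$ total (strong c.b.l.\ Jordan--H\"older), and for fixed $i$ the multiplicity ${x^{\bg'} S_j'}^*(F(x^\bg S_i))$ is nonzero only for $\bg'$ in a translate of $C_B$ by $\bg$, the set of pairs $(\bg,\bg')$ with both $x^\bg S_i$ occurring in $X$ and $S_j'$ (in the chosen degree $\bg'$) occurring in $F(x^\bg S_i)$ is controlled by intersecting two cones-compatible-with-$\prec$: this is a finite set by the remark that in a cone compatible with $\prec$ only finitely many elements precede any given one. Running this over the finitely many $S_i$ shows the contribution to ${S_j'}^*(FX)$ is a finite sum, i.e.\ $F$ is locally finite; then \cref{prop:lfexactinducemap} gives the induced map $[F]$.

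For the Morita statement: if $B$ induces a Morita equivalence $R\amod \simeq R'\amod$, with inverse induced by a bimodule $B'$ (so $B' \otimes_{R'} B \cong R$ and $B \otimes_R B' \cong R'$ as bimodules), then since Morita equivalences preserve finite generation and graded-finite-dimensionality degree-wise, $F$ restricts to an equivalence $R\modlf \xrightarrow{\sim} R'\modlf$ with inverse $F' = B' \otimes_{R'} -$, which is also exact and (by the same cone argument applied to $B'$) locally finite. Then $[F'] \circ [F]$ and $[F] \circ [F']$ are induced by the identity functors on $G_0$, hence are the identity on $\bGO$ by functoriality of the construction $\cC \mapsto \bGO(\cC)$ (the quotient $J(\cC)+J^*(\cC)$ is preserved under an equivalence), so $[F]$ is an isomorphism.

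The main obstacle I expect is bookkeeping the two cone conditions simultaneously: one must be careful that the cone $C_B$ of the bimodule and the cone $C_X$ of the module can be enlarged to a common cone compatible with $\prec$ (any two such cones are contained in a larger one), and that the finiteness "only finitely many $\bc \prec \bz$ in $C \cap \bZ^n$" is applied to the correct intersection of translated cones; the exactness of $F$ on the \emph{restricted} category $R\modlf$ (as opposed to $R\amod$) also deserves a line, since one must know the kernel and cokernel computed in $R\amod$ already lie in $R\modlf$, which holds because sub- and quotient modules of c.b.l.f.\ dimensional modules are again c.b.l.f.\ dimensional.
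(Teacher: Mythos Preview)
The paper's proof is literally the single word ``Straightforward,'' so there is nothing to compare approach-by-approach; your write-up supplies the content the paper omits. Your local finiteness argument via cone arithmetic is correct and is exactly the kind of computation the paper has in mind (it is the same argument as in the proof of the proposition immediately preceding this one, showing that a $\bZ^n$-graded functor between c.b.l.\ Jordan--H\"older categories is automatically locally finite). The Morita part is also fine.

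However, your exactness argument has a genuine flaw. You write that left exactness of $B \otimes_R -$ can be ``checked degree-by-degree where everything is finite-dimensional, and $B_\bg$ is a finite-dimensional flat, hence free, $\Bbbk$-module.'' This conflates flatness over $\Bbbk$ with flatness over $R$: the tensor product is over $R$, and the relations $br \otimes x = b \otimes rx$ couple different $\bZ^n$-degrees, so one cannot reduce to a degree-wise tensor over $\Bbbk$. A c.b.l.f.\ dimensional bimodule $B$ need not be flat as a right $R$-module (take $R = R' = \Bbbk[x]$, $n=1$, and $B = \Bbbk = R/(x)$; then $B \otimes_R -$ kills the injection $xR \hookrightarrow R$). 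So $F$ is not exact in general, and \cref{prop:lfexactinducemap} does not apply as stated. Either an additional hypothesis is implicit (e.g.\ $B$ flat or projective as a right $R$-module, which is automatic in the Morita case and in the dg-analogue \cref{prop:derivedtensorinduces} where the \emph{derived} tensor is used), or the map $[F]$ should be understood as the unique $\bZ\pp{x_1,\dots,x_n}$-linear map determined by $[S_i] \mapsto [F S_i]$ on the free basis of $\bGO(R\modlf)$---well-defined by local finiteness, but then $[F][X] = [FX]$ need not hold for all $X$. The paper's ``Straightforward'' does not resolve which of these is meant.
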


\begin{proof}
Immediate by \cref{prop:exactfunctoronG0}. 
\end{proof}

\begin{exe}
Recall the $\bZ \times \bZ^2$-graded algebra $R$ from \cref{sec:mainexintro} and its simple module $L$. We observe that $R$ respects the hypothesis of \cref{cor:GORmodlf}, and thus
\[
\bGO(R\modlf)  \cong \bZ\pp{h,q,\lambda} \cdot [L].
\]
Moreover, this time, we also have $R \in R\modlf$, and we obtain 
\[
[R] = \frac{1+ h \lambda^2}{1-q^2} [L],
\]
because of \cref{eq:filtintroex}. 
This could be already a satisfying answer to the problem of \cref{sec:mainexintro}, but as we will see below, we can do better. 
\end{exe}

\subsubsection{A surprising isomorphism}

Let 
\[
0 \subset \cdots \subset M_2 \subset M_1 \subset M_0 = M,
\]
be a c.b.l.f. composition series of some c.b.l.f. $R$-module $M$. We observe that $\bigoplus_{r \geq 1} M_r$ is a c.b.l.f. module since for any $x \in M$ then $x \notin M_r$ for $r\gg 0$ (otherwise $M$ would not be c.b.l.f.). Thus we have a short exact sequence 
\[
0 \rightarrow \bigoplus_{r \geq 1} M_r \rightarrow M \oplus \bigoplus_{r \geq 1} M_r \rightarrow \bigoplus_{r \geq 0} M_r/M_{r+1} \rightarrow 0,
\]
in $R\modlf$, where the injection is the sum of the injection maps $M_{r+1} \hookrightarrow M_r$. From this, we obtain
\[
[M] + [ \bigoplus_{r \geq 1} M_r] = [\bigoplus_{r \geq 1} M_r] + [\bigoplus_{r \geq 0} M_r/M_{r+1}]
\quad \Rightarrow \quad
[M] = \sum_{r \geq 0} [M_r/M_{r+1}],
\]
in $G_0(R\modlf)$. Therefore, we have the following proposition:

\begin{prop}
Let $R$ be as in \cref{prop:RposJH}. The canonical projection 
\[
G_0(R\modlf) \xrightarrow{\simeq} \bGO(R\modlf),
\]
is an isomorphism.
\end{prop}



\section{Triangulated asymptotic Grothendieck group}\label{sec:derivedtopK0}

Let $\cT$ be an idempotent complete, triangulated category. Also suppose that $\cT$ admits arbitrary products and coproducts, and these preserve distinguished triangles. 
In this section, we will assume we work with a saturated triangulated full subcategory $\cC \subset \cT$. 
 Saturated means that whenever $Y \cong X \oplus Z \in \cT$ for some $Y \in \cC$, there exists $X', Z' \in \cC$ such that $X' \cong X$ and $Z' \cong Z$.

\smallskip

Following the terminology of~\cite{kellernicolas}, the \emph{Milnor colimit $\mcolim_{r  \geq 0} (f_r) $} of a collection of arrows $\{X_r \xrightarrow{f_r} X_{r+1}\}_{r \in \bN}$ in $\cT$ is the mapping cone fitting inside the following distinguished triangle
\[
\coprod_{r \in \bN} X_r \xrightarrow{1-f_\bullet} \coprod_{r \in \bN} X_r \rightarrow \mcolim_{r  \geq 0} (f_r) \rightarrow 
\]
where the arrow on the left is given by the infinite matrix
\[
1-f_\bullet := 
\begin{pmatrix}
1      & 0       &  0 & 0 & \cdots \\
-f_0 & 1       & 0 & 0 & \cdots  \\
0      & -f_1  & 1 & 0  & \cdots \\
\vdots & \ddots & \ddots & \ddots & \ddots
\end{pmatrix}
\] 

\begin{rem}
Sometimes the Milnor colimit is also called ``homotopy colimit'' in the literature (e.g. in~\cite{qithesis}). 
\end{rem}

There is a dual notion of Milnor limit. Consider a collection of arrows $\{X_{r+1} \xrightarrow{f_r} X_r\}_{r \geq 0}$ in $\cT$. The \emph{Milnor limit} is the object fitting inside the distinguished triangle
\[
\mlim_{r \geq 0} (f_r) \rightarrow \prod_{r \geq 0} X_r \xrightarrow{1 - f_\bullet} \prod_{r \geq 0}  X_r \rightarrow
\]

\begin{rem}
In general, Milnor limits and Milnor colimits do not preserve distinguished triangles. However, since in $\cT$ products and coproducts preserve distinguished triangles, it is easy to see using the 3x3 lemma \cite[Lemma~2.6]{may} that Milnor limits and Milnor colimits also preserve distinguished triangles in our situation. 
\end{rem}

Similarly as before, we say that a Milnor limit or a Milnor colimit is \emph{acceptable} whenever $\bigoplus_{r \geq 0} \cone(f_r) \in \cC$. 

\smallskip

\begin{defn}\label{def:toptriangulatedK0}
The \emph{asymptotic triangulated Grothendieck group} of $\cC \subset \cT$ is given by
\[
\bKO^\Delta(\cC) := K_0^\Delta(\cC) / T(\cC),
\]
where $T(\cC)$ is generated by 
\[
[Y] -  [X] = \sum_{r\geq 0} [E_r],
\]
whenever $\bigoplus_{r \geq 0} E_r \in \cC$, and
\begin{align*}
Y \cong \mcolim\bigl(X = F_0 \xrightarrow{f_0} F_1 \xrightarrow{f_1} \cdots \bigr),
\intertext{is an acceptable Milnor colimit, or}
X \cong \mlim \bigl( \cdots \xrightarrow{f_1} F_1 \xrightarrow{f_0} F_0 = Y \bigr),
\end{align*}
is an acceptable Milnor limit, and 
\[ 
	[E_r] = [\cone(f_r)] \in K_0^\Delta(\cC),
\]
for all $r \geq 0$.
\end{defn}

\subsection{T-structures}

Recall that a \emph{t-structure}~\cite{perversesheaves}  is the datum of two strictly full subcategories $\cC^{\leq 0}$ and $\cC^{\geq 0}$ such that 
\begin{itemize}
\item $\cC^{\leq 0}$ is closed under $[-1]$ and $\cC^{\geq 0}$ is closed under $[1]$ ;
\item $\Hom_\cC(M[1], N) = 0$ for  all $M \in \cC^{\geq 0}$ and $N \in \cC^{\leq 0}$ ;
\item for each $M \in \cC$ there is a distinguished triangle
\begin{equation}\label{eq:tstructtriangle}
M^{\geq 1} \rightarrow M \rightarrow M ^{\leq 0} \rightarrow
\end{equation}
where $M^{\leq 0} \in \cC^{\leq 0}$ and $M ^{\geq 1}[-1] \in \cC^{\geq 0}$.
\end{itemize}

\begin{rem}
Because we chose a chain complex convention for dg-algebras before, we use a reverse notation as the one in the reference~\cite{perversesheaves}. 
\end{rem}

We refer to~\cite{perversesheaves} for the proof of all the following facts. 
The inclusion $\cC^{\leq n} \hookrightarrow \cC$ admits a right adjoint $\tau_{\leq n} : \cC \rightarrow \cC^{\leq n}$  which we call \emph{truncation functor}. Similarly, the inclusion $\cC^{\geq n} \hookrightarrow \cC$ admits a left adjoint $\tau^{\geq n} : \cC \rightarrow \cC^{\geq n}$.  Moreover, any distinguished triangle similar to the one in \cref{eq:tstructtriangle} is isomorphic to
\[
\tau^{\geq 1} M \rightarrow M \rightarrow \tau^{\leq 0} M \rightarrow
\]
and we have
\begin{align*}
\tau^{\leq n} \circ \tau^{\leq m} &\cong \tau^{\leq n}, &
\tau^{\leq n} \circ \tau^{\geq m} &\cong \tau^{\geq m} \circ \tau^{\leq n} \cong 0,
\\
\tau^{\geq n} \circ \tau^{\geq m} &\cong \tau^{\geq m}, &
\tau^{\geq n} \circ \tau^{\leq m} &\cong \tau^{\leq m} \circ \tau^{\geq n}, 
\end{align*}
for $n < m$.  In particular, we obtain a distinguished triangle
\[
\tau^{\geq n} \circ \tau^{\leq n}(M) \rightarrow \tau^{\leq n} M \rightarrow \tau^{\leq n-1} M \rightarrow
\]
for all $n \in \bZ$ and $M \in \cC$. 

\smallskip

The \emph{heart} of the t-structure is  $\cC^\heartsuit := \cC^{\leq 0} \cap \cC^{\geq 0}$, and the \emph{homological functor} is $H^0 := \tau^{\geq 0} \circ \tau^{\leq 0} : \cC \rightarrow \cC^\heartsuit$. One also puts $H^i := H^0 \circ [-i]$. 
It is a well-known fact that $\cC^\heartsuit$ is abelian. In particular given $X \xrightarrow{f} Y \in \cC^\heartsuit$, we have
\begin{align*}
\cok(f) &:= H^0(\cone(f)) \cong \tau^{\leq 0}(\cone(f)), \\
 \ker(f) &:= H^1(\cone(f)) \cong \tau^{\geq 0}(\cone(f)[-1]).
 \end{align*}
 Moreover, a distinguished triangle in $\cC$ gives rise to a long exact sequence in homology. 
One says that a t-structure is \emph{bounded from below} if $\cC = \bigcup_n \cC^{\geq n}$. It is \emph{non-degenerate} if $\bigcap_{n \in \bZ} \cC^{\leq n} = 0 = \bigcap_{n \in \bZ} \cC^{\geq n}$. When $\tau$ is non-degenerate, then $X \cong 0 \in  \cC$ if and only if $H^i(X) \cong 0 \in \cC^\heartsuit$ for all $i \in \bZ$. This also implies that whenever $f : X \rightarrow Y$ induces isomorphisms $H^i(f) : H^i(X) \xrightarrow{\simeq} H^i(Y)$ for all $i \in \bZ$, then $f : X \xrightarrow{\simeq} Y$ is an isomorphism in $\cC$. 

\smallskip

In our context, we will assume that the t-structure extends to $\cT$ such that there is a t-structure on $\cT$ with $\cC^{\leq 0} = \cT^{\leq 0} \cap \cC$ and $\cC^{\geq 0} = \cT^{\geq 0} \cap \cC$. Because we assumed that $\cT$ has arbitrary products and coproducts, its heart $\cT^\heartsuit$ is both AB3 and AB3* by~\cite[Proposition 3.1.2.]{parrathesis}. 

\smallskip

Since $\tau^{\leq 0}$ is a right adjoint, it preserves limits. Following~\cite{parrathesis}, we say that $\tau$ is a \emph{smashing} t-structure whenever $\tau^{\leq 0}$ also preserves coproducts (equivalently when $\cC^{\geq 0}$ is closed under taking coproducts). For example, it is the case whenever the t-structure is compactly generated (see~\cite[Definition 1.6.4.]{parrathesis}). 
Since $\tau^{\geq 0}$ always preserves colimits, we obtain that $H^0$ preserves coproducts whenever $\tau$ is smashing.   
 More generally, if $\cT^\heartsuit$ is closed under coproducts, then it is AB4 \cite[Proposition 3.1.5.]{parrathesis}. This is in particular the case when $\tau$ is smashing. 
 The dual version of these facts also holds. 
 
\begin{lem}
Suppose $\cT^\heartsuit$ is AB4 and AB4*. 
The inclusion $\cC^\heartsuit \subset \cC$ induces a map
\[
\Psi : \bGO(\cC^\heartsuit) \rightarrow \bKO^\Delta(\cC).
\]
\end{lem}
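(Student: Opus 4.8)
The plan is to check that the assignment $[X]\mapsto[X]$ for $X\in\cC^\heartsuit$ descends from $G_0(\cC^\heartsuit)$ to $\bKO^\Delta(\cC)$. First I would observe that there is a well-defined homomorphism $G_0(\cC^\heartsuit)\to K_0^\Delta(\cC)$: a short exact sequence $0\to X\to Y\to Z\to 0$ in the abelian category $\cC^\heartsuit$ is, by the standard yoga of t-structures, represented by a distinguished triangle $X\to Y\to Z\to X[1]$ in $\cC$, so it gives the relation $[Y]=[X]+[Z]$ in $K_0^\Delta(\cC)$. Composing with the quotient map $K_0^\Delta(\cC)\to\bKO^\Delta(\cC)$ gives a homomorphism $G_0(\cC^\heartsuit)\to\bKO^\Delta(\cC)$, and what remains is to show it kills the subgroup $J(\cC^\heartsuit)+J^*(\cC^\heartsuit)$ of \cref{def:topG0}.

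The core of the argument is to match an acceptable filtered colimit (resp. limit) in $\cC^\heartsuit$ with an acceptable Milnor colimit (resp. limit) in $\cC\subset\cT$, using the hypotheses that $\cT$ has products and coproducts preserving distinguished triangles and that $\cT^\heartsuit$ is AB4 and AB4*. Given an acceptable filtered colimit $Y\cong\colim(X=F_0\xrightarrow{f_0}F_1\xrightarrow{f_1}\cdots)$ in $\cC^\heartsuit$ with ${\colim}^1$ vanishing (the acceptable case forces $\bigoplus\ker f_r,\bigoplus\cok f_r\in\cC^\heartsuit\subset\cC$), I would form the Milnor colimit $M:=\mcolim(f_r)$ in $\cT$, sitting in the triangle $\coprod_r F_r\xrightarrow{1-f_\bullet}\coprod_r F_r\to M\to$. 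Taking the associated long exact sequence in $\cT^\heartsuit$-homology and comparing with the exact sequence $0\to{\colim}^1\to\coprod_r F_r\xrightarrow{1-f_\bullet}\coprod_r F_r\to\colim\to 0$ that computes the filtered colimit in the AB4 category $\cT^\heartsuit$, one sees $H^0(M)\cong Y$ and $H^{-1}(M)\cong{\colim}^1=0$, while all other $H^i(M)$ vanish since the $F_r$ lie in the heart; hence $M\cong Y$ in $\cC$ by non-degeneracy of the t-structure (this is exactly the point where the extended t-structure on $\cT$ is needed). Meanwhile $\cone(f_r)$ in $\cT$ has $H^0=\cok f_r$ and $H^{-1}=\ker f_r$, so in $K_0^\Delta(\cC)$ we get $[\cone(f_r)]=[\cok f_r]-[\ker f_r]=[C_r]-[K_r]$, and the defining relation $[Y]-[X]=\sum_r[\cone(f_r)]$ of $T(\cC)$ becomes precisely the generator $[Y]-[\tilde Y]=[X]+\sum_r([C_r]-[K_r])$ of $J(\cC^\heartsuit)$ with $\tilde Y=0$. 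Thus $J(\cC^\heartsuit)$ maps to $0$ in $\bKO^\Delta(\cC)$, and the dual computation with Milnor limits handles $J^*(\cC^\heartsuit)$.

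The main obstacle I anticipate is the bookkeeping needed to identify $\mcolim(f_r)$ with the filtered colimit $Y$ at the level of $\cT^\heartsuit$-homology: one must check carefully that the map $1-f_\bullet$ on $\coprod_r F_r$ computed in $\cT$ induces, under $H^0$, exactly the map $1-f_\bullet$ on $\coprod_r F_r$ computed in $\cC^\heartsuit$ (this uses that $H^0$ is additive and that $\coprod$ in $\cT$ restricted to the heart agrees with $\coprod$ in $\cT^\heartsuit$, which is where AB4 of $\cT^\heartsuit$ enters), and that the long exact homology sequence of the Milnor triangle splits into the short exact sequence defining the colimit together with vanishing of ${\colim}^1$. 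One also needs to confirm that acceptability is preserved: $\bigoplus_r\cone(f_r)\in\cC$ follows because its only nonzero homologies are $\bigoplus_r\cok f_r$ in degree $0$ and $\bigoplus_r\ker f_r$ in degree $-1$, both in $\cC^\heartsuit$, so $\bigoplus_r\cone(f_r)$ is a two-step extension lying in $\cC$ (here I use that $\cC$ is a saturated, hence thick, triangulated subcategory). Once these identifications are in place the rest is formal, and the dual statements for limits follow by replacing $\coprod$ by $\prod$, $\mcolim$ by $\mlim$, and AB4 by AB4*.
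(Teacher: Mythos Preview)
Your overall architecture is right and matches the paper's approach, but there is a genuine gap: you assume the derived colimit $\tilde Y={\colim}^1$ vanishes. Look back at \cref{def:topG0}: the generators of $J(\cC^\heartsuit)$ are
\[
[Y]-[\tilde Y]=[X]+\sum_{r\geq 0}\bigl([C_r]-[K_r]\bigr)
\]
with $\tilde Y\in\cC^\heartsuit$ allowed to be nonzero. Acceptability only says $\bigoplus_r\ker f_r,\bigoplus_r\cok f_r\in\cC$; it does not force $\tilde Y=0$, and nothing in the hypotheses (AB4 and AB4* on $\cT^\heartsuit$) guarantees AB5. So your sentence ``hence $M\cong Y$ in $\cC$ by non-degeneracy'' is both unjustified (non-degeneracy of $\tau$ is not assumed in this lemma) and unnecessary.

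The fix is exactly what the paper does. Since $1-f_\bullet:\coprod_r F_r\to\coprod_r F_r$ is a morphism in $\cT^\heartsuit$, the standard t-structure identities give
\[
\tau^{\leq 0}M\cong\cok(1-f_\bullet)=Y,\qquad \tau^{\geq 1}M\cong\ker(1-f_\bullet)[1]=\tilde Y[1],
\]
so $M$ is an extension of $Y$ by $\tilde Y[1]$ inside $\cC$ (both lie in $\cC^\heartsuit\subset\cC$), hence $M\in\cC$ and $[M]=[Y]-[\tilde Y]$ in $K_0^\Delta(\cC)$. Now the Milnor-colimit relation $[M]-[X]=\sum_r[\cone(f_r)]$ in $\bKO^\Delta(\cC)$ is \emph{exactly} the general generator of $J(\cC^\heartsuit)$, with no restriction on $\tilde Y$. (Incidentally, with the paper's conventions the derived colimit sits in $H^1(M)$, not $H^{-1}(M)$.) The dual argument for $J^*(\cC^\heartsuit)$ is then identical using products and AB4*.
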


\begin{proof}
There is a map $G_0(\cC^\heartsuit) \rightarrow K_0^\Delta(\cC)$  since short exact sequences in $\cC^\heartsuit$ give rise to distinguished triangles in $\cC$. Thus, we want to show that $J(\cC^\heartsuit) \subset T(\cC)$ and $J^*(\cC^\heartsuit) \subset T(\cC)$. 

We show that $J(\cC^\heartsuit) \subset T(\cC)$, the dual case being similar. Consider a map $X \xrightarrow{f} Y$ in the heart $\cC^\heartsuit$. Then we can view $\ker (f)$ and $\cok(f)$ as objects in $\cC$, and we obtain
\[
 [\cok f] - [\ker f]  = [Y] - [X] = [\cone(f)] \in K_0^\Delta(\cC).
\]
Now consider an acceptable filtered colimit and its corresponding acceptable derived colimit
\begin{align*}
Y &\cong \colim \bigl( X_0 \xrightarrow{f_0} X_1 \xrightarrow{f_1} \cdots \bigr), \\
\tilde Y &\cong {\colim}^1 \bigl( X_0 \xrightarrow{f_0} X_1 \xrightarrow{f_1} \cdots \bigr),
\end{align*}
in $\cC^\heartsuit$.  There is a distinguished triangle
\[
\bigoplus_{r \geq 0} \ker(f_r)[1] \rightarrow  \bigoplus_{r \geq 0} \cone(f_r) \rightarrow \bigoplus_{r \geq 0} \cok(f_r) \rightarrow 
\]
in $\cT$, and thus $\bigoplus_{\geq 0} \cone(f_r) \in \cC$ since $\bigoplus_{r \geq 0} \cok(f_r) \in \cC$ and $\bigoplus_{r \geq 0} \ker(f_r) \in \cC$.
Moreover, there is an exact sequence
\[
0 \rightarrow \tilde Y \rightarrow \coprod_{r \geq 0} X_r \xrightarrow{1-f_\bullet} \coprod_{r \geq 0} X_r \rightarrow Y \rightarrow 0,
\]
in $\cT^\heartsuit$. Thus, we obtain
\begin{align*}
Y &\cong H^0(\cone(1-f_\bullet)), & 
\tilde Y &\cong H^1(\cone(1-f_\bullet)),
\end{align*}
and $ \mcolim \bigl( X_0 \xrightarrow{f_0} X_1 \xrightarrow{f_1} \cdots \bigr) = \cone(1-f_\bullet) \in \cC$. 
In conclusion,
\begin{align*}
[Y] - [\tilde Y] - [X_0] &=  [ \mcolim \bigl( X_0 \xrightarrow{f_0} X_1 \xrightarrow{f_1} \cdots \bigr)] - [X_0]  \\
&= \sum_{r \geq 0} [\cone(f_r)] = \sum_{r \geq 0} ([\cok f_r] - [\ker f_r]),
\end{align*}
in $\bKO^\Delta(\cC)$. 
\end{proof}

\begin{lem}\label{lem:longexactinGO}
Let
\[
\cdots \xrightarrow{f_0} X_1 \xrightarrow{f_1} X_2 \xrightarrow{f_2} X_3 \xrightarrow{f_3} \cdots
\]
be a long exact sequence in an abelian category $\cA$ which is a full subcategory of some AB4 category $\cG$. 
If $\bigoplus_i X_i \in \cA$, then we obtain
\[
 \sum_i [X_{2i}] =  \sum_{i} [X_{2i+1}],
\]
in $G_0(\cA)$.
\end{lem}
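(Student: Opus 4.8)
The statement is about a (possibly infinite) long exact sequence whose total direct sum lies in $\cA$, and it asks to show that the alternating sum of even terms equals the alternating sum of odd terms in $G_0(\cA)$. My plan is to slice the long exact sequence into short exact sequences using images, and then reassemble the relations, being careful that the relevant direct sums actually lie in $\cA$ so that the relation $\sum_i [X_i] := [\bigoplus_i X_i]$ makes sense and can be manipulated distributively (as stated in \cref{sec:topG0}). First I would set $Z_i := \Image(f_i : X_i \to X_{i+1})$, so exactness gives short exact sequences
\[
0 \rightarrow Z_{i-1} \rightarrow X_i \rightarrow Z_i \rightarrow 0
\]
for every $i$ (with the convention $Z_{i-1} = \ker f_i$). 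Since $Z_{i-1}$ is a subobject of $X_i$ and $Z_i$ is a quotient of $X_i$, and since $\bigoplus_i X_i \in \cA$ with $\cA \subset \cG$ an AB4 (hence AB3) full subcategory, the coproduct $\bigoplus_i Z_i$ is a subobject of $\bigoplus_i X_{i+1}$, and being a subobject of an object of $\cA$... here is the first place I must be careful: $\cA$ need not be closed under subobjects inside $\cG$.

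So instead I would argue directly with the finite truncations and pass to the colimit, or better, avoid the issue by working with the split-off pieces all at once. Concretely: from the short exact sequences above, AB4-ness of $\cG$ gives a short exact sequence
\[
0 \rightarrow \bigoplus_i Z_{2i-1} \rightarrow \bigoplus_i X_{2i} \rightarrow \bigoplus_i Z_{2i} \rightarrow 0
\]
and likewise with even and odd roles of $Z$ swapped,
\[
0 \rightarrow \bigoplus_i Z_{2i} \rightarrow \bigoplus_i X_{2i+1} \rightarrow \bigoplus_i Z_{2i+1} \rightarrow 0 .
\]
Now $\bigoplus_i X_{2i}$ and $\bigoplus_i X_{2i+1}$ are both summands of $\bigoplus_i X_i \in \cA$, hence lie in $\cA$. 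The object $\bigoplus_j Z_j$ sits in the short exact sequence $0 \to \bigoplus_j Z_j \to \bigoplus_i X_{i+1} \to \bigoplus_j Z_j \to 0$ obtained by summing all the short exact sequences; this exhibits $\bigoplus_j Z_j$ as a subobject of an object of $\cA$ but also, via the quotient map, shows the quotient is again $\bigoplus_j Z_j$. To land inside $\cA$, I would note that $\bigoplus_j Z_j$ is a subobject of $\bigoplus_i X_{i+1} \in \cA$ and the quotient $\bigoplus_i X_{i+1} / \bigoplus_j Z_j \cong \bigoplus_j Z_j$ is isomorphic to that same subobject; chasing a bit, one sees $\bigoplus_i X_{i+1} \cong \bigoplus_j Z_j \oplus \bigoplus_j Z_j$ only after suitable bookkeeping — the cleanest route is actually to observe that $\bigoplus_j Z_{2j}$ and $\bigoplus_j Z_{2j-1}$ are kernels of maps between objects of $\cA$ ($\bigoplus_i X_{2i} \to \bigoplus_i X_{2i}$, the shift-by-two composite, etc.), and that $\cA$ being an \emph{abelian} full subcategory of $\cG$ means it is closed under kernels and cokernels of its own morphisms; so the relevant direct sums of $Z_j$'s, being kernels/cokernels of endomorphisms built from the $f_i$ on objects of $\cA$, lie in $\cA$.

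With all three objects $\bigoplus_i X_{2i}$, $\bigoplus_i X_{2i+1}$, $\bigoplus_j Z_{\mathrm{even}}$, $\bigoplus_j Z_{\mathrm{odd}}$ in $\cA$, the two displayed short exact sequences yield, in $G_0(\cA)$,
\[
\sum_i [X_{2i}] = \Bigl[\bigoplus_j Z_{2j-1}\Bigr] + \Bigl[\bigoplus_j Z_{2j}\Bigr] = \sum_i [X_{2i+1}],
\]
which is the claim. The main obstacle, as flagged, is the bookkeeping to certify that the infinite direct sums of the image objects $Z_j$ genuinely lie in $\cA$ (so that the symbol $\sum[\,\cdot\,]$ is legitimate and the short exact sequences can be invoked); once that is secured the computation is a one-line cancellation. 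A secondary point to handle carefully is the indexing at the ``ends'' of the sequence — whether it is genuinely two-sided infinite, bounded below, or bounded above — but in every case the same slicing works, with the degenerate $Z$'s ($= 0$) simply dropping out, and I would phrase the argument uniformly by allowing $Z_i = 0$ outside the relevant range.
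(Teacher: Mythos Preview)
Your proposal is correct and follows essentially the same approach as the paper: slice the long exact sequence into short exact sequences $0 \to \Image f_{i-1} \to X_i \to \Image f_i \to 0$, take coproducts over even and odd $i$ using AB4 in $\cG$, and cancel. The paper resolves the one issue you flagged (why the infinite direct sums of images lie in $\cA$) in a single clean stroke: it observes $\bigoplus_i \Image f_i \cong \Image\bigl(\bigoplus_i f_i\bigr)$, and the latter is the image of a morphism between objects of $\cA$ (namely $\bigoplus_i X_i \to \bigoplus_i X_i$), hence lies in $\cA$ since $\cA$ is abelian and full; the even and odd halves are handled the same way via $\bigoplus_i f_{2i}$ and $\bigoplus_i f_{2i+1}$ once one notes that $\bigoplus_i X_{2i}$ and $\bigoplus_i X_{2i+1}$ are images of idempotents on $\bigoplus_i X_i$, so also lie in $\cA$. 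This is exactly the ``kernels/cokernels of morphisms in $\cA$'' resolution you landed on, just stated more directly.
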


\begin{proof}
For each $i \in \bZ$ there is a short  exact  sequence
\[
0 \rightarrow \ker f_i \cong \Image f_{i-1} \rightarrow X_i \rightarrow \Image f_i \rightarrow 0 \in \cA \subset \cG.
\]
Since $\cG$ is AB4 , we obtain two short exact sequences
\begin{align*}
&0 \rightarrow \bigoplus_i \Image f_{2i} \rightarrow \bigoplus_i X_{2i+1} \rightarrow \bigoplus_i \Image f_{2i+1} \rightarrow 0, \\
&0 \rightarrow \bigoplus_i \Image f_{2i-1} \rightarrow \bigoplus_i X_{2i} \rightarrow \bigoplus_i \Image f_{2i} \rightarrow 0.
\end{align*}
Since $\bigoplus_i X_i \in \cA$, we also have $\Image \bigl( \bigoplus_i  f_i  \bigr) \cong \bigoplus_i \Image f_i \in \cA$.
Thus, we obtain
\[
\sum_i [X_{2i}] =  \sum_i \bigl( [\Image f_{2i-1}] + [\Image f_{2i}]\bigr) = \sum_i [X_{2i+1}],
\]
in $G_0(\cA)$.
\end{proof}

\begin{defn}
We say that $\tau$ is \emph{full} if $\bigoplus_{i \in \bZ} H^i(X) \in \cC$ for all $X \in \cC$  and  $\cT^\heartsuit$ is AB4.
\end{defn}

\begin{lem}
If $\tau$ is full and $H^0$ commutes with direct sums, 
then there is a map
\[
\Phi_0 : K_0^\Delta(\cC) \rightarrow G_0(\cC^\heartsuit),
\]
given by
\[
[X] \mapsto [H^*(X)] := \sum_{i \in \bZ} \bigl( [H^{2i}(X)] - [H^{2i+1}(X)] \bigr). 
\]
\end{lem}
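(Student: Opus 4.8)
The goal is to show that the assignment $[X] \mapsto [H^*(X)] = \sum_i \bigl([H^{2i}(X)] - [H^{2i+1}(X)]\bigr)$ is well defined as a map $K_0^\Delta(\cC) \to G_0(\cC^\heartsuit)$. The plan is to check that this respects the defining relations of $K_0^\Delta(\cC)$, namely that it sends the class of a distinguished triangle $X \to Y \to Z \to X[1]$ to zero in $G_0(\cC^\heartsuit)$. First I would note that the sum $\sum_i \bigl([H^{2i}(X)] - [H^{2i+1}(X)]\bigr)$ genuinely makes sense: since $\tau$ is full, $\bigoplus_{i \in \bZ} H^i(X) \in \cC$, hence lies in $\cC^\heartsuit$, so each $H^i(X)$ is an object of the abelian category $\cC^\heartsuit$ and all but finitely many classes could be nonzero, but the sum is interpreted in the sense of the notation $\sum_{i} [H^i(X)] := [\bigoplus_i H^i(X)]$ introduced earlier, which is a legitimate element of $G_0(\cC^\heartsuit)$. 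So $\Phi_0$ is at least well defined as a map on the free group $F(\cC)$.

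The main step is to verify that a distinguished triangle $X \xrightarrow{f} Y \xrightarrow{g} Z \xrightarrow{h} X[1]$ in $\cC$ yields $[H^*(Y)] = [H^*(X)] + [H^*(Z)]$ in $G_0(\cC^\heartsuit)$. Here I would invoke the standard fact (recalled in the excerpt) that a distinguished triangle in $\cC$ gives rise to a long exact sequence in homology,
\[
\cdots \to H^{i-1}(Z) \to H^i(X) \to H^i(Y) \to H^i(Z) \to H^{i+1}(X) \to \cdots
\]
This is an infinite long exact sequence in $\cC^\heartsuit$. Since $\tau$ is full, $\bigoplus_i H^i(X)$, $\bigoplus_i H^i(Y)$, $\bigoplus_i H^i(Z)$ all lie in $\cC$, hence so does their coproduct, which is the direct sum of all terms of this long exact sequence. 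Moreover $\cT^\heartsuit$ is AB4 (part of the definition of $\tau$ being full). So I can apply \cref{lem:longexactinGO} with $\cA = \cC^\heartsuit$ and $\cG = \cT^\heartsuit$ to the long exact sequence above, reindexed appropriately. The alternating-sum identity from that lemma, $\sum_j [\text{even terms}] = \sum_j [\text{odd terms}]$, after grouping the terms of the long exact sequence according to which of $X$, $Y$, $Z$ they come from and tracking the parity shifts (the $H^i(X)$ and $H^i(Z)$ terms sit in one parity class while the $H^i(Y)$ terms sit in the other, up to the global degree-shift coming from $Z$ appearing shifted), rearranges precisely to $[H^*(Y)] = [H^*(X)] + [H^*(Z)]$.

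The step I expect to be the main obstacle is the bookkeeping in the previous paragraph: one must set up the indexing of the long exact sequence so that \cref{lem:longexactinGO} applies cleanly and so that the resulting even/odd partition matches the alternating signs in the definition of $[H^*(-)]$. Concretely, writing the long exact sequence as a single $\bZ$-indexed complex $\cdots \to W_{k} \to W_{k+1} \to \cdots$ where $W_{3i} = H^i(X)$, $W_{3i+1} = H^i(Y)$, $W_{3i+2} = H^i(Z)$ (or a similar pattern), \cref{lem:longexactinGO} requires a partition into two classes by parity of the index, whereas here we have a period-three pattern; the cleanest fix is to observe that the portion of the long exact sequence is still exact and its terms still have a well-defined direct sum in $\cC^\heartsuit$, then apply the lemma after collecting terms. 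Alternatively, and perhaps more transparently, I would split the long exact sequence into the short exact sequences $0 \to \Image(\delta_i) \to H^i(X) \to \Image(H^i(f)) \to 0$ etc.\ exactly as in the proof of \cref{lem:longexactinGO}, use AB4 of $\cT^\heartsuit$ to take direct sums over all $i$, and compute directly that the alternating sums telescope. Once the triangle relation is verified, $\Phi_0$ descends to $K_0^\Delta(\cC) = F(\cC)/T(\cC) \to G_0(\cC^\heartsuit)$, completing the proof.
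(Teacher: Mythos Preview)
Your proposal is correct and follows essentially the same route as the paper: establish well-definedness on objects via fullness (the paper additionally notes that $\cC$ being closed under direct summands lets one split $\bigoplus_i H^i(X)$ into its even and odd parts in $\cC^\heartsuit$), then pass from a distinguished triangle to the long exact sequence in homology and invoke \cref{lem:longexactinGO}. Your worry about the period-three indexing versus the parity split is unfounded---indexing the long exact sequence by $W_{3i+j}$ with $j\in\{0,1,2\}$ for $H^{-i}(X),H^{-i}(Y),H^{-i}(Z)$, the parity of $3i+j$ equals that of $i+j$, so the even/odd partition of \cref{lem:longexactinGO} groups exactly the terms needed to give $[H^*(X)]+[H^*(Z)]=[H^*(Y)]$.
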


\begin{proof}
Since $\cC$ is saturated, we have both $\bigoplus_{i\in \bZ} H^{2i}(X) \in \cC^\heartsuit$ and $\bigoplus_{i\in \bZ} H^{2i+1}(X) \in \cC^\heartsuit$. 
Suppose there is a distinguished triangle
\[
X \rightarrow Y \rightarrow Z \rightarrow 
\]
It induces a long exact sequence  
\[
\cdots \rightarrow H^1(Z) \rightarrow H^0(X) \rightarrow H^0(Y) \rightarrow H^0(Z) \rightarrow \cdots
\]
in $\cC^\heartsuit$. 
Using \cref{lem:longexactinGO}, it implies that
\[
[H^*(X)] + [H^*(Z)] = [H^*(Y)],
\]
in $G_0(\cC^\heartsuit)$. 
\end{proof}

\begin{lem}\label{lem:coneHinC}
Consider $X \xrightarrow{f} Y \in \cT$ and suppose $\cC^\heartsuit$ is thick in $\cT^\heartsuit$. If $\cone(f) \in \cC$, then we have $\cone(H^0(f)) \in \cC$.
If $\tau$ is full, then we also have $\bigoplus_{i \in \bZ} \cone(H^i(f)) \in \cC$. 
\end{lem}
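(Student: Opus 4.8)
The statement concerns a morphism $f : X \to Y$ in $\cT$ with $\cone(f) \in \cC$, and asserts first that $\cone(H^0(f)) \in \cC$, and then, under the fullness hypothesis on $\tau$, that $\bigoplus_{i \in \bZ} \cone(H^i(f)) \in \cC$. The plan is to exploit the long exact sequence in homology attached to the distinguished triangle $X \xrightarrow{f} Y \to \cone(f) \to$, together with the thickness of $\cC^\heartsuit$ in $\cT^\heartsuit$, to identify $\cone(H^0(f))$ (up to the relevant truncations) as an extension of subquotients of $H^i(\cone(f))$, all of which lie in $\cC^\heartsuit$ by hypothesis.

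First I would recall that for $X \xrightarrow{f} Y$ in $\cT$ with $C := \cone(f)$, the long exact homology sequence reads
\[
\cdots \to H^{i}(X) \to H^{i}(Y) \to H^{i}(C) \to H^{i-1}(X) \to H^{i-1}(Y) \to \cdots .
\]
Writing $K := \ker\bigl(H^0(X) \to H^0(Y)\bigr)$ and $Q := \cok\bigl(H^0(X) \to H^0(Y)\bigr)$, one extracts from this sequence two short exact sequences in $\cT^\heartsuit$: one expressing $Q$ as a subobject of $H^0(C)$ (with quotient a subobject of $H^{-1}(X)$ — but actually the cokernel of $H^0(C) \to H^{-1}(X)$ sits inside the image, so $Q$ is the kernel of $H^0(C) \to H^{-1}(X)$), and one expressing $K$ as a quotient of $H^1(C)$. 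Since $H^0(C), H^1(C) \in \cC^\heartsuit$ and $\cC^\heartsuit$ is thick (hence closed under subobjects and quotients) in $\cT^\heartsuit$, we get $K, Q \in \cC^\heartsuit$. Now $\cone(H^0(f))$ has only two possibly-nonzero cohomologies, $H^0(\cone H^0(f)) \cong Q$ and $H^1(\cone H^0(f)) \cong K$ (using the formulas $\cok(g) = H^0(\cone g)$, $\ker(g) = H^1(\cone g)$ recorded just before the lemma). Thus $\cone(H^0(f))$ fits in a distinguished triangle $K[1] \to \cone(H^0(f)) \to Q \to$ with both outer terms in $\cC^\heartsuit \subset \cC$; since $\cC$ is triangulated (saturated) in $\cT$, it is closed under extensions, so $\cone(H^0(f)) \in \cC$.

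For the second claim, assume $\tau$ is full, so in particular $\bigoplus_{i}H^i(X), \bigoplus_i H^i(Y), \bigoplus_i H^i(C) \in \cC$. Applying the argument above with $f$ replaced by $f[-i]$ for each $i$ (equivalently, reading off the LES at cohomological degree $i$) gives $\cone(H^i(f)) \in \cC$ for every $i$, via triangles $K_i[1] \to \cone(H^i(f)) \to Q_i \to$ with $K_i = \ker H^i(f)$, $Q_i = \cok H^i(f)$. It remains to assemble these over all $i$. Since $\cT^\heartsuit$ is AB4 (part of fullness of $\tau$), coproducts are exact, so from the LES one gets short exact sequences $0 \to \bigoplus_i Q_i \to \bigoplus_i H^i(C) \to \bigoplus_i (\text{im in } H^{i-1}(X)) \to 0$ and $0 \to \bigoplus_i K_i \to \bigoplus_i H^i(C)' \to \cdots$ of the kind used in \cref{lem:longexactinGO}; thickness of $\cC^\heartsuit$ and $\bigoplus_i H^i(C) \in \cC$ then force $\bigoplus_i K_i, \bigoplus_i Q_i \in \cC^\heartsuit$. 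Finally $\bigoplus_i \cone(H^i(f))$ fits in a distinguished triangle $\bigl(\bigoplus_i K_i\bigr)[1] \to \bigoplus_i \cone(H^i(f)) \to \bigoplus_i Q_i \to$ (coproducts preserve triangles in $\cT$), whose outer terms lie in $\cC$, hence $\bigoplus_i \cone(H^i(f)) \in \cC$ by closure under extensions.

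\textbf{Main obstacle.} The delicate point is bookkeeping with the long exact sequence: one must carefully extract $K_i$ and $Q_i$ as, respectively, a quotient of $H^{i+1}(C)$ and a subobject of $H^i(C)$, rather than confusing them with the full homology objects, and then ensure that the "intermediate" images appearing in the LES — e.g. $\Image\bigl(H^i(C) \to H^{i-1}(X)\bigr)$ — are themselves in $\cC^\heartsuit$ so that the coproduct argument goes through. This is exactly where thickness of $\cC^\heartsuit$ in $\cT^\heartsuit$ (closure under sub- and quotient objects, and under extensions within $\cT^\heartsuit$) is essential, and where the AB4 hypothesis (exactness of coproducts in $\cT^\heartsuit$) is used to pass from the degree-wise statement to the assembled one. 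No single step is hard, but the indexing must be handled with care.
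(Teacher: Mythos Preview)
Your proposal is correct and follows essentially the same approach as the paper: extract $\ker H^0(f)$ and $\cok H^0(f)$ from the long exact sequence as a quotient of $H^1(\cone f)$ and a subobject of $H^0(\cone f)$ respectively, use thickness of $\cC^\heartsuit$ to place them in $\cC^\heartsuit$, and conclude via the truncation triangle on $\cone(H^0(f))$; the full case is handled identically by taking coproducts and invoking AB4. Your write-up is in fact slightly more explicit than the paper's about the bookkeeping in the coproduct step.
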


\begin{proof}
There is an exact sequence in $\cT^\heartsuit$
\[
H^1(Y) \rightarrow H^1(\cone(f)) \rightarrow H^0(X) \rightarrow H^0(Y) \rightarrow H^0(\cone(f)) \rightarrow H^{-1}(X) \rightarrow H^{-1}(Y).
\]
Using the epi-mono decomposition we obtain a commutative diagram
\[
\begin{tikzcd}
H^1(Y)  \ar{rr} \ar[twoheadrightarrow]{dr} && H^1(\cone(f)) \ar{rr}  \ar[twoheadrightarrow]{dr}  && H^0(X). \\
&I \ar[hookrightarrow]{ur} && K \ar[hookrightarrow]{ur}&
\end{tikzcd}
\]
Since $H^1(\cone(f)) \in \cC^\heartsuit$, we have $I,K \in \cC^\heartsuit$. Doing the same reasoning on the part on the right, we obtain an exact sequence
\[
0 \rightarrow K \rightarrow H^0(X) \rightarrow H^0(Y) \rightarrow C \rightarrow 0,
\]
with $K,C \in \cC^\heartsuit$. Moreover, there is a distinguished triangle
\[
C \rightarrow \cone(H^0(f)) \rightarrow K \rightarrow 
\]
in $\cT$, and thus $\cone(H^0(f)) \in \cC$.

If $\tau$ is full, we can apply a similar reasoning using the fact that $\bigoplus_{i \in I} H^i(\cone(f)) \in \cC$ and that $\cT^\heartsuit$ is AB4.
\end{proof}

It is well-known that Milnor colimits in the derived category of an AB5 category induce colimits in homology (see for example~\cite{bokstedtneeman}). We obtain a similar result:

\begin{lem}\label{lem:mcolimtocolim}
Suppose $\cC^\heartsuit$ thick in $\cT^\heartsuit$ and is locally AB5.  
If $H^0$ commutes with coproducts and
\[
X \cong \mcolim\bigl(X_0 \xrightarrow{f_0} X_1 \xrightarrow{f_1} \cdots \bigr) \in \cC,
\]
is an acceptable Milnor colimit in $\cC$, then
\[
H^i(X) \cong \colim \bigl(H^i(X_0) \xrightarrow{H^i(f_0)} H^i(X_1) \xrightarrow{H^i(f_1)} \cdots \bigr) \in \cC^\heartsuit,
\]
is an acceptable filtered colimit for all $i \in \bZ$.  If $\tau$ is also full, then
\[
\bigoplus_{i \in \bZ} H^i(X) \cong \colim \bigl(\bigoplus_{i \in \bZ}  H^i(X_0) \xrightarrow{\sum H^i(f_0)} \bigoplus_{i \in \bZ}  H^i(X_1) \xrightarrow{\sum H^i(f_1)} \cdots \bigr),
\]
is an acceptable filtered colimit in $\cC^\heartsuit$. 
The dual also holds whenever $H^0$ commutes with products and $\cC^\heartsuit$ is locally AB5*.
\end{lem}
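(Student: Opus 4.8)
The plan is to run the Bökstedt--Neeman argument for homotopy (co)limits, adapted to the ``acceptable'' setting. Apply the homological functor $H^*$ to the defining distinguished triangle of the Milnor colimit,
\[
\coprod_{r} X_r \xrightarrow{1-f_\bullet} \coprod_{r} X_r \rightarrow X \rightarrow \Bigl(\coprod_{r} X_r\Bigr)[1].
\]
Since $H^0$ commutes with coproducts by hypothesis and $[-i]$ does as well (coproducts preserve distinguished triangles in $\cT$), every $H^i = H^0\circ[-i]$ commutes with coproducts; hence $H^i(\coprod_r X_r)\cong\coprod_r H^i(X_r)$ and $H^i$ carries the matrix $1-f_\bullet$ to $1-H^i(f_\bullet)$. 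The long exact homology sequence of the triangle then produces, for each $i\in\bZ$, a short exact sequence $0\to\cok(1-H^i(f_\bullet))\to H^i(X)\to\ker(1-H^{i-1}(f_\bullet))\to 0$ in $\cT^\heartsuit$. By the very construction of sequential (co)limits, $\cok(1-H^i(f_\bullet))=\colim_r H^i(X_r)$ and $\ker(1-H^i(f_\bullet))={\colim}^1_r H^i(X_r)$, so the statement reduces to two things: (a) these filtered colimits are acceptable and lie in $\cC^\heartsuit$; (b) the acceptable derived colimits vanish.

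For (a), acceptability of the Milnor colimit gives $\bigoplus_r\cone(f_r)\cong\cone(\bigoplus_r f_r)\in\cC$, hence $\cone(\bigoplus_r f_r)[-i]\cong\cone\bigl((\bigoplus_r f_r)[-i]\bigr)\in\cC$. Applying \cref{lem:coneHinC} to $(\bigoplus_r f_r)[-i]$ (using that $\cC^\heartsuit$ is thick in $\cT^\heartsuit$), together with $H^0(\,(\bigoplus_r f_r)[-i]\,)=\bigoplus_r H^i(f_r)$, yields $\cone(\bigoplus_r H^i(f_r))\cong\bigoplus_r\cone(H^i(f_r))\in\cC$; taking $H^0$ and $H^1$, which commute with coproducts, gives $\bigoplus_r\cok(H^i(f_r))\in\cC^\heartsuit$ and $\bigoplus_r\ker(H^i(f_r))\in\cC^\heartsuit$, so $\colim_r H^i(X_r)$ is an acceptable filtered colimit. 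It also lies in $\cC^\heartsuit$: it is the image of the natural map $\coprod_r H^i(X_r)\to H^i(X)$, hence a subobject of $H^i(X)\in\cC^\heartsuit$, and $\cC^\heartsuit$ is thick.

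For (b), under the present hypotheses $\cT^\heartsuit$ is AB4 (it is AB3 and AB3* in general, and $H^0$ commuting with coproducts forces closure under coproducts); therefore the characterization recalled after the definition of locally AB5 applies, and locally AB5-ness of $\cC^\heartsuit$ forces ${\colim}^1_r H^i(X_r)=\ker(1-H^i(f_\bullet))=0$ for all $i$. Feeding this back into the short exact sequences collapses them to the desired isomorphisms $H^i(X)\cong\colim_r H^i(X_r)$. When $\tau$ is full we have $\bigoplus_i H^i(X_r)\in\cC$ and $\bigoplus_i H^i(X)\in\cC$, and since coproducts commute with filtered colimits, $\bigoplus_{i\in\bZ}H^i(X)\cong\bigoplus_{i\in\bZ}\colim_r H^i(X_r)\cong\colim_r\bigoplus_{i\in\bZ}H^i(X_r)$; acceptability of this colimit is verified exactly as above, now invoking the ``$\tau$ full'' clause of \cref{lem:coneHinC} to get $\bigoplus_{i,r}\cone(H^i(f_r))\in\cC$ and hence $\bigoplus_{i,r}\cok(H^i(f_r)),\bigoplus_{i,r}\ker(H^i(f_r))\in\cC^\heartsuit$. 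The dual statement follows by the order-reversing version of the same argument, starting from the Milnor limit triangle $\mlim_r(f_r)\to\prod_r X_r\xrightarrow{1-f_\bullet}\prod_r X_r\to$, using that $H^0$ commutes with products, that $\cT^\heartsuit$ is then AB4*, and that $\cC^\heartsuit$ is locally AB5* so that the acceptable ${\lim}^1$ vanish.

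The one genuinely delicate point is reconciling acceptability with the vanishing step: one must guarantee that the coproducts of kernels and cokernels manufactured by the homology long exact sequence really do lie in $\cC^\heartsuit$, since otherwise ``locally AB5'' has no grip on them. This is precisely what \cref{lem:coneHinC} provides, by transporting membership in $\cC$ of a cone in $\cT$ to membership in $\cC^\heartsuit$ of the associated kernels and cokernels in the heart. Once that bookkeeping is in place, the rest is the standard Milnor-(co)limit computation.
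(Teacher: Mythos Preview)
Your proof is correct and follows essentially the same approach as the paper: both apply $H^*$ to the defining triangle of the Milnor colimit, identify $\cok(1-H^i(f_\bullet))$ with $\colim_r H^i(X_r)$ and $\ker(1-H^i(f_\bullet))$ with ${\colim}^1_r H^i(X_r)$, use \cref{lem:coneHinC} to establish acceptability, and then invoke locally AB5 to kill the derived colimits. The paper packages the same computation into a two-row commutative diagram and deduces that the connecting maps $\delta_i$ vanish, whereas you extract the short exact sequences $0\to\cok(1-H^i(f_\bullet))\to H^i(X)\to\ker(1-H^{i-1}(f_\bullet))\to 0$ directly; and for membership of $\colim_r H^i(X_r)$ in $\cC^\heartsuit$ the paper appeals to \cref{lem:coneHinC} applied to $1-f_\bullet$ itself (since $\cone(1-f_\bullet)\cong X\in\cC$), while you observe it is a subobject of $H^i(X)\in\cC^\heartsuit$ and invoke thickness---but these are cosmetic differences.
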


\begin{proof}
Let
\begin{equation}\label{eq:Hicolim}
Y^i := \colim  \bigl(H^i(X_0) \xrightarrow{H^i(f_0)} H^i(X_1) \xrightarrow{H^i(f_1)} \cdots \bigr).
\end{equation}
We have $Y^i \in \cC^\heartsuit$ by \cref{lem:coneHinC} since $Y^i \cong H^0(\cone(H^i(1-f_\bullet)))$ and $\cone(1-f_\bullet) \cong X \in \cC$. 
There is a commutative diagram
\[
\begin{tikzcd}
H^{i+1}(X) \ar{r}{\delta_{i+1}} & H^i \bigl( \coprod_{r \geq 0}  X_r \bigl) \ar{d}{\vsimeq} \ar{r}{H^i(1 - f_\bullet)} & H^i \bigl(\coprod_{r\geq 0} X_r \bigr)  \ar{d}{\vsimeq} \ar{r} & H^i (X) \ar{r}{\delta_i} & \coprod_{r \geq 0}  H^{i-1}(X_r)
\\
Y^{i+1} \ar{r}{0} & \coprod_{r\geq 0} H^i(X_r) \ar{r}{1 - H^i(f_\bullet)} & \coprod_{r\geq 0} H^i(X_r)  \ar{r}{\varphi_i} & Y^i \ar{r}{0} &  \coprod_{r\geq 0} H^{i-1}(X_r)
\end{tikzcd}
\]
in $\cC^\heartsuit$, where the rows are exact. 
 Thus, $\delta_{i+1} =  0$. Since this does not depend on $i$, we also have $\delta_i = 0$, and by consequence $H^i(X) \cong Y^i$.  
 
 The colimit in \cref{eq:Hicolim} is acceptable since
 \[
 \bigoplus_{r\geq 0} \cone(f_r) \cong \cone( \bigoplus_{r\geq 0} X_r \xrightarrow{f_\bullet} \bigoplus_{r \geq 1} X_r ) \in \cC,
 \]
 which implies  by \cref{lem:coneHinC} that
 \[
 \cone( \bigoplus_{r\geq 0} H^i(X_r) \xrightarrow{H^i(f_\bullet)} \bigoplus_{r \geq 1} H^i(X_r) ) \cong \bigoplus_{r\geq 0} \cone(H^i(f_r)) \in \cC^\heartsuit,
 \]
 so that
 \[
 \bigoplus_{r\geq 0} \cok H^i(f_r) \cong H^0\bigl( \bigoplus_{r \geq 0}  \cone(H^i(f_r)) \in \cC^\heartsuit,
 \]
 and similarly for $\bigoplus_{r \geq 0} \ker H^i(f_r) \in \cC^\heartsuit$.
 
 The case when $\tau$ is full follows from the same arguments, using the fact that direct sums (which we recall are biproducts) commute with filtered colimits and filtered limits. 
 The dual case is similar. 
\end{proof}

Because of \cref{lem:mcolimtocolim}, we introduce the following:

\begin{defn}
We say that a t-structure $\tau$ on $\cC$ is \emph{stable} if
\begin{itemize}
\item $\cC^\heartsuit$ is thick in $\cT^\heartsuit$;
\item $\cC$ is both locally AB5 and locally AB5*;
\item $H^0$ commutes with both products and coproducts in $\cT$.
\end{itemize}
\end{defn}

\begin{lem}\label{lem:rightinvGO}
If $\tau$ is full and stable,
then $\Phi_0 : K^\Delta_0(\cC) \rightarrow G_0(\cC^\heartsuit)$ induces a map
\[
\Phi : \bKO^\Delta(\cC) \rightarrow \bGO(\cC^\heartsuit),
\]
which is the right inverse of $\Psi : \bGO(\cC^\heartsuit) \hookrightarrow  \bKO^\Delta(\cC)$. 
\end{lem}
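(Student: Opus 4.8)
The plan is to verify the two assertions in turn: that $\Phi_0$ factors through $\bKO^\Delta(\cC) = K_0^\Delta(\cC)/T(\cC)$, yielding $\Phi$, and that the resulting $\Phi$ satisfies $\Phi\circ\Psi = \id$. The second is immediate, and I would treat it first: $\Psi$ is induced by $[X]\mapsto[X]$ for $X\in\cC^\heartsuit$ regarded inside $\cC$, and for such an $X$ we have $H^0(X)\cong X$ while $H^i(X)\cong 0$ for $i\neq 0$, so $\Phi_0([X]) = [H^*(X)] = [X]$ already in $G_0(\cC^\heartsuit)$; hence $\Phi\circ\Psi$ is the identity.

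For the descent, by \cref{def:toptriangulatedK0} it suffices to show $\Phi_0$ annihilates every generator of $T(\cC)$, and by symmetry I treat only those coming from an acceptable Milnor colimit, the Milnor-limit case being dual (it uses that $\cC$ is locally AB5* and that products in $\cT$ are exact, in place of locally AB5 and coproducts). Such a generator may be written $[Y] - [F_0] - \sum_{r\geq 0}[\cone(f_r)]$, where $Y\cong\mcolim(F_0\xrightarrow{f_0}F_1\xrightarrow{f_1}\cdots)$ is an acceptable Milnor colimit in $\cC$ (it is enough to treat $E_r = \cone(f_r)$). So the goal is
\[
[H^*(Y)] - [H^*(F_0)] = \sum_{r\geq 0}[H^*(\cone f_r)] \quad\text{in }\bGO(\cC^\heartsuit).
\]
Since $\tau$ is full and stable, \cref{lem:mcolimtocolim} applies and converts the Milnor colimit into filtered colimits of homology: for each parity $\varepsilon$, the diagram $\bigoplus_{i\equiv\varepsilon}H^i(F_0)\to\bigoplus_{i\equiv\varepsilon}H^i(F_1)\to\cdots$ is an acceptable filtered colimit with colimit $\bigoplus_{i\equiv\varepsilon}H^i(Y)$, and its derived colimit vanishes because $\cC^\heartsuit$ is locally AB5. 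Applying the defining relation of $J(\cC^\heartsuit)$ from \cref{def:topG0} (with vanishing derived colimit) to these two diagrams and combining the even and odd identities expresses $[H^*(Y)] - [H^*(F_0)]$ as $\sum_{r\geq 0}\bigl([C_r] - [K_r]\bigr)$, where $C_r := \bigoplus_{i\text{ even}}\cok H^i(f_r)\oplus\bigoplus_{i\text{ odd}}\ker H^i(f_r)$ and $K_r := \bigoplus_{i\text{ even}}\ker H^i(f_r)\oplus\bigoplus_{i\text{ odd}}\cok H^i(f_r)$; every direct sum appearing here lies in $\cC^\heartsuit$ by the acceptability clause of \cref{lem:mcolimtocolim}, since $\cC$ is saturated, hence closed under direct summands, in $\cT$.

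It remains to identify $\sum_r([C_r]-[K_r])$ with $\sum_r[H^*(\cone f_r)]$. For each $r$ the homological long exact sequence of the triangle $F_r\xrightarrow{f_r}F_{r+1}\to\cone(f_r)\to$ splits into short exact sequences $0\to\cok H^i(f_r)\to H^i(\cone f_r)\to\ker H^{i-1}(f_r)\to 0$ in $\cT^\heartsuit$. Taking $\bigoplus$ of these over all $r$ and all $i$ of a fixed parity — legitimate because $\cT^\heartsuit$ is AB4 and, by fullness together with $\bigoplus_r\cone(f_r)\in\cC$, each of the three resulting direct sums belongs to $\cC^\heartsuit$ — identifies $[\bigoplus_{r,\,i\text{ even}}H^i(\cone f_r)]$ with $[\bigoplus_r C_r]$ and $[\bigoplus_{r,\,i\text{ odd}}H^i(\cone f_r)]$ with $[\bigoplus_r K_r]$, whence $\sum_r[H^*(\cone f_r)] = \sum_r([C_r]-[K_r])$ in $G_0(\cC^\heartsuit)$. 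Combined with the previous paragraph this gives $\Phi_0\bigl([Y] - [F_0] - \sum_r[\cone f_r]\bigr) = 0$ in $\bGO(\cC^\heartsuit)$, so $\Phi_0$ descends to $\Phi$, which is right inverse to $\Psi$ by the first paragraph.

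The main obstacle is the middle step: one must check that the homology colimits produced by \cref{lem:mcolimtocolim} genuinely have vanishing derived colimit — so that the relation of \cref{def:topG0} applies with no correction term — and that every direct sum of kernels, cokernels and homologies of cones that occurs is really an object of $\cC^\heartsuit$. Both are precisely what the hypotheses `$\tau$ full' and `$\tau$ stable', the saturatedness of $\cC$ in $\cT$, and the AB4/AB4* properties of $\cT^\heartsuit$ are there to supply; once they are in place the remaining manipulation is the direct-sum bookkeeping sketched above, and the Milnor-limit case is formally identical with limits, products and `locally AB5*' in place of colimits, coproducts and `locally AB5'.
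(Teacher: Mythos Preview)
Your proof is correct and follows essentially the same route as the paper: reduce via \cref{lem:mcolimtocolim} to an acceptable filtered colimit in $\cC^\heartsuit$, apply the defining relation of $J(\cC^\heartsuit)$ (with vanishing derived colimit, since $\tau$ stable makes $\cC^\heartsuit$ locally AB5), and match the resulting kernel/cokernel sums with $[H^*(\cone f_r)]$ via the long exact sequence. Your explicit split into even and odd parities before invoking the $\bGO$-relation is in fact a cleaner version of what the paper compresses into one line when it passes from the single colimit $\bigoplus_i H^i(F_\bullet)$ to the alternating expression $[H^*(Y)]-[H^*(X)]$.

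One small point: your parenthetical ``it is enough to treat $E_r=\cone(f_r)$'' is not quite self-evident, and the paper does \emph{not} argue that way. Instead, after obtaining $[H^*(Y)]-[H^*(X)]=\sum_r\bigl([\cok H^*(f_r)]-[\ker H^*(f_r)]\bigr)$, the paper observes that $[E_r]=[\cone f_r]$ in $K_0^\Delta(\cC)$ forces $[H^*(E_r)]=[H^*(\cone f_r)]$ in $G_0(\cC^\heartsuit)$ (because $\Phi_0$ is already well-defined on $K_0^\Delta$), and then uses the built-in flexibility of \cref{def:topG0} --- which allows \emph{any} $C_r,K_r$ with the prescribed $G_0$-class --- to pass to $\sum_r[H^*(E_r)]$. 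That last substitution is the correct way to handle general $E_r$, and you should phrase it this way rather than asserting the reduction up front.
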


\begin{proof}
We need to show that $\Phi_0(T(\cC)) \subset \bigl(J(\cC^\heartsuit) + J^*(\cC^\heartsuit)\bigr)$. 
Suppose 
\[
Y \cong \mcolim\bigl(X = F_0 \xrightarrow{f_0} F_1 \xrightarrow{f_1} \cdots \bigr),
\]
is an acceptable Milnor colimit,  
and $[\cone(f_r)] = [E_r] \in K_0^\Delta(\cC)$ for some $\{E_r\}_{r \geq 0}$ with $\bigoplus_{r \geq 0} E_r \in \cC$. 
We write $[\cok H^*(f)] := \sum_{i \in \bZ} \bigl( [\cok H^{2i}(f)] - [\cok H^{2i+1}(f)] \bigr)$ and similarly for $[\ker H^*(f)]$. 
Then we obtain 
\begin{align*}
[H^*(\cone(f_r))] 
 &=  [H^*(F_{r+1})] - [H^*(F_r)]  \\
 &=  [\cok H^{*}(f_r)] - [\ker H^{*}(f_r)], 
\end{align*}
in $G_0(\cC^\heartsuit)$. 
Moreover, we have that
\[
\bigoplus_{i \in \bZ} H^i(Y) \cong \colim \bigl(\bigoplus_{i \in \bZ}  H^i(X) =  \bigoplus_{i \in \bZ}  H^i( F_0) \xrightarrow{\sum H^i(f_0)}  \bigoplus_{i \in \bZ}  H^i(F_1) \xrightarrow{\sum H^i(f_1)} \cdots \bigr),
\] 
is an acceptable colimit in $\cC^\heartsuit$, by \cref{lem:mcolimtocolim}.
Thus, we obtain
\[
[H^*(Y)] - [H^*(X)] = \sum_{r\geq 0} \bigl( [\cok H^*(f_r)] - [\ker H^*(f_r)] \bigr),
\]
in $\bGO(\cC^\heartsuit)$. 
Finally, because $[\cone(f_r)] = [E_r]$ in $K_0^\Delta(\cC)$ we have $[H^*(\cone(f_r))] = [H^*(E_r)]$ in $G_0(\cC^\heartsuit)$. Therefore
\[
[H^*(Y)] - [H^*(X)] = \sum_r [H^*(E_r)],
\]
in $\bGO(\cC^\heartsuit)$. 
The dual case is similar.
\end{proof}

\begin{prop}\label{prop:colimtau}
Suppose $\tau$ is a non-degenerate, full and stable t-structure.
Let $X \in \cC$. 
For all $e \in \bZ$ we have
\[
X \cong \mlim \bigl(\cdots \rightarrow \tau^{\leq e + 1} X \rightarrow  \tau^{\leq e } X \bigr).
\]
\end{prop}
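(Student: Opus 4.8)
The plan is to show that the canonical maps $X \to \tau^{\leq n}X$, coming from the truncation triangles, assemble into a Milnor limit with value $X$. First I would form the Milnor limit $L := \mlim(\cdots \to \tau^{\leq e+1}X \to \tau^{\leq e}X)$, which fits in a distinguished triangle
\[
L \to \prod_{n \geq e} \tau^{\leq n}X \xrightarrow{1-f_\bullet} \prod_{n \geq e} \tau^{\leq n}X \to L[1],
\]
and observe that the compatible family $\{X \to \tau^{\leq n}X\}_{n \geq e}$ induces a canonical morphism $\varphi : X \to L$ by the universal property expressed through that triangle (i.e. the composite $X \to \prod_n \tau^{\leq n}X \xrightarrow{1-f_\bullet} \prod_n \tau^{\leq n}X$ is zero, since the maps commute with the transition maps $\tau^{\leq n+1}X \to \tau^{\leq n}X$). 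Since $\tau$ is non-degenerate, to prove $\varphi$ is an isomorphism it suffices to prove $H^i(\varphi) : H^i(X) \xrightarrow{\simeq} H^i(L)$ for every $i \in \bZ$.

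Next I would compute $H^i(L)$. Because $\tau$ is full and stable, $\cC$ is locally AB5* and $H^0$ commutes with products, so by (the dual of) \cref{lem:mcolimtocolim} applied to the Milnor limit $L$ we get
\[
H^i(L) \cong \lim\bigl(\cdots \to H^i(\tau^{\leq n+1}X) \to H^i(\tau^{\leq n}X)\bigr),
\]
provided this Milnor limit is acceptable — which holds because $\cone(\tau^{\leq n+1}X \to \tau^{\leq n}X) \cong H^{n+1}(X)[n+1]$, so $\bigoplus_n \cone$ of the transition maps lies in $\cC$ by fullness of $\tau$. Now the key point is the standard t-structure fact that $H^i(\tau^{\leq n}X) \cong H^i(X)$ for $n \geq i$ and $H^i(\tau^{\leq n}X) = 0$ for $n < i$; hence for fixed $i$ the transition maps in the tower $\{H^i(\tau^{\leq n}X)\}_{n \geq e}$ are isomorphisms for all $n$ large (in fact for $n \geq \max(i,e)$), so the tower is eventually constant equal to $H^i(X)$, it trivially satisfies Mittag--Leffler, and its limit is $H^i(X)$. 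Chasing the commutative square relating $\varphi$ and the canonical maps $X \to \tau^{\leq n}X$, one sees $H^i(\varphi)$ is exactly the comparison map to this eventually-constant limit, hence an isomorphism.

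The main obstacle I anticipate is not any single deep step but rather the bookkeeping needed to legitimately invoke the dual of \cref{lem:mcolimtocolim}: one must check that the relevant Milnor limit is \emph{acceptable} in the sense of the section, i.e. that $\bigoplus_{n \geq e}\cone(\tau^{\leq n+1}X \to \tau^{\leq n}X) = \bigoplus_{n}H^{n+1}(X)[n+1] \in \cC$, which is precisely where the hypothesis that $\tau$ is full (so $\bigoplus_i H^i(X) \in \cC$ and $\cT^\heartsuit$ is AB4) gets used; and then that $\cC^\heartsuit$ is locally AB5* and $H^0$ commutes with products, which is the content of $\tau$ being stable. The only other point requiring a little care is verifying that the composite $X \xrightarrow{\;} \prod_n \tau^{\leq n}X \xrightarrow{1-f_\bullet} \prod_n \tau^{\leq n}X$ vanishes, so that $\varphi$ exists; this is immediate from the compatibility of the canonical truncation maps with the transition morphisms. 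Once $H^i(\varphi)$ is an isomorphism for all $i$, non-degeneracy of $\tau$ finishes the proof.
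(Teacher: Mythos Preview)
Your proposal is correct and follows essentially the same approach as the paper: form the Milnor limit, use the canonical compatible maps $X \to \tau^{\leq n}X$ to produce a comparison morphism, apply the dual of \cref{lem:mcolimtocolim} to identify $H^i$ of the Milnor limit with the filtered limit of an eventually constant tower, and conclude by non-degeneracy of $\tau$. Your write-up is in fact more careful than the paper's about verifying acceptability and the existence of the comparison map.
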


\begin{proof}
The proof is similar to~\cite[Theorem 10.2]{kellernicolas}. 
Let 
\[
Y :=  \mlim \bigl(\cdots \rightarrow \tau^{\leq e + 1} X \rightarrow  \tau^{\leq e } X \bigr).
\]
The maps $X \rightarrow \tau^{\leq e + r} X$ induce a map $h : X \rightarrow Y$. 
For all $i \in \bZ$ we have
\[
H^i(Y) \cong \lim \bigl( \cdots\rightarrow  H^i(\tau^{\leq e + 1} X) \rightarrow H^i(\tau^{\leq e}  X)   \bigr),
\]
thanks to \cref{lem:mcolimtocolim}.
For $e +r \geq i$ then $H^i(\tau^{\leq e + r}  X)  = H^i(X)$ so that $H^i(h) : H^i(X) \cong H^i(Y)$. Thus $h$ is an isomorphism by non-degeneracy of $\tau$. 
\end{proof}

\begin{thm}\label{thm:eqasympKOheart}
If $\tau$ is bounded from below, non-degenerate, full and stable, then
\[
\bKO^\Delta(\cC) \cong \bGO(\cC^\heartsuit),
\]
through $\Phi$. 
\end{thm}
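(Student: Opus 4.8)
My plan is to prove that the maps $\Phi$ and $\Psi$ are mutually inverse. By \cref{lem:rightinvGO} one of the two composites is already the identity: $\Phi$ is well defined and $\Phi\circ\Psi=\id_{\bGO(\cC^\heartsuit)}$, because for $M\in\cC^\heartsuit$ one has $H^i(M)=0$ for $i\neq0$ and $H^0(M)\cong M$, so $\Phi([M])=[M]=\Psi([M])$ (viewed in $\bKO^\Delta(\cC)$). Hence $\Psi$ is a split monomorphism and it remains only to prove it is surjective, equivalently that $\Psi\circ\Phi=\id_{\bKO^\Delta(\cC)}$, i.e. that for every $X\in\cC$
\[
[X]\;=\;\sum_{i\in\bZ}\bigl([H^{2i}(X)]-[H^{2i+1}(X)]\bigr)\;\in\;\bKO^\Delta(\cC),
\]
the right-hand side being $\Psi(\Phi([X]))$. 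Once this is known, $\Psi$ is bijective, and composing the two identities gives $\Phi=\Psi^{-1}$, so the isomorphism is indeed induced by $\Phi$.

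To prove the displayed equality I would fix $X\in\cC$ and use that $\tau$ is bounded from below to write $X\in\cC^{\geq m}$ for some $m\in\bZ$; then $H^i(X)=0$ for $i<m$, and $\tau^{\leq m-1}X\cong0$ by non-degeneracy. Applying \cref{prop:colimtau} with $e=m-1$ realises $X$ as a Milnor limit
\[
X\;\cong\;\mlim\bigl(\cdots\xrightarrow{\,f_2\,}\tau^{\leq m+1}X\xrightarrow{\,f_1\,}\tau^{\leq m}X\xrightarrow{\,f_0\,}\tau^{\leq m-1}X\cong0\bigr),\qquad f_r\colon\tau^{\leq m+r}X\to\tau^{\leq m+r-1}X.
\]
The truncation triangle of $\tau^{\leq m+r}X$ identifies $\cone(f_r)$ with $(\tau^{\geq m+r}\tau^{\leq m+r}X)[1]$, an object concentrated in homological degree $m+r$ and isomorphic to a shift of the heart object $H^{m+r}(X)$; hence $[\cone(f_r)]=(-1)^{m+r+1}[H^{m+r}(X)]$ in $K_0^\Delta(\cC)$ (the shift sign being independent of the normalisation convention). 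Since $\tau$ is full we have $\bigoplus_{i\in\bZ}H^i(X)\in\cC$, and, because the homology objects of $\bigoplus_{r\geq0}\cone(f_r)$ are exactly the $H^i(X)$ up to reindexing and shift, the bookkeeping of \cref{lem:coneHinC} and \cref{lem:mcolimtocolim} gives $\bigoplus_{r\geq0}\cone(f_r)\in\cC$; thus this Milnor limit is acceptable in the sense of \cref{def:toptriangulatedK0}.

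Then the acceptable Milnor limit contributes to $\bKO^\Delta(\cC)$ the relation $[\tau^{\leq m-1}X]-[X]=\sum_{r\geq0}[\cone(f_r)]$; using $\tau^{\leq m-1}X\cong0$ and the fact that the translation functor acts as $-1$ on any triangulated Grothendieck group, this becomes $-[X]=\sum_{r\geq0}(-1)^{m+r+1}[H^{m+r}(X)]$, hence
\[
[X]\;=\;\sum_{r\geq0}(-1)^{m+r}[H^{m+r}(X)]\;=\;\sum_{i\in\bZ}(-1)^i[H^i(X)],
\]
the last equality by setting $i=m+r$ and using $H^i(X)=0$ for $i<m$. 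Splitting the right-hand side according to the parity of $i$ — and recalling that throughout a sum of classes abbreviates the class of the corresponding direct sum, which lies in $\cC$ by fullness — this is precisely $\Psi(\Phi([X]))$. Combined with $\Phi\circ\Psi=\id$, this shows $\Phi$ and $\Psi$ are inverse isomorphisms.

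The step I expect to be the main obstacle is the acceptability claim $\bigoplus_{r\geq0}\cone(f_r)\in\cC$: the cones $\cone(f_r)$ are the homology objects $H^{m+r}(X)$ shifted by amounts growing linearly in $r$, so the coproduct is not a single shift of $\bigoplus_iH^i(X)$, and $\cC$ need not be closed under arbitrary coproducts; one genuinely has to use fullness of $\tau$, together with stability (so that homology commutes with the relevant coproducts), to land back in $\cC$. The closely related bookkeeping point — tracking the degree of the shift in each truncation triangle — is exactly what makes the alternating signs in the definition of $\Phi_0$ appear, while the bounded-below hypothesis is what allows the sum over $i\geq m$ to be rewritten as a sum over all of $\bZ$.
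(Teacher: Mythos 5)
Your proof follows the same route as the paper's: injectivity of $\Psi$ comes from \cref{lem:rightinvGO}, and surjectivity from \cref{prop:colimtau} together with boundedness from below and fullness; you simply make explicit the sign and shift computation $\cone(f_r)\cong H^{m+r}(X)[m+r+1]$, hence $[\cone(f_r)]=(-1)^{m+r+1}[H^{m+r}(X)]$, that the paper leaves implicit. Your observation that acceptability of the Milnor limit, i.e.\ $\bigoplus_{r\geq 0}\cone(f_r)\in\cC$, is the delicate step (the coproduct carries shifts $[m+r+1]$ growing linearly in $r$, so it is not a uniform shift of $\bigoplus_i H^i(X)$) is well taken; the paper's one-sentence argument glosses over it, though it is unproblematic in the intended application $\cC=\cD^{lf}(A,d)$, where membership in $\cC$ is tested purely on the underlying bigraded homology.
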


\begin{proof}
We only need to show the surjectivity of $\Psi$ since we already know by \cref{lem:rightinvGO} that $\Psi$ is injective. This follows from \cref{prop:colimtau} and the fact that $\bigoplus_i H^i(X) \in \cC$ for all $X \in \cC$, since it implies that
$[X] =  [H^*(X)] \in \bKO^\Delta(\cC).$
\end{proof}

\subsection{Baric structures}

Inspired by~\cite{baric}, we say that a \emph{baric structure} on a $\bZ^n$-graded triangulated category $\cC$ is the datum of a pair of saturated 
subcategories $\cC_{\preceq 0}$ and $\cC_{\succ 0}$ such that
\begin{itemize}
\item $x^\bg \cC_{\preceq 0} \subset \cC_{\preceq 0}$ for all $\bg \prec 0$ ;
\item $x ^\bg \cC_{\succ 0} \subset \cC_{\succ 0}$ for all $\bg \succ 0$ ;
\item $\Hom_\cC(X,Y) = 0$ whenever $X \in \cC_{\preceq 0}$ and $Y \in \cC_{\succ 0}$ ;
\item for each $X \in \cC$ there is a distinguished triangle
\begin{equation}\label{eq:disttrianglebaric}
X_{\preceq 0} \rightarrow X \rightarrow X_{\succ 0} \rightarrow 
\end{equation}
where $X_{\preceq 0} \in \cC_{\preceq 0}$ and $X_{\succ 0} \in \cC_{\succ 0}$.
\end{itemize}

\begin{prop}
Let  $\cC_{\preceq 0}$ and $\cC_{\succ 0}$ be a baric structure on $\cC$. The inclusion $\cC_{\preceq 0} \hookrightarrow \cC$ admits a right-adjoint $\beta_{\preceq 0} : \cC \rightarrow \cC_{\preceq 0}$ and the inclusion $\cC_{\succ 0} \hookrightarrow \cC$ admits a left-adjoint $\beta_{\succ 0} : \cC \rightarrow \cC_{\succ 0}$. Moreover, for each $X \in \cC$ there is a distinguished triangle
\begin{equation}\label{eq:disttrianglebeta}
\beta_{\preceq 0} X \rightarrow X \rightarrow \beta_{\succ 0} X \rightarrow 
\end{equation}
equivalent to~\cref{eq:disttrianglebaric}.
\end{prop}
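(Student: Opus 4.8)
The plan is to adapt the classical argument for $t$-structures from~\cite[\S1.3]{perversesheaves} (compare~\cite{baric}), using the orthogonality $\hom_\cC(\cC_{\preceq 0},\cC_{\succ 0})=0$ together with the completion axiom for distinguished triangles to promote the mere existence of~\eqref{eq:disttrianglebaric} into a functorial construction, from which the adjunctions follow formally.

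First I would prove a uniqueness statement. Fix $X,X'\in\cC$ with chosen triangles $X_{\preceq 0}\to X\to X_{\succ 0}\to X_{\preceq 0}[1]$ and $X'_{\preceq 0}\to X'\to X'_{\succ 0}\to X'_{\preceq 0}[1]$ as in~\eqref{eq:disttrianglebaric}, and let $f\colon X\to X'$ be any morphism. Since $\cC_{\preceq 0}$ and $\cC_{\succ 0}$ are saturated, hence closed under $[1]$ and $[-1]$, applying $\hom_\cC(X_{\preceq 0},-)$ to the triangle for $X'$ and using $\hom_\cC(X_{\preceq 0},X'_{\succ 0})=0=\hom_\cC(X_{\preceq 0},X'_{\succ 0}[-1])$ gives an isomorphism $\hom_\cC(X_{\preceq 0},X'_{\preceq 0})\xrightarrow{\simeq}\hom_\cC(X_{\preceq 0},X')$; this yields a unique $f_{\preceq 0}\colon X_{\preceq 0}\to X'_{\preceq 0}$ with $(X'_{\preceq 0}\to X')\circ f_{\preceq 0}=f\circ(X_{\preceq 0}\to X)$. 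Completing the resulting commutative square by axiom~(5) of triangulated categories produces an $f_{\succ 0}\colon X_{\succ 0}\to X'_{\succ 0}$ making a morphism of triangles, and applying $\hom_\cC(-,X'_{\succ 0})$ to the triangle for $X$, with $\hom_\cC(X_{\preceq 0},X'_{\succ 0})=0=\hom_\cC(X_{\preceq 0}[1],X'_{\succ 0})$, shows $\hom_\cC(X_{\succ 0},X'_{\succ 0})\xrightarrow{\simeq}\hom_\cC(X,X'_{\succ 0})$, so $f_{\succ 0}$ is unique. Taking $X=X'$ and $f=\id_X$ shows the triangle~\eqref{eq:disttrianglebaric} is unique up to unique isomorphism.

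Next I would define $\beta_{\preceq 0}X:=X_{\preceq 0}$ and $\beta_{\succ 0}X:=X_{\succ 0}$ on objects (using a chosen triangle for each $X$) and $\beta_{\preceq 0}f:=f_{\preceq 0}$, $\beta_{\succ 0}f:=f_{\succ 0}$ on morphisms; functoriality is immediate from the uniqueness above, since $\beta_{\preceq 0}(g)\circ\beta_{\preceq 0}(f)$ and $\beta_{\preceq 0}(g\circ f)$ both complete the same morphism of triangles, and likewise for $\beta_{\succ 0}$ and for identities. This also gives~\eqref{eq:disttrianglebeta} as the chosen triangle, canonically isomorphic to any instance of~\eqref{eq:disttrianglebaric}. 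For the adjunctions: given $Z\in\cC_{\preceq 0}$, applying $\hom_\cC(Z,-)$ to~\eqref{eq:disttrianglebeta} and using $\hom_\cC(Z,\beta_{\succ 0}X)=0=\hom_\cC(Z,\beta_{\succ 0}X[-1])$ yields $\hom_\cC(Z,\beta_{\preceq 0}X)\xrightarrow{\simeq}\hom_\cC(Z,X)$; naturality in $Z$ is clear and naturality in $X$ follows from functoriality of $\beta_{\preceq 0}$ and the uniqueness statement applied to the relevant square, so $\beta_{\preceq 0}$ is right adjoint to $\cC_{\preceq 0}\hookrightarrow\cC$. Dually, applying $\hom_\cC(-,W)$ to~\eqref{eq:disttrianglebeta} for $W\in\cC_{\succ 0}$ and using $\hom_\cC(\beta_{\preceq 0}X,W)=0=\hom_\cC(\beta_{\preceq 0}X[1],W)$ shows $\beta_{\succ 0}$ is left adjoint to $\cC_{\succ 0}\hookrightarrow\cC$. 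The only genuinely delicate point is the uniqueness argument underpinning functoriality and naturality; once that is established everything else is formal, so the proof can be kept short with a reference to~\cite[\S1.3]{perversesheaves} and~\cite{baric}.
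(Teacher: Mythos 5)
Your proposal is correct and follows exactly the approach the paper invokes: the paper's proof is a one-line reference to Achar--Treumann's Proposition 2.2 and to BBD \S1.3.3--1.3.4, and your argument is precisely the worked-out content of those references (orthogonality gives uniqueness of the morphism of triangles, hence functoriality of the truncations, hence the adjunctions and the canonicity of the triangle). You have merely spelled out the details that the paper leaves to the reader; the one point worth flagging when writing this up is that the needed vanishing $\hom_\cC(X_{\preceq 0}, X'_{\succ 0}[-1]) = 0$ relies on $\cC_{\succ 0}$ being a triangulated (saturated) subcategory, hence stable under $[\pm 1]$, which distinguishes baric truncation from $t$-truncation where one only has closure under one shift.
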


\begin{proof}
As in~\cite[Proposition 2.2]{baric}, this is actually similar to the case of a t-structure. Thus we can follow the same arguments as in~\cite[\S1.3.3--1.3.4]{perversesheaves}.
\end{proof}

Then we use similar notations as in \cref{sec:cobaric}.

\begin{lem}\label{lem:betacomp}
For $\bg \preceq \bg'$ there are natural isomorphisms
\begin{align*}
\beta_{\preceq \bg} \circ \beta_{\preceq \bg'} &\cong \beta_{\preceq \bg},  &
\beta_{\preceq \bg} \circ \beta_{\succ \bg '} &\cong   \beta_{\succ \bg '}  \circ \beta_{\preceq \bg}  \cong 0,
\\
\beta_{\succ \bg} \circ \beta_{\succ \bg'} &\cong \beta_{\succ \bg'},  &
\beta_{\succ \bg} \circ \beta_{\preceq \bg '} &\cong  \beta_{\preceq \bg '} \circ \beta_{\succ \bg}.
\end{align*}
\end{lem}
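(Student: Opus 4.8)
The plan is to imitate, step by step, the arguments for t-structures in~\cite[\S1.3.3 and Proposition~1.3.5]{perversesheaves} (which is also the blueprint for the cobaric \cref{lem:betacompab}); the baric axioms are exactly what makes those arguments apply without change. I would use three facts. \emph{First}, for $\bg \preceq \bg'$ one has $\cC_{\preceq \bg} \subseteq \cC_{\preceq \bg'}$ and $\cC_{\succ \bg'} \subseteq \cC_{\succ \bg}$: trivial when $\bg = \bg'$, and when $\bg \prec \bg'$ immediate from the stability axioms applied to $x^{\bg - \bg'}$ and $x^{\bg' - \bg}$ respectively. \emph{Second}, the truncation triangle~\cref{eq:disttrianglebeta} is unique up to canonical isomorphism: any distinguished triangle $A \to X \to B \to A[1]$ with $A \in \cC_{\preceq \bg}$ and $B \in \cC_{\succ \bg}$ is canonically isomorphic to $\beta_{\preceq \bg}X \to X \to \beta_{\succ \bg}X \to$, by the usual t-structure argument using $\hom_\cC(\cC_{\preceq \bg}, \cC_{\succ \bg}) = 0$. \emph{Third}, $M \in \cC_{\succ \bg}$ as soon as $\hom_\cC(T, M) = 0$ for every $T \in \cC_{\preceq \bg}$: the map $\beta_{\preceq \bg}M \to M$ then vanishes, so~\cref{eq:disttrianglebeta} splits and $\beta_{\preceq \bg}M[1]$ is a direct summand of $M$ lying in $\cC_{\preceq \bg}$, hence orthogonal to itself, hence zero; so $M \cong \beta_{\succ \bg}M$. (And dually for $\cC_{\preceq \bg}$.)

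With these at hand, the absorption and vanishing identities are short. If $Y \in \cC_{\succ \bg'} \subseteq \cC_{\succ \bg}$, then $\beta_{\preceq \bg}Y \to Y$ lies in $\hom_\cC(\cC_{\preceq \bg}, \cC_{\succ \bg}) = 0$, so~\cref{eq:disttrianglebeta} splits and, since $\cC_{\succ \bg}$ is saturated, $\beta_{\preceq \bg}Y[1]$ is a summand of $Y$ in $\cC_{\preceq \bg} \cap \cC_{\succ \bg} = 0$; thus $\beta_{\preceq \bg}Y \cong 0$ and $\beta_{\succ \bg}Y \cong Y$. Applying this to $Y = \beta_{\succ \bg'}X$ gives $\beta_{\succ \bg} \circ \beta_{\succ \bg'} \cong \beta_{\succ \bg'}$ and $\beta_{\preceq \bg} \circ \beta_{\succ \bg'} \cong 0$. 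The dual argument, applied to $Z \in \cC_{\preceq \bg} \subseteq \cC_{\preceq \bg'}$, gives $\beta_{\preceq \bg'}Z \cong Z$ and $\beta_{\succ \bg'}Z \cong 0$, whence $\beta_{\succ \bg'} \circ \beta_{\preceq \bg} \cong 0$; the remaining absorption $\beta_{\preceq \bg} \circ \beta_{\preceq \bg'} \cong \beta_{\preceq \bg}$ reappears in the next step.

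For the one substantive identity $\beta_{\succ \bg} \circ \beta_{\preceq \bg'} \cong \beta_{\preceq \bg'} \circ \beta_{\succ \bg}$, I would fix $X \in \cC$, factor the counit $\beta_{\preceq \bg}X \to X$ as $\beta_{\preceq \bg}X \xrightarrow{u} \beta_{\preceq \bg'}X \xrightarrow{v} X$ (possible since $\beta_{\preceq \bg}X \in \cC_{\preceq \bg} \subseteq \cC_{\preceq \bg'}$ and $v$ is the counit at $\bg'$), and apply the octahedral axiom to $v \circ u$. Since $\cone(v \circ u) \cong \beta_{\succ \bg}X$ and $\cone(v) \cong \beta_{\succ \bg'}X$, this yields a distinguished triangle
\[
W \to \beta_{\succ \bg}X \to \beta_{\succ \bg'}X \to W[1], \qquad W := \cone(u).
\]
Here $W \in \cC_{\preceq \bg'}$ because $\cC_{\preceq \bg'}$ is triangulated and $u$ is a morphism in it; and $W \in \cC_{\succ \bg}$ because, applying $\hom_\cC(T, -)$ with $T \in \cC_{\preceq \bg}$ to the triangle $\beta_{\preceq \bg}X \xrightarrow{u} \beta_{\preceq \bg'}X \to W \to$, the counit-induced maps $\hom_\cC(T, \beta_{\preceq \bg}X) \to \hom_\cC(T, X)$ and $\hom_\cC(T, \beta_{\preceq \bg'}X) \to \hom_\cC(T, X)$ are isomorphisms (as $T \in \cC_{\preceq \bg} \subseteq \cC_{\preceq \bg'}$), so $\hom_\cC(T, u)$ is an isomorphism and the long exact sequence forces $\hom_\cC(T, W) = 0$; the third fact above then gives $W \in \cC_{\succ \bg}$. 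By uniqueness, $\beta_{\preceq \bg}X \to \beta_{\preceq \bg'}X \to W \to$ is the $\bg$-truncation triangle of $\beta_{\preceq \bg'}X$ (so $\beta_{\preceq \bg} \circ \beta_{\preceq \bg'} \cong \beta_{\preceq \bg}$ and $\beta_{\succ \bg} \circ \beta_{\preceq \bg'} \cong W$), and $W \to \beta_{\succ \bg}X \to \beta_{\succ \bg'}X \to$ is the $\bg'$-truncation triangle of $\beta_{\succ \bg}X$ (so $\beta_{\preceq \bg'} \circ \beta_{\succ \bg} \cong W$). Combining proves the identity; naturality of all the isomorphisms is automatic from functoriality of the truncation functors together with the canonicity in the second fact.

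The main obstacle is the step $W = \cone(u) \in \cC_{\succ \bg}$ — equivalently, that $u$ exhibits the $\bg$-truncation of $\beta_{\preceq \bg'}X$ — together with the bookkeeping ensuring that the a priori non-canonical octahedral filler can be chosen compatibly enough to produce functorial isomorphisms. Both are handled exactly as in~\cite[\S1.3.3, Proposition~1.3.5]{perversesheaves}, the only inputs beyond the formal calculus of triangulated categories being the orthogonality $\hom_\cC(\cC_{\preceq \bg}, \cC_{\succ \bg}) = 0$ and the fact that $\cC_{\preceq \bg}$ and $\cC_{\succ \bg}$ are saturated.
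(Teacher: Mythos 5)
Your proposal is correct and takes exactly the approach the paper does: the paper's proof of this lemma is a one-line citation to \cite[\S1.3.5]{perversesheaves}, and your argument is a careful transcription of that t-structure argument to the baric setting (via the octahedron, uniqueness of the truncation triangle, and hom-vanishing). One minor slip to fix: if the first map in a distinguished triangle $A \to B \to C \to A[1]$ is zero, then $A[1]$ is a direct summand of $C$, not of $B$; so $\beta_{\preceq \bg}Y[1]$ is a summand of $\beta_{\succ \bg}Y$ rather than of $Y$, but since $\cC_{\succ \bg}$ is saturated the conclusion $\beta_{\preceq \bg}Y[1] \in \cC_{\preceq \bg} \cap \cC_{\succ \bg} = 0$ still follows and your argument stands.
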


\begin{proof}
Similar as in~\cite[\S1.3.5]{perversesheaves}, and omitted.
\end{proof}

\begin{prop}\label{prop:baricexact}
The functors $\beta_{\preceq \bg}$ and $\beta_{\succ \bg}$ are exact. 
\end{prop}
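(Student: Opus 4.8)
The plan is to adapt the argument of \cite[Proposition~2.3]{baric}, which is the triangulated analogue of the exactness statement for cobaric truncations established earlier. First I would reduce to the case $\bg=0$: since the grading shift $x^{\bg}$ is a triangulated autoequivalence of $\cC$ (it preserves distinguished triangles and commutes with $[1]$) and $\beta_{\preceq\bg}$ is naturally isomorphic to $x^{\bg}\circ\beta_{\preceq 0}\circ x^{-\bg}$, and likewise for $\beta_{\succ\bg}$, it suffices to treat $\beta_{\preceq 0}$ and $\beta_{\succ 0}$. Commutation with $[1]$ is automatic: $\cC_{\preceq 0}$ and $\cC_{\succ 0}$ are saturated, hence triangulated, subcategories of $\cC$, so the inclusions are triangulated functors and their adjoints $\beta_{\preceq 0}$, $\beta_{\succ 0}$ inherit natural isomorphisms with $[1]$. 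So the content is that these functors send a distinguished triangle $X\xrightarrow{f}Y\xrightarrow{g}Z\xrightarrow{h}X[1]$ to a distinguished triangle (with the functorial maps).

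To this end I would apply the $3\times3$ lemma (\cite[Lemma~2.6]{may}) to the commutative square given by the unit $\id\Rightarrow\beta_{\succ 0}$ evaluated at $f$,
\[
\begin{tikzcd}
X \ar{r}{f} \ar{d} & Y \ar{d} \\
\beta_{\succ 0}X \ar{r}{\beta_{\succ 0}f} & \beta_{\succ 0}Y
\end{tikzcd}
\]
The two vertical morphisms sit in the canonical triangles of \cref{eq:disttrianglebeta}, with third terms $\beta_{\preceq 0}X[1]$ and $\beta_{\preceq 0}Y[1]$; the top horizontal morphism extends to the given triangle on $Z$; completing the bottom morphism by a cone $C$ of $\beta_{\succ 0}f$ and the left vertical by a cone $W$ of $\beta_{\preceq 0}f$, the $3\times3$ lemma furnishes a commutative diagram all of whose rows and columns are distinguished triangles. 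In particular it contains a distinguished triangle $W\to Z\to C\to W[1]$ whose left column is $\beta_{\preceq 0}X\to \beta_{\preceq 0}Y\to W\to\beta_{\preceq 0}X[1]$ and whose bottom row is $\beta_{\succ 0}X\to\beta_{\succ 0}Y\to C\to\beta_{\succ 0}X[1]$, the first map of each being by construction $\beta_{\preceq 0}f$, respectively $\beta_{\succ 0}f$.

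The key step is then to identify $W$ with $\beta_{\preceq 0}Z$ and $C$ with $\beta_{\succ 0}Z$ compatibly with all maps. Since $\cC_{\preceq 0}$ is a triangulated subcategory and $W$ is the cone of a morphism between its objects, $W\in\cC_{\preceq 0}$; dually $C\in\cC_{\succ 0}$. Hence $W\to Z\to C\to W[1]$ realizes $Z$ as in \cref{eq:disttrianglebaric}, and by uniqueness of such a triangle — which follows from $\hom_\cC(\cC_{\preceq 0},\cC_{\succ 0})=0$ exactly as in the $t$-structure case \cite[\S1.3.3--1.3.4]{perversesheaves} — it is canonically isomorphic to \cref{eq:disttrianglebeta} for $Z$, and under this isomorphism the maps supplied by the $3\times3$ diagram become the functorial maps $\beta_{\preceq 0}g$ and $\beta_{\succ 0}g$. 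Thus the left column and bottom row of the diagram are precisely the images of $X\to Y\to Z\to X[1]$ under $\beta_{\preceq 0}$ and $\beta_{\succ 0}$, and they are distinguished. The main obstacle I anticipate is exactly this last bit of bookkeeping: ensuring that the maps produced by the $3\times3$ lemma are the functorial ones, so that one gets "$\beta_{\preceq 0}$ is exact as a functor" and not merely "it preserves the class of distinguished objects". This is handled by combining naturality of the truncation triangle \cref{eq:disttrianglebeta} (functoriality of $\beta_{\preceq 0}$, $\beta_{\succ 0}$, from \cite[Proposition~2.2]{baric}) with the uniqueness just used, choosing the $3\times3$ diagram so that its outer square is the given one; the remainder is formal and parallels both the abelian cobaric argument above and \cite[Proposition~2.3]{baric}.
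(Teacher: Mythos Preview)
Your proposal is correct and follows essentially the same approach as the paper, which simply cites \cite[Proposition~2.3]{baric}; you have reconstructed in detail the argument that reference contains (the $3\times3$ lemma combined with the uniqueness of the baric decomposition triangle). The reduction to $\bg=0$ and the bookkeeping about functoriality of the maps are handled correctly.
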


\begin{proof}
The proof is almost identical to~\cite[Proposition 2.3]{baric}, and omitted.
\end{proof}

\begin{lem}\label{lem:disttriangbetag}
For all $X \in \cC$ and $\bg \in \bZ^n$ there are distinguished triangles
\begin{align*}
&\beta_{\prec \bg} X \rightarrow \beta_{\preceq \bg} X \rightarrow \beta_\bg X \rightarrow \\
&\beta_{\bg} X \rightarrow \beta_{\succeq \bg} X \rightarrow \beta_{\succ \bg} X \rightarrow 
\end{align*}
\end{lem}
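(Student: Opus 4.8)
The plan is to deduce both distinguished triangles from the defining triangle~\eqref{eq:disttrianglebeta} applied to the truncated objects, together with the composition identities of \cref{lem:betacomp}. This is the exact analogue of the passage in \cref{lem:sesbetag} from the cobaric setting to the baric one, so I would mirror that argument.

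First I would apply the canonical triangle~\eqref{eq:disttrianglebeta} — in its shifted form with threshold $\bg$, that is $\beta_{\prec \bg} Z \to Z \to \beta_{\succeq \bg} Z \to \beta_{\prec \bg} Z[1]$ (using $\cC_{\prec \bg}$ and $\cC_{\succeq \bg}$ in place of $\cC_{\preceq 0}$ and $\cC_{\succ 0}$, which is legitimate since these are obtained from the baric structure by the degree shift $x^{\bg}$ and $x^{-\bz}$) — to the object $Z := \beta_{\preceq \bg} X$. This gives a distinguished triangle
\[
\beta_{\prec \bg}\, \beta_{\preceq \bg} X \rightarrow \beta_{\preceq \bg} X \rightarrow \beta_{\succeq \bg}\, \beta_{\preceq \bg} X \rightarrow \beta_{\prec \bg}\, \beta_{\preceq \bg} X[1].
\]
Now \cref{lem:betacomp} (with $\bg \prec \bg$ replaced by the evident relations $\prec$ versus $\preceq$, which are covered by the same proof) yields $\beta_{\prec \bg}\, \beta_{\preceq \bg} X \cong \beta_{\prec \bg} X$ and $\beta_{\succeq \bg}\, \beta_{\preceq \bg} X \cong \beta_{\bg} X$, the latter because $\beta_{\succeq \bg}\,\beta_{\preceq \bg} \cong \beta_{\preceq \bg}\,\beta_{\succeq \bg}$ and $\beta_{\preceq \bg}\,\beta_{\succeq \bg} = \beta_{\bg}$ by definition of $\beta_{\bg}$. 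Substituting these isomorphisms (which are natural, hence give an isomorphism of triangles) produces the first desired triangle $\beta_{\prec \bg} X \rightarrow \beta_{\preceq \bg} X \rightarrow \beta_\bg X \rightarrow$.

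For the second triangle I would dually apply~\eqref{eq:disttrianglebeta} with threshold $\bg$ to the object $Z := \beta_{\succeq \bg} X$, giving
\[
\beta_{\prec \bg}\, \beta_{\succeq \bg} X \rightarrow \beta_{\succeq \bg} X \rightarrow \beta_{\succeq \bg}\, \beta_{\succeq \bg} X \rightarrow,
\]
wait — more carefully, the triangle at threshold $\bg$ reads $\beta_{\prec\bg} Z \to Z \to \beta_{\succeq\bg} Z \to$, but I actually want the decomposition of $\beta_{\succeq \bg} X$ into its degree-$\bg$ part and its $\succ \bg$ part, so I use threshold $\bg$ in the form separating $\preceq \bg$ from $\succ \bg$: namely $\beta_{\preceq \bg}\beta_{\succeq \bg} X \to \beta_{\succeq \bg} X \to \beta_{\succ \bg}\beta_{\succeq \bg} X \to$. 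By \cref{lem:betacomp} one has $\beta_{\preceq\bg}\beta_{\succeq\bg} X \cong \beta_{\bg} X$ and $\beta_{\succ\bg}\beta_{\succeq\bg} X \cong \beta_{\succ\bg} X$, which after substitution gives $\beta_{\bg} X \rightarrow \beta_{\succeq \bg} X \rightarrow \beta_{\succ \bg} X \rightarrow$, as required.

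The only genuine subtlety — and the step I would be most careful about — is making sure the composition identities of \cref{lem:betacomp} are available in the mixed strict/non-strict form ($\prec$ against $\preceq$, $\succ$ against $\succeq$) that I am invoking, rather than only for strict inequalities $\bg \preceq \bg'$ as literally stated there. Since $\cC_{\prec \bg}$, $\cC_{\preceq \bg}$, $\cC_{\succ \bg}$, $\cC_{\succeq \bg}$ are all truncations of the given baric structure (the non-strict ones obtained by shifting the strict ones by the minimal element $\bz$), the same adjunction/orthogonality argument used to prove \cref{lem:betacomp} applies verbatim; alternatively one reduces directly to the case in \cref{lem:betacomp} by shifting. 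I would state this reduction explicitly in one sentence rather than re-proving it, and otherwise the proof is a short formal manipulation of two instances of~\eqref{eq:disttrianglebeta}.
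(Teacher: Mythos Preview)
Your proof is correct and is precisely the argument the paper intends: it simply says ``Similar to \cref{lem:sesbetag}'', and you have carried out exactly that transposition, applying the baric triangle~\eqref{eq:disttrianglebeta} to $\beta_{\preceq \bg}X$ and $\beta_{\succeq \bg}X$ and simplifying via \cref{lem:betacomp}. Your worry about the strict/non-strict forms is unnecessary here, since \cref{lem:betacomp} is stated for $\bg \preceq \bg'$ (not strict) and $\beta_{\prec \bg}$, $\beta_{\succeq \bg}$ are by definition $\beta_{\preceq \bg - \bz}$, $\beta_{\succ \bg - \bz}$, so all the identities you use are instances of that lemma with $\bg - \bz \preceq \bg$.
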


\begin{proof}
Similar to \cref{lem:sesbetag}, and omitted.
\end{proof}

\begin{defn}
We say that a baric structure is
\begin{itemize}
\item \emph{non-degenerate} if $\bigcap_{\bg \in \bZ^n} \cC_{\preceq \bg} = 0 = \bigcap_{\bg \in \bZ^n} \cC_{\succ \bg}$;
\item \emph{locally finite} if there is a finite collection of objects $P_1,\dots, P_m$ in $\cC_0$ such that any object in $\cC_0$ is isomorphic to a finite iterated extension of $P_1,\dots,P_m$;
\item \emph{cone bounded}  if for any $X \in \cC$ there exists a \emph{bounding cone} $C_X \subset \bR^{n}$ and a \emph{minimal degree} $\be \in \bZ^n$ such that $\beta_\bg(X) = 0$ for all $\bg - \be \notin C_X$;
\item \emph{stable} if $\beta_{\preceq \bg}$ commutes with acceptable Milnor colimits and limits; 
\item \emph{full} if $\bigoplus_{\bg \in \bZ^n} \beta_\bg X \in \cC$ for all $X \in \cC$; 
\item \emph{strongly c.b.l.f.} if it is non-degenerate, locally finite, cone bounded, stable and full. 
\end{itemize}
\end{defn}

\begin{prop}\label{prop:stabilitycond}
Let $\beta$ be a cone bounded, locally finite baric structure on $\cC$. If $\cC_{\succ 0}$ is stable under acceptable Milnor colimits and acceptable Milnor limits, then $\beta$ is stable. 
\end{prop}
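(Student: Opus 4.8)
The plan is, for a fixed $\bg\in\bZ^n$, to compare the baric truncation triangle of $X$ with the Milnor (co)limit of the baric truncation triangles of the $X_r$; I spell out the Milnor colimit case, the Milnor limit case being dual. So let $X\cong\mcolim(X_0\xrightarrow{f_0}X_1\xrightarrow{f_1}\cdots)$ be an acceptable Milnor colimit. For each $r$ there is the distinguished triangle $\beta_{\preceq\bg}X_r\to X_r\to\beta_{\succ\bg}X_r\to\beta_{\preceq\bg}X_r[1]$ of \cref{eq:disttrianglebeta}, functorial in $X_r$ since $\beta_{\preceq\bg}$ and $\beta_{\succ\bg}$ are adjoints to inclusions; applying $\mcolim$ to this ladder, and using that Milnor colimits preserve distinguished triangles in our setting, yields a distinguished triangle
\[
P:=\mcolim_r\beta_{\preceq\bg}X_r\longrightarrow X\longrightarrow Q:=\mcolim_r\beta_{\succ\bg}X_r\longrightarrow P[1].
\]
It then suffices to show $Q\in\cC_{\succ\bg}$ and $\Hom(P,Y)=0$ for all $Y\in\cC_{\succ\bg}$: an elementary comparison of this triangle with the baric truncation triangle $\beta_{\preceq\bg}X\to X\to\beta_{\succ\bg}X\to\beta_{\preceq\bg}X[1]$ (each correction term lands in a group of the form $\Hom(\cC_{\preceq\bg},\cC_{\succ\bg})$ or $\Hom(P,\cC_{\succ\bg})$, vanishing by the baric orthogonality and the closure of $\cC_{\preceq\bg},\cC_{\succ\bg}$ under the shift) produces mutually inverse isomorphisms $\beta_{\succ\bg}X\cong Q$ and $\beta_{\preceq\bg}X\cong P$ compatible with the structure maps, which is exactly the assertion that $\beta_{\preceq\bg}$ and $\beta_{\succ\bg}$ commute with this Milnor colimit.

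The vanishing of $\Hom(P,Y)$ for $Y\in\cC_{\succ\bg}$ is immediate from the defining triangle $\bigoplus_r\beta_{\preceq\bg}X_r\xrightarrow{1-f_\bullet}\bigoplus_r\beta_{\preceq\bg}X_r\to P\to(\bigoplus_r\beta_{\preceq\bg}X_r)[1]$: applying $\Hom(-,Y)$ and using $\Hom(\bigoplus_rA_r,Y)\cong\prod_r\Hom(A_r,Y)$ together with $\Hom(\cC_{\preceq\bg},\cC_{\succ\bg})=0$ (also after a shift) squeezes $\Hom(P,Y)$ between two zero groups. For $Q\in\cC_{\succ\bg}$ I want to invoke the hypothesis that $\cC_{\succ0}$, hence $\cC_{\succ\bg}$, is closed under acceptable Milnor colimits; this needs $\mcolim_r\beta_{\succ\bg}X_r$ — and, for the dual direction, $\mcolim_r\beta_{\preceq\bg}X_r$ — to be \emph{acceptable}. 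By exactness of $\beta_{\preceq\bg}$ and $\beta_{\succ\bg}$ (\cref{prop:baricexact}) the relevant cones are $\beta_{\preceq\bg}\cone(f_r)$ and $\beta_{\succ\bg}\cone(f_r)$, so I must show $\bigoplus_r\beta_{\preceq\bg}\cone(f_r)\in\cC$ and $\bigoplus_r\beta_{\succ\bg}\cone(f_r)\in\cC$; by the triangle $\bigoplus_r\beta_{\preceq\bg}\cone(f_r)\to\bigoplus_r\cone(f_r)\to\bigoplus_r\beta_{\succ\bg}\cone(f_r)\to$ and acceptability of the original Milnor colimit it suffices to treat the first.

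This is the technical heart, and the step I expect to be the main obstacle, and it is precisely where cone\nobreakdash-boundedness and local finiteness enter. The point is that each $\cone(f_r)$ is a direct summand of $Z:=\bigoplus_s\cone(f_s)\in\cC$ (split off by the coproduct inclusion together with the map $Z\to\cone(f_r)$ that is the identity on the $r$\nobreakdash-th summand and zero on the others), so $\beta_\bh\cone(f_r)$ is a direct summand of $\beta_\bh Z$ for the exact functor $\beta_\bh$; the latter vanishes unless $\bh$ lies in the translate $\be_Z+C_Z$ of a bounding cone of $Z$, and the extra constraint $\bh\preceq\bg$ then confines $\bh$ to a finite set of degrees by the remark on cones compatible with $\prec$. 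Since each $\beta_\bh Z$ lies in the finite\nobreakdash-type category $\cC_\bh$ (by local finiteness every object is a finite iterated extension of shifts of the finitely many $P_i$), only finitely many $r$ can satisfy $\beta_{\preceq\bg}\cone(f_r)\neq0$, whence $\bigoplus_r\beta_{\preceq\bg}\cone(f_r)$ is a finite direct sum of objects of $\cC$ and lies in $\cC$. With both truncated Milnor colimits acceptable, the hypothesis gives $Q\in\cC_{\succ\bg}$ and the identification of the first paragraph goes through. The Milnor limit case is entirely dual: one runs the same comparison using that $\beta_{\preceq\bg}$ is a right adjoint (so it commutes with products and hence directly with Milnor limits, once the identical acceptability check is in place) and the assumed stability of $\cC_{\succ0}$ under acceptable Milnor limits.
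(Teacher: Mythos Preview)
Your proof is correct and follows essentially the same strategy as the paper: both hinge on the observation (from cone-boundedness and local finiteness applied to $Z=\bigoplus_r\cone(f_r)\in\cC$) that $\beta_{\preceq\bg}\cone(f_r)=0$ for all but finitely many $r$, and then invoke the closure hypothesis on $\cC_{\succ\bg}$ to identify the truncation of the Milnor colimit.

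The paper's organization is marginally more economical. Once it notes the stabilization $\beta_{\preceq\bg}\cone(f_r)=0$ for $r\geq m$, it immediately has $P=\mcolim_r\beta_{\preceq\bg}X_r\cong\beta_{\preceq\bg}X_m\in\cC_{\preceq\bg}$, which makes your separate verification of $\Hom(P,\cC_{\succ\bg})=0$ redundant. It then forms $X'':=\mcolim_{r\geq m}\cone(X_m\to X_r)$ rather than your $Q=\mcolim_r\beta_{\succ\bg}X_r$; since the cones of this system are the original $\cone(f_r)$ (by the octahedral axiom), acceptability is automatic and no second truncation-of-direct-sum argument is needed. Applying the exact functor $\beta_{\preceq\bg}$ to the triangle $X_m\to X\to X''\to{}$ with $X''\in\cC_{\succ\bg}$ then gives $\beta_{\preceq\bg}X\cong\beta_{\preceq\bg}X_m\cong P$ directly. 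Your route works just as well; the paper's simply avoids a couple of auxiliary checks by exploiting the stabilization earlier.
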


\begin{proof}
Let $\bg \in \bZ^n$. 
Consider an acceptable Milnor colimit 
\[
X \cong \mcolim \bigl( X_0  \xrightarrow{f_0} X_1 \xrightarrow{f_1} \cdots \bigr).
\]
Let 
\[
X' := \mcolim \bigl( \beta_{\preceq \bg}  X_0  \xrightarrow{ \beta_{\preceq \bg}f_0}  \beta_{\preceq \bg}X_1 \xrightarrow{ \beta_{\preceq \bg}f_1} \cdots \bigr).
\]
Because $\beta$ is cone bounded, locally finite and $\bigoplus_{r \geq 0} \cone(f_r) \in \cC$, we have  $\cone(\beta_{\preceq \bg}f_r) = 0$ for $r \gg 0$. Thus,  $X' \cong \beta_{\preceq \bg} X_m$, and $X_r/X_m := \cone(X_m \rightarrow X_r) \in \cC_{\succ \bg}$ for $m \gg 0$ and $r \geq m$. Let
\[
X'' := \mcolim(X_m/X_m \rightarrow X_{m+1}/X_m \rightarrow \cdots).
\]
There is a distinguished triangle
\[
X_m \rightarrow X \rightarrow X''  \rightarrow 
\] 
and thus
\[
X' \rightarrow \beta_{\preceq \bg}  X \rightarrow \beta_{\preceq \bg} X'' \rightarrow 
\] 
is also a distinguished triangle. By hypothesis, we obtain $X'' \in \cC_{\succ \bg}$, and thus $\beta_{\preceq \bg} X'' \cong 0$. Therefore, $X' \cong \beta_{\preceq \bg}  X$.
The dual case is similar, and left to the reader.
\end{proof}

\begin{prop}\label{prop:colimbetabaric}
Suppose $\beta$ is non-degenerate, stable and cone bounded. Let $X \in \cC$. 
 If  $C_X = \{0 = \bc_0 \prec \bc_1 \prec \cdots \}$ is a bounding cone of $X$ and $\be \in \bZ^n$ the minimal degree of $X$, then
\[
X \cong \mcolim \bigl( \beta_{\preceq \be+\bc_0} X  \xrightarrow{\imath_0} \beta_{\preceq \be +\bc_{1}} X \xrightarrow{\imath_1} \cdots \bigr),
\] 
where $\imath_r$ is given by \cref{lem:disttriangbetag}.  
If $\beta$ is full, then this Milnor colimit is acceptable. 
\end{prop}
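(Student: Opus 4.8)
The plan is to mimic the proof of \cref{prop:colimtau} and \cref{prop:cobariccanlim}, now in the triangulated rather than abelian setting. Write
\[
Y := \mcolim\bigl( \beta_{\preceq \be+\bc_0} X \xrightarrow{\imath_0} \beta_{\preceq \be+\bc_1} X \xrightarrow{\imath_1} \cdots \bigr),
\]
and observe that the canonical maps $X \to \beta_{\preceq \be+\bc_r}X$ (coming from the distinguished triangles defining $\beta$) are compatible with the $\imath_r$, hence induce a map $h : X \to Y$. The goal is to show $h$ is an isomorphism, and then, under fullness, that $\bigoplus_{r\geq 0}\cone(\imath_r) \in \cC$.

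\textbf{Key steps.} First I would apply $\beta_{\preceq \bg}$ to $Y$ for an arbitrary $\bg \in \bZ^n$. Since $\beta$ is stable, $\beta_{\preceq \bg}$ commutes with the acceptable Milnor colimit, so
\[
\beta_{\preceq \bg} Y \cong \mcolim\bigl( \beta_{\preceq \bg}\beta_{\preceq \be+\bc_0} X \to \beta_{\preceq \bg}\beta_{\preceq \be+\bc_1} X \to \cdots \bigr).
\]
Using \cref{lem:betacomp}, for $r$ large enough we have $\be+\bc_r \succeq \bg$ (because $\{\bc_r\}$ exhausts the cone and the order is additive), hence $\beta_{\preceq \bg}\beta_{\preceq \be+\bc_r} X \cong \beta_{\preceq \bg} X$ and the transition maps are eventually identities; thus the Milnor colimit stabilizes and $\beta_{\preceq \bg} Y \cong \beta_{\preceq \bg} X$, compatibly with $\beta_{\preceq\bg}(h)$. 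Next, form $\cone(h)$. Applying $\beta_{\preceq \bg}$ (exact by \cref{prop:baricexact}) to the triangle $X \xrightarrow{h} Y \to \cone(h) \to$ gives $\beta_{\preceq\bg}\cone(h) \cong \cone(\beta_{\preceq\bg}(h)) \cong 0$ for all $\bg$. Hence $\cone(h) \in \bigcap_{\bg} \cC_{\succ \bg} = 0$ by non-degeneracy, so $h$ is an isomorphism. (One must be slightly careful that $X$ and $Y$ have a common bounding cone so that $\cone(h)$ lies in $\cC$ and the intersection argument applies; since $Y \cong X$ this is automatic, or one argues with $\beta_\bg$ directly as in \cref{prop:cobariccanlim}.) Finally, for acceptability: $\cone(\imath_r) \cong \beta_{\bc_r}X$ (up to the shift $\be$) by \cref{lem:disttriangbetag}, and fullness of $\beta$ gives precisely $\bigoplus_{\bg} \beta_\bg X \in \cC$, so $\bigoplus_{r\geq 0}\cone(\imath_r) \in \cC$, i.e. the Milnor colimit is acceptable.

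\textbf{Main obstacle.} The delicate point is the interchange in the first step: checking that an acceptable Milnor colimit of objects $\beta_{\preceq \be+\bc_r}X$ remains acceptable after applying $\beta_{\preceq\bg}$, so that stability may be invoked, and that the resulting Milnor colimit genuinely stabilizes (as opposed to merely having a zero derived term). This rests on cone-boundedness: for fixed $\bg$, the cones $\cone(\imath_r) \cong \beta_{\be+\bc_r}X$ satisfy $\beta_{\preceq\bg}\cone(\imath_r) = 0$ for $r \gg 0$, which forces the transition maps in $\beta_{\preceq\bg}Y$ to be isomorphisms eventually; I would state this explicitly and verify the Milnor-colimit-of-an-eventually-constant-tower computation (using that $\mcolim$ of $X_0 \xrightarrow{\sim} X_1 \xrightarrow{\sim} \cdots$ is $X_0$, which follows from the defining triangle and the five lemma). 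Everything else is a formal transcription of the baric-structure formalism already assembled in the excerpt.
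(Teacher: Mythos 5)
Your overall strategy matches the paper's: form the Milnor colimit $Y$, compare it with $X$, show the cone of the comparison map lies in $\bigcap_{\bg} \cC_{\succ \bg}$ by truncating with $\beta_{\preceq \bg}$, and invoke non-degeneracy to conclude it is zero; acceptability then follows from fullness since $\cone(\imath_r)\cong \beta_{\be+\bc_{r+1}}X$. However, there is a genuine error in how you construct the comparison map. You assert ``canonical maps $X \to \beta_{\preceq \be+\bc_r}X$ coming from the distinguished triangles defining $\beta$'' and use them to build $h : X \to Y$. Such maps do not exist for a baric structure: the defining triangle is $\beta_{\preceq \bg}X \to X \to \beta_{\succ \bg}X \to \beta_{\preceq \bg}X[1]$, so the natural arrow points \emph{from} $\beta_{\preceq \bg}X$ \emph{to} $X$, and $\beta_{\preceq\bg}$ is the right adjoint of the inclusion $\cC_{\preceq \bg}\hookrightarrow \cC$ with counit $\beta_{\preceq\bg}X \to X$. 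You appear to be transporting the picture from the abelian cobaric setting of \cref{prop:cobariccanlim}, where the short exact sequence $0 \to X_{\succ 0} \to X \to X_{\preceq 0} \to 0$ does give a map $X \to X_{\preceq \bg}$; the baric/triangulated situation here is the mirror image.

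The correct construction, and the one the paper uses, goes the other way: the compatible system $j_r : \beta_{\preceq \be+\bc_r}X \to X$ (compatible with the $\imath_r$ via \cref{lem:betacomp}) assembles to $j_\bullet : \coprod_{r\geq 0} \beta_{\preceq\be+\bc_r}X \to X$ which kills $1-\imath_\bullet$, so it factors through the cone of $1-\imath_\bullet$, i.e.\ through the Milnor colimit $Y$, yielding a comparison map $h : Y \to X$ (this is the weak ``universal'' property of the Milnor colimit as a cone, applied via the long exact $\Hom(-,X)$ sequence of the defining triangle). Setting $R := \cone(h)$, your remaining argument — that $\beta_{\preceq\bg}(h)$ is an isomorphism for every $\bg$ by stability, \cref{lem:betacomp}, and cone-boundedness, hence $\beta_{\preceq\bg}R = 0$ by exactness of $\beta_{\preceq\bg}$ (\cref{prop:baricexact}), hence $R = 0$ by non-degeneracy — is exactly the paper's and goes through unchanged once the direction of $h$ is corrected.
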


\begin{proof}
Let $\beta_{\leq r} := \beta_{\preceq \be + \bc_r}$. 
Since we have maps  $j_r : \beta_{\leq r} X \rightarrow X$, we obtain a map $j_\bullet : \coprod_{r \geq 0} \beta_{\leq r} X \rightarrow X$. Thus, the square on the left commutes in the following diagram
\[
\begin{tikzcd}
\coprod_{r \geq 0} \beta_{\leq r} X \ar{r}{1-\imath_\bullet} \ar{d} & 
\coprod_{r \geq 0} \beta_{\leq r} X \ar{r}  \ar{d}{j_\bullet} & 
\mcolim_{r\geq 0} (\imath_r)  \ar[dashrightarrow]{d}{\exists h}   \ar{r} & {}
\\
0 \ar{r} & 
X \ar[equals]{r} &
X  \ar{r} & {}
\end{tikzcd}
\]
inducing the vertical map $h$. 
We put $R := \cone(h)$. 
Let $\bg \in \bZ^n$. 
Since for $r \gg 0$ we have $\bc_r - \be \succ \bg$, we obtain that 
$\beta_{\preceq \bg} \mcolim_{r\geq 0} (\imath_r) \cong \beta_{\preceq \bg} X$. Therefore, $\beta_{\preceq \bg} R \cong 0$, and thus $\beta_{\succ \bg} R \cong R$.
Since $\bg$ is arbitrary, it means $R \in \bigcap_{\bg \in \bZ^n} \cC_{\succ \bg}$. Because $\beta$ is non-degenerate, we conclude that $R \cong 0$ and $X \cong \mcolim_{r\geq 0}(\imath_r)$. 
\end{proof}

\subsubsection{C.b.l.f. iterated extensions}

In a $\bZ^n$-graded triangulated category $\cT$, we define the notion of \emph{c.b.l.f. coproduct} as follows: 
\begin{itemize}
\item take a a finite collection of objects $\{K_1,\dots, K_m\}$ in $\cT$; 
\item consider a coproduct of the form
\begin{align*}
&\coprod_{\bg \in \bZ^n} x^{\bg} (K_ {1,\bg} \oplus \cdots \oplus K_{m,\bg}), &
&\text{ with }&
K_{i,\bg} &= \bigoplus_{j = 1}^{k_{i,\bg}} K_i[h_{i,j,\bg}],
\end{align*}
where $k_{i,\bg} \in \bN$ and $h_{i,j,\bg} \in \bZ$;
\item there exists a cone $C$ compatible with $\prec$, and $\be \in \bZ^n$ such that for all $j$ we have $k_{j,\bg} = 0$ whenever $\bg -  \be \notin C$;
\item there exists $h \in \bZ$ such that $h_{i,j,\bg} \geq h$ for all $i,j,\bg$. 
\end{itemize}

We  suppose that $\cC$ is a c.b.l. additive category (for c.b.l.f. coproducts defined as above). 

\begin{defn}\label{def:cblfitext}
Let $\{K_1, \dots, K_m\}$ be a finite collection of objects in $\cC$, and let $\{E_r\}_{r \in \bN}$ be a family of direct sums of $\{K_1, \dots, K_m\}$ such that $\bigoplus_{r \in \bN} E_r$ is a c.b.l.f. direct sum of $\{K_1, \dots, K_m\}$. Let $\{M_r\}_{r \in \bN}$  be a collection of objects in $\cC$ with $M_0 = 0$, such that they fit in distinguished triangles
\[
M_r \xrightarrow{f_r} M_{r+1} \rightarrow E_r \rightarrow 
\]
Then we say that an object $M \in \cC$ such that $M \cong_{\cT} \mcolim_{r\geq 0} (f_r)$ in $\cT$ is a \emph{c.b.l.f. iterated extension of  $\{K_1, \dots, K_m\}$}. The sequence $t_{i,\bg} = \{h_{i,1,\bg}, \dots, h_{i,k_{i,\bg},\bg}\}$ of occurences of $x^\bg K_i[h_{i,j,\bg}]$ as a direct summand in $\bigoplus_{r \in \bN} E_r$ is called the \emph{degree $\bg$ multiplicity} of $K_i$.
\end{defn}

\begin{prop}
If $\cC$ admits c.b.l.f. iterated extensions and $\beta$ is a cone bounded, locally finite baric structure on $\cC$, then $\beta$ is full.
\end{prop}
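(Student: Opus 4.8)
The plan is to realise the object $\bigoplus_{\bg\in\bZ^n}\beta_\bg X$ as a c.b.l.f.\ iterated extension of the finite family $\{P_1,\dots,P_m\}$ furnished by local finiteness, and then conclude by the hypothesis that $\cC$ admits c.b.l.f.\ iterated extensions. Fix $X\in\cC$. Since $\beta$ is cone bounded, choose a bounding cone $C_X$ and a minimal degree $\be$ for $X$, and enumerate $C_X\cap\bZ^n=\{0=\bc_0\prec\bc_1\prec\cdots\}$, so that the only nonzero graded pieces of $X$ are the $\beta_{\be+\bc_r}X$, $r\geq 0$. For each $r$ the object $x^{-(\be+\bc_r)}\beta_{\be+\bc_r}X$ lies in $\cC_0$, hence by local finiteness is quasi-isomorphic to a finite iterated extension of $P_1,\dots,P_m$; applying the autoequivalence $x^{\be+\bc_r}$, we get a finite chain of distinguished triangles $0=W^{(r)}_0\to W^{(r)}_1\to\cdots\to W^{(r)}_{k_r}=\beta_{\be+\bc_r}X$ in $\cC$ whose successive cones are objects $x^{\be+\bc_r}P_i$ (in particular only finitely many composition factors per $r$).

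Next I would build the desired chain. Set $Y_s:=\bigoplus_{r\le s}\beta_{\be+\bc_r}X$; this lies in $\cC$ as a finite direct sum of objects $\beta_\bg X=\beta_\bg(X)\in\cC$, and the split inclusions $Y_s\hookrightarrow Y_{s+1}$ have cone $\beta_{\be+\bc_{s+1}}X$, with $\mcolim_s\bigl(Y_s\hookrightarrow Y_{s+1}\bigr)\cong\bigoplus_{r\ge 0}\beta_{\be+\bc_r}X$. Now refine this sequence: between $Y_s$ and $Y_{s+1}$ insert the objects $Z^{(s)}_j:=Y_s\oplus W^{(s+1)}_j$ with the maps $\id_{Y_s}\oplus(W^{(s+1)}_j\to W^{(s+1)}_{j+1})$, whose cones are again the single generators $x^{\be+\bc_{s+1}}P_i$ (for block $0$ one uses the chain $W^{(0)}_\bullet$ directly). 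This produces a single sequence $0=M_0\to M_1\to M_2\to\cdots$ in $\cC$ in which every cone $E_t$ is one of the $x^\bg P_i$, and whose terms $Y_0,Y_1,Y_2,\dots$ form a cofinal subsequence; since Milnor colimits are insensitive to passing to a cofinal subsequence, $\mcolim_t(M_t)\cong\bigoplus_{r\ge 0}\beta_{\be+\bc_r}X=\bigoplus_{\bg\in\bZ^n}\beta_\bg X$.

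Finally, collecting the cones, one has $\coprod_t E_t\cong\coprod_{\bg\in\bZ^n}x^\bg\bigl(P_1^{\oplus k_{1,\bg}}\oplus\cdots\oplus P_m^{\oplus k_{m,\bg}}\bigr)$, where $k_{i,\bg}$ is the (finite) multiplicity of $x^\bg P_i$ among the composition factors of $\beta_\bg X$, and $k_{i,\bg}=0$ unless $\bg-\be\in C_X$, a cone compatible with $\prec$; hence $\coprod_t E_t$ is a c.b.l.f.\ coproduct of $\{P_1,\dots,P_m\}$ and, by c.b.l.\ additivity of $\cC$, lies in $\cC$. Thus the data $(\{P_i\},\{E_t\},\{M_t\})$ is exactly of the type appearing in \cref{def:cblfitext}, so $\bigoplus_{\bg}\beta_\bg X=\mcolim_t(M_t)\in\cC$ because $\cC$ admits c.b.l.f.\ iterated extensions; this is precisely the assertion that $\beta$ is full. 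The main obstacle is the second step: one must organise the iterated extension so that its Milnor colimit computes the \emph{associated graded} $\bigoplus_\bg\beta_\bg X$ rather than $X$ itself (naively concatenating the triangles $\beta_{\preceq\be+\bc_{r-1}}X\to\beta_{\preceq\be+\bc_r}X\to\beta_{\be+\bc_r}X$ of \cref{lem:disttriangbetag} rebuilds $X$), and the device of passing through the split partial sums $Y_s$ and only refining \emph{within} each degree block is what resolves this; a subsidiary point, automatic here since the finite iterated extensions provided by local finiteness involve no homological shifts, is that the shifts occurring among the $E_t$ stay bounded below so that $\coprod_t E_t$ is genuinely a c.b.l.f.\ coproduct.
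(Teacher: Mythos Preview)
Your proof is correct and follows essentially the same approach as the paper, which simply says ``Similar to \cref{lem:cblisfull}.'' You have spelled out in full the details that the paper leaves implicit: since each $\beta_\bg X$ is a finite iterated extension of the $x^\bg P_i$, the direct sum $\bigoplus_\bg \beta_\bg X$ can be realised as a c.b.l.f.\ iterated extension of $\{P_1,\dots,P_m\}$, hence lies in $\cC$. Your device of passing through the split partial sums $Y_s$ and refining within each degree block is precisely the extra work needed in the baric (as opposed to cobaric) setting, where $\cC_0$ is not semisimple and $\beta_\bg X$ is only an iterated extension rather than a direct sum; the paper's one-line reference to the cobaric lemma glosses over this distinction, so your version is in fact more complete.
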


\begin{proof}
Similar to \cref{lem:cblisfull}, and omitted.
\end{proof}

\subsection{Topological Grothendieck group}

Mimicking Achar--Stroppel construction~\cite{acharstroppel}, we define the following:

\begin{defn}
The \emph{topological Grothendieck group} of $(\cC,\beta)$ is
\[
\bKO^\Delta(\cC,\beta) := K^\Delta_0(\cC)/T(\cC,\beta),
\]
where
\begin{equation}\label{eq:defTA}
T(\cC,\beta) := \{ [X] \in K_0^\Delta(\cC) \ |\  [\beta_{\preceq \bg}X] = 0 \text{ for all $\bg \in \bZ^n$} \}.
\end{equation}
It is endowed with a canonical topology given by using $\{\bKO^{\Delta}(\beta_{\succ \be}\cC, \beta)\}_{\be \in \bZ^n}$ as basis of neighborhoods of zero.
\end{defn}

As before, if $\beta$ is non-degenerate, then the canonical topology on $\bKO^\Delta(\cC,\beta)$ is Hausdorff. 

\begin{defn}
We say that a graded functor $F : (\cC,\beta) \rightarrow(\cC',\beta')$  has \emph{finite amplitude} if there exists $|F| \in \bZ^n$ such that 
$
F\cC_{\succ \bg} \subset \cC'_{\succ \bg + |F|}.
$
\end{defn}

\begin{prop}\label{prop:K0ASinducedmap}
Let $F : (\cC,\beta) \rightarrow(\cC',\beta')$ be a $\bZ^n$-homogeneous exact functor. If $F$ has finite amplitude, then it induces a continuous map
\[
[F] : \boldsymbol K^\Delta_0(\cC,\beta)  \rightarrow \boldsymbol K^\Delta_0(\cC',\beta'),
\]
by $[F][X] := [F(X)]$. 
\end{prop}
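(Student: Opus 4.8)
The plan is to follow the same template as the abelian-category analogue, \cref{prop:abfinampinduces}, adapting it to the triangulated setting. First I would note that since $F$ is exact, it sends distinguished triangles to distinguished triangles, so $[X]\mapsto [F(X)]$ is a well-defined homomorphism $K_0^\Delta(\cC)\to K_0^\Delta(\cC')$; composing with the quotient map to $\bKO^\Delta(\cC',\beta')$ gives a homomorphism $K_0^\Delta(\cC)\to \bKO^\Delta(\cC',\beta')$. The only thing to check is that this kills $T(\cC,\beta)$, i.e. $[F]\bigl(T(\cC,\beta)\bigr)\subset T(\cC',\beta')$, and that the induced map is continuous for the canonical topologies.

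For the first point, the key tool is the direct-sum decomposition of the triangulated Grothendieck group coming from the distinguished triangle $\beta_{\preceq\bg}X\to X\to\beta_{\succ\bg}X\to$ of \cref{eq:disttrianglebeta}, together with the exactness of $\beta_{\preceq\bg}$ and $\beta_{\succ\bg}$ (\cref{prop:baricexact}): for each $\bg$ there is an isomorphism
\[
K_0^\Delta(\cC')\xrightarrow{\simeq} K_0^\Delta(\cC'_{\preceq\bg})\oplus K_0^\Delta(\cC'_{\succ\bg}),\qquad [X]\mapsto [\beta'_{\preceq\bg}X]+[\beta'_{\succ\bg}X].
\]
Given $f\in T(\cC,\beta)$ and any $\bh\in\bZ^n$, write $f=[\beta_{\preceq\bh}]f+[\beta_{\succ\bh}]f$ with $[\beta_{\preceq\bh}]f=0$, so $f=[\beta_{\succ\bh}]f$. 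Applying $[F]$ and using the finite-amplitude hypothesis $F\cC_{\succ\bh}\subset\cC'_{\succ\bh+|F|}$, we get $[F]f=[F][\beta_{\succ\bh}]f\in K_0^\Delta(\cC'_{\succ\bh+|F|})$, hence $[\beta'_{\preceq\bh+|F|}][F]f=0$. As $\bh$ ranges over $\bZ^n$ so does $\bh+|F|$, so $[\beta'_{\preceq\bg}][F]f=0$ for all $\bg\in\bZ^n$, i.e. $[F]f\in T(\cC',\beta')$. This is exactly the argument of \cref{prop:abfinampinduces}, now run through distinguished triangles rather than short exact sequences.

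For continuity, the canonical topology on $\bKO^\Delta(\cC,\beta)$ has neighbourhood basis of zero $\{\bKO^\Delta(\beta_{\succ\be}\cC,\beta)\}_{\be\in\bZ^n}$, i.e. the images of the subgroups generated by classes of objects in $\cC_{\succ\be}$; the finite-amplitude condition says precisely that $[F]$ maps the basic neighbourhood indexed by $\be$ into the one indexed by $\be+|F|$, so $[F]$ is continuous (indeed uniformly continuous) as a group homomorphism. The main obstacle, such as it is, is bookkeeping: one must make sure the decomposition isomorphism above is genuinely natural/compatible with $[F]$ and that the neighbourhood subgroups are the images of $K_0^\Delta(\cC_{\succ\be})\to\bKO^\Delta(\cC,\beta)$ as stated, but both of these are formal consequences of \cref{lem:betacomp}, \cref{prop:baricexact}, and the definitions; no genuinely hard step is involved, which is why the statement can reasonably be left with a one-line proof pointing to \cref{prop:abfinampinduces}.
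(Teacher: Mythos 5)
Your argument is exactly the paper's intended one: the published proof of \cref{prop:K0ASinducedmap} consists of a single line referring back to \cref{prop:abfinampinduces}, and what you have written is the faithful translation of that argument into the triangulated setting (distinguished triangles replacing short exact sequences, $K_0^\Delta$ replacing $G_0$, and \cref{prop:baricexact}/\cref{lem:betacomp} supplying exactness of the truncations needed for the direct-sum decomposition). Your addition of the continuity check — observing that finite amplitude pushes the basic neighbourhood $\bKO^\Delta(\beta_{\succ\be}\cC,\beta)$ into $\bKO^\Delta(\beta_{\succ\be+|F|}\cC',\beta')$ — is also the right verification and matches the paper's definitions of the canonical topology.
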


\begin{proof}
The proof is similar to \cref{prop:abfinampinduces}.
\end{proof}

\begin{prop}\label{prop:baricisoKO}
Suppose $\beta$ is a  full, non-degenerate, stable, cone bounded baric structure on $\cC$.
There is a surjection
\[
 \bKO^\Delta(\cC,\beta) \twoheadrightarrow \bKO^\Delta(\cC),
\]
induced by the identity on $K_0^\Delta(\cC)$. 
If  $\beta$ is locally finite, then it is an isomorphism
\[
 \bKO^\Delta(\cC,\beta) \cong \bKO^\Delta(\cC). 
\]
\end{prop}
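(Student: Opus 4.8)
The plan is to mirror the abelian-category argument of \cref{prop:topGOisasympGO}, translating short exact sequences into distinguished triangles, kernels/cokernels into shifts of homology, and filtered (co)limits into Milnor (co)limits. First I would establish the surjection. Take $[X] - [Y] \in T(\cC,\beta)$, so $[\beta_{\preceq \bg} X] = [\beta_{\preceq \bg} Y] \in K_0^\Delta(\cC)$ for all $\bg \in \bZ^n$. By \cref{prop:colimbetabaric}, $X \cong \mcolim_{r \geq 0}(\beta_{\preceq \be+\bc_r} X \xrightarrow{\imath_r} \beta_{\preceq \be+\bc_{r+1}} X)$ is an acceptable Milnor colimit (fullness of $\beta$), and by \cref{lem:disttriangbetag} the cone of $\imath_r$ is (a shift of) $\beta_{\be+\bc_{r+1}} X$, which is supported in a single degree. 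Since the same holds for $Y$, comparing the two Milnor colimits via the relation defining $T(\cC)$ shows $[X] - [Y] \in T(\cC)$; hence the identity on $K_0^\Delta(\cC)$ descends to a surjection $\bKO^\Delta(\cC,\beta) \twoheadrightarrow \bKO^\Delta(\cC)$. One subtlety: I must check that the telescoping differences $[X] - [\beta_{\preceq \be+\bc_r}X]$ eventually stabilize degree-by-degree, which uses cone boundedness together with the hypothesis that $[\beta_{\preceq \bg}X] = [\beta_{\preceq \bg}Y]$ for \emph{all} $\bg$, so that the two telescopes agree term by term once $\bc_r - \be \succ$ any fixed degree.

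For the reverse direction, assuming $\beta$ is locally finite, I would show $T(\cC) \subseteq T(\cC,\beta)$, i.e. that applying $\beta_{\preceq \bg}$ kills every generator of $T(\cC)$. Let
\[
Y \cong \mcolim\bigl(X = F_0 \xrightarrow{f_0} F_1 \xrightarrow{f_1} \cdots\bigr)
\]
be an acceptable Milnor colimit with $[\cone(f_r)] = [E_r]$ and $\bigoplus_r E_r \in \cC$. Since $\beta_{\preceq \bg}$ is exact (\cref{prop:baricexact}), $[\beta_{\preceq \bg} E_r] = [\beta_{\preceq \bg}\cone(f_r)] = [\beta_{\preceq \bg} F_{r+1}] - [\beta_{\preceq \bg} F_r]$ in $K_0^\Delta(\cC)$. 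Because $\beta$ is cone bounded and locally finite, $\beta_{\preceq \bg}\cone(f_r) = 0$ for $r \gg 0$, so $\beta_{\preceq \bg} f_r$ is an isomorphism for all large $r$; combined with stability of $\beta$ (so $\beta_{\preceq \bg}$ commutes with the acceptable Milnor colimit, giving $\beta_{\preceq \bg} Y \cong \beta_{\preceq \bg} F_{r'}$ for $r' \gg 0$), the sum $\sum_{r\geq 0}[\beta_{\preceq \bg} E_r]$ telescopes to $[\beta_{\preceq \bg} Y] - [\beta_{\preceq \bg} X]$. Hence $[\beta_{\preceq \bg}]$ applied to the generator $[Y] - [X] - \sum_r [E_r]$ vanishes in $K_0^\Delta(\cC)$ for every $\bg$; the Milnor-limit case is dual. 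Therefore $T(\cC) \subseteq T(\cC,\beta)$, and since the surjection already gives $T(\cC,\beta) \subseteq T(\cC)$ (that is what the first part proved), the two ideals coincide and the map is an isomorphism.

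The main obstacle I anticipate is the bookkeeping in the stabilization/telescoping arguments: making precise that ``for $r \gg 0$'' statements can be made uniform in a way compatible with the fact that $\bKO^\Delta$ relations only see the \emph{total} direct sum $\bigoplus_r E_r$, not individual truncations — this is exactly where cone boundedness plus local finiteness of $\beta$ are essential, guaranteeing $\beta_{\preceq \bg}\cone(f_r)=0$ for $r\gg 0$ and that $\beta_{\preceq\bg}$ lands in a finite-length category where the classes behave well. A secondary point to verify carefully is that the Milnor colimit built from the $\beta_{\preceq\bg}$-truncations in \cref{prop:colimbetabaric} is genuinely acceptable, so that it is a legitimate generator of $T(\cC)$; this is handled by fullness, but one should confirm the cones $\bigoplus_r \cone(\imath_r)$ land in $\cC$, which follows since each $\cone(\imath_r)$ is a shift of $\beta_{\be+\bc_{r+1}}X$ and $\bigoplus_{\bg}\beta_\bg X \in \cC$ by fullness.
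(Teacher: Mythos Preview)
Your proposal is correct and follows essentially the same approach as the paper's proof. The paper is terser---it simply cites \cref{prop:colimbetabaric} for the surjection and then runs exactly your telescoping argument for the reverse inclusion under local finiteness---but your added details (explicitly invoking stability to identify $\beta_{\preceq \bg} Y \cong \beta_{\preceq \bg} F_{r'}$, and explaining why the Milnor colimit from \cref{prop:colimbetabaric} is acceptable via fullness) are accurate elaborations of steps the paper leaves implicit.
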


\begin{proof}
The proof is similar to the one of \cref{prop:topGOisasympGO}. We obtain a surjective map
\[
 \bGO(\cC,\beta) \twoheadrightarrow \bGO(\cC),
\]
thanks to \cref{prop:colimbetabaric}.
Then, suppose $\beta$ is locally finite and consider an acceptable Milnor colimit
\[
Y \cong \mcolim\bigl(X = F_0 \xrightarrow{f_0} F_1 \xrightarrow{f_1} \cdots \bigr),
\]
and $\bigoplus_{r \geq 0} E_r \in \cC$ such that $[E_r] = [\cone(f_r)]$ in $K_0^\Delta(\cC)$ for all $r \geq 0$.
Because $\beta$ is locally finite we have $\beta_{\preceq \bg} E_r = \beta_{\preceq \bg} \cone(f_r) = 0$ for $r \gg 0$. Hence, for $r' \gg 0$, we have
\begin{align*}
[\beta_{\preceq \bg} Y] - [\beta_{\preceq \bg} X] = \sum_{r = 0}^{r'} [\beta_{\preceq \bg}\cone(f_r)] = \sum_{r = 0}^{r'} [\beta_{\preceq \bg}E_r],
\end{align*}
in $K_0^\Delta(\cC)$. 
The dual case is similar, concluding the proof. 
\end{proof}



\section{C.b.l.f. positive dg-algebras}\label{sec:cblfdgalg}

For this section, we restrict to derived categories that arise from  $\bZ^n$-graded dg-$\Bbbk$-algebras $(A,d_A)$, where $A = \bigoplus_{(h,\bg) \in \bZ \times \bZ^n} A_\bg^h$, and where $\Bbbk$ is a field. We will consider c.b.l. additive subcategories of $\cT := \cD(A,d_A)$. 
In this context, an $(A,d_A)$-module 
is said to be c.b.l.f. dimensional  (resp. \emph{c.b.l.f. generated}) if it is c.b.l.f. dimensional (resp. c.b.l.f. generated) for the $\bZ^n$-grading, and bounded from below for the homological grading. 

\smallskip

It is a known fact, and actually not hard to show (see for example~\cite[Lemma 5.2]{kelleryang}), that if $(A,d_A)$ is positive for the homological grading, i.e. $A_\bg^h = 0$ whenever $h < 0$, then $\cD(A,d_A)$ has a canonical non-degenerate t-structure. We take $\cD(A,d_A)^{\leq 0}$ as given by the dg-modules with homology concentrated in non-positive degree, and $\cD(A,d_A)^{\geq 0}$ by those in non-negative degree. Moreover, the truncation functors are given by the usual intelligent truncations: 
\begin{align*}
\tau^{\geq 0} (M,d_M) &:= \bigl(\bigoplus_{h > 0} M^h_* \oplus \ker(M^0_* \xrightarrow{d_M} M^{-1}_*)  ,d_M\bigr), \\
\tau^{\leq 0} (M,d_M) &:= \bigl(\bigoplus_{h < 0} M^h_* \oplus \cok(M^1_* \xrightarrow{d_M} M^0_*)  ,d_M\bigr),
\end{align*}
for $(M,d_M)$ an $(A,d_A)$-module with $M = \bigoplus_{(h,\bg)} M_\bg^h$.  
Then the homological functor $H^0$ coincides with the usual one, and there is an equivalence of abelian categories
\[
\cD(A,d_A)^\heartsuit \cong H^0(A,d_A)\amod.
\]

Let $\cD^{cblf}(A,d_A)$ denotes the triangulated full subcategory of $\cD(A,d_A)$ consisting of $(A,d_A)$-modules having a c.b.l.f. dimensional homology (meaning the homological degree is also bounded from below). The t-structure of $\cD(A,d_A)$ restricts to $\cD^{cblf}(A,d_A)$. 
It becomes bounded from below and we obtain
\[
\cD^{cblf}(A,d_A)^\heartsuit \cong H^0(A,d_A)\modlf.
\]
Note that the restriction of $\tau$ on $\cD^{cblf}(A,d_A)$ gives a full t-structure, since the homology of a c.b.l.f. dimensional dg-module is c.b.l.f. dimensional for the $\bZ^n$-grading. 

\begin{prop}\label{prop:canontauondgisstable}
Let $(A,d_A)$ be a $\bZ^n$-graded dg-algebra. If $A$ is positive for the homological grading and $H^0(A,d_A)$ is locally finite dimensional, then the canonical t-structure on  $\cD^{cblf}(A,d_A)$ is stable. 
\end{prop}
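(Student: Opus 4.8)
The plan is to verify the three defining conditions of a stable t-structure (from \cref{def:toptriangulatedK0}'s surrounding \emph{stable} definition) for the canonical t-structure $\tau$ on $\cC := \cD^{lf}(A,d)$, viewed inside $\cT := \cD(A,d)$. Recall that stability requires: (i) $\cC^\heartsuit$ is thick in $\cT^\heartsuit$; (ii) $\cC$ is both locally AB5 and locally AB5*; (iii) $H^0$ commutes with direct sums in $\cT$. We have the identifications $\cT^\heartsuit \cong H^0(A,d)\amod$ and $\cC^\heartsuit \cong H^0(A,d)\modlf$.

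For (i), I would argue that $\cC^\heartsuit = H^0(A,d)\modlf$ is thick in $H^0(A,d)\amod$: it is closed under direct summands (a summand of a c.b.l.f. dimensional module is c.b.l.f. dimensional, using the same bounding cone), and it is closed under extensions and kernels/cokernels (the graded dimension of an extension is the sum, which stays in $\bZ\pp{x_1,\dots,x_n}$, and sub/quotients only shrink the dimension). Here I use that $H^0(A,d)$ is locally finite dimensional \emph{and} positive — wait, positivity of $H^0(A,d)$ is not assumed, only of $A$ for the homological grading; but the relevant finiteness is that $H^0(A,d)$ is locally finite dimensional as a $\bZ^n$-graded algebra, which is what is hypothesized, so a finitely-generated-over-a-cone or c.b.l.f.-dimensional module stays c.b.l.f. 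For (iii), $H^0 = \tau^{\geq 0}\circ\tau^{\leq 0}$, and since $A$ is positive for the homological grading, the t-structure on $\cD(A,d)$ is the standard one whose truncations are the intelligent truncations displayed just above the statement; these manifestly commute with arbitrary direct sums of dg-modules (a direct sum of complexes, truncated, is the truncation of the direct sum, degree by degree), so $H^0$ commutes with direct sums in $\cT$. Equivalently, one observes the t-structure is smashing and invokes the remarks in the t-structure subsection.

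The main work, and the expected obstacle, is (ii): showing $\cC$ is locally AB5 and locally AB5*. For locally AB5*, I would invoke \cref{prop:MLcobaricisAB5loc} (or argue directly): $\cC^\heartsuit = H^0(A,d)\modlf$ is a category of locally finite dimensional graded modules, hence locally AB5* because in each fixed $\bZ^n$-degree the spaces are finite dimensional, so acceptable filtered limits satisfy a degreewise Mittag--Leffler condition and $1-f_\bullet$ is degreewise surjective — this is exactly the argument given in the example right after \cref{def:locJH}, and it shows acceptable derived limits vanish. For locally AB5, since $\cC^\heartsuit$ is a full subcategory of $H^0(A,d)\amod$ which is AB5 (a module category), acceptable filtered colimits computed in the ambient category preserve kernels, and one checks the colimit of an acceptable diagram in $\cC^\heartsuit$ stays in $\cC^\heartsuit$ (the graded dimension of the colimit is bounded by that of the terms, which stabilize degreewise since everything is c.b.l.f. over a common cone). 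This is essentially the content invoked at the end of the proof of \cref{prop:RposJH}. Finally, the ``local'' statements at the level of $\cC$ (not just its heart) follow because the relevant kernels, cokernels, and derived limits/colimits of objects of $\cC$, computed in $\cT$, have homology that is again c.b.l.f. dimensional and bounded from below — using fullness of $\tau$ and the long exact sequences in homology — so they land in $\cC$; thus an acceptable derived limit (resp. colimit) in $\cC$ vanishes iff its homology does, which reduces everything to the heart.

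Putting these together: (i), (ii), (iii) all hold, so $\tau$ restricted to $\cD^{lf}(A,d)$ is stable. I expect the delicate point to be bookkeeping the interaction between the homological boundedness-below condition and the cone-boundedness in the $\bZ^n$-grading when passing between $\cC$ and $\cC^\heartsuit$ — specifically verifying that acceptable Milnor/filtered (co)limits of c.b.l.f. objects remain c.b.l.f., which is where the hypothesis that $A$ is positive for the homological grading (forcing homology to be bounded below in a controlled way) and that $H^0(A,d)$ is locally finite dimensional (controlling the $\bZ^n$-direction) both get used.
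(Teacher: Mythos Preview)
Your proposal is correct and follows essentially the same approach as the paper: verify the three conditions of stability by (i) noting that sub- and quotient modules of c.b.l.f.\ dimensional modules remain c.b.l.f.\ dimensional, (ii) observing that the intelligent truncations (hence $H^0$) commute with direct sums since $\bZ^n$-graded $\Bbbk$-vector spaces is AB4 and AB4*, and (iii) reducing the locally AB5/AB5* conditions to the arguments already given in the example following \cref{def:locJH} and in the proof of \cref{prop:RposJH}. The paper's proof is terser but identical in substance; your extra care in distinguishing whether locally AB5/AB5* refers to $\cC$ or $\cC^\heartsuit$ is warranted (the definition as written is slightly ambiguous, but as used in \cref{lem:mcolimtocolim} and in the paper's own proof it is the heart that is meant), and your side remark about \cref{prop:MLcobaricisAB5loc} is a red herring you correctly abandon in favor of the direct degreewise Mittag--Leffler argument.
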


\begin{proof}
Clearly,  $H^0(A,d_A)\modlf$ is thick in $H^0(A,d_A)\amod$, since both submodules and quotients of a c.b.l.f. dimensional module are c.b.l.f. dimensional. 
Since the category of $\bZ^n$-graded $\Bbbk$-vector spaces if both AB4 and AB4*, we obtain that $H^0$ commutes with both products and coproducts in $\cD^{cblf}(A,d_A)$. 
It only remains to show that $H^0(A,d_A)\modlf$ is both locally AB5 and locally AB5*. This follows from the same arguments as in the proof of \cref{prop:RposJH}.
\end{proof}

\begin{cor}\label{cor:dgheart}
Let $(A,d_A)$ be a $\bZ^n$-graded dg-algebra. If $A$ is positive for the homological grading and $H^0(A,d_A)$ is locally finite dimensional, then
\[
\bKO^\Delta(\cD^{cblf}(A,d_A)) \cong \bGO(H^0(A,d_A)\modlf).
\]
\end{cor}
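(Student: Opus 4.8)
The plan is to apply \cref{thm:eqasympKOheart} to the triangulated category $\cC := \cD^{lf}(A,d)$ sitting inside $\cT := \cD(A,d)$, equipped with its canonical t-structure $\tau$. First I would record that this situation satisfies the standing hypotheses of \cref{sec:derivedtopK0}: $\cD(A,d)$ admits arbitrary products and coproducts and these preserve distinguished triangles, and $\cD^{lf}(A,d)$ is a saturated triangulated subcategory. The canonical t-structure on $\cD(A,d)$ (aisles given by dg-modules with homology concentrated in non-positive, resp.\ non-negative, homological degree) restricts to $\cD^{lf}(A,d)$, and its extension to $\cT$, required in \cref{sec:derivedtopK0}, is automatic here since on $\cT = \cD(A,d)$ it is the ambient t-structure.

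Next I would check the four hypotheses of \cref{thm:eqasympKOheart} one by one. \emph{Bounded from below}: every object of $\cD^{lf}(A,d)$ has homology bounded below in homological degree, so $\cD^{lf}(A,d) = \bigcup_n \cD^{lf}(A,d)^{\geq n}$, as already noted in the text. \emph{Non-degenerate}: the canonical t-structure on $\cD(A,d)$ is non-degenerate, an object with vanishing homology in all degrees being acyclic hence zero in the derived category, and this restricts to $\cD^{lf}(A,d)$. \emph{Full}: for $X \in \cD^{lf}(A,d)$ each $H^i(X)$ is c.b.l.f.\ dimensional and, the homological grading being bounded below and the $\bZ^n$-multidegree cone bounded, the total $\bigoplus_{i \in \bZ} H^i(X)$ is again c.b.l.f.\ dimensional, so it lies in $\cD^{lf}(A,d)^\heartsuit$; moreover $\cT^\heartsuit \cong H^0(A,d)\amod$ is a module category, hence AB4. \emph{Stable}: this is precisely \cref{prop:canontauondgisstable}, whose hypotheses ($A$ positive for the homological grading and $H^0(A,d)$ locally finite dimensional) are exactly those of the corollary.

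With all four verified, \cref{thm:eqasympKOheart} yields $\bKO^\Delta(\cD^{lf}(A,d)) \cong \bGO(\cD^{lf}(A,d)^\heartsuit)$, the isomorphism being induced by $\Phi$. Finally I would invoke the equivalence of abelian categories $\cD^{lf}(A,d)^\heartsuit \cong H^0(A,d)\modlf$ recorded above (the heart of the canonical t-structure on $\cD(A,d)$ is $H^0(A,d)\amod$, and restricting to dg-modules with c.b.l.f.\ dimensional homology corresponds to restricting to c.b.l.f.\ dimensional $H^0(A,d)$-modules), which transports $\bGO$ accordingly and gives the stated isomorphism. The only step carrying content beyond bookkeeping is the verification of stability, and that has been isolated into \cref{prop:canontauondgisstable}; everything else here is a direct assembly of facts already established in the text.
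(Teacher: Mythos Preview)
Your proposal is correct and follows essentially the same approach as the paper: apply \cref{thm:eqasympKOheart} to $\cD^{lf}(A,d)$ with its canonical t-structure, with stability supplied by \cref{prop:canontauondgisstable} and the remaining hypotheses (bounded below, non-degenerate, full, identification of the heart) already established in the text preceding the corollary. The paper's proof is a one-line citation of these two results; you have simply unpacked the verification of each hypothesis explicitly.
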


\begin{proof}
This follows from \cref{thm:eqasympKOheart}, using \cref{prop:canontauondgisstable}.
\end{proof}

We investigate the case of such dg-algebras where $\cD^{cblf}(A,d_A)$ carries a natural baric structure, and for which we can explicitly compute the Grothendieck group.

\begin{defn}\label{def:cblfposdgalg}
We say that $(A,d_A)$ is a \emph{c.b.l.f. positive dg-algebra} if
\begin{enumerate}
\item $A$ is positive for the homological degree: $A_\bg^h  = 0$ whenever $h < 0$;
\item $A$ is positive, c.b.l.f. dimensional for the $\bZ^n$-grading: 
$\bigoplus_{h \in \bN} A_\bg^h$ is finite dimensional for each $\bg \in \bZ^n$, and $\sum_{(h,\bg)} \dim(A_\bg^h) x^{\bg} \in \bZ_{C_A} \llbracket{x_1, \dots, x_n}\rrbracket$ for some cone $C_A$ compatible with $\prec$;
\item $A_0^0$ is semi-simple;
\item $A_0^h = 0$ whenever $h > 0$. \label{item:zeroconcdgalg}
\end{enumerate}
\end{defn}

We write $A_0 := A_0^0$ and $A_{\succ 0} := \bigoplus_{\{(h,\bg) | \bg \succ 0\}} A_\bg^h$.  Since $d_A(A_0) = 0$, indecomposable relatively projective $(A,d_A)$-modules are in bijection with indecomposable projective $A$-modules. Let $\{P_i := A e_i\}_{i \in I}$ be the finite set of indecomposable relatively projective modules, up to shift and isomorphism. For a shifted copy $P_i' \cong x^\bg P_i [h]$ of $P_i$, we write $\deg_x(P_i') := \bg$ and $\deg_h(P_i') := h$. Note that $\deg_x(P_i')$ coincides with the minimal $\bZ^n$-degree of $P_i'$.

We see that $A_{\succ 0}e_i$ is a sub-dg-module of $P_i$, since $A_0^1 = 0$. Then, there is a simple dg-module $S_i := P_i/A_{\succ 0}e_i$. We write $\deg_*(S_i') := \deg_*(P_i')$ whenever $S_i' \cong P_i'/A_{\succ 0}P_i'$.

\begin{rem}\label{rem:exttononpositivecblfdgalg} 
For the remaining of the paper, we can drop the hypothesis that $A$ is positive for the $\bZ^n$-grading. Instead, we can assume it decomposes as a finite direct sum $A \cong \bigoplus_{j \in J} \tilde P_j$ (as graded module over itself) where $\tilde P_j$ is a graded shift of some indecomposable $P_i := A e_i$, such that each of these $P_i$ is positive, c.b.l.f. dimensional for the $\bZ^n$-grading. 
This ensures that $\Hom(P'_{i'}, P_i) = 0$ whenever $\deg_x(P'_{i'})  \prec \deg_x(P_i)$ or $\deg_h(P'_{i'}) < \deg_h(P_i)$. 
\end{rem}

\begin{rem}
If we remove point (\ref{item:zeroconcdgalg}.), then $P_i := Ae_i$ could be acyclic. In that case, there is a quasi-isomorphism $(A,d_A) \xrightarrow{\simeq} (A/Ae_iA,d_A)$. 
Thus, we could weaken the hypothesis in (\ref{item:zeroconcdgalg}.) so that it is respected only after removing all acyclic relatively projectives.  
\end{rem}

\subsection{C.b.l.f. cell modules}

We extend the notion of finite cell module (see \cref{def:finitecellmodule}) to c.b.l.f.

\begin{defn}\label{def:cblcompact}
An $(A,d_A)$-module is called \emph{c.b.l.f. cell module}  if it satisfies property~(P) and if it is, as graded $A$-module, at the same time c.b.l.f. generated for the $\bZ^n$-grading and bounded from below for the homological grading.  
\end{defn}

Our goal now will be to prove that any object in $\cD^{cblf}(A,d_A)$ is quasi-isomorphic to a c.b.l.f. cell module. 
Proving this statement ends up being quite similar to constructing a property (P) replacement of a dg-module in the non-graded case (see \cite[\S3.1]{keller}). 
We still give the details for the record. 

\begin{prop}
Any $(A,d_A)$-module $(M,d_M)$ in $\cD^{cblf}(A,d_A)$ is quasi-isomorphic to a c.b.l.f. dimensional $(A,d_A)$-module. 
\end{prop}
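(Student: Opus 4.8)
The plan is to produce, from an arbitrary $(M,d_M)\in\cD^{lf}(A,d)$, a quasi-isomorphic module whose underlying graded $A$-module is c.b.l.f.\ dimensional, by building a minimal-type free resolution degree-layer by degree-layer along the cone of $M$ and showing the total result stays cone-bounded with finite-dimensional graded pieces. Since $M$ lies in $\cD^{lf}(A,d)$, its homology $H(M,d_M)$ is c.b.l.f.\ dimensional: there is a bounding cone $C_M$ compatible with $\prec$ and a minimal degree $\be$ so that $H_\bg^h(M)=0$ unless $\bg-\be\in C_M$, and each $\bigoplus_h H_\bg^h(M)$ is finite dimensional; also the homological grading is bounded below. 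The strategy is to replace $M$ not by its bar resolution $\br(M)$ (which is far too big) but by a small property (P) model constructed inductively.

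First I would fix an enumeration $C_M\cap\bZ^n=\{0=\bc_0\prec\bc_1\prec\bc_2\prec\cdots\}$ as in the remark after the definition of a cone, so that for each $\bz\in C_M$ only finitely many $\bc_i$ precede it. Then I would build a sequence of c.b.l.f.\ generated property (P) modules $F_0\subset F_1\subset F_2\subset\cdots$ together with maps $\phi_r:F_r\to M$ such that $H(\phi_r)$ is an isomorphism in all $\bZ^n$-degrees $\preceq \be+\bc_r$ and such that $F_{r+1}/F_r$ is a finite direct sum of shifted copies $x^{\be+\bc_{r+1}}A[h]$ of the free module: concretely, at stage $r+1$ one picks finitely many generators in degree $\be+\bc_{r+1}$ (and appropriate homological degrees, bounded below because $M$ is) to kill the cokernel and kernel of $H(\phi_r)$ in that $\bZ^n$-degree, exactly as in the standard construction of a minimal cofibrant (cell) replacement but organized by the order $\prec$ rather than by a single integer. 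The key finiteness input is that $H(M)$ is finite dimensional in each $\bZ^n$-degree and that $A$ itself is positive and c.b.l.f.\ dimensional, so attaching one free cell in degree $\be+\bc_{r+1}$ only affects $\bZ^n$-degrees in $\be+\bc_{r+1}+C_A$; hence the support of $F_{r+1}$ stays inside $\be+(C_M+C_A)$ (which is again a cone compatible with $\prec$), and only finitely many previously-built cells contribute to any fixed degree. Set $F:=\colim_r F_r=\bigcup_r F_r$; this is a property (P) module, and by construction it is c.b.l.f.\ dimensional: its $\bZ^n$-support lies in a single cone, and each homogeneous piece $\bigoplus_h F_\bg^h$ is a finite sum of contributions from the finitely many cells attached in degrees $\preceq\bg$, each tensored with the finite-dimensional pieces of $A$, hence finite dimensional; homological boundedness below is preserved throughout. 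The induced map $\phi:F\to M$ is a quasi-isomorphism because $H(\phi)$ is an isomorphism in every $\bZ^n$-degree $\preceq\be+\bc_r$ for all $r$, and every degree in $C_M$ is of that form for $r$ large.

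The main obstacle I anticipate is the bookkeeping that keeps everything cone-bounded: one must check that the recursive cell-attachment does not drift the support outside a fixed cone and does not make some homogeneous component infinite dimensional — i.e.\ that $C_M+C_A$ (or the cone it generates) is still compatible with $\prec$, that only finitely many $\bc_i$ lie below any given $\bz$, and that at each stage only finitely many cells are needed (which uses $\dim\bigoplus_h H_\bg^h(M)<\infty$ together with $A$ being locally finite dimensional). A secondary technical point is verifying that $F$, being an exhaustive union of property (P) submodules with relatively projective subquotients, again satisfies property (P) — this is immediate from the definition since one simply concatenates the filtrations — and that $\phi$ is surjective if desired, though surjectivity is not needed for the stated conclusion (only quasi-isomorphism). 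Once these are in place, $F$ is the required c.b.l.f.\ dimensional module quasi-isomorphic to $M$, which proves the proposition; in fact the same $F$ will serve as the ``c.b.l.f.\ cell module'' replacement used in the subsequent development.
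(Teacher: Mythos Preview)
Your approach is correct but does far more than this proposition requires, and it differs substantially from the paper's argument. The paper gives essentially a one-line construction: choose cycle representatives $x_r \in M$ of a $\Bbbk$-basis of $H(M,d_M)$ and take the dg-submodule $M' := \sum_r A e_{i_r} x_r \subset M$ they generate over $A$; since each $P_{i_r} = A e_{i_r}$ is positive and c.b.l.f.\ dimensional and the $x_r$ sit in the c.b.l.f.\ support of $H(M)$, the submodule $M'$ is c.b.l.f.\ dimensional. You instead build a property~(P) model $F \to M$ by attaching free cells $\bZ^n$-degree by $\bZ^n$-degree along the bounding cone. This is essentially the content of the later \cref{thm:cblfdimiscell} (existence of a c.b.l.f.\ cell cover), which the paper proves separately via \cref{lem:cblfcofcover} and the totalization argument of \cref{prop:exactseqdg}; your construction collapses the two steps into one. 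The paper's route buys a clean separation of concerns and avoids all the inductive bookkeeping at this stage; your route produces a cofibrant replacement immediately and is in fact more robust, since the paper's inclusion $M' \hookrightarrow M$ is argued only to be \emph{surjective} on homology, whereas your $\phi$ is an isomorphism on homology by construction. For your finiteness check (only finitely many cells in each fixed $\bZ^n$-degree), note that you implicitly need hypothesis~(\ref{item:zeroconcdgalg}) of \cref{def:cblfposdgalg}, namely $A_0^h = 0$ for $h>0$, to prevent an unbounded cascade of kernel-killing cells in a single $\bZ^n$-degree.
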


\begin{proof}
Let $\{[x_r]\}_{r \in R}$ be a $\Bbbk$-basis of $H(M,d_M)$. For each $r \in R$, let $i_r \in I$ be such that $e_{i_r}x_r = x_r$. Consider $M' := \bigcup_{r \in R} P_{i_r} x_r \subset M$. 
Since $P_{i_r}$ is c.b.l.f. dimensional and positive, and $H(M,d_M)$ is c.b.l.f. dimensional, so is $M'$. 
Moreover, the inclusion $M' \xrightarrow{\simeq} M$ is a quasi-isomorphism, since the induced map $H(M',d_M) \rightarrow H(M,d_M)$ is surjective. 
\end{proof}

\begin{prop}\label{prop:cblfprojarecell}
Let $(M,d_M)$ be an $(A,d_A)$-module which is projective c.b.l.f. generated as $A$-module. Then $(M,d_M)$ is a c.b.l.f. cell module.
\end{prop}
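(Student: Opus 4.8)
The plan is to show that a projective, c.b.l.f.\ generated $A$-module $(M,d_M)$ satisfies property (P) by building the required exhaustive filtration degree by degree, using the positivity of $A$ and the cone-boundedness of the generators. First I would fix a homogeneous generating set $\{x_i\}_{i\in I}$ of $M$ as an $A$-module with $d_{\bg} := \#\{x_i \mid \deg_x(x_i) = \bg\} < \infty$ for all $\bg$ and $d_\bg = 0$ whenever $\bg - \be \notin C_M$ for a suitable cone $C_M$ compatible with $\prec$ and minimal degree $\be$; moreover, since $M$ is bounded from below for the homological grading, the homological degrees of the $x_i$ are bounded below as well. Because $M$ is projective over $A$, it is a direct summand of a free $A$-module; combined with the c.b.l.f.\ generation this lets us assume (after enlarging) that $M$ is a c.b.l.f.\ direct sum of shifted copies $x^{\bg}A[h]$ of the free module, i.e.\ a c.b.l.f.\ coproduct of relatively projective modules in the sense of the earlier sections.

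The key step is to construct the property-(P) filtration. Enumerate $C_M \cap \bZ^n = \{\bc_0 \prec \bc_1 \prec \bc_2 \prec \cdots\}$ as in the remark following the definition of a cone, and for each $r$ let $F_r \subset M$ be the $A$-submodule generated by all generators $x_i$ with $\deg_x(x_i) \preceq \be + \bc_{r-1}$ (with $F_0 := 0$). I must check three things: (i) each $F_r$ is a sub-\emph{dg}-module, i.e.\ $d_M(F_r) \subset F_r$ — this uses that $A$ is positive for the $\bZ^n$-grading (and the variant in Remark \ref{rem:exttononpositivecblfdgalg}), so $d_M(x_i) \in A x_i + \sum_j A x_j$ with the $x_j$ of $\bZ^n$-degree $\succeq \deg_x(x_i)$... wait, the differential preserves the $\bZ^n$-degree and lowers homological degree, so $d_M(x_i)$ is a combination of elements of $M$ of the same $\bZ^n$-degree; since $M$ is free/projective, $d_M(x_i)$ lies in the span of $A$ times generators of $\bZ^n$-degree $\preceq \deg_x(x_i)$ by positivity of $A$, hence in $F_r$; (ii) the filtration is exhaustive, $\bigcup_r F_r = M$, which is immediate since every generator has $\bZ^n$-degree in $\be + C_M$ and hence eventually appears; (iii) each quotient $F_{r+1}/F_r$ is isomorphic to a relatively projective module — it is generated by the finitely many (up to homological shift) generators of $\bZ^n$-degree exactly $\be + \bc_r$, and by the projectivity/freeness of $M$ and positivity of $A$ these generate a free direct summand $\bigoplus x^{\be+\bc_r}A[h_{i,j}]$, so the quotient is a direct sum of shifted copies of $A$, hence relatively projective. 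Finally, the remaining conditions of Definition \ref{def:cblcompact} are automatic: $M$ is c.b.l.f.\ generated for the $\bZ^n$-grading by hypothesis, and bounded from below for the homological grading because the homological degrees of the generators are bounded below and $A$ is positive for the homological degree.

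The main obstacle I anticipate is point (i), verifying that the degree-truncation $F_r$ is closed under $d_M$: this is where positivity of $A$ (or its substitute from Remark \ref{rem:exttononpositivecblfdgalg}, ensuring $\Hom(P'_{i'},P_i) = 0$ when $\deg_x(P'_{i'}) \prec \deg_x(P_i)$) is essential, since without it the differential could mix in components of strictly smaller $\bZ^n$-degree that are not yet in $F_r$. One must argue carefully that, writing $d_M(x_i)$ in terms of the chosen $A$-basis of $M$, every basis element appearing with nonzero coefficient has $\bZ^n$-degree $\preceq \deg_x(x_i)$; this follows because $d_M$ preserves $\bZ^n$-degree and $A$ is concentrated in $\bZ^n$-degrees $\succeq 0$, so a basis element $x_j$ of $\bZ^n$-degree $\bg_j$ can contribute only through $A_{\bg_i - \bg_j}\cdot x_j$, which is zero unless $\bg_i - \bg_j \succeq 0$. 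Everything else is a routine bookkeeping check that the cone and minimal-degree data transport correctly through the construction.
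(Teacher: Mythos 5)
Your filtration $F_r$ is exactly the one the paper constructs (the $M_r$ of the given proof), and steps (i) and (ii) are argued correctly. The gap is in step (iii). You assert that the subquotient $F_{r+1}/F_r$ ``is a direct sum of shifted copies of $A$, hence relatively projective,'' but this is only true at the level of underlying graded $A$-modules. The \emph{induced differential} on $F_{r+1}/F_r$ need not be the free one. Writing $d_M(x_i) = \sum_j a_j x_j$ in terms of your chosen generators, the terms that survive in the quotient are those with $\deg_x(x_j) = \be + \bc_r = \deg_x(x_i)$, which forces $a_j \in A_0$ (and in fact $a_j \in A_0^0$ by condition (4) of \cref{def:cblfposdgalg}) and $\deg_h(x_j) = \deg_h(x_i) - 1$. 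So the differential on $F_{r+1}/F_r$ generically has nonzero off-diagonal components $x^{\be+\bc_r}P_i[h] \to x^{\be+\bc_r}P_j[h-1]$, and such a module is \emph{not} a direct summand of a free $(A,d)$-module in general. (Already for $(A,d)=(\Bbbk,0)$ the two-step complex $\Bbbk[1] \xrightarrow{\mathrm{id}} \Bbbk$ is not relatively projective, even though its underlying graded module is free.)

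The fix is the last sentence of the paper's proof, which you omit: since $F_{r+1}/F_r$ is a \emph{finite} direct sum of shifted indecomposable projectives concentrated in a single $\bZ^n$-degree, one may further refine the filtration by homological degree, taking $F_r \subset F_r^{(h_0)} \subset F_r^{(h_0+1)} \subset \cdots \subset F_{r+1}$ where $F_r^{(h)}/F_r$ is spanned by the summands of homological degree $\leq h$. Because the mixing part of the differential strictly lowers homological degree and the internal part is $d_A$, each subquotient $F_r^{(h+1)}/F_r^{(h)}$ is then a finite direct sum of $x^{\be+\bc_r}P_i[h+1]$ with the standard differential $d_A$, hence genuinely relatively projective. (You do observe the finiteness — ``the finitely many (up to homological shift) generators'' — which is precisely what makes this secondary refinement terminate and remain exhaustive.)

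A minor point: you say that because $M$ is projective you may ``assume (after enlarging) that $M$ is a c.b.l.f.\ direct sum of shifted copies of $A$.'' Enlarging $M$ is not permissible — you need a filtration of $M$ itself. What you want is the Krull--Schmidt decomposition: a projective c.b.l.f.\ generated $A$-module is a c.b.l.f.\ direct sum of the indecomposable projectives $x^\bg P_i[h]$, which is what the paper uses. The rest of your argument goes through unchanged with $P_i$ in place of $A$.
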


\begin{proof}
As $A$-module, $(M,d_M)$ decomposes as a c.b.l.f. direct sum of shifted indecomposables $P_i$. The generators of $M$ are contained in $\be + C_M$ for some $\be \in \bZ^{n}$ and cone $C_M = \{0 = \bc_0 \prec \bc_1 \prec \cdots \}$ compatible with $\prec$. 
 Moreover, the differential $d_M$, when restricted to $x^\bg P_i[h] \rightarrow x^{\bg'} P_j [h']$, can be non zero only if $h > h'$ and $\bg \succeq \bg'$.  Then we construct $M_{r}$ as the sub-dg-module of $(M,d_M)$ with underlying $A$-module given by the direct sum of $x^\bg P_i [h]$ with $\bg \prec \be + \bc_r$. It yields an exhaustive filtration 
 \[
 0 = M_0 \subset M_1 \subset M_2 \subset \cdots \subset M_r \subset M_{r+1} \subset \cdots \subset M,
 \]
 of $(M,d_M)$ such that $M_{r+1}/M_r$ is, as $A$-module, a finite direct sum of $x^{\be+ \bc_r} P_i[h]$ for different $h \in \bZ$. Since the sum is finite, we can further refine the filtration by filtering on the homological degree.  
\end{proof}

Because any c.b.l.f. cell module admits as underlying graded module a projective c.b.l.f. generated $A$-module, we also obtain the following result from the proof of \cref{prop:cblfprojarecell}.

\begin{prop}\label{prop:orderedfiltP}
Let $M$ be a c.b.l.f. cell module, and $C_M = \{0 = \bc_0 \prec \bc_1 \prec \bc_2 \prec \cdots\}$ be a $\bZ^n$-bounding cone of $M$, and let $\be$ be the minimal $\bZ^n$-degree of $M$. There exists an exhaustive filtration
\[
0 =  F_0 \subset F_1 \subset  F_2 \subset \cdots \subset  F_r \subset  F_{r+1} \subset \cdots \subset M,
\]
such that $ F_{r+1}/  F_{r}$ is isomorphic (as $A$-module) to a finite direct sum of indecomposable relatively projective $\{P_i'\}$ with $\deg_x(P_i') = \bc_r + \be$. This filtration is unique in $(A,d_A)\amod$ whenever $C_M$ is fixed.
\end{prop}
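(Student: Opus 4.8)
The statement to prove is Proposition~\ref{prop:orderedfiltP}: a c.b.l.f.\ cell module $M$ admits an exhaustive filtration $0 = F_0 \subset F_1 \subset \cdots \subset M$ with $F_{r+1}/F_r$ isomorphic as $A$-module to a finite direct sum of indecomposable relatively projective $P_i'$ with $\deg_x(P_i') = \bc_r + \be$, and this filtration is unique in $(A,d)\amod$ once the bounding cone $C_M$ is fixed. The plan is to \emph{reuse the construction from the proof of \cref{prop:cblfprojarecell}}, which is essentially what the remark preceding the statement tells us to do.

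\textbf{Step 1: existence.} By \cref{def:cblcompact}, $M$ satisfies property (P) and is c.b.l.f.\ generated as graded $A$-module, hence (being also a property (P) module, so $A$-projective) its underlying graded $A$-module decomposes as a c.b.l.f.\ direct sum of shifted indecomposables $x^\bg P_i[h]$. Fix the bounding cone $C_M = \{0 = \bc_0 \prec \bc_1 \prec \cdots\}$ and minimal degree $\be$, so that every summand $x^\bg P_i[h]$ appearing has $\bg \in \be + C_M$, and (since $P_i$ is itself positive for the $\bZ^n$-grading) one may arrange $\bg = \be + \bc_r$ for the appropriate $r$ reading off the minimal $\bZ^n$-degree. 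Define $F_r$ to be the $A$-submodule of $M$ spanned by those summands $x^\bg P_i[h]$ with $\bg \prec \be + \bc_r$. The key point is that $F_r$ is actually a \emph{sub-dg-module}: as noted in the proof of \cref{prop:cblfprojarecell}, the differential $d_M$ restricted to a component $x^\bg P_i[h] \to x^{\bg'} P_j[h']$ can be nonzero only if $\bg \succeq \bg'$ (and $h > h'$), using $A_0^h = 0$ for $h > 0$ and positivity of $A$ for the $\bZ^n$-grading (or, under Remark~\ref{rem:exttononpositivecblfdgalg}, positivity of each $\tilde P_j$); hence $d_M$ cannot lower $\deg_x$ out of the prescribed range and preserves each $F_r$. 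Exhaustiveness ($\bigcup_r F_r = M$) and $F_0 = 0$ are immediate from the cone being compatible with $\prec$: for any homogeneous element of $M$ there are only finitely many $\bc \prec$ its degree in $C_M$. Finally $F_{r+1}/F_r$ is, as $A$-module, exactly the direct sum of the summands $x^{\be+\bc_r}P_i[h]$, which is the required finite direct sum of indecomposable relatively projectives with $\deg_x = \bc_r + \be$; finiteness follows from c.b.l.f.\ generation (each $\bZ^n$-degree carries only finitely many generators).

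\textbf{Step 2: uniqueness in $(A,d)\amod$ given $C_M$.} Suppose $0 = F_0 \subset F_1 \subset \cdots$ is another such filtration. I would argue by induction on $r$ that $F_r$ equals the submodule constructed above. The inductive step rests on the observation that $F_r/F_{r-1}$ is a finite direct sum of $x^{\be + \bc_{r-1}} P_i[h]$'s, so its generators sit in $\bZ^n$-degree $\be + \bc_{r-1}$; combined with the degree estimate on $d_M$ above, this forces any element of $F_r$ to be supported (in the fixed $A$-module decomposition of $M$) in degrees $\preceq \be + \bc_{r-1}$ within $C_M$, i.e.\ $F_r$ must contain all of the canonical $F_{r}$ and, by counting graded dimensions degree-by-degree (which are finite, by c.b.l.f.\ dimensionality — use that a c.b.l.f.\ generated projective over a c.b.l.f.\ dimensional algebra is c.b.l.f.\ dimensional), cannot be larger. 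Here the fact that $C_M$ is fixed is essential: different cones would reorder the $\bc_r$, giving genuinely different (but still valid) filtrations.

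\textbf{Main obstacle.} The delicate point is making precise that the $A$-module decomposition of $M$ into shifted $P_i$'s, while not canonical, has a \emph{canonical $\bZ^n$-degreewise filtration}: one must check that "span of summands with $\deg_x \prec \be + \bc_r$" is independent of the chosen decomposition. This is where the triangularity of $d_M$ (Remark~\ref{rem:exttononpositivecblfdgalg}: $\Hom(P'_{i'},P_i) = 0$ when $\deg_x(P'_{i'}) \prec \deg_x(P_i)$, and $d_M$ raises homological degree so strictly decreases it as a chain map, forcing it to strictly increase... — more precisely, $d_M$ maps into strictly lower homological degree, hence into summands of $\deg_x \succeq$) guarantees that $F_r = \{m \in M : \text{the image of } m \text{ in each } \deg_x \succeq \be+\bc_r \text{ graded piece of } M \text{ lies in } d_M(\text{lower}) \}$ — but the cleanest formulation is simply: $F_r$ is the largest $A$-submodule of $M$ all of whose generators have $\deg_x \prec \be + \bc_r$, which is manifestly decomposition-independent, and one checks it is a dg-submodule and that the construction above realizes it. I would state this characterization as the heart of the uniqueness argument and then invoke graded-dimension counting to finish, referring back to the proof of \cref{prop:cblfprojarecell} for the computational details rather than repeating them.
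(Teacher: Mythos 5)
Your proof follows the paper's intended route: the statement is introduced in the paper as a consequence of the proof of \cref{prop:cblfprojarecell}, and your Step~1 reproduces that construction correctly (decompose the underlying $A$-module as a c.b.l.f.\ direct sum of shifted $P_i$'s and filter by $\bZ^n$-degree of the summands, using the triangularity of $d_M$ to see each term is a sub-dg-module). A minor simplification: the triangularity --- $d_M$ nonzero on $x^\bg P_i[h]\to x^{\bg'}P_j[h']$ only if $\bg\succeq\bg'$ and $h>h'$ --- already follows from positivity of $A$ in both gradings; item~(4) of \cref{def:cblfposdgalg} is not needed for this particular point.

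In Step~2 the characterization you end with --- $F_r$ is the $A$-submodule generated by $\bigoplus_{\bg\prec\be+\bc_r}M_\bg$, manifestly independent of the chosen decomposition --- is exactly the right idea, but the surrounding logic is inverted. From ``$F'_r$ is generated in $\bZ^n$-degrees $\preceq\be+\bc_{r-1}$, hence, by positivity of the $P_i$, supported only on summands with $\bg\prec\be+\bc_r$'' you deduce $F'_r\subseteq F_r$, not that $F'_r$ contains the canonical $F_r$; the subsequent graded-dimension count is therefore trying to re-establish an inclusion you already have. What is missing is the reverse inclusion $F_r\subseteq F'_r$, and it comes from exhaustiveness rather than a dimension count: if $m\in M$ is homogeneous of $\bZ^n$-degree $\bg\prec\be+\bc_r$ and $m\in F'_s\setminus F'_{s-1}$ with $s>r$, then $m$ has non-zero image in $F'_s/F'_{s-1}$, which is concentrated in $\bZ^n$-degrees $\succeq\be+\bc_{s-1}\succeq\be+\bc_r$, contradicting $\bg\prec\be+\bc_r$. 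Hence $\bigoplus_{\bg\prec\be+\bc_r}M_\bg\subseteq F'_r$, so $F_r\subseteq F'_r$. With this correction your argument is complete; note that the paper itself does not supply a proof of the uniqueness claim.
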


\begin{exe}
In order to make things clearer, we will try to get an insight of the typical look of a c.b.l.f. cell module $M$ when there is only one extra grading (thus $n =1$), and only one indecomposable relatively projective module $P_0$ of degree $0$. We suppose that $P_0$ appears with minimal $x$-degree $g \in \bZ^n$ and homological degree $h \in \bZ$ in the filtration of $M$. Write $P := x^g P_0[h]$. Then we can visualize $M$ as:
\[
\begin{tikzcd}
\vdots  & \vdots  & \vdots  &\reflectbox{$\ddots$} \\
\oplus_{\alpha_{2,0}} P[2] \ar{d} &\oplus_{\alpha_{2,1}}x P[2]  \ar{d}  \ar{ld} & \oplus_{\alpha_{2,2}}x^{2} P[2]  \ar{d}  \ar{ld}  \ar{lld} & \dots \\
\oplus_{\alpha_{1,0}} P[1] \ar{d} &\oplus_{\alpha_{1,1}}x P[1]  \ar{d}  \ar{ld} & \oplus_{\alpha_{1,2}}x^{2} P[1]  \ar{d}  \ar{ld}  \ar{lld} & \dots \\
\oplus_{\alpha_{0,0}} P &\oplus_{\alpha_{0,1}}x P & \oplus_{\alpha_{0,2}}x^{2} P & \dots
\end{tikzcd}
\]
with each $\alpha_{i,j} \in \bN$. For $j$ fixed, we have $\alpha_{k,j} = 0$ for $k \gg 0$. Note that for $i$  fixed, then we can have $\alpha_{i,j} > 0$ for all $j \geq n_0$. A composition of arrows in the diagram represents where the differential can be non zero.
Then, the ordered filtration of \cref{prop:orderedfiltP} takes bigger and bigger vertical slices of this diagram, starting from the left. Since the differential only goes to bottom/left, it is easy to see from here that these vertical slices are sub-dg modules. Moreover, because of  $\alpha_{k,j} = 0$ for $k \gg 0$, the quotient of a slice by another one is a finite direct sum of shifted copies of~$P$ (as graded module).
\end{exe}

The following lemma is inspired by~\cite[\href{https://stacks.math.columbia.edu/tag/09KN}{Lemma 09KN}]{stacks-project}.

\begin{lem}\label{lem:cblfcofcover}
Let $(M,d_M)$ be a c.b.l.f. dimensional $(A,d_A)$-module with bounding cone $C_M$ and minimal $\bZ^n$-degree $\be$, and let $C_A$ be a bounding cone of $(A,d_A)$. There exists a c.b.l.f. cell module $(P,d_P)$ with bounding cone $C_A+C_M$  such that:
\begin{enumerate}
\item there is an epimorphism $\pi : (P,d_P) \rightarrow (M,d_M)$ in $(A,d_A)\amod$;
\item $\ker(d_P) \rightarrow \ker(d_M)$ is surjective;
\item $\deg_{\bx}(\ker (\pi)) \succ \deg_{\bx}((M,d_M))$ or $\deg_{h}(\ker (\pi)) > \deg_{h}((M,d_M))$;
\item $\dim (\ker(\pi)_\be^h) \leq \sum_{(a+b=h)} \dim (M_\be^a) \dim (A_0^b)$.
\end{enumerate}
\end{lem}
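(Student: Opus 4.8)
The plan is to take for $(P,d_P)$ the \emph{induced module} $A\otimes_{A_0}M$, where $A_0:=A_0^0$. Since $d_A$ vanishes on $A_0$, the differential $d_M$ is $A_0$-linear, so $(M,d_M)$ is a complex of (left) $A_0$-modules and $A\otimes_{A_0}M$ becomes an $(A,d)$-module with differential $d_P(a\otimes m):=d_A(a)\otimes m+(-1)^{\deg a}\,a\otimes d_M(m)$; this is well defined precisely because $d_A(A_0)=0$. The multiplication map $\pi\colon A\otimes_{A_0}M\to M$, $a\otimes m\mapsto am$, is a morphism of dg-modules (this identity is exactly the Leibniz rule for the $A$-action on $M$), and it is surjective since $1\otimes m\mapsto m$; this gives $(1)$.

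Next I would check that $(P,d_P)$ is a c.b.l.f. cell module. Forgetting differentials, $A_0$ is a semisimple algebra concentrated in bidegree $(0,0)$ (using $A_0^h=0$ for $h\neq 0$), so the bigraded $A_0$-module $M$ is a direct sum of shifts of the simple $A_0$-modules $A_0e_i$, with cone bounded and homologically bounded below multiplicities since $M$ is c.b.l.f. dimensional. Hence $A\otimes_{A_0}M\cong\bigoplus_{i,\bg,h}(x^{\bg}P_i[h])^{\oplus\mu_{i,\bg,h}}$ as a bigraded $A$-module, which is projective and c.b.l.f. generated; by \cref{prop:cblfprojarecell} it is a c.b.l.f. cell module, and adding the bounding cones shows that $C_A+C_M$ is a bounding cone for $P$ (hence for $\ker\pi\subseteq P$).

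For $(2)$, if $m\in\ker d_M$ then $d_P(1\otimes m)=1\otimes d_M(m)=0$ because $d_A(1)=0$, and $\pi(1\otimes m)=m$; thus $\ker(d_P)\to\ker(d_M)$ is onto. For $(3)$ and $(4)$, I would use the $A_0$-bimodule splitting $A=A_0\oplus A_{\succ 0}$ (with $A_{\succ 0}$ a two-sided ideal), which identifies $\ker\pi$ with $A_{\succ 0}\otimes_{A_0}M$ as a bigraded vector space. Every nonzero homogeneous element of $A_{\succ 0}$ has $\bZ^n$-degree $\succ 0$, so every homogeneous component of $\ker\pi$ has $\bZ^n$-degree strictly larger than $\deg_{\bx}(M)=\be$; this is $(3)$, and in particular $(\ker\pi)_{\be}^{h}=0$, so $(4)$ holds trivially (its right-hand side being $\dim(M_{\be}^{h})\dim(A_0)$ since $A_0=A_0^0$). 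In the generality of \cref{rem:exttononpositivecblfdgalg}, where $A$ need not be positive for the $\bZ^n$-grading, the same construction is used and these degree counts are replaced by the $\Hom$-vanishing recorded there, which again forces $\ker\pi$ to start in $\bZ^n$-degree (or in homological degree) strictly above that of $M$.

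The only place where real care is needed, and hence the main obstacle, is the bookkeeping of the second and third paragraphs: one must track the $\bZ^n$-grading, the homological grading, and the cone and boundedness conditions \emph{simultaneously}, so that \cref{prop:cblfprojarecell} genuinely applies and so that the estimates $(3)$--$(4)$ come out. The dg-theoretic content --- that $\pi$ is a morphism of dg-modules which is surjective on cycles --- is formal once one observes that $d_M$ is $A_0$-linear.
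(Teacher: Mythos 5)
Your proof is correct and takes a genuinely different route from the paper's. Where the paper picks a $\Bbbk$-basis $\{m_{\bg,h}^j\}$ of $M$ adapted to the primitive idempotents, forms a direct sum of the $P_{k_j}$ over these basis elements, prunes the index set $J_{\be,t}$ at the minimal bidegree to kill the kernel there, and then \emph{additionally} adjoins relatively projective summands on cycle generators to force condition~(2), you instead take the induced module $P = A\otimes_{A_0}M$ with the tensor-product differential. This packages the whole construction: because $d_A$ vanishes on $A_0$, the section $m\mapsto 1\otimes m$ of $\pi$ commutes with the differentials, so surjectivity on cycles is automatic and no extra summands are needed; and the splitting $A=A_0\oplus A_{\succ 0}$ of right $A_0$-modules identifies $\ker\pi$ with $A_{\succ 0}\otimes_{A_0}M$ as bigraded spaces, giving $(\ker\pi)_\be=0$ and hence (3) and (4) outright — in fact a slightly stronger statement than the paper proves, since the paper's $\ker\pi$ may be nonzero in $\bZ^n$-degree $\be$ in homological degrees above $t$. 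The two constructions produce different modules $P$ (yours is generally smaller, being the minimal induction from $A_0$ rather than a free cover on a vector-space basis), but both verify the lemma; the rest of the verification (projectivity, c.b.l.f.\ generation, the bounding cone $C_A+C_M$, and \cref{prop:cblfprojarecell}) is essentially the same in both. What your approach buys is basis-independence and the elimination of two ad hoc steps in the paper's proof; what you pay is nothing substantive. The one hand-wavy point is the final sentence about \cref{rem:exttononpositivecblfdgalg}: dropping positivity of $A$ in the $\bZ^n$-grading breaks the decomposition $A=A_0\oplus A_{\succ 0}$ on which your degree estimate rests, and the $\Hom$-vanishing noted there does not by itself recover it; but the paper's own proof of the lemma does not address that generalization either, so this is a side remark rather than a gap in the proof of the stated lemma.
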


\begin{proof}
Take $t \in \bZ$ 
such that $M = \bigoplus_{\{(\bg, h) | \bg \succeq \be \lor h \geq t\}} M_\bg^h$ and $M_\be^t \neq 0$. We also have $\ker(d_M) = \bigoplus_{\{(\bg, h) | \bg \succeq \be \lor h \geq t\}} \ker(d_M)_\bg^h$.

For each $(\bg, h)$ we can choose a $\Bbbk$-basis $\{m_{\bg,h}^j\}_{j \in J_{\bg,h}}$ of $M_{\bg}^h$ such that there is $k_j \in I$ such that $e_{k_j}  m_{\bg,h}^j = m_{\bg,h}^j$. This is possible since given any basis $\{m_j\}$ of $M_{\bg,h}$ then $\brak{e_k m_j}_{k \in I, j \in J_{\bg,h}}$ is a generating family of $M_{\bg,h}$. Note that $e_i m_{\bg,h}^j = 0$ whenever $i \neq k_j$. 
We choose a similar basis $\{x_{\bg,h}^j\}_{j \in K_\bg}$ for $\ker(d_M)_{\bg,h}$.

Moreover, the kernel of the surjection $A_0 e_{k_j} \twoheadrightarrow A_0 e_{k_j} m_{\be,t}^j \subset M_{\be,t} m_{\be,t}^j $ must be zero by semi-simplicity of $A_0$. Therefore, the surjection is an isomorphism. Then, since $M_\be^t$ is finitely generated, it is semi-simple over $A_0$. Thus, 
 we can extract a subset $J'_{\be,t} \subset J_{\be,t}$ such that $\bigoplus_{j \in J'_{\be,t}} A_0 e_{k_j} m_{\be,t}^j \cong M_{\be,t}$. 
We replace $J_{\be,t}$ by $J'_{\be,t}$ for the remaining of the proof.  This will ensure that condition (3) is respected.

We construct
\[
P' := \bigoplus\limits_{\substack{(h,\bg) \in \bZ \times \bZ^n \\ j \in J_{\bg,h}}} P_{k_j} \cdot m_{\bg,h}^j,
\]
where $P_{k_j} \cdot m_{\bg,h}^j$ is the relatively projective module given by $P_{k_j}$ shifted in $x$-degree so that $\deg_x(P_{k_j} \cdot m_{\bg,h}^j) = \deg_x(m_{\bg,h}^j)$.
We set 
\[
d_{P'}(a \cdot m_{\bg,h}^j) := d_A(a) \cdot  m_{\bg,h}^j + (-1)^{\deg_h(a)} a \cdot d_M(m_{\bg,h}^j),
\]
for all $a \in P_{k_j}$. Then, since $M$ is c.b.l.f. dimensional,  $(P', d_{P'})$ is a c.b.l.f. cell module by \cref{prop:cblfprojarecell}.
The obvious surjection $P' \twoheadrightarrow M$ makes $(P', d_{P'})$ respect point (1). Point (4) is clearly respected by construction, and point (3) comes from the fact the surjection restricts to an isomorphism $(P')_\be^t \cong M_\be^t$. 
In order to also respect point (2) we add
\[
P := P' \oplus \bigoplus_{\bg \succ \be \lor h > t} P_{k_j} \cdot x_{\bg,h}^j,
\]
with $d_P$ induced by $d_{P'}$ and $d_P(a \cdot x_{\bg,h}^j) = d_A(a) \cdot x_{\bg,h}^j$. Then, $(P,d_P)$ respects all the requirements, concluding the proof.
\end{proof}

We will also need to following result:

\begin{prop}
 \cite[\href{https://stacks.math.columbia.edu/tag/09IZ}{Lemma 09IZ}]{stacks-project}
\label{prop:exactseqdg}
Let $(M,d_M)$ be a $(\bZ,0)$-module. Let
\[
\cdots \xrightarrow{f_3} (A_2, d_2) \xrightarrow{f_2} (A_1, d_1) \xrightarrow{f_1} (A_0, d_0) \rightarrow (M, d_M) \rightarrow 0,
\]
be an exact sequence, such that  
\[
\cdots \rightarrow \ker(d_2) \rightarrow \ker(d_1) \rightarrow \ker(d_0) \rightarrow \ker(d_M) \rightarrow 0,
\]
is exact as well. Then define the $(\bZ,0)$-module $(T,d_T)$ by $T^h := \bigoplus_{(a+b=h)} A_a^b$ and $d_T(x) := f_a(x) + (-1)^a d_a(x)$ for $x \in A_a^b$ and $f_0 = 0$. There is a surjective quasi-isomorphism
\[
(T, d_T) \xrightarrow{\simeq} (M,d_M),
\]
induced by $(A_0, d_0) \twoheadrightarrow (M,d_m)$.
\end{prop}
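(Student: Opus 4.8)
The plan is to prove that the canonical map $\epsilon_T\colon (T,d_T)\to (M,d_M)$ --- projection onto the $a=0$ summand $A_0^\bullet\subseteq T^\bullet$ followed by the augmentation $\epsilon\colon A_0\twoheadrightarrow M$ --- is a surjective quasi-isomorphism. It is surjective because $\epsilon$ is, and it is a chain map because $\epsilon f_1=0$ (the given sequence is a complex at $A_0$) and $\epsilon$ commutes with differentials. So the real content is that $\epsilon_T$ is a quasi-isomorphism, and by the long exact sequence of $0\to K\to T\to M\to 0$ this is equivalent to acyclicity of the kernel complex $K:=\ker\epsilon_T$. Unwinding the definitions, $K$ is exactly the direct-sum total complex of the bicomplex $\cdots\to A_2\to A_1\to\ker\epsilon\to 0$, with columns indexed by $a\ge 0$ and $\ker\epsilon$ regarded as a subcomplex of $A_0$ (it is one, since $\epsilon$ is a chain map, and $f_1$ lands in $\ker\epsilon=\Image f_1$). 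Exactness of $\cdots\to A_1\to A_0\to M\to 0$ in each internal degree makes each row $\cdots\to A_1^b\to(\ker\epsilon)^b\to 0$ exact, and exactness of $\cdots\to\ker d_1\to\ker d_0\to\ker d_M\to 0$ makes each row of internal cocycles $\cdots\to(\ker d_1)^b\to(\ker d_0\cap\ker\epsilon)^b\to 0$ exact.

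The statement thus reduces to the homological lemma: if $C_\bullet=(\cdots\xrightarrow{g_2}C_1\xrightarrow{g_1}C_0\to 0)$ is a complex of complexes of $\Bbbk$-vector spaces whose internal-degree rows are exact and whose rows of internal cocycles are exact, then its direct-sum total complex is acyclic. For this I would first propagate exactness: the short exact sequences of row-complexes $0\to\ker\delta\to C\xrightarrow{\delta}\Image\delta\to 0$ and $0\to\Image\delta\to\ker\delta\to H(C)\to 0$ (read in the $a$-direction, in each internal degree), combined with the two hypotheses and the associated long exact sequences, show that the rows of coboundaries $\Image\delta$, and then the rows of internal homology $H^b(C_\bullet)$, are also exact. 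Then I would prove acyclicity directly, by induction on the largest index $N$ with $\xi_N\ne 0$ in a cycle $\xi=(\xi_0,\dots,\xi_N,0,\dots)$, $D\xi=0$: for $N=0$ the equation $D\xi=0$ gives $\xi_0\in\ker\delta_0$, and surjectivity of $\ker\delta_1\to\ker\delta_0$ lets us write $\xi=D\sigma$; for $N\ge 1$ the top equation forces $\xi_N\in\ker\delta_N$ and the next equation shows $g_N(\xi_N)$ is a $\delta_{N-1}$-coboundary, so $[\xi_N]\in\ker\!\bigl(H(C_N)\to H(C_{N-1})\bigr)=\Image\!\bigl(H(C_{N+1})\to H(C_N)\bigr)$ by exactness of the homology row. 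Lifting to a $\delta_{N+1}$-cocycle and subtracting an explicit total boundary $D\sigma$ with $\sigma$ supported in columns $N$ and $N+1$ yields a cycle whose largest nonzero index is $<N$, and the induction closes.

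The routine parts are checking that $\epsilon_T$ is a chain map, identifying $K$ with the displayed totalization, and the sign bookkeeping in building $\sigma$. The step that needs care is the propagation of exactness --- passing from ``rows and cocycle-rows are exact'' to ``homology-rows are exact'' --- together with arranging the downward induction so that the two hypotheses enter in the right places: the cocycle-row exactness supplies the base case, while its consequence, exactness of the homology rows, drives the inductive step. Working over a field is convenient but not essential, since the argument never uses a splitting; this is the elementary form of the acyclic-assembly (hyperhomology) computation and follows \cite[Lemma 09IZ]{stacks-project}.
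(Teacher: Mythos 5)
The paper does not prove this proposition: it cites it directly from the Stacks project (tag 09IZ) with no argument reproduced, so there is no in-paper proof to compare against. Your proof, however, is correct. The chain of reductions is right: $\epsilon_T$ is a surjective chain map because the resolution is a complex at $A_0$ and $\epsilon$ commutes with the differentials; quasi-isomorphism is then equivalent to acyclicity of $K:=\ker\epsilon_T$, and $K$ is exactly the direct-sum total complex of $\cdots\to A_1\to\ker\epsilon\to 0$, whose rows and cocycle-rows are exact by the two hypotheses (noting $\ker(\ker d_0\to\ker d_M)=\ker d_0\cap\ker\epsilon$). The propagation step is sound: the short exact sequences of row-complexes $0\to Z^b\to C^b\to B^{b-1}\to 0$ and $0\to B^b\to Z^b\to H^b\to 0$, read in the resolution direction, give via their long exact sequences that exactness of rows and cocycle-rows forces exactness of boundary-rows and then of homology-rows. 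The induction on the largest nonzero column index of a cycle is well-founded precisely because you use the direct-sum totalization (so cycles have finite column support), the base case $N=0$ correctly invokes surjectivity of $Z(C_1)\to Z(C_0)$, and in the step the choice $\sigma_{N+1}=z_{N+1}$, $\sigma_N=(-1)^N w_N$ does kill the index-$N$ and index-$(N+1)$ components of $\xi-D\sigma$ while leaving it a cycle. You are also right that no splitting is invoked anywhere, so the argument works over an arbitrary base ring as the $(\bZ,0)$-module phrasing requires.
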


\begin{thm}\label{thm:cblfdimiscell}
Let $(A,d_A)$ be a c.b.l.f. positive dg-algebra. 
Any c.b.l.f. dimensional $(A,d_A)$-module $(M,d_M)$ admits a quasi-isomorphic cover by a c.b.l.f. cell module $(P,d_P)$.
\end{thm}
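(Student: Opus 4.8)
The plan is to iterate \cref{lem:cblfcofcover} to build a resolution of $(M,d_M)$ by c.b.l.f. cell modules, and then assemble these into a single c.b.l.f. cell module via the totalization construction of \cref{prop:exactseqdg}, checking along the way that the cone-bounded, locally finite estimates are preserved.

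First I would apply \cref{lem:cblfcofcover} to $(M,d_M)$ to get a c.b.l.f. cell module $(P_0, d_{P_0})$ with an epimorphism $\pi_0 : (P_0,d_{P_0}) \twoheadrightarrow (M,d_M)$ such that $\ker(d_{P_0}) \twoheadrightarrow \ker(d_M)$ is surjective, and with $K_1 := \ker(\pi_0)$ satisfying $\deg_{\bx}(K_1) \succ \deg_{\bx}(M)$ or $\deg_h(K_1) > \deg_h(M)$, together with the dimension bound $\dim((K_1)_\be^h) \leq \sum_{a+b=h} \dim(M_\be^a)\dim(A_0^b)$. Since $(A,d)$ is c.b.l.f. dimensional and $(M,d_M)$ is c.b.l.f. dimensional, this bound shows $K_1$ is again c.b.l.f. dimensional (its graded dimension is bounded by the product $\gdim(M)\gdim(A)$, which lies in $\bZ_{C_M + C_A}\llbracket x_1,\dots,x_n\rrbracket$). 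Note also that $K_1 = \ker(\pi_0)$ is a sub-dg-module of $(P_0,d_{P_0})$, hence the short exact sequence $0 \to K_1 \to P_0 \to M \to 0$ is a sequence of dg-modules, and because $\ker(d_{P_0}) \to \ker(d_M)$ is surjective the snake lemma gives that $\ker(d_{K_1}) \hookrightarrow \ker(d_{P_0}) \twoheadrightarrow \ker(d_M)$ is short exact as well, so in particular $\cdots \to \ker(d_{P_0}) \to \ker(d_M) \to 0$ remains exact. Then I would repeat: apply \cref{lem:cblfcofcover} to $(K_1,d_{K_1})$ to obtain a c.b.l.f. cell module $(P_1, d_{P_1})$ with $\pi_1 : P_1 \twoheadrightarrow K_1$, set $K_2 := \ker(\pi_1)$, and so on. This produces an exact sequence of $(A,d)$-modules
\[
\cdots \xrightarrow{f_3} (P_2,d_{P_2}) \xrightarrow{f_2} (P_1,d_{P_1}) \xrightarrow{f_1} (P_0,d_{P_0}) \xrightarrow{\pi_0} (M,d_M) \to 0,
\]
where $f_{r+1}$ is the composite $P_{r+1} \twoheadrightarrow K_{r+1} \hookrightarrow P_r$, and by the surjectivity-on-kernels clause (2.) of \cref{lem:cblfcofcover} the induced sequence $\cdots \to \ker(d_{P_1}) \to \ker(d_{P_0}) \to \ker(d_M) \to 0$ is also exact. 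The degree-shift clause (3.) guarantees that $\deg_{\bx}(K_r)$ or $\deg_h(K_r)$ strictly increases with $r$, so each summand $x^\bg P_i[h]$ appears in only finitely many of the $P_r$ in any fixed $\bZ^n$-degree and below any fixed homological degree. Forgetting the differentials, this would let me form $(T, d_T)$ with $T^h := \bigoplus_{a+b = h} P_a^b$ and $d_T(x) := f_a(x) + (-1)^a d_{P_a}(x)$ as in \cref{prop:exactseqdg}, applied degree-by-degree in the $\bZ^n$-grading (each homogeneous piece being a genuine $(\bZ,0)$-module situation).

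The second main step is to verify that $(T,d_T)$ is itself a c.b.l.f. cell module and that the map $(T,d_T) \to (M,d_M)$ is a quasi-isomorphism. The quasi-isomorphism is exactly the conclusion of \cref{prop:exactseqdg}, applied to each $\bZ^n$-homogeneous strand; surjectivity is likewise from that proposition. For the cell structure: as graded $A$-module, $T$ is the direct sum of all the $P_r$ with a homological shift, so it is a direct sum of shifted indecomposable relatively projectives; the key point is that this direct sum is cone bounded and locally finite. Cone-boundedness follows because every $K_r$ has bounding cone $C_M + C_A$ (the bound propagates: $K_{r+1}$ has bounding cone contained in $C_M + C_A + C_A = C_M + C_A$ since $C_A + C_A \subseteq C_A$ as $C_A$ is a cone), and local finiteness follows from the strict increase of $\deg_{\bx}$ or $\deg_h$ of the $K_r$ in clause (3.): fixing $(\bg,h) \in \bZ^n\times\bZ$, only finitely many $K_r$ can have a generator of $x$-degree $\preceq \bg$ and homological degree $\leq h$, hence only finitely many $P_r$ contribute a summand $x^{\bg'}P_i[h']$ with $\bg' \preceq \bg$, $h' \leq h$. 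Once $T$ is seen to be c.b.l.f. generated (and bounded below homologically, which is clear since all the $P_r$ are), property (P) for $T$ follows from \cref{prop:cblfprojarecell}: $T$ is projective and c.b.l.f. generated as $A$-module, hence a c.b.l.f. cell module.

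The main obstacle I anticipate is the bookkeeping that shows the totalized module $(T,d_T)$ is genuinely c.b.l.f. generated — i.e. that in each $\bZ^n$-degree and below each homological degree only finitely many generators appear. This requires combining clause (3.) (the strict degree shift, which makes the "columns" of the resolution march off to $+\infty$ in the chosen order $\prec$ or in homological degree) with clause (4.) and the c.b.l.f. dimensionality of $A$ (which controls the size of each $K_r$ and keeps the bounding cone from growing). A subtlety is that the order $\prec$ need not be a well-order on $\bZ^n$, so "strictly increasing" must be used together with cone-boundedness (a cone compatible with $\prec$ contains only finitely many elements below any given one) to conclude finiteness; I would state this carefully. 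The rest — the quasi-isomorphism, property (P), boundedness below — is a direct appeal to \cref{prop:exactseqdg} and \cref{prop:cblfprojarecell} applied strand-by-strand in the extra $\bZ^n$-grading.
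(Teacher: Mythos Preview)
Your proposal is correct and follows essentially the same route as the paper: iterate \cref{lem:cblfcofcover} to build a resolution, use clause~(2.) for exactness on kernels, totalize via \cref{prop:exactseqdg}, and invoke \cref{prop:cblfprojarecell} for the cell structure, with clauses~(3.)--(4.) (together with $A_0^h = 0$ for $h>0$, which you bundle into the c.b.l.f.\ positivity of $A$) controlling the c.b.l.f.\ generated property of the totalization. The paper's write-up is terser on the finiteness bookkeeping you flag as the main obstacle, but the argument is the same.
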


\begin{proof}
The proof is similar to the one of~\cite[\href{https://stacks.math.columbia.edu/tag/09KP}{Lemma 09KP}]{stacks-project}.

We set $(M_0, d_{M_0}) := (M,d_M)$. Then we construct inductively for each $i > 0$ short exact sequences
\[
0 \rightarrow (M_i, d_{M_i}) \rightarrow( P_i, d_{P_i}) \xrightarrow{\pi_i} (M_{i-1}, d_{M_{i-1}}) \rightarrow 0,
\]
with $(M_i,d_{M_i}) :=  \ker(\pi_i)$, using \cref{lem:cblfcofcover}. Thus, we get a resolution
\begin{equation}\label{eq:cblfresolution}
\begin{tikzcd}[column sep = 1ex]
\dots  \ar[swap,twoheadrightarrow]{dr}{\pi_3}  \ar{rr}{f_3} && P_2  \ar{rr}{f_2} \ar[swap,twoheadrightarrow]{dr}{\pi_2} && P_1  \ar{rr}{f_1}  \ar[swap,twoheadrightarrow]{dr}{\pi_1}  && P_0  \ar[twoheadrightarrow]{rrrr}{\pi_0} &&&& M  \ar{rrr} &&& 0. \\
&\ker(\pi_2)  \ar[hookrightarrow]{ur}&&\ker(\pi_1) \ar[hookrightarrow]{ur}&&\ker(\pi_0)  \ar[hookrightarrow]{ur}&&
\end{tikzcd}
\end{equation}
Then we put
\[
P := \bigoplus_{i \geq 0} P_i,
\]
with $\bZ^n$-grading induced by $P_i$, and homological grading given by $P^h = \oplus_{(a+b=h)} P_{a}^b$.
It is a c.b.l.f. generated projective $A$ module thanks to points (3) and (4) from \cref{lem:cblfcofcover}, and the fact that $A_0^h = 0$ whenever $h > 0$. 
We equip $P$ with a differential $d_P$ by setting
\[
d_P(x) = f_{a}(x) + (-1)^a d_{P_{a}}(x),
\]
for each $x \in P^b_{a}$. 
Therefore, because of \cref{prop:cblfprojarecell}, $(P,d_P)$ is a c.b.l.f. cell module, coming with an epimorphism $\bar \pi : (P,d_P) \rightarrow (M,d_M)$ induced by $(M_0, d_{M_0}) \twoheadrightarrow (M,d_M)$. 

Moreover, point (2) of \cref{lem:cblfcofcover} tells us that $\pi_{i+1}(\ker d_{P_{i+1}}) =  \ker(d_{P_i}) \cap \ker (\pi_i)$. Thus, we have an exact sequence
\[
\cdots \rightarrow \ker(d_{P_2}) \rightarrow \ker(d_{P_1}) \rightarrow \ker(d_{P_0})  \rightarrow \ker(d_M) \rightarrow 0.
\]
Then, using the exactness of~\eqref{eq:cblfresolution}, we conclude by \cref{prop:exactseqdg} that $\bar \pi$ is a quasi-isomorphism. 
\end{proof}

Let $\cD^{cblc}(A,d_A)$ be the triangulated full subcategory of $\cD(A,d_A)$ given by objects quasi-isomorphic to c.b.l.f. iterated extensions of compact objects in $\cD(A,d_A)$.

\begin{lem}\label{lem:cblfextarecell}
Let $(M,d_M)$ be a c.b.l.f. iterated extension of some c.b.l.f. cell modules $\{(K_1,d_1), \dots, (K_m,d_m)\}$ where $K_i$ appears with degree $\bg$ multiplicity $t_{i,\bg} = \{h_{i,1,\bg, \dots, h_{i,k_{i,\bg},\bg}} \}$. Then $(M,d_M)$ is quasi-isomorphic to a c.b.l.f. cell module $(M',d_{M'})$, having the c.b.l.f. direct sum $\bigoplus_{i,\bg} x^{\bg} \bigoplus_{s=0}^{k_{i,\bg}} K_i[h_{i,j,\bg}]$ as underlying graded $A$-module.
\end{lem}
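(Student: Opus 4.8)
The plan is to build an explicit dg-module $(M',d_{M'})$ by realizing the defining Milnor colimit of \cref{def:cblfitext} inside $(A,d)\amod$, replacing the abstract diagram in $\cD(A,d)$ by an honest increasing union of sub-dg-modules, and then using \cref{prop:cblfprojarecell} to recognize the result as a c.b.l.f. cell module.

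First I would unravel \cref{def:cblfitext}: we are given distinguished triangles $M_r \xrightarrow{f_r} M_{r+1} \to E_r \to M_r[1]$ with $M_0 = 0$, each $E_r$ a direct sum of shifts of the $K_i$, with $\bigoplus_{r} E_r$ a c.b.l.f. direct sum, and $M \cong_\cT \mcolim_r(f_r)$. Rotating each triangle produces a map $\phi_r : E_r[-1] \to M_r$ in $\cD(A,d)$ with $M_{r+1} \cong \cone(\phi_r)$, under which $f_r$ becomes the canonical inclusion $M_r \hookrightarrow \cone(\phi_r)$ and there is a short exact sequence $0 \to M_r \to M_{r+1} \to E_r \to 0$. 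I would then observe that each $E_r$ is a c.b.l.f. cell module: as a graded $A$-module it is a c.b.l.f. direct sum of shifted copies of the indecomposable relatively projectives (since each $K_i$ is, by the description in the proof of \cref{prop:cblfprojarecell}), hence projective and c.b.l.f. generated as graded $A$-module, so \cref{prop:cblfprojarecell} applies. In particular each $E_r[-1]$ satisfies property (P), so it is cofibrant.

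Next I would construct inductively c.b.l.f. cell modules $M'_r$ together with quasi-isomorphisms $\psi_r : M'_r \xrightarrow{\simeq} M_r$ and honest dg-inclusions $g_r : M'_r \hookrightarrow M'_{r+1}$ with cokernel $E_r$, satisfying $\psi_{r+1} \circ g_r = f_r \circ \psi_r$. Start with $M'_0 := 0$. Given $M'_r$ and $\psi_r$, cofibrancy of $E_r[-1]$ gives $\Hom_{\cD}(E_r[-1], M'_r) \cong \Hom_{\cD}(E_r[-1], M_r)$, so the class $[\phi_r]$ lifts to a morphism of dg-modules $\tilde\phi_r : E_r[-1] \to M'_r$ with $\psi_r \circ \tilde\phi_r$ homotopic to $\phi_r$. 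Put $M'_{r+1} := \cone(\tilde\phi_r)$, defined as in \cref{eq:cone}; it fits in a short exact sequence $0 \to M'_r \xrightarrow{g_r} M'_{r+1} \to E_r \to 0$ of dg-modules, and as a graded $A$-module $M'_{r+1} \cong E_r \oplus M'_r$ is projective and c.b.l.f. generated, hence a c.b.l.f. cell module by \cref{prop:cblfprojarecell} again. The pair $(\id_{E_r[-1]}, \psi_r)$ induces $\psi_{r+1} : M'_{r+1} \to M_{r+1}$, which is a quasi-isomorphism by the five lemma on the long exact homology sequences of the two triangles, and a direct check on the cone model shows $\psi_{r+1} \circ g_r = f_r \circ \psi_r$. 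Setting $M' := \colim_r M'_r = \bigcup_r M'_r$, the filtered colimit in $(A,d)\amod$ along the monomorphisms $g_r$, its underlying graded $A$-module is $\bigoplus_{r} E_r$, the desired c.b.l.f. direct sum, so $M'$ is a c.b.l.f. cell module once more by \cref{prop:cblfprojarecell}. The telescope short exact sequence
\[
0 \to \bigoplus_{r} M'_r \xrightarrow{1 - g_\bullet} \bigoplus_{r} M'_r \to M' \to 0
\]
of dg-modules identifies $M'$ with $\mcolim_r(g_r)$ in $\cD(A,d)$; the commuting ladder formed by the $\psi_r$, together with the corresponding telescope triangle for $M$, induces (using the rotation axiom and that coproducts preserve distinguished triangles in $\cT$) a morphism $\psi : M' \to M$, which on homology is $\colim_r H^n(\psi_r)$, hence an isomorphism, so $\psi$ is a quasi-isomorphism and $M \cong M'$.

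The main obstacle is the induction step: turning the abstract map $\phi_r$ in $\cD(A,d)$ and the quasi-isomorphism $\psi_r$ into honest dg-morphisms with \emph{strictly} commuting squares, so that the comparison of $\mcolim_r(g_r)$ with $\mcolim_r(f_r) = M$ is realized by an actual morphism rather than merely a zig-zag of quasi-isomorphisms; this is exactly where cofibrancy of the $E_r[-1]$ and the explicit cone construction \cref{eq:cone} are needed. Everything else (closure of c.b.l.f. cell modules under the operations used, the telescope short exact sequence, and the homology computation for Milnor colimits) is routine given \cref{prop:cblfprojarecell} and the elementary fact that filtered colimits of modules are exact.
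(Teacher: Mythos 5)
Your proposal is correct and follows the same route as the paper's own proof: replace the abstract triangles inductively by honest cones of lifted attaching maps, take the increasing union in $(A,d)\amod$, and invoke \cref{prop:cblfprojarecell}. You are, in fact, more careful than the paper at precisely the points it glosses over — the paper writes ``$M'_{r+1}:=\cone(E_r[-1]\to M'_r)$'' without saying where a dg-level map $E_r[-1]\to M'_r$ comes from, and it asserts the final quasi-isomorphism to $M$ without argument; you justify the first by the cofibrancy of $E_r[-1]$ (each $E_r$ satisfies property (P) by \cref{prop:cblfprojarecell}) and the second by comparing telescope triangles. One minor imprecision: strict commutativity $\psi_{r+1}\circ g_r = f_r\circ\psi_r$ at the dg level is more than you need and may fail if $\psi_r\circ\tilde\phi_r$ is only homotopic to $\phi_r$; commutativity up to homotopy suffices, since it already gives a morphism of the two telescope triangles in $\cD(A,d)$, and then the five lemma for triangulated categories shows the induced $\psi:M'\to M$ is an isomorphism without ever computing the map on homology. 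With that small adjustment the argument is complete.
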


\begin{proof}
Let $\{K_1, \dots, K_m\}$ be a finite set of c.b.l.f. cell modules, and $\bigoplus_{r\in\bN} E_r$ be a c.b.l.f. direct sum of elements in $\{K_1, \dots, K_m\}$. Consider a collection of iterated extensions $\{M_r\}_{r \in \bN}$, $M_0 = 0$, given by distinguished triangles
\[
M_r \xrightarrow{f_r} M_{r+1} \rightarrow E_r \rightarrow M_r[1].
\]
Take $M := \mcolim_{r \geq 0} (f_r)$. We will prove that $M$ is quasi-isomorphic to a c.b.l.f. cell module. 
We can replace inductively each $M_r$ by a quasi-isomorphic $M_{r+1}' := \cone(E_r[-1] \rightarrow M_r')$. The module $M_{r+1}'$ decomposes as
\[
M'_{r+1} \cong
\begin{tikzcd}[row sep=0, column sep = 0]
E_r  \arrow[loop,out=240,in=300,swap,looseness=5]  \ar[bend left=30]{rr} & \oplus & E_{r-1}  \arrow[loop,out=240,in=300,swap,looseness=5]  \ar[bend left=30]{rr}   & \oplus & \llap{$\phantom{E_i}$} \dots  \ar[bend left=30]{rr}   & \oplus & E_{0}, \arrow[loop,out=240,in=300,swap,looseness=5] 
\end{tikzcd}
\]
where the arrows represent the possible action of the differential on $M_{r+1}'$. Thus, we have an inclusion $M_r' \subset M'_{r+1}$ as $(A,d_A)$-modules. Taking the direct limit of this filtration in $(A,d_A)\amod$ yields an $(A,d_A)$-module $(M',d_{M'})$, which decomposes as $M' \cong \bigoplus_{r\in  \bN} E_r$ as graded $A$-module, and which is quasi-isomorphic to $(M,d_M)$.
Then $M'$ decomposes as $A$-module as a c.b.l.f. direct sum of projective c.b.l.f. generated $A$-modules, and thus is projective c.b.l.f. generated itself. By \cref{prop:cblfprojarecell}, $(M',d_{M'})$ is a c.b.l.f. cell module.
\end{proof}

In particular, it means that in our situation there is an equivalence of triangulated categories $\cD^{cblc}(A,d_A)\cong \cD^{cblf}(A,d_A)$.

\begin{thm}\label{thm:triangtopK0genbyPi}
Let $\{P_i\}_{i \in I}$ be a full collection of  distinct, indecomposable relatively projective $(A,d_A)$-modules.
The asymptotic Grothendieck group is a free $\bZ\pp{x_1, \dots, x_\ell}$-module
\begin{align*}
\bKO^\Delta(\cD^{cblf}(A,d_A))  & \cong  \bigoplus_{i \in I} \bZ\pp{x_1, \dots, x_\ell} \cdot [P_i],
\end{align*}
with basis given by $\{[P_i]\}_{i \in I}$. Moreover, $\bKO^\Delta(\cD^{cblf}(A,d_A))$ is also freely generated by the classes of distinct simple modules $\{[S_i]\}_{i \in I}$.
\end{thm}

\begin{proof}
We have by \cref{cor:GORmodlf} that
\[
\bGO(H^0(A,d_A)\modlf) \cong \bZ\pp{x_1, \dots, x_\ell} \cdot [S_i],
\]
and thus
\[
\bKO^\Delta(\cD^{cblc}(A,d_A))  \cong \bZ\pp{x_1, \dots, x_\ell} \cdot [S_i],
\]
by \cref{thm:eqasympKOheart}. 
Moreover, there is clearly a surjection
\[
  \bigoplus_{i \in I} \bZ\pp{x_1, \dots, x_\ell} \cdot [P_i]  \rightarrow \bKO^{\Delta}(\cD^{cblc}(A,d_A)),
\]
since objects in $\cD^{cblc}(A,d_A)$ are given by c.b.l.f. cell modules. Since we have
\[
[P_i] = f(\bx) [S_i] \in \bKO^\Delta(\cD^{cblc}(A,d_A)),
\]
where $f(0) = 1$, we conclude that 
\[
\bKO^\Delta(\cD^{cblc}(A,d_A))  \cong  \bigoplus_{i \in I} \bZ\pp{x_1, \dots, x_\ell} \cdot [P_i].
\]
This ends the proof since $\cD^{cblc}(A,d_A) \cong \cD^{cblf}(A,d_A)$. 
\end{proof}

\subsection{Baric structure on $\cD^{cblc}(A,d_A)$}

Let $\cD^{cblc}(A,d_A)_{\preceq 0} \subset \cD^{cblc}(A,d_A)$ be the full subcategory given by objects quasi-isomorphic to a c.b.l.f. iterated extension of $\{P_i\}_i$ with all $\bZ^n$-degree $\bg$ multiplicities being zero for $\bg \succ 0$. Similarly,  $\cD^{cblc}(A,d_A)_{\succ 0}$ is given by the c.b.l.f. iterared extensions  in $\bZ^n$-degrees bigger than zero.  By \cref{prop:orderedfiltP}, this defines a baric structure on $\cD^{cblc}(A,d_A)$, where the truncation functor $\beta_{\preceq \bg}$ simply truncates the filtration up to $F_{r+1}$ such that $\bc_r + \be = \bg$. Note that we have $\Hom(X,Y) = 0$ whenever $X \in \cD^{cblc}(A,d_A)_{\preceq 0}$ and $Y \in \cD^{cblc}(A,d_A)_{\succ 0}$ since $\Hom(P_i, P'_j) = 0$ whenever $\deg_\bg(P_i) < \deg_{\bg}(P'_j)$. For the remaining of the section, we will write this baric structure as $\beta$. 
It is non-degenerate, locally finite, cone bounded and full. 

\begin{lem}
The baric structure $\beta$ on $\cD^{cblc}(A,d_A)$ is stable.
\end{lem}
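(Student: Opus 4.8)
The plan is to reduce to \cref{prop:stabilitycond}. It has already been recorded that $\beta$ is non-degenerate, locally finite, cone bounded and full, so $\beta$ will be stable as soon as $\cD^{lc}(A,d)_{\succ 0}$ is closed under acceptable Milnor colimits and under acceptable Milnor limits. I would handle the two cases separately; the limit case is formal, while the colimit case needs the cell-module machinery of this section.

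For Milnor limits, first observe the orthogonality description
\[
\cD^{lc}(A,d)_{\succ 0} = \{\, Y \in \cD^{lc}(A,d) \ \mid\ \hom_{\cD(A,d)}(Z,Y) = 0 \ \text{for all}\ Z \in \cD^{lc}(A,d)_{\preceq 0}\,\},
\]
which follows from the $\hom$-vanishing built into the definition of a baric structure, the truncation triangle~\eqref{eq:disttrianglebeta}, and the fact that any object lying in both $\cD^{lc}(A,d)_{\preceq 0}$ and $\cD^{lc}(A,d)_{\succ 0}$ has zero identity morphism and is therefore zero. Now let $X \cong \mlim(\cdots \xrightarrow{f_1} Y_1 \xrightarrow{f_0} Y_0)$ be an acceptable Milnor limit with every $Y_r \in \cD^{lc}(A,d)_{\succ 0}$, and fix $Z \in \cD^{lc}(A,d)_{\preceq 0}$. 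Applying $\hom_{\cD(A,d)}(Z,-)$ to the defining triangle $X \to \prod_r Y_r \xrightarrow{1-f_\bullet} \prod_r Y_r \to X[1]$, and using that $\hom$ sends products in its second argument to products together with $\hom_{\cD(A,d)}(Z[j],Y_r) = 0$ for all $j \in \bZ$ (as $\cD^{lc}(A,d)_{\preceq 0}$ is a triangulated subcategory), both $\hom(Z,\prod_r Y_r)$ and $\hom(Z,\prod_r Y_r[-1])$ vanish, which forces $\hom(Z,X) = 0$ in the long exact sequence. Hence $X \in \cD^{lc}(A,d)_{\succ 0}$.

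For Milnor colimits, let $X \cong \mcolim(Y_0 \xrightarrow{g_0} Y_1 \xrightarrow{g_1} \cdots)$ be an acceptable Milnor colimit with every $Y_r \in \cD^{lc}(A,d)_{\succ 0}$. Since $\cD^{lc}(A,d)_{\succ 0}$ is triangulated, $E_r := \cone(g_r) \in \cD^{lc}(A,d)_{\succ 0}$, and $\bigoplus_{r\geq 0} E_r \in \cD^{lc}(A,d)$ by acceptability. I would then represent $Y_0$ and each $E_r$ by c.b.l.f. dimensional $(A,d)$-modules concentrated in $\bZ^n$-degrees $\succ 0$, choose (via \cref{lem:cblfcofcover}, \cref{thm:cblfdimiscell}) sufficiently economical c.b.l.f. cell covers $Y_0', E_0', E_1', \dots$ of them, whose underlying graded $A$-modules are c.b.l.f. direct sums of shifted copies of the $P_i$ all placed in $\bZ^n$-degrees $\succ 0$ and such that $Y_0' \oplus \bigoplus_{r\geq 0} E_r'$ is again a projective c.b.l.f. generated $A$-module, and then run the construction in the proof of \cref{lem:cblfextarecell}: build $(A,d)$-modules $Y_r'$ quasi-isomorphic to $Y_r$ with $Y_r' \subseteq Y_{r+1}'$ as sub-dg-modules, set $M := \varinjlim_r Y_r'$ in $(A,d)\amod$, and conclude that $M$ is a c.b.l.f. cell module, isomorphic to $Y_0' \oplus \bigoplus_{r\geq 0} E_r'$ as a graded $A$-module and quasi-isomorphic to $X$. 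By \cref{prop:cblfprojarecell} and \cref{prop:orderedfiltP}, $M$ carries an ordered filtration all of whose slices sit in $\bZ^n$-degrees $\succ 0$, whence $X \cong M \in \cD^{lc}(A,d)_{\succ 0}$. Together with the limit case, \cref{prop:stabilitycond} yields that $\beta$ is stable.

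The step I expect to cause trouble is the Milnor colimit case, and within it the uniform control of cone bounds and local finiteness, i.e.\ the claim that $Y_0' \oplus \bigoplus_{r\geq 0} E_r'$ really is a projective c.b.l.f. generated $A$-module. This is exactly where acceptability is indispensable — it guarantees $\bigoplus_r E_r$ has c.b.l.f. dimensional homology — and where one must take the size-controlled covers of \cref{lem:cblfcofcover} (using conditions (3) and (4)) rather than arbitrary resolutions, so that potentially acyclic or oversized cell models do not wreck the c.b.l.f. structure of the total module; making this bookkeeping precise is the bulk of the work.
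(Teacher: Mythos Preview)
Your reduction to \cref{prop:stabilitycond} matches the paper, and your Milnor-limit argument via the right-orthogonal description of $\cD^{lc}(A,d)_{\succ 0}$ is correct.

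For Milnor colimits you are working much harder than necessary, and the difficulty you flag---controlling the individually chosen cell covers $E_r'$ so that $\bigoplus_r E_r'$ remains c.b.l.f.\ generated---is genuine and not easily resolved along the lines you sketch. The paper bypasses it with a one-line degree-support argument: replace each $Y_r$ by a c.b.l.f.\ cell module concentrated in $\bZ^n$-degrees $\succ 0$ (possible by definition of $\cD^{lc}(A,d)_{\succ 0}$ together with \cref{lem:cblfextarecell}) and lift the maps. Then $\coprod_r Y_r$ is a dg-module whose underlying graded space lives entirely in $\bZ^n$-degrees $\succ 0$, and therefore so does the cone $X = \cone(1-g_\bullet)$. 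Since $X \in \cD^{lc}(A,d)$, its c.b.l.f.\ dimensional homology is concentrated in degrees $\succ 0$, and the cell cover furnished by \cref{thm:cblfdimiscell} then automatically sits in $\cD^{lc}(A,d)_{\succ 0}$. No per-term bookkeeping is needed. The same argument handles Milnor limits symmetrically (products of graded modules are computed degree-wise), so the paper treats both cases at once; your orthogonality trick is a pleasant alternative for limits, but it does not dualise to colimits, which is presumably why you reached for the heavier construction on that side.
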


\begin{proof}
Consider a Milnor colimit $X := \mcolim\bigl(X_1 \rightarrow X_2 \rightarrow \cdots)$ with all elements $X_r \in \cD^{cblc}(A,d_A)_{\succ 0}$. We can assume each $X_r$ is a c.b.l.f. cell module. Thus, $\coprod_r X_r$ is isomorphic to a dg-module concentrated in degrees $\succ \bg$. Then so is $X := \cone(\coprod_r X_r \rightarrow \coprod X_r)$, and $X \in  \cD^{cblc}(A,d_A)_{\succ 0}$. It is similar in the case of a Milnor limit. In particular, $\cD^{cblc}(A,d_A)_{\succ 0}$  is stable under acceptable Milnor colimits and acceptable Milnor limits. We conclude by applying \cref{prop:stabilitycond}. 
\end{proof}

\begin{cor}\label{cor:isobaricdgclbf}
There is an isomorphism
\[
\bKO^\Delta(\cD^{cblc}(A,d_A),\beta) \cong \bKO^\Delta(\cD^{cblc}(A,d_A)).
\]
\end{cor}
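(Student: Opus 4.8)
The plan is to obtain this as an immediate application of \cref{prop:baricisoKO}, which already contains all the abstract content: for a full, non-degenerate, stable, cone bounded baric structure $\beta$ on a c.b.l. additive triangulated category $\cC$, the identity on $K_0^\Delta(\cC)$ induces a surjection $\bKO^\Delta(\cC,\beta) \twoheadrightarrow \bKO^\Delta(\cC)$, and this surjection is an isomorphism as soon as $\beta$ is moreover locally finite. Thus the whole proof reduces to checking that the baric structure $\beta$ on $\cD^{lc}(A,d)$ built in this subsection has all five of these properties.

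First I would recall that $\beta$ was already seen, right after its construction, to be non-degenerate, locally finite, cone bounded and full: these come out of \cref{prop:orderedfiltP} (the ordered filtration by $\bZ^n$-degree provides the truncations $\beta_{\preceq \bg}$), from the fact that the $\{P_i\}_{i \in I}$ form a finite collection of objects of $\cD^{lc}(A,d)_0$, from cone boundedness of c.b.l.f. cell modules, and from fullness by the same argument as in \cref{lem:cblisfull}. Then I would invoke the preceding lemma, which establishes that $\beta$ is stable — this is really the only substantive input, and it itself rests on the observation that $\cD^{lc}(A,d)_{\succ 0}$ is closed under acceptable Milnor colimits and acceptable Milnor limits, together with \cref{prop:stabilitycond}. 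With all five adjectives verified, \cref{prop:baricisoKO} applies directly and yields the claimed isomorphism $\bKO^\Delta(\cD^{lc}(A,d),\beta) \cong \bKO^\Delta(\cD^{lc}(A,d))$ induced by the identity on $K_0^\Delta$.

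The only point that deserves a word of care is that \cref{prop:baricisoKO} is stated for a c.b.l. additive category, so one must know that $\cD^{lc}(A,d)$ genuinely admits all c.b.l.f. coproducts (in the triangulated sense) and that these are biproducts; this is guaranteed by \cref{lem:cblfextarecell}, which identifies c.b.l.f. iterated extensions of c.b.l.f. cell modules with honest c.b.l.f. cell modules, so that $\cD^{lc}(A,d) \cong \cD^{lf}(A,d)$ and c.b.l.f. coproducts are realised concretely as c.b.l.f. direct sums of cell modules. Beyond this bookkeeping I do not expect any real obstacle: the corollary is a purely formal consequence of the stability lemma and \cref{prop:baricisoKO}, and if desired one may additionally note that the isomorphism is a homeomorphism for the canonical topologies, since both are defined via the same system $\{\beta_{\succ \be}\}_{\be \in \bZ^n}$.
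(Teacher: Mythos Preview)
Your proposal is correct and follows exactly the paper's approach: the paper's proof is literally the single line ``We apply \cref{prop:baricisoKO}'', relying on the preceding lemma for stability and on the remarks after the construction of $\beta$ for the other four properties. Your elaboration of why each hypothesis is satisfied is accurate and simply spells out what the paper leaves implicit.
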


\begin{proof}
We apply \cref{prop:baricisoKO}.
\end{proof}

\begin{prop}\label{prop:derivedtensorinduces}
Let $(A,d_A)$ and $(A',d_{A'})$ be two c.b.l.f. positive dg-algebras. Let $B$ be a c.b.l.f. dimensional $(A',d_{A'})$-$(A,d_A)$-bimodule. The derived tensor product functor
\[
F : \cD^{cblf}(A,d_A) \rightarrow \cD^{cblf}(A',d_{A'}), \quad F(X) := B \Lotimes_{(A,d_A)} X,
\]
induces a continuous map
\[
[F] : \bKO^\Delta(\cD^{cblf}(A,d_A))  \rightarrow \bKO^\Delta(\cD^{cblf}(A',d_{A'})).
\]
\end{prop}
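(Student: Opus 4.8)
The plan is to realize $F$ as a $\bZ^n$-homogeneous exact functor of finite amplitude between the baric categories $(\cD^{lc}(A,d),\beta)$ and $(\cD^{lc}(A',d'),\beta')$, apply \cref{prop:K0ASinducedmap}, and then transport the result along the isomorphism of \cref{cor:isobaricdgclbf} and the equivalences $\cD^{lc}(A,d)\cong\cD^{lf}(A,d)$, $\cD^{lc}(A',d')\cong\cD^{lf}(A',d')$.

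First I would check that $F$ is well defined. Given $X\in\cD^{lf}(A,d)$, by \cref{thm:cblfdimiscell} and \cref{lem:cblfextarecell} we may assume $X$ is a c.b.l.f. cell module; as a graded $A$-module it is then a c.b.l.f. direct sum of shifted copies of the indecomposable relatively projectives $P_i=Ae_i$, which are cofibrant. Hence $F(X)\cong B\otimes_A X$ is a c.b.l.f. direct sum of shifted copies of the bimodule summands $Be_i$. Each $Be_i$ is a direct summand of $B$, hence c.b.l.f. dimensional and bounded below for the homological grading, and a cone bounded, locally finite direct sum of such spaces is again c.b.l.f. dimensional and bounded below. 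Therefore $F(X)\in\cD^{lf}(A',d')\cong\cD^{lc}(A',d')$, so $F$ restricts to a functor $\cD^{lc}(A,d)\to\cD^{lc}(A',d')$. That $F$ commutes with the grading shifts is immediate from the definition of the derived tensor product, and $F$ is exact, being a derived tensor product functor.

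Next I would establish finite amplitude. For the finitely many $i$ with $Be_i\neq 0$, let $m_i:=\deg_{\bx}(Be_i)$ be the minimal $\bZ^n$-degree of $Be_i$, and set $|F|:=\min_{\prec}\{m_i\}$. Recall that, by \cref{prop:orderedfiltP} and the description of $\beta$, a nonzero object $M\in\cD^{lc}(A,d)$ (represented by a cell module) lies in $\cD^{lc}(A,d)_{\succ\bg}$ exactly when $\deg_{\bx}(M)\succ\bg$, and likewise for $A'$. Now take $X\in\cD^{lc}(A,d)_{\succ\bg}$, modeled by a cell module whose underlying $A$-module is a c.b.l.f. direct sum of modules $x^{\bg'}P_i[k]$ with all $\bg'\succ\bg$. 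Then $F(X)$ has underlying $A'$-module the corresponding c.b.l.f. direct sum of the $x^{\bg'}(Be_i)[k]$, so $\deg_{\bx}(F(X))=\min_{\bg',i}(\bg'+m_i)$. Since $\bg'\succ\bg$ forces $\bg'\succeq\bg+\bz$, we get $\bg'+m_i\succeq\bg+\bz+|F|\succ\bg+|F|$, hence $F(X)\in\cD^{lc}(A',d')_{\succ\bg+|F|}$. Thus $F$ has finite amplitude.

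By \cref{prop:K0ASinducedmap}, $F$ induces a continuous map $\bKO^\Delta(\cD^{lc}(A,d),\beta)\to\bKO^\Delta(\cD^{lc}(A',d'),\beta')$, given by $[X]\mapsto[F(X)]$; composing with the isomorphism of \cref{cor:isobaricdgclbf} and the two equivalences $\cD^{lc}\cong\cD^{lf}$ then yields the asserted continuous map $[F]:\bKO^\Delta(\cD^{lf}(A,d))\to\bKO^\Delta(\cD^{lf}(A',d'))$. The step I expect to be the main obstacle is the finite-amplitude estimate, which hinges on $B$ being c.b.l.f. dimensional with only finitely many non-zero summands $Be_i$, each having a well-defined minimal $\bZ^n$-degree, so that tensoring with $B$ moves the baric support by a bounded amount. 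Alternatively, one could avoid the baric machinery: since $B\Lotimes_{(A,d)}-$ commutes with coproducts and with mapping cones, $F$ sends an acceptable Milnor colimit (resp.\ limit) to one, with $F(\bigoplus_{r\geq 0}\cone(f_r))\cong\bigoplus_{r\geq 0}\cone(Ff_r)\in\cD^{lf}(A',d')$, so it preserves the relations defining $T(\cC)$ and induces $[F]$; continuity then follows from \cref{thm:triangtopK0genbyPi}, since $[F]$ is $\bZ\pp{x_1,\dots,x_\ell}$-linear and there are only finitely many generators $[P_i]$.
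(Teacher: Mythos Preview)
Your proposal is correct and follows essentially the same strategy as the paper: show that $F$ is a $\bZ^n$-homogeneous exact functor of finite amplitude for the canonical baric structures on $\cD^{lc}(A,d)$ and $\cD^{lc}(A',d')$, then invoke \cref{prop:K0ASinducedmap} and \cref{cor:isobaricdgclbf}. The paper's proof is terser---it takes $|F|$ to be the minimal $\bZ^n$-degree of $B$ as a $\Bbbk$-vector space (which coincides with your $\min_\prec\{m_i\}$) and observes directly that $B\otimes_A\br(X)$ then has minimal degree $\be+|F|$---but the substance is identical; you simply spell out more carefully why $F$ lands in $\cD^{lf}(A',d')$ and why the baric support shifts by at most $|F|$. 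Your closing alternative, bypassing the baric structure by arguing that $F$ preserves acceptable Milnor (co)limits and the relations generating $T(\cC)$, is a legitimate different route not taken in the paper.
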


\begin{proof}
Recall that
\[
B \Lotimes_{(A,d_A)} X := B \otimes_{(A,d_A)} \br{(X)}, 
\]
where $\br{(X)}$ is a property (P) replacement of $X$. 
By \cref{thm:cblfdimiscell}, we can assume $\br{(X)}$ is a c.b.l.f. cell module contained in $\cD^{cblc}(A,d_A)_{\succeq \be}$ for some minimal degree $\be \in \bZ^n$. 
Let 
$|F|$ be the minimal degree of $B$ as $\bZ^{n}$-graded $\Bbbk$-vector space.  We claim that $F$ has finite amplitude $|F|$. Indeed, we obtain that $ B \otimes_{(A,d_A)} \br{(X)}$ is a c.b.l.f. dimensional $(A',d_{A'})$-module with minimal degree $\be + |F|$ as $\Bbbk$-vector space. In particular, $\br{(B \otimes_{(A,d_A)} \br{(X)})} \in \cD^{cblc}(A',d_{A'})_{\succeq \be + |F|}$. 
Finally, we apply \cref{prop:K0ASinducedmap}. 
\end{proof}

\begin{exe}
Recall the $\bZ^2$-graded dg-algebra $(R,0)$ from \cref{sec:mainexintro}. It is a c.b.l.f. positive dg-algebra, and thus we obtain
\[
\bKO^\Delta(\cD^{cblf}(R,0)) \cong \bZ\pp{q,\lambda} \cdot [(R,0)].
\] 
Furthermore, $\bKO^\Delta(\cD^{cblf}(R,0))$ is also freely generated by $[(L,0)]$, and the relations in \cref{eq:changeofbasisintro} hold inside of $\bKO^\Delta(\cD^{cblf}(R,0))$, by interpreting \cref{eq:filtintroex} and \cref{eq:resintroex} as Milnor limits. This achieves our goal from \cref{sec:mainexintro}. 
\end{exe}



\bibliographystyle{bibliography/habbrv}


\end{document}